\documentclass[12pt,reqno]{amsart}
\usepackage{amsbooka}

\makeatletter
\def\chaptermark#1{}

\def\chapter{%
  \if@openright\cleardoublepage\else\clearpage\fi
  \thispagestyle{plain}\global\@topnum\z@
  \@afterindenttrue \secdef\@chapter\@schapter}

\def\@chapter[#1]#2{\refstepcounter{chapter}%
  \ifnum\c@secnumdepth<\z@ \let\@secnumber\@empty
  \else \let\@secnumber\thechapter \fi
  \typeout{\chaptername\space\@secnumber}%
  \def\@toclevel{0}%
  \ifx\chaptername\appendixname \@tocwriteb\tocappendix{chapter}{#2}%
  \else \@tocwriteb\tocchapter{chapter}{#2}\fi
  \chaptermark{#1}%
  \addtocontents{lof}{\protect\addvspace{10\p@}}%
  \addtocontents{lot}{\protect\addvspace{10\p@}}%
  \@makechapterhead{#2}\@afterheading}
\def\@schapter#1{\typeout{#1}%
  \let\@secnumber\@empty
  \def\@toclevel{0}%
  \ifx\chaptername\appendixname \@tocwriteb\tocappendix{chapter}{#1}%
  \else \@tocwriteb\tocchapter{chapter}{#1}\fi
  \chaptermark{#1}%
  \addtocontents{lof}{\protect\addvspace{10\p@}}%
  \addtocontents{lot}{\protect\addvspace{10\p@}}%
  \@makeschapterhead{#1}\@afterheading}
\newcommand\chaptername{Chapter}

\def\@makechapterhead#1{\global\topskip 7.5pc\relax
  \begingroup
  \fontsize{\@xivpt}{18}\bfseries\centering
    \ifnum\c@secnumdepth>\m@ne
      \leavevmode \hskip-\leftskip
      \rlap{\vbox to\z@{\vss
          \centerline{\normalsize\mdseries
              \uppercase\@xp{\chaptername}\enspace\thechapter}
          \vskip 3pc}}\hskip\leftskip\fi
     #1\par \endgroup
  \skip@34\p@ \advance\skip@-\normalbaselineskip
  \vskip\skip@ }
\def\@makeschapterhead#1{\global\topskip 7.5pc\relax
  \begingroup
  \fontsize{\@xivpt}{18}\bfseries\centering
  #1\par \endgroup
  \skip@34\p@ \advance\skip@-\normalbaselineskip
  \vskip\skip@ }
\def\appendix{\par
  \c@chapter\z@ \c@section\z@
  \let\chaptername\appendixname
  \def\thechapter{\@Alph\c@chapter}}

\newcounter{chapter}
\newif\if@openright
\makeatother

\makeatletter
\@addtoreset{section}{chapter}
\makeatother

\usepackage{amsfonts, amsmath, amssymb, xcolor}
\usepackage{amsthm}
\usepackage{mathrsfs}
\usepackage{bm}

\usepackage{booktabs}
\usepackage{hhline}
\usepackage{enumitem}
\usepackage{tabularx}
\usepackage{array}

\usepackage{graphicx}
\usepackage{float}
\usepackage{subfig}
\usepackage{lscape}
\usepackage{rotating}
\usepackage{textcomp}
\usepackage{scalefnt}
\usepackage{a4wide}
\usepackage{lipsum}
\usepackage{longtable}

\usepackage{young}
\usepackage{youngtab}
\usepackage{ytableau}

\usepackage{soul, color} 
\usepackage{tikz}
\usepackage{tikz-cd}
\usetikzlibrary{
  shapes.geometric, shapes,
  arrows, arrows.meta,
  positioning, fit, calc,
  shadows, backgrounds,
  decorations.markings,
  decorations.pathmorphing
}
\pgfdeclarelayer{background}
\pgfsetlayers{background,main}

\usepackage[english]{babel}
\usepackage[colorlinks=true,
  linkcolor=blue, citecolor=magenta, urlcolor=blue, backref=page]{hyperref}
\hypersetup{colorlinks,linkcolor=blue,urlcolor=cyan,citecolor=magenta}

\allowdisplaybreaks[4]
\theoremstyle{plain}
\newtheorem{thm}{Theorem}[chapter]
\newtheorem{lem}[thm]{Lemma}
\newtheorem{prop}[thm]{Proposition}
\newtheorem{cor}[thm]{Corollary}
\newtheorem{rem}[thm]{Remark}

\newtheorem{defn}[thm]{Definition}
\newtheorem{exmp}[thm]{Example}

\newtheorem{mainthm}{Theorem}

\numberwithin{equation}{section}

\makeatletter

\newcommand{\Rmnum}[1]{\expandafter\@slowromancap\romannumeral #1@}
\makeatother

\newcommand{\la}{\lambda}
\newcommand{\id}{\mathrm{id}}
\newcommand{\Cont}{\mathrm{Cont}}

\newcommand{\bla}{\bm{\lambda}}

\newcommand{\ev}{\mathrm{ev}}
\newcommand{\wt}{\mathrm{wt}}
\newcommand{\T}{\mathcal{T}}

\newcommand{\Comp}{\mathrm{Comp}}

\newcommand{\qbinomial}[2]{\genfrac{[}{]}{0pt}{}{#1}{#2}}

\DeclareMathOperator{\rht}{ht}
\DeclareMathOperator{\sgn}{sgn}

\DeclareMathOperator{\sk}{sk}
\newcommand{\notepage}[1]{\hyperref[#1]{p.~\pageref*{#1}}}

\def\K{\mathcal{K}}

\def\R{\mathscr{R}}
\def\O{\mathscr{O}}
\def\E{\mathcal{E}}

\def\P{\mathscr{P}}
\def\q{\mathfrak{q}}
\def\n{\mathrm{n}}

\begin{document}

\title{A skew Murnaghan--Nakayama rule for Hopf dual pairs}
\author{Naihuan Jing}
\address{Department of Mathematics, North Carolina State University, Raleigh, NC 27695, USA}
\email{jing@ncsu.edu}
\author{Ning Liu}
\address{ Beijing International Center for Mathematical Research, Peking University, Beijing 100871, China}
\email{mathliu123@outlook.com}

\subjclass[2020]{Primary 16T05, 05E05; Secondary 05E10, 20C08, 20C20, 20C30, 17B69}

\keywords{Hopf dual pairs, Cauchy elements, skew Murnaghan--Nakayama rules, $(q,t)$-Kostka matrices, inverse $(q,t)$-Kostka matrices, Ariki--Koike algebras, Hecke--Clifford algebras, $\q$-rook monoid algebras, modular Schur functions, Walker's conjecture}

\begin{abstract}
We develop a uniform skew Murnaghan--Nakayama theory for graded Hopf dual pairs equipped with a nondegenerate Hopf pairing. Using the completed Cauchy element, its grouplike factorization, and the resulting partial contraction operators, we establish a general skew Cauchy identity together with an abstract skew Murnaghan--Nakayama rule. Specializing this framework recovers and extends the classical skew Murnaghan--Nakayama rule for symmetric functions, and yields new skew Murnaghan--Nakayama formulas in several settings, including the dual pairs $(\mathrm{NSym},\mathrm{QSym})$ and $(\Lambda_{(k)},\Lambda^{(k)})$ arising in $k$-Schur theory, as well as the type~$C$ affine Grassmannian context.

As applications, we obtain generating functions for irreducible characters of Ariki--Koike algebras, including their type $A$ and type $B$ specializations, as well as Hecke--Clifford algebras and $\mathfrak q$-rook monoid algebras. We also give ribbon-tableau expansions for skew $(q,t)$-Kostka polynomials and for the entries of the inverse transition matrix, thereby answering a question of Carbonara (1998). Finally, by specializing the auxiliary alphabet $Y$ to sums of powers of primitive roots of unity, we derive a skew plethystic Murnaghan--Nakayama formula together with a Schur expansion for skew modular Schur functions; as a further consequence, we confirm Walker's conjecture (1994) by showing that if the transition from the modular Schur functions to the Schur basis is trivial in the row indexed by $\lambda$, then $\lambda$ must be a $k$-core.
\end{abstract}

\maketitle
\setcounter{tocdepth}{1}
\tableofcontents

\chapter*{Notation}

\begingroup
\renewcommand{\arraystretch}{1.5}
\setlength{\extrarowheight}{1pt}

\begin{longtable}{@{}p{0.29\textwidth}p{0.55\textwidth}p{0.10\textwidth}@{}}
\toprule
Notation & Meaning & Page \\
\midrule
$\P,\ \mathscr S,\ \O$ & The sets of all partitions, strict partitions, and odd partitions. & \notepage{def:partition-basic} \\
$\la\vdash n$ & A partition of $n$. & \notepage{def:partition-basic} \\
$\ell(\la),\ |\la|,\ m_i(\la)$ & The length, weight, and multiplicity of the part $i$ in $\la$. & \notepage{def:partition-basic} \\
$\mathfrak S_n$ & The symmetric group on $n$ letters. & \notepage{def:partition-basic} \\
$\la^t$ & The conjugate partition of $\la$. & \notepage{def:conjugate-partition} \\
\begin{tabular}[t]{@{}l@{}}$a_\la(s),\ l_\la(s),$\\$\Cont(s),\ h_\la(s)$\end{tabular} & Arm-length, leg-length, content, hook length of box $s$. & \notepage{def:box-statistics} \\
$\rht(R)$ & The height of a ribbon $R$. & \notepage{def:ribbon-height} \\
$\Lambda$ & The ring of symmetric functions. & \notepage{def:lambda-ring} \\
$s_\la$ & Schur functions. & \notepage{def:schur-function} \\
$s_{\la/\mu}$ & Skew Schur functions. & \notepage{def:skew-schur-function} \\
$P_\la(X),\ Q_\la(X)$ & Schur $P$- and $Q$-functions, indexed by strict partitions. & \notepage{def:schur-pq-functions} \\
$S_{\la/\mu}(X;t)$ & Big Schur functions. & \notepage{def:big-schur-functions} \\
$P_\la(X;q,t),\ Q_\la(X;q,t)$ & Macdonald $P$- and $Q$-functions. & \notepage{def:macdonald-pq-functions} \\
$J_\la(X;q,t)$ & integral Macdonald polynomials. & \notepage{def:integral-macdonald} \\
$\E$ & The completed Cauchy element $\sum_\lambda f_\lambda\otimes g_\lambda$. & \notepage{d:cauchy-dual} \\
$\q$ & The Hecke deformation parameter, distinguished from the Macdonald parameter $q$. & \notepage{def:q-parameter} \\
$\mathrm{NSym},\ \mathrm{QSym}$ & The Hopf algebras of noncommutative symmetric functions and quasisymmetric functions. & \notepage{ss:NSym-QSym-skewMN} \\
$\Psi_r$ & The first-kind noncommutative power-sum element in $\mathrm{NSym}$. & \notepage{def:first-kind-nsym-powersum} \\
$\mathrm{NCB}^{0}_{r}$ & Tewari nc border strips of size $r$ with empty north-east set. & \notepage{def:nc-border-strip-zero} \\
$\xrightarrow{\mathrm{LH}}_m,\ \xrightarrow{\mathrm{LV}}_m$ & Tewari--van Willigenburg left horizontal and left vertical strip relations for compositions. & \notepage{def:nc-left-strips} \\
$\Lambda_{(k)},\ \Lambda^{(k)}$ & The $k$-bounded subalgebra of symmetric functions and its graded Hopf dual. & \notepage{ss:kSchur-skewMN} \\
$\rightsquigarrow^{(k)}_r$ & The weak $k$-Pieri strip relation on $k$-bounded partitions. & \notepage{def:k-weak-pieri-strip} \\
$\Gamma_{(n)},\ \Gamma^{(n)}$ & The type $C$ affine Grassmannian Hopf-dual pair. & \notepage{ss:SpAffineGr-skewMN} \\
$\mathscr H_{m,n}(\q,\bm u)$ & The Ariki--Koike algebra. & \notepage{sec:ariki-koike-algebras} \\
$\mathscr P_{n,m},\ \mathscr A_m$ & The sets of $m$-multipartitions and finite-support exponent arrays. & \notepage{def:multipartitions-arrays} \\
$\chi^{\bm\lambda}_{\bm\mu}$ & Irreducible character values for Ariki--Koike algebras. & \notepage{def:chi-AK} \\
$\mathscr H_n(\q)$ & The type $A$ Iwahori--Hecke algebra. & \notepage{ss:typeA-hecke} \\
$\phi^\la_\mu$ & Irreducible character values for type $A$ Iwahori--Hecke algebras. & \notepage{def:phi-typeA} \\
$\mathscr B_n(u,\q)$ & The type $B$ Iwahori--Hecke algebra. & \notepage{ss:typeB-hecke} \\
$\varphi^{\bm\lambda}_{\bm\mu}$ & Irreducible character values for type $B$ Iwahori--Hecke algebras. & \notepage{def:varphi-typeB} \\
$\mathscr H_n^c(\q)$ & The Hecke--Clifford algebra. & \notepage{sec:hecke-clifford-algebras} \\
$\zeta^\nu_\mu$ & Irreducible character values for Hecke--Clifford algebras. & \notepage{def:zeta-HC} \\
$\R_n(\q)$ & The $\q$-rook monoid algebra. & \notepage{sec:q-rook-monoid-algebras} \\
$\Upsilon^\la_\mu$ & Irreducible character values for $\q$-rook monoid algebras. & \notepage{def:Upsilon-rook} \\
SRT & Special ribbon tableaux. & \notepage{def:SRT} \\
$K_{\la/\mu,\nu}(q,t)$ & The skew $(q,t)$-Kostka polynomial. & \notepage{ss:skew-qt-kostka} \\
$\K_{\la/\mu,\nu}(q,t)$ & The skew inverse $(q,t)$-Kostka coefficient. & \notepage{ss:inv-qt-kostka} \\
$G(k,m)$ & The Petrie symmetric function. & \notepage{def:petrie-functions} \\
GPR & Good proper $k$-ribbons. & \notepage{def:GPR} \\
$\Omega_k$ & The root-of-unity alphabet $1+\omega_k+\cdots+\omega_k^{k-1}$. & \notepage{def:omega-k} \\
$\mathfrak s^{(k)}_{\la/\mu}$ & The skew modular Schur function. & \notepage{def:skew-modular-schur} \\
$m_{\la/\mu,\nu}$ & The Schur expansion coefficient of $\mathfrak s^{(k)}_{\la/\mu}$. & \notepage{eq:def-skew-m-coeff} \\
GPRT & Good proper $k$-ribbon tableaux. & \notepage{def:GPRT} \\
\bottomrule
\end{longtable}

\endgroup

\chapter{Introduction}
\section{Background}

Decomposition rules in the ring of symmetric functions lie at the heart of Schubert calculus, representation theory, and algebraic combinatorics.
A prototypical example is the classical Murnaghan--Nakayama (MN) rule \cite{Mur37, Nak40a, Nak40b}, which gives the Schur expansion of the product of a power-sum symmetric function with a Schur function:
\begin{align}
    p_r\, s_{\mu}
    =\sum_{\lambda}(-1)^{\rht(\lambda/\mu)}\, s_{\lambda}.
\end{align}
Here the sum ranges over all partitions $\lambda$ for which $\lambda/\mu$ is a ribbon (border strip) of size $r$, and $\rht(\lambda/\mu)$ denotes the height of the ribbon, i.e., the number of rows of $\lambda/\mu$ minus $1$.
Under the standard identification of Schur functions with Schubert classes on the Grassmannian, the MN rule yields a signed rim-hook recursion for intersection numbers against the Newton classes (equivalently, Chern characters) of the tautological bundle.
From the viewpoint of representation theory, it provides an efficient combinatorial procedure for computing irreducible character values of symmetric groups.

Over the past decades, MN-type identities have proliferated well beyond the classical setting.
They appear for Hecke algebras \cite{Ram91}, BMW algebras \cite{HR95}, and Hecke--Clifford algebras \cite{JL23} via $q$-deformations; for Ariki--Koike algebras via cyclotomic deformations \cite{JL25}; for Hall--Littlewood functions via vertex-operator realizations \cite{JL22a}; and for Macdonald polynomials via $(q,t)$-deformations \cite{JL24}, among many others.
More recently, increasing attention has been directed toward plethystic MN rules and their variants \cite{DLT94, EPW14, Wil16, Tur25, CJL25}.

A particularly influential refinement is the \emph{skew} MN rule.
Assaf and McNamara \cite{AM11} obtained an elegant formula for the product of a skew Schur function with a power-sum. In the form recorded in \cite[Theorem~3]{Kon12}, it reads
\begin{equation}\label{e:skew-ps}
    p_r\, s_{\la/\mu}
    =\sum_{\substack{\la^+\supset\la\\
        \la^+/\la\text{ is an }r\text{-ribbon}}}
        (-1)^{\rht(\la^+/\la)}s_{\la^+/\mu}
    -\sum_{\substack{\mu^-\subset\mu\\
        \mu/\mu^-\text{ is an }r\text{-ribbon}}}
        (-1)^{\rht(\mu/\mu^-)}s_{\la/\mu^-}.
\end{equation}
A quantum deformation of \eqref{e:skew-ps}, in which $p_r$ is replaced by a one-parameter family of symmetric functions\footnote{These functions serve as the conjugacy-class parameter functions for Hecke algebras.}, was subsequently obtained in \cite{Kon12}.

Despite their power, these generalizations remain rather dispersed in the literature.
Their proofs typically use different models, such as tableau bijections, raising operators, vertex-operator realizations, or case-specific character formulas, and the resulting identities are often stated separately for the particular family of symmetric functions at hand.
In particular, the common origin of skew identities, plethystic specializations, and character-generating-function formulas is not always visible from the individual proofs.
This motivates a uniform treatment that isolates the structural reason behind MN-type decompositions and makes their skew and plethystic extensions transparent.

\section{Goals and methods}

This paper develops a Hopf-algebraic approach to skew Murnaghan--Nakayama type identities and applies it in several directions.
The starting point is that many skew, plethystic, and character-theoretic variants of the Murnaghan--Nakayama rule arise from the same structure.
We make this structure explicit and use it to derive formulas in a range of settings.

There are two parts to the construction.
First, we formulate a general skew Murnaghan--Nakayama theory for graded Hopf dual pairs equipped with a nondegenerate Hopf pairing.
Second, we specialize this theory in several contexts, obtaining explicit results in symmetric-function theory, character theory, and modular representation-theoretic problems.
The first part is developed in the Hopf-algebraic framework below; the second is summarized in the contribution map that follows it.

\subsection{Hopf-algebraic framework}
Our basic input is a \emph{graded Hopf dual pair} $(H,H^\vee)$ equipped with a nondegenerate Hopf pairing $\langle\cdot,\cdot\rangle$ such that, on each homogeneous degree,
\begin{align}\label{e:int-hopf-pairing}
\langle ab,c\rangle=\langle a\otimes b,\ \Delta^\vee(c)\rangle,
\qquad
\langle a,bc\rangle=\langle \Delta(a),\ b\otimes c\rangle.
\end{align}
Fix dual bases $(f_{\lambda})_{\lambda\in\mathcal I}$ of $H$ and $(g_{\lambda})_{\lambda\in\mathcal I}$ of $H^\vee$.
To formulate an abstract analogue of the skew identity \eqref{e:skew-ps}, we need two ingredients:
a notion of \emph{skew elements} in $H$, and a canonical object playing the role of the Cauchy kernel.

The skew elements arise naturally from the coproduct.
For each $\lambda\in\mathcal I$, we define $f_{\lambda/\mu}\in H$ by expanding $\Delta(f_\lambda)$ in the second tensor factor with respect to the basis $(f_\mu)_{\mu\in\mathcal I}$:
\[
\Delta(f_{\lambda})=\sum_{\mu\in\mathcal I} f_{\lambda/\mu}\otimes f_{\mu}.
\]
To encode the pairing in a form suited to Murnaghan--Nakayama type identities, we introduce the \emph{Cauchy element}
\begin{align}\label{e:int-Cauchy-element}
\E:=\sum_{\lambda\in\mathcal I} f_\lambda\otimes g_\lambda
\ \in\ \widehat{H\otimes H^\vee},
\end{align}
where $\widehat{H\otimes H^\vee}$ denotes the graded completion.
When $H=H^\vee=\Lambda$ is the ring of symmetric functions with its standard Hopf pairing, the element $\E$ specializes to the usual Cauchy kernel.
Thus $\E$ may be regarded as the Hopf-algebraic analogue of the classical Cauchy kernel.

The skew Murnaghan--Nakayama rule on $(H,H^\vee)$ is then obtained from three properties of $\E$:
a grouplike-type factorization of the Cauchy element, a systematic use of partial contractions induced by the pairing, and an orthogonality relation involving the antipode.
Together these properties give a single argument for a number of skew and plethystic variants.

This point of view is close in spirit to the framework of Lam--Lauve--Sottile~\cite{LLS11}, who study skew Littlewood--Richardson rules in the setting of dual Hopf algebras.
Both works start from the same basic input: a graded Hopf pairing, dual homogeneous bases, and skew elements defined via coproduct expansions.
The two settings differ, however, in their main objectives.
The focus of~\cite{LLS11} is on multiplicative structure, whereas here the emphasis is on Murnaghan--Nakayama type identities obtained from the completed Cauchy element and its factorization properties.
Accordingly, while there is no direct implication between the main theorems of~\cite{LLS11} and our skew Murnaghan--Nakayama rule, the two approaches meet in classical specializations at the level of skew Pieri phenomena.
This meeting point is made concrete in Chapters~\ref{s:special-cases-Lambda} and~\ref{s:more}: the former recovers classical skew Pieri rules from the Cauchy-element formalism, while the latter combines the same formalism with straight Pieri inputs in the noncommutative Schur, $k$-Schur, and type $C$ affine Grassmannian settings.

\subsection{Contribution map}
We next indicate how the abstract construction is used in the rest of the paper.
It is useful to separate the results into three levels: recovery of classical identities, Hopf-dual specializations beyond the ordinary symmetric-function setting, and generating or combinatorial formulas obtained by combining the general theory with standard inputs from the relevant applications.
\begin{enumerate}
    \item In the ordinary symmetric-function setting, the framework recovers known skew Pieri and skew Murnaghan--Nakayama identities as specializations. These recoveries serve as checks on the formalism and provide the plethystic language used later.
    \item In nonclassical Hopf-dual settings such as $(\mathrm{NSym},\mathrm{QSym})$, $k$-Schur theory, and type $C$ affine Grassmannian theory, the same completed-Cauchy-element mechanism first gives formal skew MN identities.  When combined with the available straight Murnaghan--Nakayama or Pieri inputs, these identities yield explicit skew combinatorial formulas, including the noncommutative Schur, $k$-Schur, and type $C$ affine Grassmannian specializations in Chapter~\ref{s:more}.
    \item In the application chapters, the framework is combined with standard inputs from representation theory and symmetric functions, such as Frobenius-type character formulas, Macdonald Pieri evaluations, binomial coefficients, and the definitions of Petrie and modular Schur functions. This produces the global character generating series and their refinements, tableau/flag formulas for skew $(q,t)$-Kostka matrices and their inverses, skew plethystic and Petrie-type expansions, and the consequence for Walker's conjecture.
\end{enumerate}
Thus the later chapters should be viewed as parallel applications of the same completed-Cauchy-element formalism, together with the extra input needed in each setting, rather than as a single chain of dependent results.

\section{Main results}
We now state the results in detail. Structurally, the paper consists of one abstract theorem package, one chapter of Hopf-dual specializations beyond $\Lambda$, and three groups of applications.
The abstract core is the skew Cauchy identity together with the general skew Murnaghan--Nakayama rule for Hopf dual pairs.
Chapter~\ref{s:more} applies this core to nonclassical Hopf-dual pairs and extracts concrete skew MN and skew Pieri formulas from known straight inputs.
The subsequent application chapters use this Hopf--Cauchy framework in three directions: generating functions for irreducible characters, skew $(q,t)$-Kostka theory, and plethystic modular Schur theory.
In each direction, the framework is combined with input specific to the setting.
Thus Theorems~\ref{t:B}--\ref{t:F} should be viewed as parallel developments from a common framework, rather than as a single chain of consequences of Theorem~\ref{t:A} alone.

\subsection{General Hopf-algebraic framework}
Let $(H,H^\vee)$ be a graded Hopf dual pair with a nondegenerate Hopf pairing satisfying \eqref{e:int-hopf-pairing}. Their standard structure maps are written as
\[
(H,m,u,\Delta,\varepsilon,S),
\qquad
(H^\vee,m^\vee,u^\vee,\Delta^\vee,\varepsilon^\vee,S^\vee).
\]
All completed tensor products and infinite Cauchy sums in the main statements are understood in the degree-completed sense made precise in Chapter~\ref{s:general-rule}; in particular, the identities are degreewise finite.

\begin{mainthm}[Proposition \ref{p:skew-cauchy-dual}+Theorem \ref{t:skew-MN-dual}]\label{t:A}
  With the notations above, we have the following skew Cauchy identity:
\begin{align}\label{e:int-skew-cauchy-dual}
\E\;\cdot\sum_{\tau\in\mathcal I} f_{\lambda/\tau}\otimes g_{\mu/\tau}
=\sum_{\rho\in\mathcal I} f_{\rho/\mu}\otimes g_{\rho/\lambda}
\end{align}
for any dual bases $(f_{\lambda})_{\lambda\in\mathcal I}$ of $H$ and $(g_{\lambda})_{\lambda\in\mathcal I}$ of $H^\vee$.
Furthermore, if $H^\vee$ is commutative, then the skew Murnaghan--Nakayama rule is obtained as follows:
\begin{align}\label{e:int-skew-MN-dual}
\E\;\cdot\bigl(f_{\lambda/\eta}\otimes 1_{H^\vee}\bigr)
=\sum_{\rho,\mu\in\mathcal I} f_{\rho/\mu}\otimes S^\vee(g_{\eta/\mu})\,g_{\rho/\lambda}
\end{align}
for all $\lambda,\eta\in\mathcal I$.  
\end{mainthm}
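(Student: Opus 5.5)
The plan is to prove \eqref{e:int-skew-cauchy-dual} by pairing both sides against an arbitrary test element $g_\alpha\otimes f_\beta$ of $H^\vee\otimes H$, and then to deduce \eqref{e:int-skew-MN-dual} from it by an antipode contraction. Two facts are used throughout. Since the pairing is nondegenerate and homogeneous, and both sides are degreewise finite in $\widehat{H\otimes H^\vee}$, it suffices to compare all such pairings. The skew elements admit the dual descriptions
\[
\langle f_{\rho/\mu},g_\alpha\rangle=\langle f_\rho,\,g_\alpha g_\mu\rangle ,\qquad
\langle f_\beta,\,g_{\rho/\lambda}\rangle=\langle f_\beta f_\lambda,\,g_\rho\rangle ,
\]
where $g_{\rho/\lambda}$ is defined analogously by $\Delta^\vee(g_\rho)=\sum_\lambda g_{\rho/\lambda}\otimes g_\lambda$. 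Both follow directly from \eqref{e:int-hopf-pairing} and duality of the bases.

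\textbf{Step 1 (right-hand side).} Pairing $\sum_\rho f_{\rho/\mu}\otimes g_{\rho/\lambda}$ with $g_\alpha\otimes f_\beta$ gives
\[
\sum_\rho\langle f_\rho,g_\alpha g_\mu\rangle\langle f_\beta f_\lambda,g_\rho\rangle=\langle f_\beta f_\lambda,\ g_\alpha g_\mu\rangle .
\]
Here the sum over $\rho$ collapses because $(f_\rho),(g_\rho)$ are dual bases.

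\textbf{Step 2 (left-hand side).} The left side is $\sum_{\nu,\tau} f_\nu f_{\lambda/\tau}\otimes g_\nu g_{\mu/\tau}$, and its pairing with $g_\alpha\otimes f_\beta$ is
\[
\sum_{\nu,\tau}\langle f_\nu f_{\lambda/\tau},g_\alpha\rangle\,\langle f_\beta,g_\nu g_{\mu/\tau}\rangle .
\]
I would expand $\langle f_\beta f_\lambda,g_\alpha g_\mu\rangle=\langle \Delta(f_\beta)\Delta(f_\lambda),\,g_\alpha\otimes g_\mu\rangle$ using the bialgebra axiom. Writing $\Delta(f_\beta)=\sum f_{\beta/\sigma}\otimes f_\sigma$ and $\Delta(f_\lambda)=\sum_\tau f_{\lambda/\tau}\otimes f_\tau$, then re-expressing $\langle f_{\beta/\sigma}f_{\lambda/\tau},g_\alpha\rangle$ and $\langle f_\sigma f_\tau,g_\mu\rangle$ through the dual descriptions above, should produce exactly this left-hand expression. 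The sum over $\nu$ plays the role of the insertion $\sum_\nu f_\nu\otimes g_\nu$ of a resolution of identity.

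This bookkeeping is the main obstacle: all the pairing identities and the choice of tensor ordering must match the conventions of \eqref{e:int-hopf-pairing}. If the direct expansion gets tangled, I would first establish the grouplike factorizations $(\Delta\otimes\mathrm{id})\E=\E_{13}\E_{23}$ and $(\mathrm{id}\otimes\Delta^\vee)\E=\E_{12}\E_{13}$. I would then obtain \eqref{e:int-skew-cauchy-dual} by contracting $\E_{13}\E_{23}\E_{14}\cdots$ against $f_\lambda$ and $g_\mu$ in the appropriate slots.

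\textbf{Step 3 (skew MN rule).} I would prove an orthogonality relation. Coassociativity gives $\sum_\mu g_{\eta/\mu}\otimes g_{\mu/\tau}=\Delta^\vee(g_{\eta/\tau})$. Applying $m^\vee(S^\vee\otimes\mathrm{id})$ then yields
\[
\sum_\mu S^\vee(g_{\eta/\mu})\,g_{\mu/\tau}=\varepsilon^\vee(g_{\eta/\tau})=\delta_{\eta\tau}.
\]
The last equality holds since $g_{\eta/\tau}$ is homogeneous of degree $|\eta|-|\tau|$ and $g_{\eta/\eta}=1$ by the counit axiom.

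Now multiply \eqref{e:int-skew-cauchy-dual}, for each fixed $\mu$, on the right by $1\otimes S^\vee(g_{\eta/\mu})$, and sum over $\mu$; each graded piece involves only finitely many $\mu$. On the left, commutativity of $H^\vee$ lets me move $S^\vee(g_{\eta/\mu})$ next to $g_{\mu/\tau}$. The orthogonality relation then collapses the sum over $\tau$ to $\tau=\eta$, leaving $\E\cdot(f_{\lambda/\eta}\otimes 1_{H^\vee})$. The right side becomes $\sum_{\rho,\mu}f_{\rho/\mu}\otimes S^\vee(g_{\eta/\mu})g_{\rho/\lambda}$, which is \eqref{e:int-skew-MN-dual}.
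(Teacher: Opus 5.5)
Your proposal is correct. Step~3 is essentially the paper's proof of Theorem~\ref{t:skew-MN-dual}. You multiply by $1\otimes S^\vee(g_{\eta/\mu})$ on the right rather than the left, which only changes where commutativity of $H^\vee$ is used: to reorder $g_{\mu/\tau}S^\vee(g_{\eta/\mu})$ rather than to pass through $\E$.

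Your main argument for the skew Cauchy identity is genuinely different from the paper's, and the direct bookkeeping in Step~2 does close, so the fallback is unnecessary. Expand $\langle f_\beta, g_\nu g_{\mu/\tau}\rangle=\sum_\sigma\langle f_{\beta/\sigma},g_\nu\rangle\langle f_\sigma,g_{\mu/\tau}\rangle$. The $\nu$-sum then collapses via $\sum_\nu\langle f_{\beta/\sigma},g_\nu\rangle f_\nu=f_{\beta/\sigma}$, and your dual description gives $\langle f_\sigma,g_{\mu/\tau}\rangle=\langle f_\sigma f_\tau,g_\mu\rangle$. So the left side pairs to $\sum_{\sigma,\tau}\langle f_{\beta/\sigma}f_{\lambda/\tau},g_\alpha\rangle\langle f_\sigma f_\tau,g_\mu\rangle=\langle \Delta(f_\beta)\Delta(f_\lambda),g_\alpha\otimes g_\mu\rangle=\langle f_\beta f_\lambda,g_\alpha g_\mu\rangle$. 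This matches Step~1, and no commutativity is needed, as it should be.

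The paper instead proves the grouplike identities and assembles the four-factor factorization $(\Delta\otimes\Delta^\vee)\E=\E_{13}\E_{14}\E_{23}\E_{24}$. It then applies the partial contraction $\Cont_{\mu,\lambda}$ to both sides, which is in effect your fallback.

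What each route buys:
\begin{itemize}
\item Your route is shorter and avoids leg notation and completions of triple and quadruple tensor products. It shows both sides have the same matrix coefficients $\langle f_\beta f_\lambda,g_\alpha g_\mu\rangle$, so the identity is visibly just multiplicativity of $\Delta$. It needs the easy remark that the functionals $x\otimes y\mapsto\langle x,g_\alpha\rangle\langle f_\beta,y\rangle$ separate points of $\widehat{H\otimes H^\vee}$. This holds because each total-degree component is finite-dimensional and only finitely many terms of the left side contribute to a given bidegree.
\item The paper's route keeps the Cauchy element at the center, in line with its ``completed Cauchy kernel'' viewpoint. It also yields the grouplike factorizations as reusable intermediate results.
\end{itemize}

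One small tightening in Step~3: homogeneity alone does not give $\varepsilon^\vee(g_{\eta/\tau})=0$ when $\eta\neq\tau$ but $|\eta|=|\tau|$. Apply $\varepsilon^\vee\otimes\id$ to $\Delta^\vee(g_\eta)=\sum_\tau g_{\eta/\tau}\otimes g_\tau$ and use linear independence of $(g_\tau)$, as the paper does.
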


The two identities in Theorem~\ref{t:A} have different roles.
The skew Cauchy identity is the basic identity for the completed Cauchy element, while the skew Murnaghan--Nakayama rule is obtained from it by using the antipode and the commutativity hypothesis on the dual side.
In examples, different choices of dual bases and different specializations of the auxiliary alphabet turn these formal identities into the skew Pieri, skew MN, plethystic, and character-theoretic formulas developed below.

Recall the standard Hopf structure on the ring of symmetric functions $\Lambda$ \cite{GR20}.
If we take $(H,H^\vee)=(\Lambda,\Lambda)$, then \eqref{e:int-skew-cauchy-dual} and \eqref{e:int-skew-MN-dual} become (in plethystic form)
\begin{align}
\E\;\cdot\sum_{\tau\in\P} f_{\lambda/\tau}[X]\,g_{\mu/\tau}[Y]
=\sum_{\rho\in\P} f_{\rho/\mu}[X]\, g_{\rho/\lambda}[Y]
\end{align}
and
\begin{align}
\E \cdot f_{\lambda/\eta}[X]
=\sum_{\rho,\mu\in\P} g_{\eta/\mu}[-Y]\,g_{\rho/\lambda}[Y]\,f_{\rho/\mu}[X],
\end{align}
respectively, where $\E$ is precisely the Cauchy kernel.
By choosing different dual bases $(f_{\bullet},g_{\bullet})$ of $\Lambda$ and specializing $Y$, we recover many known identities in skew form, including skew Pieri rules and skew MN rules.
For example, \eqref{e:skew-ps} corresponds to the Hall inner product and the specialization $Y=(\q-1)z$, where $\q$ is an additional parameter. See Table~\ref{tab:Mac} for the Macdonald case and Chapter \ref{s:special-cases-Lambda} for details.

\begin{table}
    \centering
    \renewcommand{\arraystretch}{2}
    \begin{tabular}{ccccc}
    \toprule
          & $e_rP_{\lambda/\eta}(X;q,t)$ & $h_rP_{\lambda/\eta}(X;q,t)$ &
          $g_rP_{\lambda/\eta}(X;q,t)$ & $g_r[(\q-1)z]P_{\lambda/\eta}(X;q,t)$\\
    \midrule
         $Y$ & $\frac{q-1}{1-t}z$ & $\frac{1-q}{1-t}z$ & $z$ & $(\q-1)z$\\
    \bottomrule
    \end{tabular}
    \caption{Various formulas for expanding products involving the skew Macdonald polynomial $P_{\la/\eta}(X;q,t)$. The first three columns correspond to skew Pieri rules, while the last column corresponds to a skew MN rule.}
    \label{tab:Mac}
\end{table}

In addition to $\Lambda$, we also consider other Hopf-dual pairs. These include the dual pair $(\mathrm{NSym},\mathrm{QSym})$ with (noncommutative/quasisymmetric) Schur bases indexed by compositions, the $k$-bounded subalgebra $\Lambda_{(k)}$ and its graded Hopf dual $\Lambda^{(k)}$ with $k$-Schur and dual $k$-Schur bases indexed by $k$-bounded partitions, and the type $C$ affine Grassmannian Hopf-dual pair $(\Gamma_{(n)},\Gamma^{(n)})$ with Schubert bases indexed by affine Grassmannian elements. In Chapter~\ref{s:more}, the formal identities for these pairs are further combined with known straight Murnaghan--Nakayama and Pieri rules to obtain explicit skew combinatorial formulas.

\subsection{Generating functions of irreducible characters}

For $m\ge1$, let $\mathscr P_{n,m}$ denote the set of $m$-multipartitions of $n$, and let
\[
\mathscr A_m:=\Bigl\{\bm\alpha=(\alpha_{i,j})_{1\le i\le m,\ j\ge1}:\ 
\alpha_{i,j}\in\mathbb Z_{\ge0},\ \#\{(i,j):\alpha_{i,j}\ne0\}<\infty\Bigr\}.
\]
For $\bm\alpha\in\mathscr A_m$, write
\[
Z^{\bm\alpha}:=\prod_{i=1}^m\prod_{j\ge1} z_{i,j}^{\,\alpha_{i,j}}.
\]
We also denote by $\P_n$ (resp.\ $\mathscr S_n$, $\O_n$) the set of all partitions (resp.\ strict partitions, odd partitions) of $n$.

As the first application of Theorem~\ref{t:A}, we obtain global generating functions for irreducible characters in these three settings.

\begin{mainthm}[Theorem~\ref{t:gene-AK}+Theorem~\ref{t:gene-HC}+Theorem~\ref{t:gene-rook}]\label{t:B}
With the notation above, we have the following generating functions.
\begin{enumerate}
    \item For the Ariki--Koike algebra $\mathscr H_{m,n}(\q,\bm u)$, let $\chi^{\bm\lambda}_{\bm\alpha}$ be the character coefficients defined in \eqref{eq:Frob-alpha}. Then
    for $N\ge1$, with
    $\mathscr G_{\bm\lambda}^{[N]}(Z;\q,\bm u):=\mathscr G_{\bm\lambda}(Z;\q,\bm u)|_{z_{i,j}=0\ {\rm for}\ j>N}$,
    \begin{align}
        \mathscr G_{\bm\lambda}^{[N]}(Z;\q,\bm u)
        =
        \sum_{\kappa\in\{0,1,\dots,m\}^{\{1,\dots,m\}\times\{1,\dots,N\}}}
        \left(\prod_{i=1}^m\prod_{j=1}^N b^{(i)}_{\kappa_{i,j}}\right)
        \prod_{r=1}^m
        s_{\lambda^{(r)}}\!\bigl[(\q-1)Y_{r,N}(\kappa)\bigr],
    \end{align}
    where the explicit coefficients $b_k^{(i)}$ and alphabets $Y_{r,N}(\kappa)$ are given in Subsection~\ref{ss:plethysm-linear-comb}.
    The full series $\mathscr G_{\bm\lambda}(Z;\q,\bm u)$ is the coefficientwise stable limit of these finite formulas.

    \item For the Hecke--Clifford algebra, one has
    \begin{align}
        \mathscr G^c_{\nu}(Z;\q)
        :=\sum_{\alpha\in\mathscr A_1}
        (\q-1)^{\ell(\alpha)}\zeta^\nu_{\alpha}(\q)\,Z^\alpha
        =
        2^{\frac{-\ell(\nu)+\delta(\nu)}{2}}\,Q_{\nu}\bigl[(\q-1)Z\bigr].
    \end{align}

    \item For the $\q$-rook monoid algebra, one has
    \begin{align}
        \mathscr G^R_{\lambda}(Z;\q)
        :=\sum_{\alpha\in\mathscr A_1}
        (\q-1)^{\ell(\alpha)}\Upsilon^\lambda_{\alpha}(\q)\,Z^\alpha
        =
        \sigma_Z[\q-1]\;s_{\lambda}\bigl[(\q-1)Z\bigr].
    \end{align}
\end{enumerate}
\end{mainthm}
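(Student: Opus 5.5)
The plan is to derive all three generating functions from the Frobenius-type character formulas for each algebra, read through the Cauchy-element formalism of Theorem~\ref{t:A} with $(H,H^\vee)=(\Lambda,\Lambda)$ (or its multi-alphabet analogue), and then specialize the auxiliary alphabet.

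\textbf{Common mechanism.} In each setting the character values are the coefficients of a Schur-type function in a family of ``class functions'' indexed by $\bm\alpha\in\mathscr A_m$. These class functions are products of one-row elements $g_r[\cdot]$ (for instance $g_r[(\q-1)z]$, Ram's $\q$-power sums). By duality with respect to the Hall pairing, summing over $\bm\alpha$ with weight $Z^{\bm\alpha}$ amounts to evaluating the dual basis element at a plethystic alphabet. Concretely, if $c_\alpha=\prod_j g_{\alpha_j}[(\q-1)z]$, then
\[
\sum_\alpha \langle s_\lambda, c_\alpha\rangle Z^\alpha
\]
is obtained by pairing $s_\lambda$ with $\prod_j \sigma_{z_j}[(\q-1)X]$. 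The Cauchy element $\E=\sigma[XY]$ then gives $s_\lambda[(\q-1)Z]$. This is the $\eta=\emptyset$ case of \eqref{e:int-skew-MN-dual} with $Y=(\q-1)Z$, iterated over the variables $z_j$ using the grouplike factorization $\E[X(Y_1+Y_2)]=\E[XY_1]\,\E[XY_2]$. The factor $(\q-1)^{\ell(\alpha)}$ in the definitions absorbs the normalization of $g_r[(\q-1)z]$.

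\textbf{Hecke--Clifford (part 2).} Here the Frobenius formula pairs the $Q_\nu$ (up to the factor $2^{(-\ell(\nu)+\delta(\nu))/2}$) against products of odd-type $\q$-class functions in $\Gamma$. I will use the dual pair $(P_\nu,Q_\nu)$ in $\Gamma$ and the Cauchy kernel $\prod\frac{1+x_iy_j}{1-x_iy_j}$. The main check is that the generating series of the class elements factorizes as $\prod_j \E_\Gamma[X\cdot(\q-1)z_j]$. Once that holds, the identity follows directly.

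\textbf{Rook monoid (part 3).} The $\q$-rook Frobenius formula (Halverson's) involves an extra factor coming from the idempotent parts, which produces $\sigma_Z[\q-1]=\prod_j\sigma_{z_j}[\q-1]$. I expect this to arise because the rook class functions are $g_r[(\q-1)z]$ times a contribution from the $\Lambda_{n-k}$ branching. Alternatively, it comes from multiplying by $\sum_k h_k$, which contributes $\E[1\cdot(\q-1)Z]$.

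\textbf{Ariki--Koike (part 1) and the main obstacle.} This is the hard case. The Frobenius formula (Shoji) expresses the class functions through the alphabets $X^{(1)},\dots,X^{(m)}$ and the parameters $\bm u$, via linear combinations $\sum_k b^{(i)}_k X^{(k)}$. I will truncate to $N$ variables per color. For each $z_{i,j}$, the factor $\E$ splits as a sum over $\kappa_{i,j}\in\{0,\dots,m\}$, obtained by expanding the linear-combination plethysm (the ``binomial'' step of Subsection~\ref{ss:plethysm-linear-comb}), with coefficient $b^{(i)}_{\kappa_{i,j}}$. The multiplicative factorization then distributes the variables into the alphabets $Y_{r,N}(\kappa)$. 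Finally, pairing with $\prod_r s_{\lambda^{(r)}}[X^{(r)}]$ yields $\prod_r s_{\lambda^{(r)}}[(\q-1)Y_{r,N}(\kappa)]$. The delicate point is verifying that the expansion of the plethystic linear combination gives exactly those $b$'s and alphabets, and that the truncations are compatible. Stability in $N$ holds because setting $z_{i,N+1}=0$ kills every term with $\kappa_{i,N+1}\neq0$; this gives the coefficientwise limit.
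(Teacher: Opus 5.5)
Your overall architecture (a kernel built from the class functions, the Frobenius expansion on one side, the Cauchy expansion on the other, then comparison of coefficients) is the paper's, and parts (2) and (3) can be completed that way. Part (1), however, has a genuine gap exactly at the step you flag as delicate, and the mechanism you propose there fails. Shoji's class functions are not plethysms at a linear combination $\sum_k b^{(i)}_kX^{(k)}$. If they were, $\sigma_z\bigl[(1-\q^{-1})\sum_k b^{(i)}_kX^{(k)}\bigr]$ would factor as a \emph{product} over $k$, with the $b^{(i)}_k$ entering as exponents. It would not split as a \emph{sum} over $\kappa_{i,j}$ weighted by $b^{(i)}_{\kappa_{i,j}}$. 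It would also couple $z_{i,j}$ to the single alphabet $X^{(k)}$, whereas the rule $z_{i,j}\in Y_{r,N}(\kappa)\iff\kappa_{i,j}\ge r$ couples it to the cumulative alphabet $X^{(1)}+\cdots+X^{(\kappa_{i,j})}$.

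The missing idea is the \emph{linear} decomposition $\widehat Q^{(i)}(z)=\sum_{k=0}^m b^{(i)}_kP_k(z)$, where $P_k(z)=\prod_{r\le k}\sigma_z\bigl[(1-\q^{-1})X^{(r)}\bigr]$. It is read off from Shoji's formula. In $\widehat q^{(i)}_a=\sum_{c\in C_{a,m}}u_{\kappa(c)}^{\,i-1}\prod_r q_{c_r}(X^{(r)};\q^{-1})$, the weight depends only on the last nonzero position $\kappa(c)$. Summing $z^{|c|}\prod_r q_{c_r}$ over $c$ supported in $\{1,\dots,k\}$ gives $P_k(z)$, so the $c$ with $\kappa(c)=k$ contribute $P_k(z)-P_{k-1}(z)$. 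Summation by parts in $1+\sum_{k\ge1}u_k^{\,i-1}(P_k-P_{k-1})$ then yields precisely the $b^{(i)}_k$. With this lemma, expanding $\prod_{i,j}\widehat Q^{(i)}(z_{i,j})$ gives the $\kappa$-sum. Your regrouping $\prod_{i,j}P_{\kappa_{i,j}}(z_{i,j})=\prod_r\sigma\bigl[(1-\q^{-1})X^{(r)}Y_{r,N}(\kappa)\bigr]$ and the Cauchy identity in each color then finish the proof. The factor $\q^{-|\lambda|}$ from $s_\lambda[(1-\q^{-1})Y]=\q^{-|\lambda|}s_\lambda[(\q-1)Y]$ matches the $\q^{-|\bm\alpha|}$ built into $\widehat q_{\bm\alpha}$.

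There are also some smaller corrections.

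\emph{Stability in part (1).} Your stability argument is false as stated. Setting $z_{i,N+1}=0$ does not kill the terms with $\kappa_{i,N+1}\neq0$; they survive with weight $b^{(i)}_{\kappa_{i,N+1}}$. For $i=1$ one even has $b^{(1)}_0=0$, so your claim would make everything vanish. Compatibility holds because $\sum_k b^{(i)}_k=\widehat Q^{(i)}(0)=1$. In any case it is automatic, since $\mathscr G^{[N]}_{\bm\lambda}$ is defined as a truncation of $\mathscr G_{\bm\lambda}$ and the formula is proved for each $N$ directly.

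\emph{Part (3).} Do not leave the extra factor at the level of ``I expect''. The Dieng--Halverson--Poladian formula uses $\hat q_\mu(X;t)=q_\mu[1+X;t]$. Hence $\sum_a \q^a\hat q_a(X;\q^{-1})z^a=\sigma_z\bigl[(\q-1)(1+X)\bigr]=\sigma_z[\q-1]\,\sigma_z\bigl[(\q-1)X\bigr]$. So $\sigma_Z[\q-1]$ is simply the Cauchy factor of the extra letter $1$ in the alphabet, which is what your ``alternatively'' was pointing to.

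\emph{Part (2).} Going directly through the $\Gamma$ Cauchy kernel is a slightly shorter route than the paper's iteration of the straight case of \eqref{e:skew-M-N-Mac} at $(q,t)=(0,-1)$. Just track $Q_\nu=2^{\ell(\nu)}P_\nu$ to turn $2^{-(\ell(\nu)+\delta(\nu))/2}$ into $2^{(-\ell(\nu)+\delta(\nu))/2}$.
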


We use the relevant Frobenius-type character formulas as input. The new point is to assemble the resulting character values into single symmetric-function generating series and to derive their refinements from the same skew-MN formalism. In the Ariki--Koike and Hecke--Clifford settings, Jing's vertex operators \cite{J91} lead further to explicit bivariate generating functions, while in the $\q$-rook case a parallel kernel argument gives an analogous two-variable refinement. These identities exhibit phenomena that are much less transparent from individual Murnaghan--Nakayama expansions, including reciprocity in the Ariki--Koike family and self-reciprocity in the Hecke--Clifford case.

The type $A$ and type $B$ Iwahori--Hecke formulas appear naturally as specializations of the Ariki--Koike case. More precisely, the type $A$ formula is recovered at $m=1$ and $u_1=1$ (see Corollary \ref{c:gene-typeA}), while the type $B$ formula arises at $m=2$ and $(u_1,u_2)=(-1,u)$ (see Corollary \ref{c:gene-typeB}). In this sense, the Ariki--Koike generating series provides the common source from which the classical Hecke-theoretic cases emerge. The $\q$-rook formula is parallel in shape, but differs by the additional factor $\sigma_Z[\q-1]$, reflecting the plethystic shift $X\mapsto 1+X$.

\subsection{The skew $(q,t)$-Kostka matrix and its inverse}
For two partitions $\rho\subset\nu$, let $\binom{\nu}{\rho}_{q,t}$ denote the generalized $(q,t)$-binomial coefficient (introduced independently by Lassalle \cite{L98} and Okounkov \cite{O97}).

As the second application of Theorem \ref{t:A}, we obtain combinatorial formulas for the skew $(q,t)$-Kostka polynomial $K_{\la/\mu,\nu}(q,t)$ and its inverse.
The generalized binomial coefficients and the Hall--Littlewood/Macdonald Pieri evaluations used below are standard inputs; the role of the skew-MN rule is to turn them into ribbon-tableau and flag expansions for the skew transition matrices and their inverses.

\begin{mainthm}[Theorem \ref{t:skew-K_iterative}+Corollary \ref{c:skew-K(t)-iterative}]\label{t:C}
We express $K_{\la/\mu,\nu}(q,t)$ in terms of generalized $(q,t)$-binomial coefficients:
\begin{align}
    K_{\la/\mu,\nu}(q,t)
    =t^{n(\nu)}\sum_{(\T,\{\nu\})}\sgn(\T)\prod_{i\ge 1}\binom{\nu^i}{\nu^{i-1}}_{q,t},
\end{align}
summed over all pairs consisting of a special ribbon tableau $\T$ of shape $\la/\mu$ and a partition flag $\{\varnothing=\nu^0\subset\nu^1\subset\cdots\subset\nu\}$ of the same type.
Consequently, at $q=0$ this yields a formula for the skew Kostka polynomial $K_{\la/\mu,\nu}(t)$:
\begin{align}
K_{\la/\mu,\nu}(t)
=\sum_{(\T,\{\nu\})}\sgn(\T)\,
t^{\sum_i n(\nu^i/\nu^{i-1})}
\prod_{i\ge 1}\prod_{j\ge 1}
\qbinomial{(\nu^i)^t_j-(\nu^{i-1})^t_{j+1}}{(\nu^{i-1})^t_j-(\nu^{i-1})^t_{j+1}}_{t},
\end{align}
summed over all such pairs $(\T,\{\nu\})$. Here $\qbinomial{\cdot}{\cdot}_{t}$ is the usual $t$-binomial coefficient.
\end{mainthm}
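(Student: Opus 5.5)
Since $K_{\la/\mu,\nu}(q,t)$ is a skew transition coefficient, I would first write it as a Hall-pairing coefficient. With the convention $s_{\la/\mu}[X]=\sum_\nu K_{\la/\mu,\nu}(q,t)\,J_\nu[X/(1-t)]$ or its dual, extracting $K_{\la/\mu,\nu}$ amounts to pairing $s_{\la/\mu}$ against the dual basis element of $J_\nu$. This gives a clean decomposition into two inputs:
\begin{itemize}
\item a combinatorial expansion of $s_{\la/\mu}$ in a power-type basis adapted to $Y$;
\item an evaluation of the Macdonald side via generalized binomial coefficients.
\end{itemize}

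For the first input I would use the skew Murnaghan--Nakayama rule of Theorem~\ref{t:A} with $(H,H^\vee)=(\Lambda,\Lambda)$ and the Schur dual bases. The specialization should be $Y=(\q-1)z$, or the analogous specialization making $g_r[Y]$ the $e$/$h$-type element. Iterating the rule, removing one ribbon at a time from $\la/\mu$, writes $s_{\la/\mu}$ as a signed sum over special ribbon tableaux $\T$ of shape $\la/\mu$:
\[
s_{\la/\mu}=\sum_{\T}\sgn(\T)\,\prod_i e_{r_i}\ \text{(or } h_{r_i}\text{)}.
\]
This is the skew analogue of the Eğecioğlu--Remmel inverse Kostka formula. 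The second term on the right of \eqref{e:int-skew-MN-dual}, involving $S^\vee(g_{\eta/\mu})$, accounts for ribbons removed from the inner shape $\mu$. I would organize things so that the sign bookkeeping collapses to $\sgn(\T)$, which is defined with both kinds of ribbons.

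For the second input, the pairing of $\prod_i e_{r_i}$ with the Macdonald dual basis element of index $\nu$ is computed by iterating the Macdonald Pieri rule. The Pieri rule is the first column of Table~\ref{tab:Mac}, again a specialization of Theorem~\ref{t:A}. Each step multiplies $J_{\nu^{i-1}}$ by a single factor and produces $J_{\nu^i}$, with coefficient expressed through the Lassalle--Okounkov binomial $\binom{\nu^i}{\nu^{i-1}}_{q,t}$. This is the standard evaluation of $e_r[X/(1-t)]$-type Pieri coefficients at the principal specialization. Collecting the chain $\varnothing=\nu^0\subset\cdots\subset\nu$ with type matching the ribbon sizes of $\T$ gives the sum over pairs $(\T,\{\nu\})$. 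The global factor $t^{n(\nu)}$ comes from the normalization between $J_\nu$ and the modified basis $H_\nu$ used to define $K$.

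The $q=0$ corollary then follows by substitution. At $q=0$ the generalized binomial $\binom{\nu^i}{\nu^{i-1}}_{0,t}$ factorizes, via the Hall--Littlewood Pieri rule for $P_\mu e_r$, into
\[
t^{n(\nu^i/\nu^{i-1})-\text{shift}}\prod_j\qbinomial{(\nu^i)^t_j-(\nu^{i-1})^t_{j+1}}{(\nu^{i-1})^t_j-(\nu^{i-1})^t_{j+1}}_t .
\]
The $t$-powers then combine with $t^{n(\nu)}$ to give $t^{\sum_i n(\nu^i/\nu^{i-1})}$.

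The main obstacle is the second input: identifying exactly which normalization of the Pieri coefficients turns into $\binom{\nu^i}{\nu^{i-1}}_{q,t}$, with no stray $(q,t)$-factors, so that the product telescopes to the stated $t^{n(\nu)}$ prefactor. I would first try to match the Lassalle--Okounkov definition via the binomial formula for $J_\nu[1+X]$ before attempting any other normalization.
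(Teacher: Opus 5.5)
Your skeleton is the right one: an SRT expansion, a pairing, a chain of Pieri-type steps, and a telescoping $t$-power. But the Macdonald-side step, which is the heart of the proof, is misidentified and left unresolved.

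First fix the convention. The definition is $J_\nu=\sum_\la K_{\la,\nu}S_\la$ with $S_\la=s_\la[(1-t)X]$. Hence $K_{\la/\mu,\nu}=\langle\Phi_{\la/\mu},J_\nu\rangle_{q,t}$ with $\Phi_{\la/\mu}=s_{\la/\mu}[X/(1-q)]$; your displayed expansion of $s_{\la/\mu}$ is the inverse transition. Substituting $X\mapsto X/(1-q)$ into the SRT expansion gives $\Phi_{\la/\mu}=\sum_\T\sgn(\T)\,d_{\mathrm{type}(\T)}$ with $d_r=h_r[X/(1-q)]$. So what must be iterated is the adjoint $d_r^{\perp_{q,t}}$ acting on $J_\nu$.

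It is not the $e_r$-Pieri rule from the first column of Table~\ref{tab:Mac}, and that rule cannot work. Its coefficients vanish unless $\nu/\rho$ is a vertical strip. Yet $K_{(2),(2)}(q,t)=1$ comes entirely from the flag $\varnothing\subset(2)$, with weight $\binom{(2)}{\varnothing}_{q,t}=1$.

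Two ingredients are missing.
\begin{enumerate}
\item[(i)] $d_k^{\perp_{q,t}}J_\nu=\sum_{\rho\subset_k\nu}\frac{c'_\nu}{c'_\rho}Q_{\nu/\rho}[\frac{1}{1-t};q,t]\,J_\rho$. This follows from the straight dual rule \eqref{e:straight-dual-Lambda} with $(f_\la,g_\la)=(P_\la,Q_\la)$, because $\sum_n d_n^{\perp_{q,t}}z^{-n}=\exp(\sum_n\frac{z^{-n}}{1-t^n}\frac{\partial}{\partial p_n})$ is translation by $Y=\frac{1}{(1-t)z}$.
\item[(ii)] The evaluation $\frac{c'_\nu}{c'_\rho}Q_{\nu/\rho}[\frac{1}{1-t};q,t]=t^{n(\nu)-n(\rho)}\binom{\nu}{\rho}_{q,t}$ from \cite[Eq.~(6.15)]{JL24}. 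This is exactly your ``main obstacle,'' and you leave it open.
\end{enumerate}
The prefactor $t^{n(\nu)}$ is then just the telescoped product of the factors $t^{n(\nu^i)-n(\nu^{i-1})}$; no modified basis $H_\nu$ is involved.

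Likewise, at $q=0$ the Hall--Littlewood $e_r$-Pieri rule is the wrong input. Its coefficients $\prod_j\qbinomial{\nu^t_j-\nu^t_{j+1}}{\nu^t_j-\rho^t_j}_t$ are a different product and are supported only on vertical strips. What you need instead is $c'_\nu(0,t)=1$ together with the principal specialization $Q_{\nu/\rho}[\frac{1}{1-t};0,t]=t^{n(\nu/\rho)}\prod_j\qbinomial{\nu^t_j-\rho^t_{j+1}}{\rho^t_j-\rho^t_{j+1}}_t$.

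Your derivation of the first input also does not go through. Theorem~\ref{t:A} writes a Cauchy-kernel coefficient times $s_{\la/\eta}$ as a combination of $s_{\rho/\mu}$ with $\rho\supseteq\la$ and $\mu\subseteq\eta$. Iterating it yields Kostka-type expansions of $h_\tau s_{\la/\eta}$ in skew Schur functions, which is the inverse of what you need; it never writes $s_{\la/\mu}$ as a combination of products of $h$'s.

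The SRT expansion \eqref{e:decom-skew-s} comes instead from expanding the skew Jacobi--Trudi determinant along its last column, as in \eqref{e:JT-expand}. The cofactor of $h_{\la_i-\mu_m-i+m}$ is the skew Schur function of the shape left after removing the special ribbon of height $m-i$ that starts at the south-west-most box (E\u{g}ecio\u{g}lu--Remmel). All ribbons are cut from that end, so $\sgn(\T)$ involves only them. The $S^\vee(g_{\eta/\mu})$ term plays no role, and the expansion is in $h$'s, not $e$'s.
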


We refer the reader to Subsection \ref{ss:decomp-Schur} for the definitions of a special ribbon tableau $\T$, its type ${\rm type}(\T)$, and the sign $\sgn(\T)$.
For the skew inverse $(q,t)$-Kostka coefficient $\K_{\la/\mu,\nu}(q,t)$ we have:
\begin{mainthm}[Theorem \ref{t:skew-inv-K_iterative}+Corollary \ref{t:skew-inv-K(t)-iterative}]\label{t:D} 
We have
    \begin{align}
        \K_{\la/\mu,\nu}(q,t)=\frac{1}{c_{\nu}(q,t)}\sum_{(\T,\{\nu\})}\sgn(\T)\prod_{i}Q_{\nu^i/\nu^{i-1}}[1-q;q,t]
    \end{align}
    summed over all pairs consisting of a special ribbon tableau $\T$ of shape $\la/\mu$ and a partition flag $\{\varnothing=\nu^0\subset\nu^1\subset\cdots\subset\nu\}$ with the same type.
    Consequently, at $q=0$ we obtain
     \begin{align}\label{e:int-inv-t-Kostka}
        \K_{\la/\mu,\nu}(t)=\sum_{(\T,T)}\sgn(\T)\psi_{T}(t),
    \end{align}
    summed over all pairs consisting of a special ribbon tableau $\T$ of shape $\la/\mu$ and a semistandard Young tableau (SSYT) $T$ of shape $\nu$ such that ${\rm type}(\T)=\wt(T)$. Here $\psi_T(t)$ is the Pieri-rule coefficient for the Hall--Littlewood polynomial defined in \cite[Ch.\ III (5.9$^{'}$)]{Mac}.
\end{mainthm}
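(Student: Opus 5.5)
The plan is to treat Theorem~\ref{t:D} exactly as Theorem~\ref{t:C} was treated, but with the roles of the Macdonald bases exchanged. The inverse skew Kostka coefficient is the coefficient in the expansion of a skew Schur function in the integral Macdonald basis:
\[
s_{\la/\mu}[X]=\sum_\nu \K_{\la/\mu,\nu}(q,t)\,J_\nu(X;q,t).
\]
The approach has three steps.

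\textbf{Step 1 (expand the skew Schur function into ribbon terms).} First expand $s_{\la/\mu}$ via the special ribbon tableau decomposition of Subsection~\ref{ss:decomp-Schur}:
\[
s_{\la/\mu}=\sum_{\T}\sgn(\T)\,h_{\mathrm{type}(\T)}.
\]
This is obtained by iterating the skew MN rule of Theorem~\ref{t:A} for $(\Lambda,\Lambda)$ with the Hall pairing, at the specialization that produces $h_r$ (equivalently by Jacobi--Trudi). Each term is a product $h_{\alpha_1}h_{\alpha_2}\cdots$ indexed by the type of $\T$.

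\textbf{Step 2 (expand each product of $h$'s in the $J$ basis).} Expand each $h_{\alpha_1}h_{\alpha_2}\cdots$ in the basis $J_\nu$ by repeatedly applying the Macdonald Pieri rule. The cleanest route is the skew Cauchy identity \eqref{e:int-skew-cauchy-dual} with dual bases $f_\nu=P_\nu$, $g_\nu=Q_\nu$, specialized at $Y$ a single variable. Taking $Y=\frac{1-q}{1-t}z$ from Table~\ref{tab:Mac}, one has $h_r[X]=g_r[X(1-t)/(1-q)]$ up to normalization. The plethystic Cauchy kernel evaluation gives
\[
h_r[X]\,P_{\rho}=\sum_{\nu\supset\rho}Q_{\nu/\rho}[1-q;q,t]\,P_\nu ,
\]
since specializing the second alphabet to the one-letter alphabet ``$1-q$'' in $Q$ corresponds to $h_r=\sum_{|\nu|=r} Q_\nu[1-q]P_\nu$. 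This single-letter evaluation is the key input. Iterating it along $\varnothing=\nu^0\subset\nu^1\subset\cdots$ with $|\nu^i/\nu^{i-1}|=\alpha_i$ yields
\[
h_\alpha=\sum_{\{\nu\}}\prod_i Q_{\nu^i/\nu^{i-1}}[1-q;q,t]\,P_\nu .
\]

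\textbf{Step 3 (convert to $J_\nu$ and specialize).} Convert using $P_\nu=J_\nu/c_\nu(q,t)$, which produces the factor $1/c_\nu$, and sum over $\T$ to obtain the first formula. For $q=0$, note that $c_\nu(0,t)=1$ and $J_\nu(X;0,t)=P_\nu(X;t)$, while $Q_{\nu^i/\nu^{i-1}}[1;0,t]$ is the single-row Hall--Littlewood coefficient. Each factor $Q_{\nu^i/\nu^{i-1}}[1]$ vanishes unless $\nu^i/\nu^{i-1}$ is a horizontal strip, and then it equals $\varphi$ or $\psi$ from \cite[Ch.~III (5.8)--(5.8$'$)]{Mac}. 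Flags of horizontal strips are exactly SSYT $T$ of shape $\nu$ with $\wt(T)=\mathrm{type}(\T)$, and the product of the factors is $\psi_T(t)$, which gives \eqref{e:int-inv-t-Kostka}.

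The main obstacle is the bookkeeping of normalizations, which enters in two places. The first is matching the one-letter evaluation to the correct family: checking that it yields $Q[1-q]$ multiplying $P$, rather than $P[\ldots]$ multiplying $Q$. The second is confirming that at $q=0$ the coefficient is $\psi$ rather than $\varphi$, that is, the Pieri coefficient in $q_r P_\mu=\sum\varphi P_\la$ versus $P_\mu h_r$. I would settle both by comparing with the small case $\la=(r)$, $\mu=\varnothing$, where $s_{(r)}=h_r$ and the formula must reduce to $h_r=\sum_\nu Q_\nu[1-q]P_\nu$.
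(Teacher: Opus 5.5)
Your architecture (SRT expansion, iterated one-letter Pieri along a flag, then $P_\nu=J_\nu/c_\nu$) is the paper's computation in adjoint form. The paper pairs against $Q_\nu$ and uses $q_k^{\perp_{q,t}}Q_\nu=\sum_{\rho\subset_k\nu}Q_{\nu/\rho}[1-q;q,t]Q_\rho$ (Proposition~\ref{p:q*Q}). But you start from the wrong object, and your Step~2 identity is false.

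\textbf{The definition.} $\K_{\la/\mu,\nu}(q,t)$ is defined by expanding the \emph{big} Schur function $S_{\la/\mu}(X;t)=s_{\la/\mu}[(1-t)X]$ in the basis $J_\nu$, not $s_{\la/\mu}$. With your definition the theorem already fails at $\la=\nu=(1)$, $\mu=\varnothing$: $s_{(1)}=p_1=J_{(1)}/(1-t)$, whereas the formula gives $Q_{(1)}[1-q;q,t]/c_{(1)}=(1-t)/(1-t)=1$.

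\textbf{The Pieri identity.} Your claim $h_rP_\rho=\sum_\nu Q_{\nu/\rho}[1-q;q,t]P_\nu$ fails at $r=1$, $\rho=\varnothing$, where the right side is $(1-t)p_1$. The choice $Y=\frac{1-q}{1-t}z$ from Table~\ref{tab:Mac} gives the coefficients $Q_{\nu/\rho}[\frac{1-q}{1-t};q,t]$ for $h_r$. The factor $Q_{\nu/\rho}[1-q;q,t]$ comes from $Y=(1-q)z$. There the kernel is $\exp\bigl(\sum_n\frac{1-t^n}{n}p_nz^n\bigr)=\sum_r q_r(X;t)z^r$, so the correct statement is $q_r(X;t)P_\rho=\sum_\nu Q_{\nu/\rho}[1-q;q,t]P_\nu$.

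\textbf{The fix.} Step~1 must be $S_{\la/\mu}(X;t)=\sum_{\T}\sgn(\T)\,q_{\mathrm{type}(\T)}(X;t)$, which is \eqref{e:decom-S2}, i.e.\ \eqref{e:decom-skew-s} after $X\mapsto(1-t)X$. Your two errors cancel in the displayed formula, but the argument as written does not prove it. Your own proposed check at $\la=(r)$ would have exposed this, since $S_{(r)}(X;t)=q_r(X;t)$, not $h_r$.

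\textbf{The $q=0$ step.} This also rests on false normalizations. First, $c_\nu(0,t)=\prod_{s\in\nu,\ a(s)=0}(1-t^{l(s)+1})=b_\nu(t)$, not $1$; for example $c_{(1)}(0,t)=1-t$. Second, $J_\nu(X;0,t)=Q_\nu(X;t)$, not $P_\nu(X;t)$.

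The one-letter factors are $Q_{\nu^i/\nu^{i-1}}[1;t]=\varphi_{\nu^i/\nu^{i-1}}(t)$. Their product over a flag is therefore $\varphi_T(t)$, nonzero exactly when the flag is an SSYT. With your normalization $c_\nu(0,t)=1$ you would obtain $\varphi_T$, not $\psi_T$.

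The missing ingredient is the relation $\varphi_{\nu/\rho}(t)\,b_\rho(t)=b_\nu(t)\,\psi_{\nu/\rho}(t)$. It telescopes along the flag to $\varphi_T(t)/b_\nu(t)=\psi_T(t)$, as in \eqref{e:def-psi}. This is exactly where the prefactor $1/c_\nu(0,t)=1/b_\nu(t)$ is used.
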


Equation \eqref{e:int-inv-t-Kostka} recovers the well-known formula for inverse Kostka numbers \cite{ER90}. Beyond this recovery, the combinatorial expansions obtained here make it possible to access finer properties of skew $(q,t)$-Kostka matrices and their inverses.  As an illustration, we establish the strict upper unitriangularity of the inverse $t$-Kostka matrix, and we also provide a combinatorial proof of the formula for $\K_{(1^n),\nu}(t)$, answering a question posed by Carbonara \cite{Car98}.

\subsection{Expansion of modular Schur functions and Walker's conjecture}
Let $p_n[h_k]$ denote the plethysm of the power-sum symmetric function $p_n$ and the complete symmetric function $h_k$.
The Petrie symmetric function $G(k,m)$ is defined in \eqref{e:defG}, and the skew modular Schur function $\mathfrak s^{(k)}_{\la/\mu}$ is defined from $G(k,m)$ via the Jacobi--Trudi determinant.
Here we use the theory of Petrie and modular Schur functions, together with root-of-unity specializations of symmetric functions.
The new part is to derive skew plethystic and Petrie-type rules from Theorem~\ref{t:A}, turn them into Schur expansion formulas for skew modular Schur functions, and extract the Walker-conjecture consequence from the resulting plethystic description.

\begin{mainthm}[Theorem \ref{t:skew-PMN}+Theorem \ref{t:Pet}+Theorem \ref{t:combin-m}]\label{t:E} 
Specializing the alphabet $Y$ in Theorem \ref{t:A} to a sum of powers of a primitive root of unity, we have the following combinatorial formulas:
\begin{enumerate}
    \item The skew plethystic MN rule is obtained as follows:
\begin{equation}\label{e:int-skew-PMN}
p_n[h_k]\,s_{\la/\eta}
=\sum_{\rho,\mu}(-1)^{|\eta/\mu|}\,\sgn(\rho/\la)\,\sgn(\eta^{t}/\mu^{t})\,s_{\rho/\mu},
\end{equation}
where the sum ranges over all partitions $\rho,\mu$ such that both $\rho/\la$ and $\eta^{t}/\mu^{t}$ are horizontal $n$-ribbons and the sum of their weights is $k$. The notions of horizontal $n$-ribbons and their weights are referred to in Subsection \ref{ss:skew-PMN}.
\item The skew Pieri-like rule for the Petrie symmetric function is as follows:
   \begin{align}\label{e:int-skew-Pieri-like}
       G(k,m)(X)s_{\la/\eta}(X)=\sum_{\rho,\mu}(-1)^{|\eta/\mu|}\sgn^{'}(\rho/\la)\sgn^{'}(\eta^t/\mu^t)s_{\rho/\mu}(X),
   \end{align}
   where the sum ranges over all partitions $\rho$ and $\mu$ such that $|\rho/\la|+|\eta/\mu|=m$ and both $\rho/\la$ and $\eta^t/\mu^t$ are good proper $k$-ribbons with $\rho_i-\la_i<k$ and $\eta^t_i-\mu^t_i<k$ for all $i$.
\item We have the combinatorial decomposition formula for the modular Schur function $\mathfrak s^{(k)}_{\la/\mu}$ in terms of Schur functions:
    \begin{align}\label{e:int-decomp-modular-schur}
    \mathfrak s^{(k)}_{\la/\mu}
      =\sum_{\nu}\sum_{(\T,\mathscr T)}\sgn(\T)\,\sgn^{'}(\mathscr T)\, s_\nu,
    \end{align}
    where the inner summation runs over all pairs consisting of a special ribbon tableau $\T$ of shape $\la/\mu$ and a good proper $k$-ribbon tableau $\mathscr T$ of shape $\nu$ having the same type. The notions of good proper $k$-ribbons (and their tableaux) and their weights are recalled in Subsection \ref{ss:skew-Pieri-like}.
    \end{enumerate}
\end{mainthm}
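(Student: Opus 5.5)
The plan is to derive all three parts from the plethystic form of Theorem~\ref{t:A} on $(\Lambda,\Lambda)$ with the Schur basis $f_\la=g_\la=s_\la$:
\[
\sigma[XY]\,s_{\la/\eta}[X]=\sum_{\rho,\mu}s_{\eta/\mu}[-Y]\,s_{\rho/\la}[Y]\,s_{\rho/\mu}[X].
\]
We specialize $Y$ to a root-of-unity alphabet and then read off the coefficient of a suitable power of an auxiliary variable $z$.

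\textbf{Part (1).} Take $Y=z\,\Omega_n$, where $\Omega_n=1+\omega_n+\cdots+\omega_n^{n-1}$ is read as an alphabet of $n$ letters. Then $p_r[Y]=z^r\sum_j\omega_n^{rj}$, which equals $n z^r$ if $n\mid r$ and $0$ otherwise. Hence $\sigma[XY]=\sum_k z^{nk}\,h_k[p_n[X]]$, and $h_k[p_n]=p_n[h_k]$, so the left-hand coefficient of $z^{nk}$ is $p_n[h_k]\,s_{\la/\eta}$.

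On the right we need $s_{\rho/\la}[z\Omega_n]$ and $s_{\eta/\mu}[-z\Omega_n]=(-1)^{|\eta/\mu|}z^{|\eta/\mu|}s_{\eta^t/\mu^t}[\Omega_n]$. The key input is the classical evaluation of a skew Schur function at the $n$-th roots of unity. It vanishes unless the skew shape is a horizontal $n$-ribbon strip. In that case it equals $\sgn$ of the strip, arising from the $n$-quotient/$n$-core (Littlewood) description of $s_{\rho/\la}$ at $p_r\mapsto n\delta_{n\mid r}$. I would prove this by writing $s_{\rho/\la}=\det(h_{\rho_i-\la_j-i+j})$ and using $h_m[\Omega_n]=\delta_{n\mid m}$. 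The resulting determinant is a signed count that reduces, via the abacus, to $\sgn$ times a weight-zero condition. Collecting total $z$-degree $nk$ then gives the stated condition on the sum of the weights.

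\textbf{Part (2).} This is the same argument with $Y=z\,(1+\omega_k+\cdots+\omega_k^{k-1})$ replaced by a specialization realizing $G(k,m)$. Since $G(k,\cdot)$ is generated by $\sum_m G(k,m)u^m=\prod_i(1+x_iu+\cdots+x_i^{k-1}u^{k-1})=\sigma[X\,u(1+\cdots)]$ with the truncated geometric factor, I would use the plethystic identity
\[
\sum_m G(k,m)\,u^m=\sigma\Bigl[X\,\tfrac{1-u^k}{1-u}\Bigr]\quad\text{(with }u^k\text{ acting as a negative letter)},
\]
i.e.\ $Y=(1-u^k)/(1-u)$ in $\lambda$-ring notation. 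Then $s_{\rho/\la}[Y]$ is computed from the Jacobi--Trudi matrix with entries $h_m[Y]$. These entries are $1$ for $0\le m<k$ and otherwise $0$, except for the sign contributions. Evaluating these determinants combinatorially yields $\sgn'$ over good proper $k$-ribbons with the row-length bound $<k$. Extracting the coefficient of $u^m$ gives \eqref{e:int-skew-Pieri-like}.

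\textbf{Part (3).} Expand $\mathfrak s^{(k)}_{\la/\mu}$ by iterating part (2) in the form of a Jacobi--Trudi/special-ribbon-tableau expansion. First write $s_{\la/\mu}$ in the power-/$h$-type basis through special ribbon tableaux $\T$ with $\sgn(\T)$ (Subsection~\ref{ss:decomp-Schur}). Then substitute $h\mapsto G(k,\cdot)$, expand the resulting products of $G(k,m)$'s by repeatedly applying \eqref{e:int-skew-Pieri-like} with $\eta=\varnothing$, and read the successive strips as a good proper $k$-ribbon tableau $\mathscr T$ of matching type.

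\textbf{Main obstacle.} The main obstacle is the root-of-unity evaluation of skew Schur functions, namely identifying the determinant exactly with the sign $\sgn$ or $\sgn'$ of the strip. I would attack it through the abacus or $n$-quotient model, reducing the skew determinant to a product over runners.
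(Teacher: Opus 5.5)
Your overall plan follows the paper: specialize the Schur case of Theorem~\ref{t:A} at a root-of-unity alphabet, extract a $z$-coefficient, use $s_{\eta/\mu}[-Y]=(-1)^{|\eta/\mu|}s_{\eta^t/\mu^t}[Y]$, and for part~(3) push \eqref{e:decom-skew-s} through $X\mapsto(1-\Omega_k)X$ and iterate the straight Pieri-like rule. That last iteration is equivalent to the paper's adjoint version, Lemma~\ref{l:G*s}.

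\textbf{Problem 1: the alphabet in part~(2) is wrong.} In $\lambda$-ring notation $p_r\bigl[\tfrac{1-u^k}{1-u}\bigr]=\tfrac{1-u^{kr}}{1-u^r}$, so
\[
\sigma\bigl[X\tfrac{1-u^k}{1-u}\bigr]=\prod_i\prod_{j=0}^{k-1}(1-x_iu^j)^{-1}.
\]
For $k=2$ this is $\prod_i(1-x_i)^{-1}(1-x_iu)^{-1}$, not $\sum_m e_mu^m=\prod_i(1+x_iu)$. A factor $1-(x_iu)^k$ cannot come from a single negative letter. Since $\frac{1-(x_iu)^k}{1-x_iu}=\prod_{j=1}^{k-1}(1-\omega_k^jx_iu)$, the correct alphabet is $Y=u(1-\Omega_k)$, with $p_r[Y]=u^r(1-k\,\delta_{k\mid r})$. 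This is why the statement uses a root-of-unity alphabet in part~(2) as well. The expression $\frac{1-u^k}{1-u}$ you wrote is $\sigma_u[1-\Omega_k]$, the generating series of the numbers $h_m[1-\Omega_k]$. These numbers are exactly $1$ for $0\le m<k$ and $0$ otherwise; there are no extra ``sign contributions''.

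\textbf{Problem 2 (more serious): the key evaluations are never established.} All the combinatorics sits in the values $s_{\rho/\la}[\Omega_n]$ and $s_{\rho/\la}[1-\Omega_k]$.
\begin{itemize}
\item For~(1), your abacus plan can work. With $a_i=\rho_i-i$ and $b_j=\la_j-j$, the determinant of $\bigl(\delta_{n\mid a_i-b_j}\,\delta_{a_i\ge b_j}\bigr)$ vanishes unless the $n$-cores agree, and otherwise factors over residue classes into one-variable horizontal-strip indicators. You would still have to show two things: that the sign of the sorting permutation equals $\prod_i(-1)^{\rht(\tau^{(i)}/\tau^{(i-1)})}$, and that the quotient condition matches the ``ending box topmost in its column'' definition.
\item For~(2), ``evaluating these determinants combinatorially yields $\sgn'$'' is not an argument. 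The cancellations that produce goodness and the bound $\rho_i-\la_i<k$ are exactly the content of the Wu et al.\ result recorded in \eqref{e:combin-sgn'}. Your part~(3) inherits this gap, since it iterates the same rule.
\end{itemize}

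\textbf{The missing idea.} No determinant needs to be evaluated. Set $\eta=\varnothing$ in the same specialized identity (Corollary~\ref{c:straight-MN-dual}). This gives
\[
p_n[h_k]\,s_\la=\sum_\rho s_{\rho/\la}[\Omega_n]\,s_\rho
\quad\text{and}\quad
G(k,m)\,s_\la=\sum_\rho s_{\rho/\la}[1-\Omega_k]\,s_\rho .
\]
Comparing with the known straight rules \eqref{e:PMN} and \eqref{e:combin-sgn'} reads off both specializations (Lemmas~\ref{l:spec-omega} and~\ref{l:1-Theta}). The skew formulas then follow at once from the $Y$-specialized skew identity.
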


Equation \eqref{e:int-skew-PMN} is obtained by applying Theorem~\ref{t:A} at the specialization
$Y=1+\omega_n+\cdots+\omega_n^{n-1}$, while
\eqref{e:int-skew-Pieri-like} and \eqref{e:int-decomp-modular-schur} are obtained from the specialization
$Y=1-(1+\omega_k+\cdots+\omega_k^{k-1})$.
Here $\omega_i$ denotes a primitive $i$-th root of unity.

Transition matrices from modular Schur functions to Schur functions were studied in \cite{Wal94}, motivated by determining the dimensions of weight spaces in modular representation theory of general linear groups.
Several conjectures concerning these matrices were proposed in that context.
The formulas above also fit naturally with questions raised in the literature.
For example, the authors of \cite{CCE+23} decomposed the modular Schur function labeled by one row in terms of Schur functions and asked for combinatorial decomposition formulas for arbitrary shapes; \eqref{e:int-decomp-modular-schur} provides such an answer.

Finally, the same plethystic viewpoint leads to a criterion for when the modular Schur function
$\mathfrak s^{(k)}_\lambda$ collapses to the ordinary Schur function.
When $k$ is an odd prime, we confirm Walker's conjecture from \cite[Conjecture 4.7]{Wal94}.

\begin{mainthm}[Theorem \ref{thm:conj47}]\label{t:F}
Assume that $k$ is an odd prime. If
\[
m_{\lambda,\nu}=\delta_{\lambda,\nu}
\qquad
\text{for all }\nu\vdash |\lambda|,
\]
equivalently if $\mathfrak s^{(k)}_\lambda=s_\lambda$, then $\lambda$ is a $k$-core.
\end{mainthm}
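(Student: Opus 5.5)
We argue by contraposition: assuming $\lambda$ is not a $k$-core, we exhibit some $\nu\ne\lambda$ with $m_{\lambda,\nu}\neq 0$. The starting point is the plethystic description of the modular Schur function. The specialization $Y=1-\Omega_k$ used for \eqref{e:int-skew-Pieri-like} and \eqref{e:int-decomp-modular-schur} expresses $\mathfrak s^{(k)}_\lambda$ in terms of $s_\lambda$ and the operator of multiplication by $p_k$. Concretely, we expect an identity of the shape
\[
\mathfrak s^{(k)}_{\lambda}(X)=\sum_{\mu}s_{\lambda/\mu}[1-\Omega_k]\,s_\mu(X),
\qquad
G(k,m)=h_m[X(1-\Omega_k)]\text{-type kernel}.
\]
Here the factor $[1-\Omega_k]$ is, up to sign and plethystic rescaling, evaluation of power sums at the $k$-th roots of unity: $p_r[\Omega_k]=k$ if $k\mid r$ and $0$ otherwise. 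Hence $s_{\lambda/\mu}[1-\Omega_k]$ vanishes unless $\lambda/\mu$ is tiled compatibly with $k$-ribbons plus a horizontal/vertical correction. Combining this with the combinatorial formula \eqref{e:int-decomp-modular-schur}, $m_{\lambda,\nu}$ becomes a signed count of pairs $(\T,\mathscr T)$ consisting of a special ribbon tableau of shape $\lambda$ and a good proper $k$-ribbon tableau of shape $\nu$ of the same type.

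The key step is to isolate a distinguished contribution. Suppose $\lambda$ is not a $k$-core, so it contains a removable $k$-ribbon. We will choose the extremal one: the $k$-ribbon $R$ whose removal gives $\lambda^-=\lambda\setminus R$, taken with the ribbon in the lowest possible position. We then consider $\nu$ obtained by reinserting a $k$-ribbon in the "good proper" extremal position. The natural candidate is the unique partition $\nu$ such that $\nu/\lambda^-$ is a horizontal good proper $k$-ribbon of the form dictated by the Petrie term $G(k,\cdot)$ with largest dominance, chosen with $\nu\neq\lambda$.

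We will show that $m_{\lambda,\nu}$ is, up to sign, a single nonzero term. Equivalently, we argue through the dominance order: all pairs $(\T,\mathscr T)$ contributing to $\nu$ are forced, and they reduce to the one obtained from $R$. If cancellation is hard to rule out, an alternative is to compare coefficients modulo $k$. Using $p_r[\Omega_k]\in\{0,k\}$, we get $\mathfrak s^{(k)}_\lambda\equiv s_\lambda$ modulo contributions involving $p_k$. The identity $\mathfrak s^{(k)}_\lambda=s_\lambda$ would then force $s_{\lambda/\mu}[\text{root-of-unity alphabet}]$ to vanish for every $\mu\ne\lambda$ of size $|\lambda|-k$. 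Specializing at $\mu=\lambda^-$, this value is $\pm1$ times a $k$-quotient character value, computed by the skew MN rule \eqref{e:skew-ps} with one ribbon. That value is nonzero precisely when $\lambda/\mu$ is a $k$-ribbon, giving the contradiction.

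The primality of $k$ is used to control the plethysm $p_n[h_k]$ and the root-of-unity evaluations. For $k=p$ prime, $\Omega_p$ evaluated on $s_{\lambda/\mu}$ with $|\lambda/\mu|=p$ equals $\pm1$ or $0$ by the $p$-core/quotient theory. It also prevents contributions from divisors of $k$ from producing additional cancelling terms of the same weight; oddness rules out the sign degeneracy $\omega=-1$.

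The main obstacle will be proving non-cancellation: the coefficient extracted at the chosen $\nu$ (or at $\mu=\lambda^-$) must receive exactly one contribution, or contributions of a common sign. We would first attempt a leading-term argument in dominance order, taking the lexicographically maximal removable $k$-ribbon. If this fails, we would fall back on the degree-$k$ component in the $p_k$-expansion described above, where primality makes the relevant coefficient a single ribbon sign.
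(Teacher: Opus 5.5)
There is a genuine gap. The non-cancellation you defer to the end is the entire content of the contrapositive, and neither of your routes establishes it.

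\begin{itemize}
\item \textbf{The starting identity is incorrect.} $\mathfrak s^{(k)}_\lambda=s_\lambda[(1-\Omega_k)X]$ is a plethysm by a \emph{product} alphabet. It does not expand as $\sum_\mu s_{\lambda/\mu}[1-\Omega_k]\,s_\mu(X)$: already for $\lambda=(1)$ this gives $s_1+1\neq G(k,1)$. The expansion with coefficients $s_{\rho/\mu}[1-\Omega_k]$ describes $G(k,m)\,s_\mu$, not $\mathfrak s^{(k)}_\lambda$.
\item \textbf{The main line is a heuristic, not an argument.} Nothing shows that the signed sum over pairs $(\T,\mathscr T)$ at your chosen $\nu$ collapses to a single term. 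The extremal-ribbon and dominance considerations are asserted, not proved.
\item \textbf{The fallback skips the key separation step.} The claim that $\mathfrak s^{(k)}_\lambda=s_\lambda$ ``forces'' the contributions with $|\lambda/\mu|=k$ to vanish is exactly what needs proof. In $s_\lambda[X-\Omega_kX]=\sum_\mu s_\mu(X)\,s_{\lambda/\mu}[-\Omega_kX]$ the summands are not linearly independent, because $s_\mu(X)$ itself involves $p_k,p_{2k},\dots$. So the block $|\lambda/\mu|=k$ cannot be isolated from the blocks $|\lambda/\mu|\ge 2k$ by inspection.
\item \textbf{Primality is misplaced.} The one-ribbon coefficient is a single sign for every $k$, by the Murnaghan--Nakayama rule.
\end{itemize}

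The missing idea is to compare coefficients in the power-sum basis, which diagonalizes the substitution. One has $p_\mu[(1-\Omega_k)X]=(1-k)^{a_k(\mu)}p_\mu$, where $a_k(\mu)$ is the number of parts of $\mu$ divisible by $k$. Since $(1-k)^a\neq 1$ for $a\ge 1$ when $k\ge 3$ (this is where oddness, i.e.\ $k\neq 2$, enters), $\mathfrak s^{(k)}_\lambda=s_\lambda$ forces every $p_\mu$ with $a_k(\mu)>0$ to have coefficient $0$ in $s_\lambda$.

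The paper reads this as $\epsilon^\lambda$ vanishing on all $k$-singular classes. It then deduces defect zero by restricting to a Sylow $k$-subgroup, and invokes the hook-length characterization of defect-zero characters; this is where primality is used.

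Your one-ribbon idea actually finishes more directly from the same point:
\begin{itemize}
\item The vanishing of those coefficients means $s_\lambda\in\mathbb Q[p_r: k\nmid r]$, so $p_k^{\perp}s_\lambda=k\,\partial s_\lambda/\partial p_k=0$, with $\perp$ the Hall adjoint.
\item The Murnaghan--Nakayama rule gives $p_k^{\perp}s_\lambda=\sum_\mu(-1)^{\rht(\lambda/\mu)}s_\mu$, summed over all $\mu$ with $\lambda/\mu$ a $k$-ribbon.
\item This is a signed sum of distinct basis elements, hence nonzero unless $\lambda$ is a $k$-core.
\end{itemize}
That version needs neither primality nor the SRT/GPRT expansion.
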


The proof compares the Frobenius character expansion of $s_\lambda\bigl[(1-\Omega_k)X\bigr]$ with that of $s_\lambda(X)$.
This forces the ordinary character $\epsilon^\lambda$ to vanish on all $k$-singular conjugacy classes of the symmetric group.
A defect-zero argument then shows that no hook length of $\lambda$ is divisible by $k$.
In this way, the plethystic description of modular Schur functions feeds directly back into the classical representation theory of $\mathfrak S_d$.

\section{Organization}
The paper is organized as follows.

Chapter~\ref{s:Pre} reviews the combinatorial models, the ring of symmetric functions, plethysm, and Hopf algebras that will be used throughout.
The main abstract results are proved in Chapter~\ref{s:general-rule}, where we introduce the completed Cauchy element, establish its factorization properties, and derive the skew Cauchy identity and the general skew Murnaghan--Nakayama rule for Hopf dual pairs.

Chapter~\ref{s:special-cases-Lambda} specializes this formalism to the symmetric-function setting and provides the language used in the later applications.
In particular, it recovers classical skew Pieri and skew Murnaghan--Nakayama type identities as concrete instances of the general theory.
Chapter~\ref{s:more} records further examples beyond the classical setting, including the dual pair $(\mathrm{NSym},\mathrm{QSym})$, the $k$-bounded/dual $k$-Schur setting, and the type $C$ affine Grassmannian Hopf-dual pair. It also extracts skew MN and skew Pieri formulas from the corresponding straight Murnaghan--Nakayama and Pieri inputs.

The remaining three chapters are devoted to applications.
Chapter~\ref{s:GF-Hecke} develops generating functions for irreducible characters of Ariki--Koike algebras, their type $A$ and type $B$ specializations, Hecke--Clifford algebras, and $\q$-rook monoid algebras.
Chapter~\ref{s:qt-kostka} gives combinatorial formulas for skew $(q,t)$-Kostka matrices and their inverse matrices.
Chapter~\ref{s:modular} establishes plethystic and Petrie-type expansions for modular Schur functions and concludes with a proof of Walker's conjecture.

\medskip

\noindent\textbf{Roadmap.}\quad The logical dependence is not linear after the abstract theorem package. Schematically,
\[
\begin{tikzcd}[column sep=large,row sep=small]
& \text{Chapter \ref{s:special-cases-Lambda}}
  \arrow[r] & \text{Chapters \ref{s:GF-Hecke}--\ref{s:modular}} \\
\text{Chapter \ref{s:Pre}}
  \arrow[r] & \text{Chapter \ref{s:general-rule}}
  \arrow[u]
  \arrow[d] & \\
& \text{Chapter \ref{s:more}} &
\end{tikzcd}
\]
Chapter~\ref{s:special-cases-Lambda} translates the general theory into the symmetric-function and plethystic language used in the three application chapters, whereas Chapter~\ref{s:more} is a separate branch illustrating how the same framework works in further Hopf-dual settings and how known straight MN/Pieri rules can be lifted to skew formulas there.
Thus Chapters~\ref{s:GF-Hecke}--\ref{s:modular} should be read as three parallel applications stemming from the symmetric-function specialization in Chapter~\ref{s:special-cases-Lambda}, each using its own additional input from representation theory, Macdonald theory, or the theory of Petrie and modular Schur functions.

\chapter{Preliminaries}\label{s:Pre}

\section{Partitions and skew diagrams}\label{sec:partitions-skew-diagrams}
The standard reference for partitions and symmetric functions is \cite{Mac}. In this chapter we review the combinatorial objects that will be used throughout this paper.

\phantomsection\label{def:partition-basic}
A {\em composition} is a finite sequence of positive integers; the empty composition is the unique composition of $0$. A {\em weak composition} is a sequence of nonnegative integers with finite support. A {\it partition} is a finite weakly decreasing sequence of positive integers
\[
\la=(\la_1,\la_2,\dots).
\]
We regard the empty sequence as the unique partition of $0$, and extend every partition by $\la_i=0$ for $i>\ell(\la)$ whenever convenient. The entries $\la_i>0$ are the {\it parts} of $\la$. The number of parts is the {\it length} $\ell(\la)$, and the sum of the parts is the {\it weight} $|\la|$. If $|\la|=n$, we write
\[
\la\vdash n,
\qquad
\alpha\models n
\]
if $\alpha$ is a composition of $n$. A partition is {\em strict} if all its parts are distinct. Let
\[
\P_n:=\{\la:\la\vdash n\},
\qquad
\mathscr S_n:=\{\la\vdash n:\la\text{ is strict}\},
\qquad
\P:=\bigcup_{n\ge0}\P_n,
\qquad
\mathscr S:=\bigcup_{n\ge0}\mathscr S_n.
\]
We also write $\O_n$ for the set of odd partitions of $n$, and set $\O:=\bigcup_{n\ge0}\O_n$.
We write $\mathfrak S_n$ for the symmetric group on $n$ letters.
We also use the multiplicity notation
\[
\la=(1^{m_1}2^{m_2}\cdots r^{m_r}\cdots),
\qquad
m_i(\la):=\#\{\,j:\la_j=i\,\}.
\]

\phantomsection\label{def:conjugate-partition}
The {\it (Young/Ferrers) diagram} of $\la$ is the set of unit squares (boxes) arranged in left-justified rows such that the $i$-th row has $\la_i$ boxes, $1\le i\le \ell(\la)$. The {\it conjugate} partition $\la^t$ is obtained by transposing the diagram of $\la$; equivalently,
\[
\la_i^t=\#\{\,j:\la_j\ge i\,\}.
\]

\phantomsection\label{def:box-statistics}
For a box $s=(i,j)\in\la$ with $1\le i\le \ell(\la)$ and $1\le j\le \la_i$, define the {\it arm-length}, {\it leg-length}, {\it content}, and {\it hook-length} by
\[
a_\la(s):=\#\{(i,j')\in\la:\ j<j'\}=\la_i-j,
\]
\[
l_\la(s):=\#\{(i',j)\in\la:\ i<i'\}=\la_j^t-i,
\]
\[
\Cont(s):=j-i,
\qquad
h_\la(s):=a_\la(s)+l_\la(s)+1.
\]
When $\la$ is fixed (or clear from context), we abbreviate $a_\la(s)$ and $l_\la(s)$ by $a(s)$ and $l(s)$, respectively. We also set
\[
\n(\la):=\sum_{i\ge1}(i-1)\la_i
=\sum_{j\ge1}\binom{\la_j^t}{2}.
\]
See Figure~\ref{fig:conjugate} for an example.

\begin{figure}
\centering
\begin{tikzpicture}[scale = 0.4]
  \begin{scope}
    \clip (0,0) -| (1,1) -| (3,2) -| (4,3) -| (0,3) -| (0,0);
    \draw [color=black!25] (0,0) grid (4,3);
  \end{scope}
  \draw [thick] (0,0) -| (1,1) -| (3,2) -| (4,3) -- (0,3) -- (0,0);
  \node at (1.5,2.5) {$\bullet$};
\end{tikzpicture}
\hspace{10em}
\begin{tikzpicture}[scale = 0.4]
  \begin{scope}
    \clip (0,0) -| (1,1) -| (2,3) -| (3,4) -- (0,4) -- (0,0);
    \draw [color=black!25] (0,0) grid (3,4);
  \end{scope}
  \draw [thick] (0,0) -| (1,1) -| (2,3) -| (3,4) -- (0,4) -- (0,0);
\end{tikzpicture}
\caption{$\la=(4,3,1)$ on the left, and its conjugate partition $\la^t$ on the right. Here $a_\la(\bullet)=2$ and $l_\la(\bullet)=1$.}
\label{fig:conjugate}
\end{figure}

The {\it dominance order} on partitions of the same weight is defined by
\[
\la\trianglerighteq\mu
\qquad\Longleftrightarrow\qquad
\sum_{i=1}^r \la_i\ge \sum_{i=1}^r \mu_i
\quad\text{for all }r\ge1.
\]

We write $\mu\subset\la$ if $\mu_i\le \la_i$ for all $i$. If $\mu\subset\la$, then the set-theoretic difference
\[
\K=\la/\mu
\]
is called a {\it skew diagram}.

\phantomsection\label{def:ribbon-height}
A {\it path} in a skew diagram $\K$ is a sequence $x_0,x_1,\dots,x_n$ of boxes in $\K$ such that $x_{i-1}$ and $x_i$ share a common side for $1\le i\le n$. A subset $\xi\subset\K$ is {\it connected} if any two boxes in $\xi$ are joined by a path lying in $\xi$. The connected components (the maximal connected subsets) of $\K$ are themselves skew diagrams.

A skew diagram $\la/\mu$ is a {\it vertical strip} if each row contains at most one box, and a {\it horizontal strip} if each column contains at most one box. A skew diagram $\K$ is a {\it ribbon} if it is connected and contains no $2\times2$ block. The {\it length} of a ribbon $\K$ is its number of boxes, and its {\it height} is one less than the number of rows it occupies; we write $\rht(\K)$ for the height. The southwest-most (respectively, northeast-most) box is called the {\it starting box} (respectively, {\it ending box}) of the ribbon. 
Figure~\ref{fig:ribbon} gives an example.

\begin{figure}
    \centering
    \begin{tikzpicture}[scale = 0.4]
      \begin{scope}
        \clip (0,0) -| (2,1) -| (3,2) -| (5,3) -| (8,4) -- (0,4) -- (0,0);
        \draw [color=black!25] (0,0) grid (8,4);
      \end{scope}
      \draw [thick] (0,0) -| (2,1) -| (3,2) -| (5,3) -| (8,4) -- (0,4) -- (0,0);
      \draw [thick, rounded corners] (2.5,1.5) -- (2.5,2.5) -- (4.5,2.5) -- (4.5,3.5) -- (7.5,3.5);
      \draw [color=black,fill=black,thick] (7.5,3.5) circle (.6ex);
      \node [draw, circle, fill = white, inner sep = 1.5pt] at (2.5,1.5) { };
    \end{tikzpicture}
    \caption{A ribbon with starting box (white dot) and ending box (black dot). Its length and height are $8$ and $2$, respectively.}
    \label{fig:ribbon}
\end{figure}

A {\em semi-standard Young tableau} (SSYT) $T$ of shape $\la/\mu$ is a filling of the boxes of $\la/\mu$ by positive integers that are weakly increasing along rows (left to right) and strictly increasing down columns. Its {\em weight} is the weak composition
\[
\wt(T)=(\alpha_1,\alpha_2,\ldots),
\qquad
\alpha_i:=\#\{\text{boxes of }T\text{ filled with }i\}.
\]
We denote by $\operatorname{type}(T)$ the partition obtained by rearranging $\wt(T)$ in weakly decreasing order.

Now let $\la=(\la_1>\cdots>\la_{\ell(\la)}>0)$ and $\mu=(\mu_1>\cdots>\mu_{\ell(\mu)}>0)$ be strict partitions with $\mu\subset\la$. The {\it shifted Young diagram} of $\la$ is
\[
S(\la)=\bigl\{(i,j)\in\mathbb Z_{>0}^2:\ 1\le i\le \ell(\la),\ i\le j\le i+\la_i-1\bigr\},
\]
and similarly for $S(\mu)$. The {\it shifted skew diagram} is
\[
S(\la/\mu):=S(\la)\setminus S(\mu).
\]
The boxes $(i,i)$ (when present) form the {\it main diagonal}.

We use the diagonal-strict unmarked model for shifted tableaux, equivalent to the usual marked model; see \cite[Ch.~III, Sec.~8]{Mac}. A {\it shifted semistandard tableau} $U$ of shape $S(\la/\mu)$ is a filling of $S(\la/\mu)$ by positive integers that is weakly increasing from left to right in each row and from top to bottom in each column, and strictly increasing along every NW--SE diagonal. Consequently, for each $r\ge1$, the set of boxes labeled $r$ is a disjoint union of ribbons. We write $\kappa(U)$ for the total number of these ribbon components, summed over all labels.

\section{Symmetric functions}\label{sec:symmetric-functions}
\phantomsection\label{def:lambda-ring}
Let $X=(x_1,x_2,\dots)$ be a set of variables. Let $\Lambda$ be the ring of symmetric functions in the variables $x_1,x_2,\dots$ over $\mathbb Q$, and let $\Lambda_{\mathbb Z}\subset\Lambda$ be the $\mathbb Z$-subalgebra of symmetric functions with integer coefficients. 

\subsection{ Standard bases and generating functions}
The ring $\Lambda$ has several standard $\mathbb Q$-bases indexed by partitions $\la\in\P$:
\begin{enumerate}
    \item {\em Monomial symmetric functions} $m_\la(X)$:
    \[
    m_\la(X):=\sum_{\alpha}x^\alpha,
    \]
    where the sum runs over distinct rearrangements $\alpha$ of the parts of $\la$.

    \item {\em Elementary symmetric functions} $e_\la(X)$: let
    \[
    e_k(X):=\sum_{i_1<\cdots<i_k}x_{i_1}\cdots x_{i_k},
    \qquad
    e_\la(X):=\prod_j e_{\la_j}(X).
    \]

    \item {\em Complete symmetric functions} $h_\la(X)$: let
    \[
    h_k(X):=\sum_{i_1\le\cdots\le i_k}x_{i_1}\cdots x_{i_k},
    \qquad
    h_\la(X):=\prod_j h_{\la_j}(X).
    \]

    \item {\em Power-sum symmetric functions} $p_\la(X)$: let
    \[
    p_k(X):=\sum_i x_i^k,
    \qquad
    p_\la(X):=\prod_j p_{\la_j}(X).
    \]

    \item {\em Schur functions} $s_\la(X)$:\phantomsection\label{def:schur-function}
    \[
    s_\la(X)=\det_{1\le i,j\le \ell(\la)}\!\bigl(h_{\la_i-i+j}(X)\bigr),
    \]
    where $h_0=1$ and $h_r=0$ for $r<0$.
\end{enumerate}
We adopt the analogous conventions $e_0=1$ and $e_r=0$ for $r<0$.

The generating series for the elementary and complete symmetric functions are
\[
E(z):=\sum_{r\ge0} e_r(X)z^r=\prod_{i\ge1}(1+x_i z),
\qquad
H(z):=\sum_{r\ge0} h_r(X)z^r=\prod_{i\ge1}\frac{1}{1-x_i z}.
\]
In particular,
\[
E(-z)\,H(z)=1.
\]

\subsection{Hall inner product and the involution $\omega$}
The {\it Hall inner product} on $\Lambda$ is the bilinear form characterized by
\[
\langle h_\la,m_\mu\rangle=\delta_{\la,\mu}.
\]
Equivalently,
\[
\langle s_\la,s_\mu\rangle=\delta_{\la,\mu},
\qquad
\langle p_\la,p_\mu\rangle=\delta_{\la,\mu}\,z_\la,
\qquad
z_\la:=\prod_{i\ge1} i^{m_i(\la)}m_i(\la)!.
\]

The standard involution $\omega:\Lambda\to\Lambda$ is determined by
\[
\omega(e_r)=h_r,
\qquad
\omega(h_r)=e_r,
\qquad
\omega(p_r)=(-1)^{r-1}p_r.
\]
It follows that
\[
\omega(s_\la)=s_{\la^t},
\qquad
\omega(s_{\la/\mu})=s_{\la^t/\mu^t}.
\]

\subsection{ Schur functions and skew Schur functions}\label{sec:schur-skew-functions}
\phantomsection\label{def:skew-schur-function}
For a skew shape $\la/\mu$, the tableau definition of the skew Schur function is
\[
s_{\la/\mu}(X):=\sum_T x^{\wt(T)},
\]
where the sum ranges over all SSYTs $T$ of shape $\la/\mu$, and
\[
x^{\wt(T)}:=\prod_i x_i^{\alpha_i}
\qquad
\text{if }
\wt(T)=(\alpha_1,\alpha_2,\dots).
\]
Equivalently, if $m\ge \ell(\la)$ and $\mu$ is padded with zeros to length $m$, then the skew Jacobi--Trudi formula is
\[
s_{\la/\mu}(X)
=
\det_{1\le i,j\le m}\!\bigl(h_{\la_i-\mu_j-i+j}(X)\bigr).
\]
There is also the dual Jacobi--Trudi formula
\[
s_{\la/\mu}(X)
=
\det_{1\le i,j\le \la_1}\!\bigl(e_{\la_i^t-\mu_j^t-i+j}(X)\bigr).
\]

The classical Cauchy identity reads
\begin{equation}\label{e:Cauchy-Schur}
\sum_{\la\in\P} s_\la(X)s_\la(Y)
=
\prod_{i,j\ge1}\frac{1}{1-x_i y_j}
=
\exp\!\left(\sum_{r\ge1}\frac{1}{r}p_r(X)p_r(Y)\right).
\end{equation}
More generally, the skew Cauchy identity is
\begin{equation}\label{e:skew-Cauchy-Schur}
\sum_{\la\in\P} s_{\la/\mu}(X)s_{\la/\nu}(Y)
=
\left(\prod_{i,j\ge1}\frac{1}{1-x_i y_j}\right)
\sum_{\rho\in\P} s_{\nu/\rho}(X)s_{\mu/\rho}(Y).
\end{equation}

\subsection{Schur $P$- and $Q$-functions}\label{sec:schur-pq-functions}
\phantomsection\label{def:schur-pq-functions}
For a shifted semistandard tableau $U$, let
\[
x^U:=\prod_{i\ge1} x_i^{\alpha_i},
\]
where $\alpha_i$ is the number of boxes of $U$ labeled $i$. The {\it skew Schur $Q$-function} is defined by
\[
Q_{\la/\mu}(X):=\sum_U 2^{\kappa(U)}x^U,
\]
where the sum ranges over all shifted semistandard tableaux $U$ of shape $S(\la/\mu)$. The corresponding {\it skew Schur $P$-function} is
\[
P_{\la/\mu}(X):=2^{-(\ell(\la)-\ell(\mu))}Q_{\la/\mu}(X).
\]
For straight shapes one has the Cauchy identity
\[
\sum_{\la\in\mathscr S} Q_\la(X)P_\la(Y)
=
\prod_{i,j\ge1}\frac{1+x_i y_j}{1-x_i y_j}.
\]

Throughout this paper, the unparameterized symbols $P_{\la/\mu}(X)$ and $Q_{\la/\mu}(X)$ refer to the Schur $P$- and $Q$-functions.
By contrast, $P_{\la/\mu}(X;t)$ and $Q_{\la/\mu}(X;t)$ denote Hall--Littlewood functions, while $P_{\la/\mu}(X;q,t)$ and $Q_{\la/\mu}(X;q,t)$ denote Macdonald functions.

\subsection{Hall--Littlewood functions and big Schur functions}\label{sec:hall-littlewood-big-schur}
Let $t$ be an indeterminate and write $\Lambda(t):=\Lambda\otimes_{\mathbb Q}\mathbb Q(t)$. The one-row Hall--Littlewood functions (also called generalized complete symmetric functions) are defined by
\begin{equation}\label{e:qgen}
\sum_{r\ge0} q_r(X;t)z^r
=
\prod_{i\ge1}\frac{1-tx_i z}{1-x_i z}
=
\exp\!\left(\sum_{r\ge1}\frac{1-t^r}{r}p_r(X)z^r\right).
\end{equation}
In particular,
\[
q_r(X;0)=h_r(X).
\]

The $t$-Hall inner product on $\Lambda(t)$ is defined on the power-sum basis by
\[
\langle p_\la,p_\mu\rangle_t
=
\delta_{\la,\mu}\,z_\la(t),
\qquad
z_\la(t):=z_\la\prod_{i=1}^{\ell(\la)}\frac{1}{1-t^{\la_i}}.
\]
Let $\{P_\la(X;t)\}_{\la\in\P}$ and $\{Q_\la(X;t)\}_{\la\in\P}$ be the Hall--Littlewood $P$- and $Q$-functions, normalized so that they form dual bases with respect to $\langle\cdot,\cdot\rangle_t$. Their skew versions are defined by
\[
Q_{\la/\mu}(X;t):=P_\mu^{\perp_t}Q_\la(X;t),
\qquad
P_{\la/\mu}(X;t):=Q_\mu^{\perp_t}P_\la(X;t),
\]
where $\perp_t$ denotes the adjoint with respect to $\langle\cdot,\cdot\rangle_t$. The Hall--Littlewood Cauchy identity is
\[
\sum_{\la\in\P} Q_\la(X;t)P_\la(Y;t)
=
\prod_{i,j\ge1}\frac{1-tx_i y_j}{1-x_i y_j}
=
\exp\!\left(\sum_{r\ge1}\frac{1-t^r}{r}p_r(X)p_r(Y)\right).
\]

\phantomsection\label{def:big-schur-functions}
The {\it big Schur functions} are defined by
\[
S_{\la/\mu}(X;t)
:=
\det_{1\le i,j\le m}\!\bigl(q_{\la_i-\mu_j-i+j}(X;t)\bigr),
\qquad m\ge \ell(\la).
\]
In particular,
\[
S_{\la/\mu}(X;0)=s_{\la/\mu}(X).
\]

\subsection{Macdonald polynomials}\label{sec:macdonald-polynomials}
\phantomsection\label{def:macdonald-pq-functions}
Let $q,t$ be independent indeterminates and $\mathbb Q(q,t)$ the field of rational functions in $q$ and $t$. Set
\[
\Lambda(q,t):=\Lambda\otimes_{\mathbb Q}\mathbb Q(q,t)
=\bigoplus_{n\ge0}\Lambda^n(q,t).
\]
It is well known that the homogeneous Macdonald polynomials
\[
P_\la(X;q,t),\qquad Q_\la(X;q,t)\qquad (\la\in\P)
\]
form bases of $\Lambda(q,t)$, characterized by triangularity and orthogonality.

For a partition $\la$, define
\[
c_\la(q,t):=\prod_{s\in\la}\bigl(1-q^{a(s)}t^{l(s)+1}\bigr),
\qquad
c'_\la(q,t):=\prod_{s\in\la}\bigl(1-q^{a(s)+1}t^{l(s)}\bigr).
\]
\phantomsection\label{def:integral-macdonald}
The {\it integral Macdonald polynomial} is
\[
J_\la(X;q,t):=c_\la(q,t)P_\la(X;q,t)=c'_\la(q,t)Q_\la(X;q,t).
\]

Define
\[
z_\la(q,t):=z_\la\prod_{i=1}^{\ell(\la)}\frac{1-q^{\la_i}}{1-t^{\la_i}}.
\]
The $q,t$-Hall inner product on $\Lambda(q,t)$ is given on the power-sum basis by
\begin{equation}\label{e:innerprod}
\langle p_\la,p_\mu\rangle_{q,t}=\delta_{\la,\mu}\,z_\la(q,t).
\end{equation}
For $f\in\Lambda(q,t)$, let $f$ act by multiplication on $\Lambda(q,t)$, and define its adjoint $f^{\perp_{q,t}}$ by
\[
\langle fg,h\rangle_{q,t}
=
\langle g,f^{\perp_{q,t}}h\rangle_{q,t}
\qquad
\text{for all }g,h\in\Lambda(q,t).
\]
In particular,
\[
p_n^{\perp_{q,t}}
=
\frac{n(1-q^n)}{1-t^n}\,\frac{\partial}{\partial p_n}.
\]

Let
\[
Q_\la(X;q,t)=b_\la(q,t)\,P_\la(X;q,t)
\]
so that $\{P_\la\}$ and $\{Q_\la\}$ are dual bases for $\langle\cdot,\cdot\rangle_{q,t}$. The normalization factor is
\[
b_\la(q,t)=\langle P_\la,P_\la\rangle_{q,t}^{-1}.
\]
If $(a;q)_r:=\prod_{i=0}^{r-1}(1-aq^i)$, then one convenient expression is
\[
b_\la(q,t)
=
\prod_{1\le i\le j\le \ell(\la)}
\frac{(q^{\la_i-\la_j}t^{\,j-i+1};q)_{\la_j-\la_{j+1}}}
{(q^{\la_i-\la_j+1}t^{\,j-i};q)_{\la_j-\la_{j+1}}}.
\]

The skew Macdonald polynomials are defined by
\[
Q_{\la/\mu}(X;q,t):=P_\mu^{\perp_{q,t}}Q_\la(X;q,t),
\qquad
P_{\la/\mu}(X;q,t):=Q_\mu^{\perp_{q,t}}P_\la(X;q,t).
\]
Their Cauchy identity is
\[
\sum_{\la\in\P} P_\la(X;q,t)Q_\la(Y;q,t)
=
\prod_{i,j\ge1}\frac{(t x_i y_j;q)_\infty}{(x_i y_j;q)_\infty}
=
\exp\!\left(\sum_{r\ge1}\frac{1-t^r}{r(1-q^r)}p_r(X)p_r(Y)\right).
\]

The one-row Macdonald functions are defined by
\[
\sum_{r\ge0} g_r(X;q,t)z^r
=
\exp\!\left(\sum_{r\ge1}\frac{1-t^r}{r(1-q^r)}p_r(X)z^r\right).
\]
In particular,
\[
g_r(X;0,t)=q_r(X;t).
\]

\section{Plethysm and virtual alphabets}
We now introduce plethystic notation and record several identities that will be used later.

Let
\[
X=x_1+x_2+\cdots,
\qquad
Y=y_1+y_2+\cdots
\]
be the alphabets associated with the variable sets introduced above.

\begin{defn}
Let $f,g\in\Lambda$ and expand $f=\sum_{\la} c_{\la}\,p_{\la}$. The {\em plethysm} $f[g]$ is defined by
\begin{equation}\label{e:def-plethysm}
f[g]:=\sum_{\la} c_{\la}\,\prod_{j=1}^{\ell(\la)} g\!\bigl[X^{\la_j}\bigr].
\end{equation}
\end{defn}

The following identities hold whenever both sides are defined:
\[
p_k[p_m]=p_{km},
\qquad
p_k[f\pm g]=p_k[f]\pm p_k[g],
\qquad
p_k[fg]=p_k[f]\cdot p_k[g],
\]
\[
(f\pm g)[h]=f[h]\pm g[h],
\qquad
(fg)[h]=f[h]\cdot g[h].
\]

For two alphabets $X$ and $Y$, set
\[
X+Y:=\sum_i x_i+\sum_j y_j,
\qquad
XY:=\sum_{i,j}x_i y_j.
\]
Define the negative alphabet $-X$ by $X+(-X)=\bm 0$, where $\bm 0=0+0+\cdots$. More generally, for a virtual alphabet $A$ and $r\ge1$, plethystic evaluation is determined by the power sums. In particular,
\[
p_r[X+Y]=p_r[X]+p_r[Y],
\qquad
p_r[XY]=p_r[X]\,p_r[Y],
\qquad
p_r[-X]=-p_r[X].
\]
Two especially useful specializations are
\[
p_r[(1-t)X]=(1-t^r)p_r[X],
\qquad
p_r\!\left[\frac{X}{1-q}\right]=\frac{p_r[X]}{1-q^r}.
\]

If $\omega_n$ is a primitive $n$th root of unity, we set
\[
\Omega_n:=1+\omega_n+\omega_n^2+\cdots+\omega_n^{n-1}.
\]
Then
\[
p_r[\Omega_n]
=
1+\omega_n^r+\omega_n^{2r}+\cdots+\omega_n^{(n-1)r}
=
\begin{cases}
0,& n\nmid r,\\[4pt]
n,& n\mid r.
\end{cases}
\]

For any symmetric function $f$, we adopt the convention
\[
f[\bm 0]=
\begin{cases}
0,& \deg f>0,\\
1,& \deg f=0.
\end{cases}
\]
We shall also use the standard shorthand
\[
\sigma_z[A]:=\sum_{r\ge0} h_r[A]\,z^r
=
\exp\!\left(\sum_{n\ge1}\frac{1}{n}p_n[A]\,z^n\right).
\]

\begin{lem}\label{l:plethysm}
Let $X=x_1+x_2+\cdots$ and let $f$ be a homogeneous symmetric function of degree $n$. Then
\[
f[X]=f(x),
\qquad
f[-X]=(-1)^n\,\omega(f)[X],
\]
where $\omega$ is the standard involution on $\Lambda$ determined by $\omega(p_k)=(-1)^{k-1}p_k$.
\end{lem}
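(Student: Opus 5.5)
Both identities reduce to power sums. The map $f\mapsto f[X]$ and the map $f\mapsto f[-X]$ are $\mathbb Q$-algebra homomorphisms $\Lambda\to\Lambda$. This is immediate from the definition \eqref{e:def-plethysm} and the rules $(fg)[h]=f[h]\,g[h]$ and $(f\pm g)[h]=f[h]\pm g[h]$. The involution $\omega$ is also an algebra homomorphism, and the twist $f\mapsto(-1)^{\deg f}f$ is an algebra homomorphism on homogeneous components. Since $\{p_\la\}$ is a $\mathbb Q$-basis of $\Lambda$ and these maps are multiplicative, it suffices to check both identities on the generators $p_k$. Both sides are linear in $f$, so the case of a general homogeneous $f$ of degree $n$ follows by expanding $f=\sum_{\la\vdash n}c_\la p_\la$.

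For the first identity, the definition gives $p_k[X]=p_k[x_1+x_2+\cdots]=\sum_i x_i^k=p_k(x)$. Here we use that plethystic evaluation on the alphabet $X=\sum_i x_i$ substitutes $x_i\mapsto x_i^k$, i.e.\ $g[X^{k}]$ with $g=p_1$. Multiplicativity then gives $p_\la[X]=p_\la(x)$, and linearity gives $f[X]=f(x)$.

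For the second identity, $p_k[-X]=-p_k[X]=-p_k(x)$ by the stated rule for negative alphabets. On the other side, $(-1)^k\,\omega(p_k)=(-1)^k(-1)^{k-1}p_k=-p_k$, which matches. For $\la\vdash n$ we then have
\[
p_\la[-X]=\prod_j\bigl(-p_{\la_j}\bigr)=\prod_j(-1)^{\la_j}\omega(p_{\la_j})=(-1)^{n}\omega(p_\la),
\]
using $\sum_j\la_j=n$ and the multiplicativity of $\omega$. Summing against the coefficients $c_\la$ gives $f[-X]=(-1)^n\omega(f)[X]$.

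None of these steps is a real obstacle. The only point needing care is that the sign $(-1)^n$ depends on homogeneity: $\sum_j\la_j=n$ holds for every $p_\la$ occurring in $f$. This is why $f$ is assumed homogeneous; a nonhomogeneous $f$ would be handled degree by degree.
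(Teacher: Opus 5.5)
Your proposal is correct and follows essentially the same route as the paper: expand $f=\sum_{\lambda\vdash n}c_\lambda p_\lambda$, use $p_k[-X]=-p_k[X]$, and match signs via $\omega(p_\lambda)=(-1)^{|\lambda|-\ell(\lambda)}p_\lambda$. Your only difference is that you verify the identity on the generators $p_k$ and then use multiplicativity, whereas the paper checks it directly on each $p_\lambda$.
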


\begin{proof}
The first identity is tautological. For the second, note from \eqref{e:def-plethysm} that
\[
p_k[-X]=-(x_1^k+x_2^k+\cdots)=-p_k[X].
\]
Writing
\[
f=\sum_{\la\vdash n} c_\la\,p_\la,
\]
we obtain
\[
f[-X]
=
\sum_{\la\vdash n} c_\la\,(-1)^{\ell(\la)}p_\la[X].
\]
Since
\[
\omega(p_\la)=(-1)^{|\la|-\ell(\la)}p_\la,
\]
we get
\[
f[-X]
=
\sum_{\la\vdash n} c_\la\,(-1)^{|\la|}\,\omega(p_\la)[X]
=
(-1)^n\,\omega(f)[X]. \qedhere
\]
\end{proof}

In particular,
\[
s_{\la/\mu}[-X]=(-1)^{|\la/\mu|}s_{\la^t/\mu^t}[X],
\quad
Q_{\la/\mu}[-X]=(-1)^{|\la/\mu|}Q_{\la/\mu}[X],
\quad
h_r[-X]=(-1)^r e_r[X].
\]
We shall freely use $f[X]=f(x)$, interchanging the two notations when convenient.

Using plethystic notation, the one-row Hall--Littlewood functions satisfy
\[
q_r(X;t)=h_r[(1-t)X],
\]
and hence
\[
\sum_{r\ge0} q_r(X;t)z^r
=
\sigma_z[(1-t)X].
\]
Likewise, the big Schur functions satisfy
\[
S_{\la/\mu}(X;t)=s_{\la/\mu}[(1-t)X].
\]


Another basic identity that will be used repeatedly is the plethystic translation formula
\begin{equation}\label{e:plethystic-translation}
\exp\!\left(\sum_{n\ge1} p_n[Y]\frac{\partial}{\partial p_n[X]}\right)f[X]=f[X+Y].
\end{equation}
Indeed, the differential operator on the left sends each $p_n[X]$ to $p_n[X]+p_n[Y]$, and hence acts on every symmetric function by the substitution $X\mapsto X+Y$.

\section{Hopf algebras}
We conclude this chapter by recalling the Hopf-algebraic language used throughout this paper.

\begin{defn}[Hopf algebra]\label{d:Hopf}
Let $F$ be a field. A {\em Hopf algebra} over $F$ is a vector space $H$ equipped with linear maps
\[
m:H\otimes H\to H,
\qquad
u:F\to H,
\qquad
\Delta:H\to H\otimes H,
\qquad
\varepsilon:H\to F,
\qquad
S:H\to H,
\]
called the multiplication, unit, comultiplication, counit, and antipode, respectively, such that:
\begin{align*}
m\circ(m\otimes \id)&=m\circ(\id\otimes m),\\
m\circ(u\otimes \id)&=\id=m\circ(\id\otimes u),\\
(\Delta\otimes \id)\circ\Delta&=(\id\otimes\Delta)\circ\Delta,\\
(\varepsilon\otimes \id)\circ\Delta&=\id=(\id\otimes\varepsilon)\circ\Delta,\\
m\circ(S\otimes \id)\circ\Delta&=u\circ\varepsilon
=
m\circ(\id\otimes S)\circ\Delta.
\end{align*}
Moreover, $\Delta$ and $\varepsilon$ are required to be algebra homomorphisms.
\end{defn}

A Hopf algebra $H=\bigoplus_{n\ge0}H_n$ is {\it graded connected} if
\[
H_i H_j\subset H_{i+j},
\qquad
\Delta(H_n)\subset \bigoplus_{i+j=n} H_i\otimes H_j,
\qquad
H_0=F\cdot 1.
\]
Given two Hopf algebras $H$ and $H^\vee$, a bilinear form
\[
\langle\cdot,\cdot\rangle:H\times H^\vee\to F
\]
is called a {\it Hopf pairing} if it is compatible with the products and coproducts in the sense that
\[
\langle ab,c\rangle=\langle a\otimes b,\Delta^\vee(c)\rangle,
\qquad
\langle a,bc\rangle=\langle \Delta(a),b\otimes c\rangle.
\]
This is the abstract framework underlying Chapter~3.

The basic concrete example for us is the symmetric-function Hopf algebra. Let $F$ be a field of characteristic zero and set
\[
\Lambda_F:=\Lambda\otimes_{\mathbb Q}F.
\]
Then $\Lambda_F$ is a graded connected, commutative, and cocommutative Hopf algebra with the usual multiplication and unit, and with structure maps determined by
\[
\Delta(f)[X,Y]=f[X+Y],
\qquad
\varepsilon(f)=f[0],
\qquad
S(f)[X]=f[-X].
\]
Equivalently, $\Delta$ is the algebra homomorphism determined by
\[
\Delta(p_r)=p_r\otimes 1+1\otimes p_r
\qquad (r\ge1).
\]
Hence
\[
\Delta(h_r)=\sum_{i=0}^r h_i\otimes h_{r-i},
\qquad
\Delta(e_r)=\sum_{i=0}^r e_i\otimes e_{r-i},
\qquad
\Delta(p_r)=p_r\otimes 1+1\otimes p_r.
\]
The antipode is given by
\[
S(h_r)=(-1)^r e_r,
\qquad
S(e_r)=(-1)^r h_r,
\qquad
S(p_r)=-p_r.
\]

Under the Hall inner product, $\Lambda_F$ is self-dual as a Hopf algebra. In particular, the skew Schur functions are characterized by the coproduct formula
\[
\Delta(s_\la)=\sum_{\mu\subset\la} s_{\la/\mu}\otimes s_\mu
=\sum_{\mu\subset\la} s_\mu\otimes s_{\la/\mu}.
\]
Likewise, after extending scalars to $\Lambda(t)$ or $\Lambda(q,t)$ and using the corresponding Hall pairings, the skew Hall--Littlewood and skew Macdonald functions introduced above may be viewed as Hopf-theoretic skewings. The abstract version of this philosophy, in the setting of general Hopf dual pairs and completed Cauchy elements, will be developed in the next chapter.

\chapter{The skew Murnaghan--Nakayama rule}
\label{s:general-rule}

In this chapter we formulate and prove a general skew Murnaghan--Nakayama rule in a pair of dual graded Hopf algebras.
The discussion is purely Hopf--algebraic: it applies uniformly to any graded connected Hopf algebra equipped with a graded nondegenerate Hopf pairing and a dual pair of homogeneous bases.

\section{Dual Hopf algebras, completions, and Hopf pairings}

Throughout let $F$ be a field.
Let
\[
H=\bigoplus_{n\ge 0} H_n,
\qquad
H^\vee=\bigoplus_{n\ge 0} H^\vee_n
\]
be graded connected Hopf algebras over $F$.
We write their structure maps as
\[
(H,m,u,\Delta,\varepsilon,S),
\qquad
(H^\vee,m^\vee,u^\vee,\Delta^\vee,\varepsilon^\vee,S^\vee).
\]
Thus $m,m^\vee$ are associative products, $\Delta,\Delta^\vee$ are coassociative coproducts, $\varepsilon,\varepsilon^\vee$ are counits, and
$S,S^\vee$ are antipodes satisfying
\begin{equation}\label{e:antipode-axioms-abstract}
m(S\otimes \id)\Delta=u\varepsilon,\qquad m(\id\otimes S)\Delta=u\varepsilon,
\end{equation}
and similarly for $S^\vee$ with $(m^\vee,\Delta^\vee,u^\vee,\varepsilon^\vee)$.

We assume that each graded piece $H_n$ (equivalently $H^\vee_n$) is finite dimensional.
This ensures that dual bases exist degreewise and that the canonical (Cauchy) element below is well defined in a degree completion.
(All sums in this chapter are degreewise finite.)

When infinite homogeneous sums occur, we work in degree-completed tensor products.
For example, for $r,s\ge 0$ we set
\[
\widehat{H^{\otimes r}\otimes (H^\vee)^{\otimes s}}
:=\prod_{n\ge 0}\bigl(H^{\otimes r}\otimes (H^\vee)^{\otimes s}\bigr)_n,
\]
where $\bigl(\cdot\bigr)_n\subset H^{\otimes r}\otimes (H^\vee)^{\otimes s}$ denotes the homogeneous subspace of total degree $n$
(with respect to the given gradings).
All Hopf structure maps extend continuously to these completions.

\begin{defn}[Hopf pairing and dual bases]\label{d:hopf-pairing-dual}
A bilinear form $\langle\cdot,\cdot\rangle: H\times H^\vee\to F$ is a graded nondegenerate \emph{Hopf pairing} if
\[
\langle H_i,H^\vee_j\rangle=0\ \ (i\neq j),
\]
the restriction $H_n\times H^\vee_n\to F$ is nondegenerate for every $n$, and
\begin{equation}\label{e:hopf-pairing-dual}
\langle ab,c\rangle=\langle a\otimes b,\ \Delta^\vee(c)\rangle,
\qquad
\langle a,bc\rangle=\langle \Delta(a),\ b\otimes c\rangle
\quad
(a,b\in H,\ c\in H^\vee).
\end{equation}
Here the pairing on tensor products is the tensor product pairing defined below.

A pair of homogeneous bases $\{f_\lambda\}_{\lambda\in\mathcal I}\subset H$ and
$\{g_\lambda\}_{\lambda\in\mathcal I}\subset H^\vee$ is called a \emph{Hopf-dual pair} if
\[
\langle f_\lambda,g_\mu\rangle=\delta_{\lambda\mu}
\qquad(\lambda,\mu\in\mathcal I),
\]
and $\deg(f_\lambda)=\deg(g_\lambda)$ for all $\lambda$.
(The index set $\mathcal I$ is abstract; in symmetric-function applications one typically takes $\mathcal I=\mathscr P$.)
\end{defn}

\begin{defn}[Tensor product pairing]\label{d:tensor-pairing-dual}
For $r\ge 1$ we equip $H^{\otimes r}$ and $(H^\vee)^{\otimes r}$ with the tensor product pairing
\[
\langle a_1\otimes\cdots\otimes a_r,\ c_1\otimes\cdots\otimes c_r\rangle
:=\prod_{i=1}^r \langle a_i,c_i\rangle,
\]
extended by bilinearity.
This pairing is graded and nondegenerate on each homogeneous component.
\end{defn}

\section{Skew elements}

From now on, fix a Hopf pairing $\langle\cdot,\cdot\rangle$ and a Hopf-dual pair of homogeneous bases
$\{f_\lambda\}_{\lambda\in\mathcal I}\subset H$ and $\{g_\lambda\}_{\lambda\in\mathcal I}\subset H^\vee$.

\begin{defn}[Skew elements]\label{d:skew-elements-dual}
For each $\lambda\in\mathcal I$ define the skew elements $f_{\lambda/\mu}\in H$ and $g_{\lambda/\mu}\in H^\vee$ by the coproduct expansions
\begin{equation}\label{e:skew-via-coprod-dual}
\Delta(f_\lambda)=\sum_{\mu\in\mathcal I} f_{\lambda/\mu}\otimes f_\mu,
\qquad
\Delta^\vee(g_\lambda)=\sum_{\mu\in\mathcal I} g_{\lambda/\mu}\otimes g_\mu.
\end{equation}
(Each sum is finite in each homogeneous degree, hence defines an element of $\widehat{H\otimes H}$ or $\widehat{H^\vee\otimes H^\vee}$.)
\end{defn}

\begin{lem}\label{l:skew-structure-constants-dual}
Define structure constants $\tilde c^{\mu}_{\nu,\tau}\in F$ by
\begin{equation}\label{e:mult-consts-dual}
f_\nu f_\tau=\sum_{\mu\in\mathcal I}\tilde c^{\mu}_{\nu,\tau}\,f_\mu.
\end{equation}
Then the coproduct of $g_\mu$ expands as
\begin{equation}\label{e:coprod-g-dual}
\Delta^\vee(g_\mu)=\sum_{\nu,\tau\in\mathcal I}\tilde c^{\mu}_{\nu,\tau}\,g_\nu\otimes g_\tau,
\end{equation}
and consequently
\begin{equation}\label{e:skew-g-linear-dual}
g_{\mu/\tau}=\sum_{\nu\in\mathcal I}\tilde c^{\mu}_{\nu,\tau}\,g_\nu.
\end{equation}
Likewise, if $\tilde d^{\mu}_{\nu,\tau}$ is defined by
\[
g_\nu g_\tau=\sum_{\mu\in\mathcal I}\tilde d^{\mu}_{\nu,\tau}\,g_\mu,
\]
then
\[
\Delta(f_\mu)=\sum_{\nu,\tau\in\mathcal I}\tilde d^{\mu}_{\nu,\tau}\,f_\nu\otimes f_\tau
\qquad\text{and}\qquad
f_{\mu/\tau}=\sum_{\nu\in\mathcal I}\tilde d^{\mu}_{\nu,\tau}\,f_\nu.
\]
\end{lem}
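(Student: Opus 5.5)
The plan is to determine every coefficient by pairing against dual basis elements and using nondegeneracy degreewise. Since each $H^\vee_n$ is finite dimensional and $\{g_\nu\}$ is a homogeneous basis, $\{g_\nu\otimes g_\tau\}$ is a basis of $H^\vee\otimes H^\vee$, with $\{f_\nu\otimes f_\tau\}$ as its dual basis under the tensor product pairing of Definition~\ref{d:tensor-pairing-dual}. Hence we may write $\Delta^\vee(g_\mu)=\sum_{\nu,\tau} a^{\mu}_{\nu,\tau}\, g_\nu\otimes g_\tau$, and only finitely many coefficients are nonzero because everything is homogeneous of degree $\deg g_\mu$. The coefficient is recovered as $a^\mu_{\nu,\tau}=\langle f_\nu\otimes f_\tau,\Delta^\vee(g_\mu)\rangle$.

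The key step is the first Hopf pairing axiom in \eqref{e:hopf-pairing-dual}:
\[
\langle f_\nu\otimes f_\tau,\Delta^\vee(g_\mu)\rangle=\langle f_\nu f_\tau, g_\mu\rangle
=\sum_{\kappa}\tilde c^{\kappa}_{\nu,\tau}\langle f_\kappa,g_\mu\rangle=\tilde c^{\mu}_{\nu,\tau}.
\]
This gives \eqref{e:coprod-g-dual}. Next, we regroup the double sum by the second tensor factor:
\[
\Delta^\vee(g_\mu)=\sum_\tau\Bigl(\sum_\nu \tilde c^\mu_{\nu,\tau}g_\nu\Bigr)\otimes g_\tau.
\]
Compare this with the defining expansion \eqref{e:skew-via-coprod-dual}, $\Delta^\vee(g_\mu)=\sum_\tau g_{\mu/\tau}\otimes g_\tau$. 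Because $\{g_\tau\}$ is a basis, the decomposition $H^\vee\otimes H^\vee=\bigoplus_\tau H^\vee\otimes g_\tau$ is direct. So the first-factor coefficients are uniquely determined, and this yields \eqref{e:skew-g-linear-dual}.

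The second half is the same argument with the roles of $H$ and $H^\vee$ swapped, using the second axiom $\langle a,bc\rangle=\langle\Delta(a),b\otimes c\rangle$. Expand $\Delta(f_\mu)=\sum b^\mu_{\nu,\tau}f_\nu\otimes f_\tau$. Pairing with $g_\nu\otimes g_\tau$ gives
\[
b^\mu_{\nu,\tau}=\langle \Delta(f_\mu),g_\nu\otimes g_\tau\rangle=\langle f_\mu,g_\nu g_\tau\rangle=\tilde d^\mu_{\nu,\tau}.
\]
Regrouping by the second factor and comparing with $\Delta(f_\mu)=\sum_\tau f_{\mu/\tau}\otimes f_\tau$ then gives $f_{\mu/\tau}=\sum_\nu\tilde d^\mu_{\nu,\tau}f_\nu$.

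There is no real obstacle here. The only points needing care are the justification that the tensor pairing identifies $\{f_\nu\otimes f_\tau\}$ as the dual basis of $\{g_\nu\otimes g_\tau\}$, and the fact that the sums are finite because of the grading. Both follow from Definition~\ref{d:hopf-pairing-dual}: the pairing is graded, pairings between different degrees vanish, and each graded piece is finite dimensional.
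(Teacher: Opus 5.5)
Your proposal is correct and follows the paper's proof essentially step for step: you pair $\Delta^\vee(g_\mu)$ against the dual tensor basis $\{f_\nu\otimes f_\tau\}$, apply the first Hopf pairing axiom to get $\tilde c^{\mu}_{\nu,\tau}$, and read off $g_{\mu/\tau}$ as the coefficient of $g_\tau$ in the second tensor factor; the $f$-side follows by swapping roles and using $\langle a,bc\rangle=\langle\Delta(a),b\otimes c\rangle$. Your extra remarks on degreewise finiteness and on the directness of $\bigoplus_\tau H^\vee\otimes g_\tau$ make explicit points the paper leaves implicit.
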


\begin{proof}
Equip $H\otimes H$ and $H^\vee\otimes H^\vee$ with the tensor product pairing from Definition~\ref{d:tensor-pairing-dual}.
For any $\alpha,\beta\in\mathcal I$, the Hopf pairing axiom \eqref{e:hopf-pairing-dual} gives
\[
\big\langle f_\alpha\otimes f_\beta,\ \Delta^\vee(g_\mu)\big\rangle
=\big\langle f_\alpha f_\beta,\ g_\mu\big\rangle.
\]
Expanding $f_\alpha f_\beta$ via \eqref{e:mult-consts-dual} and using duality yields
\[
\big\langle f_\alpha f_\beta,\ g_\mu\big\rangle
=\Big\langle \sum_{\gamma\in\mathcal I}\tilde c^{\gamma}_{\alpha,\beta} f_\gamma,\ g_\mu\Big\rangle
=\tilde c^{\mu}_{\alpha,\beta}.
\]
Thus
\[
\big\langle f_\alpha\otimes f_\beta,\ \Delta^\vee(g_\mu)\big\rangle=\tilde c^{\mu}_{\alpha,\beta}
\qquad(\alpha,\beta\in\mathcal I).
\]
Since $\{f_\alpha\otimes f_\beta\}$ and $\{g_\alpha\otimes g_\beta\}$ are dual bases for the tensor product pairing,
the coefficients of $\Delta^\vee(g_\mu)$ in the basis $\{g_\nu\otimes g_\tau\}$ are uniquely determined by these matrix entries,
and \eqref{e:coprod-g-dual} follows.

Finally, compare \eqref{e:coprod-g-dual} with the defining skew expansion
\[
\Delta^\vee(g_\mu)=\sum_{\tau\in\mathcal I} g_{\mu/\tau}\otimes g_\tau.
\]
Taking coefficients of $g_\tau$ in the second tensor factor yields \eqref{e:skew-g-linear-dual}.
The remaining statements are proved similarly, exchanging the roles of $H$ and $H^\vee$.
\end{proof}

\begin{lem}[Skew sum rule]\label{l:sumrule-dual}
For all $\mu,\lambda\in\mathcal I$ one has
\[
\Delta(f_{\lambda/\mu})=\sum_{\nu\in\mathcal I} f_{\lambda/\nu}\otimes f_{\nu/\mu},
\qquad
\Delta^\vee(g_{\lambda/\mu})=\sum_{\nu\in\mathcal I} g_{\lambda/\nu}\otimes g_{\nu/\mu}.
\]
\end{lem}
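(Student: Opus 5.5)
The plan is to derive the skew sum rule from coassociativity of $\Delta$ applied to $f_\lambda$, together with the fact that $\{f_\mu\}$ is a basis, so that coefficients of $f_\mu$ in the last tensor factor can be compared.

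First, apply $\Delta\otimes\id$ to the defining expansion \eqref{e:skew-via-coprod-dual}. This gives
\[
(\Delta\otimes\id)\Delta(f_\lambda)=\sum_{\mu\in\mathcal I}\Delta(f_{\lambda/\mu})\otimes f_\mu .
\]

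Second, apply $\id\otimes\Delta$ instead. Expanding twice by the definition gives
\[
(\id\otimes\Delta)\Delta(f_\lambda)=\sum_{\nu}f_{\lambda/\nu}\otimes\Delta(f_\nu)=\sum_{\nu,\mu}f_{\lambda/\nu}\otimes f_{\nu/\mu}\otimes f_\mu .
\]

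By coassociativity these two elements of $H\otimes H\otimes H$ coincide. Both are finite sums in each degree, since $H_n$ is finite dimensional. Because $\{f_\mu\}$ is a basis of $H$, every element of $H\otimes H\otimes H$ can be written uniquely as $\sum_\mu T_\mu\otimes f_\mu$ with $T_\mu\in H\otimes H$. Comparing coefficients of $f_\mu$ in the third factor therefore yields
\[
\Delta(f_{\lambda/\mu})=\sum_{\nu\in\mathcal I}f_{\lambda/\nu}\otimes f_{\nu/\mu}.
\]

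The identity for $\Delta^\vee(g_{\lambda/\mu})$ follows by the identical argument in $H^\vee$, using coassociativity of $\Delta^\vee$ and the basis $\{g_\mu\}$.

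The only point needing care is the uniqueness of the decomposition along the last factor. This is elementary linear algebra: $H\otimes H\otimes H=\bigoplus_\mu (H\otimes H)\otimes f_\mu$. There is no genuine obstacle here.

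An alternative route is to pair with test elements and use Lemma~\ref{l:skew-structure-constants-dual} together with associativity in $H^\vee$. The direct coassociativity argument is cleaner, and I would use that one.
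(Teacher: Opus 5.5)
Your proposal is correct and is essentially the paper's proof: both apply $\id\otimes\Delta$ and $\Delta\otimes\id$ to $\Delta(f_\lambda)$, invoke coassociativity, and compare coefficients of $f_\mu$ in the third tensor factor. The identity for $g_{\lambda/\mu}$ is handled the same way in both, using $\Delta^\vee$ and the basis $\{g_\mu\}$.
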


\begin{proof}
We prove the identity for $f_{\lambda/\mu}$; the proof for $g_{\lambda/\mu}$ is analogous.
Fix $\lambda\in\mathcal I$.
By definition \eqref{e:skew-via-coprod-dual},
\begin{equation}\label{e:Delta-f-lambda-dual}
\Delta(f_\lambda)=\sum_{\nu\in\mathcal I} f_{\lambda/\nu}\otimes f_\nu.
\end{equation}
Apply $\id\otimes\Delta$ to \eqref{e:Delta-f-lambda-dual}:
\[
(\id\otimes\Delta)\Delta(f_\lambda)
=\sum_{\nu} f_{\lambda/\nu}\otimes \Delta(f_\nu).
\]
Using again \eqref{e:skew-via-coprod-dual} for $\Delta(f_\nu)$ gives
\begin{equation}\label{e:id-Delta-expansion-dual}
(\id\otimes\Delta)\Delta(f_\lambda)
=\sum_{\nu}\ \sum_{\mu} f_{\lambda/\nu}\otimes f_{\nu/\mu}\otimes f_\mu.
\end{equation}
On the other hand,
\[
(\Delta\otimes\id)\Delta(f_\lambda)
=\sum_{\mu} \Delta(f_{\lambda/\mu})\otimes f_\mu.
\]
Coassociativity $(\Delta\otimes\id)\Delta=(\id\otimes\Delta)\Delta$ implies that the right-hand side equals
\eqref{e:id-Delta-expansion-dual} in $H^{\otimes 3}$.
Comparing coefficients of $f_\mu$ in the third tensor factor yields
\[
\Delta(f_{\lambda/\mu})
=\sum_{\nu\in\mathcal I} f_{\lambda/\nu}\otimes f_{\nu/\mu},
\]
as claimed.
\end{proof}

\section{The Cauchy element and its grouplike factorization}

\begin{defn}[Cauchy element]\label{d:cauchy-dual}
The Cauchy element is the completed sum
\[
\E:=\sum_{\lambda\in\mathcal I} f_\lambda\otimes g_\lambda
\ \in\ \widehat{H\otimes H^\vee}.
\]
We use leg notation in tensor powers. For instance, in $\widehat{H\otimes H\otimes H^\vee}$ we set
\[
\E_{13}:=\sum_{\lambda} f_\lambda\otimes 1\otimes g_\lambda,
\qquad
\E_{23}:=\sum_{\lambda} 1\otimes f_\lambda\otimes g_\lambda,
\]
and in $\widehat{H\otimes H\otimes H^\vee\otimes H^\vee}$ we set
\[
\E_{14}:=\sum_{\lambda} f_\lambda\otimes 1\otimes 1\otimes g_\lambda,
\qquad
\E_{24}:=\sum_{\lambda} 1\otimes f_\lambda\otimes 1\otimes g_\lambda,
\]
etc.
\end{defn}

The following lemma presents the coefficient extraction for $\Delta$ and $\Delta^\vee$.
\begin{lem}\label{l:coprod-coeff-dual}
For every $\rho\in\mathcal I$ one has the coproduct expansions
\[
\Delta(f_\rho)=\sum_{\mu,\nu\in\mathcal I} \langle f_\rho,\ g_\mu g_\nu\rangle\; f_\mu\otimes f_\nu,
\qquad
\Delta^\vee(g_\rho)=\sum_{\mu,\nu\in\mathcal I} \langle f_\mu f_\nu,\ g_\rho\rangle\; g_\mu\otimes g_\nu.
\]
\end{lem}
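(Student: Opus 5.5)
The plan is to determine the coefficients of $\Delta(f_\rho)$ in the basis $\{f_\mu\otimes f_\nu\}$ by pairing against the dual basis $\{g_\mu\otimes g_\nu\}$ of $(H^\vee)^{\otimes 2}$. By Definition~\ref{d:tensor-pairing-dual}, the tensor product pairing on $H\otimes H$ versus $H^\vee\otimes H^\vee$ satisfies $\langle f_\alpha\otimes f_\beta,\ g_\mu\otimes g_\nu\rangle=\delta_{\alpha\mu}\delta_{\beta\nu}$. Hence any element $T\in H\otimes H$ (a finite sum, since $\Delta(f_\rho)$ lies in the finite-dimensional space $\bigoplus_{i+j=\deg\rho}H_i\otimes H_j$) can be written as
\[
T=\sum_{\mu,\nu}\langle T,\ g_\mu\otimes g_\nu\rangle\, f_\mu\otimes f_\nu .
\]
This is the usual coefficient-extraction identity for dual bases. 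Only finitely many terms are nonzero, because the pairing is graded.

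Apply this identity with $T=\Delta(f_\rho)$. The Hopf pairing axiom \eqref{e:hopf-pairing-dual}, in the form $\langle a,bc\rangle=\langle\Delta(a),b\otimes c\rangle$, gives
\[
\langle \Delta(f_\rho),\ g_\mu\otimes g_\nu\rangle=\langle f_\rho,\ g_\mu g_\nu\rangle .
\]
Substituting this into the expansion gives the first formula.

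The second formula follows symmetrically. Expand $\Delta^\vee(g_\rho)$ in the basis $\{g_\mu\otimes g_\nu\}$, extracting coefficients by pairing with $f_\mu\otimes f_\nu$. Then use the other axiom, $\langle ab,c\rangle=\langle a\otimes b,\Delta^\vee(c)\rangle$, to obtain
\[
\langle f_\mu\otimes f_\nu,\ \Delta^\vee(g_\rho)\rangle=\langle f_\mu f_\nu,\ g_\rho\rangle .
\]
Alternatively, this is exactly \eqref{e:coprod-g-dual} of Lemma~\ref{l:skew-structure-constants-dual}, since $\tilde c^{\rho}_{\mu,\nu}=\langle f_\mu f_\nu,g_\rho\rangle$.

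There is no real obstacle here. The only point needing care is that $\{f_\mu\otimes f_\nu\}$ and $\{g_\mu\otimes g_\nu\}$ are genuinely dual bases degreewise. This holds because each $H_n$ is finite dimensional and the pairing is nondegenerate and graded, so the coefficient extraction is valid and all sums are finite.
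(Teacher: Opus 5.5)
Your proposal is correct and matches the paper's proof: expand $\Delta(f_\rho)$ (resp.\ $\Delta^\vee(g_\rho)$) in the tensor basis, extract coefficients by pairing against the dual tensor basis, and apply the corresponding Hopf pairing axiom. Your remark that the second identity is exactly \eqref{e:coprod-g-dual} is a valid shortcut that the paper does not use here.
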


\begin{proof}
Write $\Delta(f_\rho)=\sum_{\mu,\nu} A^{\rho}_{\mu\nu}\, f_\mu\otimes f_\nu$.
Pair both sides with $g_\mu\otimes g_\nu$ using the tensor product pairing (Definition~\ref{d:tensor-pairing-dual}).
Since $\{f_\mu\otimes f_\nu\}$ and $\{g_\mu\otimes g_\nu\}$ are dual bases for this pairing, we obtain
\[
A^{\rho}_{\mu\nu}
=\big\langle \Delta(f_\rho),\ g_\mu\otimes g_\nu\big\rangle.
\]
By the Hopf pairing axiom $\langle a,bc\rangle=\langle \Delta(a), b\otimes c\rangle$ in \eqref{e:hopf-pairing-dual},
\[
\big\langle \Delta(f_\rho),\ g_\mu\otimes g_\nu\big\rangle
=\langle f_\rho,\ g_\mu g_\nu\rangle.
\]
Substituting $A^{\rho}_{\mu\nu}=\langle f_\rho,\ g_\mu g_\nu\rangle$ proves the first identity.
The second identity is proved similarly, expanding $\Delta^\vee(g_\rho)$ in the basis $\{g_\mu\otimes g_\nu\}$ and using
$\langle ab,c\rangle=\langle a\otimes b,\Delta^\vee(c)\rangle$.
\end{proof}

\begin{prop}[Grouplike identities]\label{p:grouplike-dual}
In $\widehat{H\otimes H\otimes H^\vee}$ and $\widehat{H\otimes H^\vee\otimes H^\vee}$ one has
\[
(\Delta\otimes \id)\E=\E_{13}\E_{23},
\qquad
(\id\otimes \Delta^\vee)\E=\E_{12}\E_{13}.
\]
\end{prop}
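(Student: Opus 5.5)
We prove the first identity by expanding both sides in the basis $\{f_\mu\otimes f_\nu\otimes g_\rho\}$ of each homogeneous component of $\widehat{H\otimes H\otimes H^\vee}$, and comparing coefficients. The second identity then follows by the symmetric argument, with the roles of $H$ and $H^\vee$ exchanged. Everything is degreewise finite because each $H_n$ is finite dimensional. So it is enough to compare the components of a fixed total degree; no convergence issues arise.

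For the left-hand side, apply $\Delta\otimes\id$ termwise to $\E=\sum_\rho f_\rho\otimes g_\rho$ and use the first formula of Lemma~\ref{l:coprod-coeff-dual}:
\[
(\Delta\otimes\id)\E=\sum_{\rho}\sum_{\mu,\nu}\langle f_\rho,\ g_\mu g_\nu\rangle\, f_\mu\otimes f_\nu\otimes g_\rho .
\]
For the right-hand side, multiply in the algebra $\widehat{H\otimes H\otimes H^\vee}$ with its componentwise product:
\[
\E_{13}\E_{23}=\sum_{\mu,\nu}(f_\mu\otimes 1\otimes g_\mu)(1\otimes f_\nu\otimes g_\nu)=\sum_{\mu,\nu} f_\mu\otimes f_\nu\otimes g_\mu g_\nu .
\]
Now expand $g_\mu g_\nu=\sum_\rho \langle f_\rho, g_\mu g_\nu\rangle\, g_\rho$. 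This holds because $\{f_\rho\}$ and $\{g_\rho\}$ are dual bases, so every $c\in H^\vee_n$ satisfies $c=\sum_\rho\langle f_\rho,c\rangle g_\rho$. With this expansion the two sides agree term by term.

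For the second identity, apply $\id\otimes\Delta^\vee$ and use the second formula of Lemma~\ref{l:coprod-coeff-dual}:
\[
(\id\otimes\Delta^\vee)\E=\sum_{\rho}\sum_{\mu,\nu}\langle f_\mu f_\nu,\ g_\rho\rangle\, f_\rho\otimes g_\mu\otimes g_\nu .
\]
On the other side,
\[
\E_{12}\E_{13}=\sum_{\mu,\nu} f_\mu f_\nu\otimes g_\mu\otimes g_\nu .
\]
Here $\E_{12}$ and $\E_{13}$ are the obvious leg embeddings into $H\otimes H^\vee\otimes H^\vee$. Expanding $f_\mu f_\nu=\sum_\rho\langle f_\mu f_\nu,g_\rho\rangle f_\rho$ finishes the proof.

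The only point requiring care is the bookkeeping. We must check that products and structure maps are compatible with the degree completion: the product of two completed elements is computed degreewise, with finitely many contributions in each degree, and $\Delta\otimes\id$ is applied continuously. We must also keep the order of the factors $g_\mu g_\nu$ consistent with the leg order $\E_{13}\E_{23}$, since $H^\vee$ is not assumed commutative. With the ordering above, the pairing $\langle f_\rho,g_\mu g_\nu\rangle$ matches exactly the coefficient in Lemma~\ref{l:coprod-coeff-dual}. Likewise, in the second identity the order $f_\mu f_\nu$ matches $\E_{12}\E_{13}$.
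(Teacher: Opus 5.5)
Your proposal is correct and follows the paper's proof essentially step for step. Both compare coefficients in the basis $\{f_\mu\otimes f_\nu\otimes g_\rho\}$, using Lemma~\ref{l:coprod-coeff-dual} on the left and the dual-basis reconstruction of $g_\mu g_\nu$ on the right. The only difference is that you write out the second identity, with the order $f_\mu f_\nu$ matching $\E_{12}\E_{13}$, where the paper simply calls it analogous.
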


\begin{proof}
We prove $(\Delta\otimes \id)\E=\E_{13}\E_{23}$ by comparing coefficients in the basis
$\{f_\mu\otimes f_\nu\otimes g_\rho\}_{\mu,\nu,\rho\in\mathcal I}$.

\smallskip
\noindent\emph{Left-hand side.}
By definition,
\[
(\Delta\otimes \id)\E=\sum_{\rho\in\mathcal I}\Delta(f_\rho)\otimes g_\rho.
\]
Applying Lemma~\ref{l:coprod-coeff-dual} to $\Delta(f_\rho)$ yields
\[
(\Delta\otimes \id)\E
=\sum_{\rho}\ \sum_{\mu,\nu}
\langle f_\rho,\ g_\mu g_\nu\rangle\; f_\mu\otimes f_\nu\otimes g_\rho.
\]
Hence the coefficient of $f_\mu\otimes f_\nu\otimes g_\rho$ on the left-hand side equals
$\langle f_\rho,\ g_\mu g_\nu\rangle$.

\smallskip
\noindent\emph{Right-hand side.}
Compute
\[
\E_{13}\E_{23}
=\Big(\sum_{\mu} f_\mu\otimes 1\otimes g_\mu\Big)\Big(\sum_{\nu} 1\otimes f_\nu\otimes g_\nu\Big)
=\sum_{\mu,\nu} f_\mu\otimes f_\nu\otimes (g_\mu g_\nu).
\]
Expand $g_\mu g_\nu$ in the basis $\{g_\rho\}$:
\[
g_\mu g_\nu=\sum_{\rho}\langle f_\rho,\ g_\mu g_\nu\rangle\; g_\rho,
\]
which is the reconstruction identity in each homogeneous degree.
Substituting gives
\[
\E_{13}\E_{23}
=\sum_{\mu,\nu}\ \sum_{\rho}\langle f_\rho,\ g_\mu g_\nu\rangle\; f_\mu\otimes f_\nu\otimes g_\rho.
\]
Thus the coefficient of $f_\mu\otimes f_\nu\otimes g_\rho$ on the right-hand side is again
$\langle f_\rho,\ g_\mu g_\nu\rangle$, matching the left-hand side.
Therefore $(\Delta\otimes \id)\E=\E_{13}\E_{23}$. The proof of $(\id\otimes \Delta^\vee)\E=\E_{12}\E_{13}$ is analogous, using the second identity in
Lemma~\ref{l:coprod-coeff-dual}.
\end{proof}

For $m\ge 2$ and $1\le i\le m$, let $\Delta_i$ denote the map obtained by applying $\Delta$ in the $i$-th $H$-tensor factor and the identity elsewhere;
likewise, let $\Delta_i^\vee$ denote the map obtained by applying $\Delta^\vee$ in the $i$-th $H^\vee$-tensor factor and the identity elsewhere.
Since $\Delta$ and $\Delta^\vee$ are algebra morphisms, so are $\Delta_i$ and $\Delta_i^\vee$, and hence they extend continuously to completions.

\begin{prop}\label{p:grouplike-exp-rule-dual}
In $\widehat{H\otimes H\otimes H^\vee\otimes H^\vee}$ one has the four-factor decomposition
\begin{equation}\label{e:DeltaDeltaE-factor-dual}
(\Delta\otimes\Delta^\vee)\E
=\E_{13}\E_{14}\E_{23}\E_{24}.
\end{equation}
\end{prop}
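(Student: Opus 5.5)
We obtain \eqref{e:DeltaDeltaE-factor-dual} by applying the two grouplike identities of Proposition~\ref{p:grouplike-dual} one after the other. The basic observation is that $\Delta\otimes\Delta^\vee=(\Delta\otimes\id\otimes\id)\circ(\id\otimes\Delta^\vee)$ as maps from $\widehat{H\otimes H^\vee}$ to $\widehat{H\otimes H\otimes H^\vee\otimes H^\vee}$. Here the three-factor space in between is $\widehat{H\otimes H^\vee\otimes H^\vee}$.

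\emph{Step 1.} By the second identity of Proposition~\ref{p:grouplike-dual}, $(\id\otimes\Delta^\vee)\E=\E_{12}\E_{13}$ in $\widehat{H\otimes H^\vee\otimes H^\vee}$. Explicitly,
\[
\E_{12}\E_{13}=\sum_{\alpha,\beta} f_\alpha f_\beta\otimes g_\alpha\otimes g_\beta .
\]

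\emph{Step 2.} Apply $\Delta_1$, i.e.\ $\Delta$ in the first ($H$) factor. This map is an algebra morphism and extends continuously to the completion, as noted just before the statement. It therefore suffices to compute $\Delta_1$ on each factor separately.
\begin{itemize}
\item On $\E_{12}$, the first identity of Proposition~\ref{p:grouplike-dual}, placed in the legs $(1,2;3)$ and tensored with $1$ in the last slot, gives $\Delta_1(\E_{12})=\E_{13}\E_{23}$ in the four-factor space. Here the indices are renumbered so that positions $1,2$ are the two $H$-factors and $3,4$ are the two $H^\vee$-factors.
\item On $\E_{13}$, the same argument gives $\Delta_1(\E_{13})=\E_{14}\E_{24}$.
\end{itemize}
Multiplying yields $(\Delta\otimes\Delta^\vee)\E=\E_{13}\E_{23}\E_{14}\E_{24}$.

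\emph{Step 3.} Reorder the product into the stated form. In the four-factor algebra, $\E_{23}$ and $\E_{14}$ occupy disjoint legs: $\E_{23}$ lives in slots $2,3$ and $\E_{14}$ in slots $1,4$. Elements supported on disjoint tensor legs commute, even when $H$ and $H^\vee$ are noncommutative. Hence $\E_{13}\E_{23}\E_{14}\E_{24}=\E_{13}\E_{14}\E_{23}\E_{24}$.

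As a sanity check, we can also verify the identity by direct expansion. The left side is $\sum_\rho\Delta(f_\rho)\otimes\Delta^\vee(g_\rho)$. The right side is $\sum_{\alpha,\beta,\gamma,\delta}f_\alpha f_\beta\otimes f_\gamma f_\delta\otimes g_\alpha g_\gamma\otimes g_\beta g_\delta$. Comparing coefficients, both reduce via Lemma~\ref{l:coprod-coeff-dual} to the same pairings.

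The main obstacle is bookkeeping rather than anything substantive. Two points need care. First, the leg-renumbering when transporting the three-factor identities into the four-factor space must match the convention of Definition~\ref{d:cauchy-dual}. Second, applying $\Delta_1$ termwise to an infinite product is legitimate because of continuity on the degree completion, which holds since each degree component involves only finitely many terms.
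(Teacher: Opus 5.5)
Your proof is correct and uses the same strategy as the paper: compose the two grouplike identities of Proposition~\ref{p:grouplike-dual}, using that a partial coproduct is a continuous algebra morphism. The only difference is the order. The paper starts from $(\Delta\otimes\id)\E=\E_{13}\E_{23}$ and applies $\Delta^\vee_3$, which lands directly on $\E_{13}\E_{14}\E_{23}\E_{24}$; your order (first $\id\otimes\Delta^\vee$, then $\Delta_1$) produces $\E_{13}\E_{23}\E_{14}\E_{24}$, so it needs your extra, valid step of commuting the disjoint-leg factors $\E_{23}$ and $\E_{14}$.
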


\begin{proof}
Start from the first grouplike identity in Proposition~\ref{p:grouplike-dual}:
\[
(\Delta\otimes\id)\E=\E_{13}\E_{23}
\qquad\text{in }\widehat{H\otimes H\otimes H^\vee}.
\]
Apply $\Delta^\vee_3=\id\otimes\id\otimes\Delta^\vee$ to both sides.
Because $\Delta^\vee_3$ is an algebra morphism and extends to the completion, we obtain in
$\widehat{H\otimes H\otimes H^\vee\otimes H^\vee}$:
\[
(\Delta\otimes\Delta^\vee)\E
=\Delta^\vee_3\big((\Delta\otimes\id)\E\big)
=\Delta^\vee_3(\E_{13}\E_{23})
=\Delta^\vee_3(\E_{13})\,\Delta^\vee_3(\E_{23}).
\]
It remains to compute $\Delta^\vee_3(\E_{13})$ and $\Delta^\vee_3(\E_{23})$.
For this, use the second grouplike identity in Proposition~\ref{p:grouplike-dual}:
\[
(\id\otimes\Delta^\vee)\E=\E_{12}\E_{13}
\qquad\text{in }\widehat{H\otimes H^\vee\otimes H^\vee}.
\]
Embed $H\otimes H^\vee\otimes H^\vee$ into $H\otimes H\otimes H^\vee\otimes H^\vee$ by
\[
x\otimes y\otimes z\longmapsto x\otimes 1\otimes y\otimes z.
\]
Under this embedding, the identity becomes precisely
\[
\Delta^\vee_3(\E_{13})=\E_{13}\E_{14}.
\]
Similarly, embedding by
\[
x\otimes y\otimes z\longmapsto 1\otimes x\otimes y\otimes z
\]
gives
\[
\Delta^\vee_3(\E_{23})=\E_{23}\E_{24}.
\]
Therefore
\[
(\Delta\otimes\Delta^\vee)\E
=\big(\E_{13}\E_{14}\big)\big(\E_{23}\E_{24}\big)
=\E_{13}\E_{14}\E_{23}\E_{24},
\]
which is \eqref{e:DeltaDeltaE-factor-dual}.
\end{proof}

\section{Skew Cauchy identity and the skew Murnaghan--Nakayama identity}


\medskip
Fix indices $\mu,\lambda\in\mathcal I$. Define the $F$-linear map (partial contraction)
\[
\Cont_{\mu,\lambda}:\ H\otimes H\otimes H^\vee\otimes H^\vee\longrightarrow H\otimes H^\vee
\]
on pure tensors by
\[
\Cont_{\mu,\lambda}(a\otimes b\otimes c\otimes d)
:=\langle b,\ g_\mu\rangle\ \langle f_\lambda,\ d\rangle\ (a\otimes c),
\]
and extend linearly.
Since the pairing is graded and degreewise finite, $\Cont_{\mu,\lambda}$ respects the degree filtrations and extends continuously to
\[
\Cont_{\mu,\lambda}:\ \widehat{H\otimes H\otimes H^\vee\otimes H^\vee}\longrightarrow \widehat{H\otimes H^\vee}.
\]

\begin{prop}[Skew Cauchy identity]\label{p:skew-cauchy-dual}
For all $\lambda,\mu\in\mathcal I$, one has in $\widehat{H\otimes H^\vee}$:
\begin{equation}\label{e:skew-cauchy-dual}
\E\;\sum_{\tau\in\mathcal I} f_{\lambda/\tau}\otimes g_{\mu/\tau}
=\sum_{\rho\in\mathcal I} f_{\rho/\mu}\otimes g_{\rho/\lambda}.
\end{equation}
\end{prop}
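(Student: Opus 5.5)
The plan is to apply the contraction $\Cont_{\mu,\lambda}$ to both sides of the four-factor decomposition \eqref{e:DeltaDeltaE-factor-dual} of Proposition~\ref{p:grouplike-exp-rule-dual}. This will turn the left side of the four-factor identity into the right side of \eqref{e:skew-cauchy-dual}, and the right side into the left side.

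\emph{Left side.} Write $(\Delta\otimes\Delta^\vee)\E=\sum_\rho \Delta(f_\rho)\otimes\Delta^\vee(g_\rho)$, and expand both factors by Definition~\ref{d:skew-elements-dual}:
\[
\Delta(f_\rho)=\sum_\alpha f_{\rho/\alpha}\otimes f_\alpha,\qquad
\Delta^\vee(g_\rho)=\sum_\beta g_{\rho/\beta}\otimes g_\beta .
\]
The result is $\sum_{\rho,\alpha,\beta} f_{\rho/\alpha}\otimes f_\alpha\otimes g_{\rho/\beta}\otimes g_\beta$. Applying $\Cont_{\mu,\lambda}$ produces the factors $\langle f_\alpha,g_\mu\rangle=\delta_{\alpha\mu}$ and $\langle f_\lambda,g_\beta\rangle=\delta_{\lambda\beta}$. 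Hence the contraction equals $\sum_\rho f_{\rho/\mu}\otimes g_{\rho/\lambda}$, which is exactly the right side of \eqref{e:skew-cauchy-dual}.

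\emph{Right side.} Here I would not contract the product $\E_{13}\E_{14}\E_{23}\E_{24}$ factor by factor, since $\Cont_{\mu,\lambda}$ is not multiplicative. Instead I would multiply the factors out first:
\[
\E_{13}\E_{14}\E_{23}\E_{24}=\sum_{\alpha,\beta,\gamma,\delta} f_\alpha f_\beta\otimes f_\gamma f_\delta\otimes g_\alpha g_\gamma\otimes g_\beta g_\delta .
\]
Next, contract the second and fourth legs. The second leg gives $\langle f_\gamma f_\delta,g_\mu\rangle=\langle f_\gamma\otimes f_\delta,\Delta^\vee(g_\mu)\rangle$. The fourth leg gives $\langle f_\lambda,g_\beta g_\delta\rangle=\langle\Delta(f_\lambda),g_\beta\otimes g_\delta\rangle$. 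Expanding $\Delta^\vee(g_\mu)=\sum_\tau g_{\mu/\tau}\otimes g_\tau$ and $\Delta(f_\lambda)=\sum_{\tau'} f_{\lambda/\tau'}\otimes f_{\tau'}$, the two pairings become $\langle f_\gamma,g_{\mu/\tau}\rangle\delta_{\delta\tau}$ and $\langle f_{\lambda/\tau'},g_\beta\rangle\delta_{\tau'\delta}$. So $\delta=\tau=\tau'$, and the sum collapses to
\[
\sum_{\alpha,\tau}\Big(f_\alpha\sum_\beta\langle f_{\lambda/\tau},g_\beta\rangle f_\beta\Big)\otimes\Big(g_\alpha\sum_\gamma\langle f_\gamma,g_{\mu/\tau}\rangle g_\gamma\Big).
\]
By the reconstruction identity in each degree, the inner sums are $f_{\lambda/\tau}$ and $g_{\mu/\tau}$. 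The whole expression is therefore $\sum_\alpha (f_\alpha\otimes g_\alpha)\cdot\sum_\tau f_{\lambda/\tau}\otimes g_{\mu/\tau}=\E\cdot\sum_\tau f_{\lambda/\tau}\otimes g_{\mu/\tau}$, the left side of \eqref{e:skew-cauchy-dual}.

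\emph{Main obstacle.} The step needing most care is this right-hand computation. It depends on reordering the factors correctly in each leg: the first leg is $f_\alpha f_\beta$ and the third is $g_\alpha g_\gamma$, so $\E$ lands on the left in both $H$ and $H^\vee$, consistent with the stated order $\E\cdot(\cdots)$ without any commutativity. It also depends on matching the Hopf pairing axioms to the correct legs. Finally, I must justify interchanging $\Cont_{\mu,\lambda}$ with the infinite sums. This holds because $\Cont_{\mu,\lambda}$ is continuous and only the degree components with $\deg\gamma+\deg\delta=\deg\mu$ and $\deg\beta+\deg\delta=\deg\lambda$ survive, so in each fixed total degree every sum is finite.
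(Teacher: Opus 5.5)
Your proposal is correct and follows the paper's own proof step for step. You apply $\Cont_{\mu,\lambda}$ to the four-factor identity $(\Delta\otimes\Delta^\vee)\E=\E_{13}\E_{14}\E_{23}\E_{24}$ and read off $\sum_\rho f_{\rho/\mu}\otimes g_{\rho/\lambda}$ from the left side by duality; on the right you multiply out, rewrite the two scalar pairings via the Hopf pairing axioms as $\langle f_\gamma,g_{\mu/\delta}\rangle$ and $\langle f_{\lambda/\delta},g_\beta\rangle$, and reconstruct to get $\E\cdot\sum_\delta f_{\lambda/\delta}\otimes g_{\mu/\delta}$, and your remarks on leg ordering and degreewise finiteness are accurate.
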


\begin{proof}
By Proposition~\ref{p:grouplike-exp-rule-dual}, we have in $\widehat{H\otimes H\otimes H^\vee\otimes H^\vee}$ the factorization
\[
(\Delta\otimes\Delta^\vee)\E=\E_{13}\E_{14}\E_{23}\E_{24}.
\]
Apply $\Cont_{\mu,\lambda}$ to both sides.

\smallskip
\noindent\emph{Left-hand side.}
Using $\E=\sum_{\rho} f_\rho\otimes g_\rho$ and the skew coproduct expansions
\[
\Delta(f_\rho)=\sum_{\alpha} f_{\rho/\alpha}\otimes f_\alpha,
\qquad
\Delta^\vee(g_\rho)=\sum_{\beta} g_{\rho/\beta}\otimes g_\beta,
\]
we obtain
\[
(\Delta\otimes\Delta^\vee)\E
=\sum_{\rho}\ \sum_{\alpha,\beta}
\bigl(f_{\rho/\alpha}\otimes f_\alpha\otimes g_{\rho/\beta}\otimes g_\beta\bigr).
\]
Hence
\begin{align*}
\Cont_{\mu,\lambda}\bigl((\Delta\otimes\Delta^\vee)\E\bigr)
&=\sum_{\rho}\ \sum_{\alpha,\beta}
\langle f_\alpha,g_\mu\rangle\ \langle f_\lambda,g_\beta\rangle\
\bigl(f_{\rho/\alpha}\otimes g_{\rho/\beta}\bigr)\\
&=\sum_{\rho\in\mathcal I} f_{\rho/\mu}\otimes g_{\rho/\lambda},
\end{align*}
by duality $\langle f_\alpha,g_\mu\rangle=\delta_{\alpha\mu}$ and $\langle f_\lambda,g_\beta\rangle=\delta_{\lambda\beta}$.

\smallskip
\noindent\emph{Right-hand side.}
Write
\[
\E_{13}=\sum_{\alpha} f_\alpha\otimes 1\otimes g_\alpha\otimes 1,\quad
\E_{14}=\sum_{\beta} f_\beta\otimes 1\otimes 1\otimes g_\beta,
\]
\[
\E_{23}=\sum_{\gamma} 1\otimes f_\gamma\otimes g_\gamma\otimes 1,\quad
\E_{24}=\sum_{\delta} 1\otimes f_\delta\otimes 1\otimes g_\delta.
\]
Then
\[
\E_{13}\E_{14}\E_{23}\E_{24}
=\sum_{\alpha,\beta,\gamma,\delta}
(f_\alpha f_\beta)\otimes (f_\gamma f_\delta)\otimes (g_\alpha g_\gamma)\otimes (g_\beta g_\delta).
\]
Applying $\Cont_{\mu,\lambda}$ yields
\begin{equation}\label{e:cont-expand-dual}
\Cont_{\mu,\lambda}(\E_{13}\E_{14}\E_{23}\E_{24})
=\sum_{\alpha,\beta,\gamma,\delta}
\langle f_\gamma f_\delta,\ g_\mu\rangle\ \langle f_\lambda,\ g_\beta g_\delta\rangle\
\bigl(f_\alpha f_\beta\otimes g_\alpha g_\gamma\bigr).
\end{equation}

We now simplify the scalar factors using the Hopf pairing identities \eqref{e:hopf-pairing-dual} and the skew coproduct expansions.
First,
\[
\begin{aligned}
\langle f_\gamma f_\delta,\ g_\mu\rangle
&=\langle f_\gamma\otimes f_\delta,\ \Delta^\vee(g_\mu)\rangle\\
&=\Big\langle f_\gamma\otimes f_\delta,\ \sum_{\tau} g_{\mu/\tau}\otimes g_\tau\Big\rangle\\
&=\sum_{\tau}\langle f_\gamma,g_{\mu/\tau}\rangle\ \langle f_\delta,g_\tau\rangle
=\langle f_\gamma,\ g_{\mu/\delta}\rangle,
\end{aligned}
\]
since $\langle f_\delta,g_\tau\rangle=\delta_{\delta\tau}$.
Similarly,
\[
\langle f_\lambda,\ g_\beta g_\delta\rangle
=\langle \Delta(f_\lambda),\ g_\beta\otimes g_\delta\rangle
=\Big\langle \sum_{\sigma} f_{\lambda/\sigma}\otimes f_\sigma,\ g_\beta\otimes g_\delta\Big\rangle
=\sum_{\sigma}\langle f_{\lambda/\sigma},g_\beta\rangle\ \langle f_\sigma,g_\delta\rangle
=\langle f_{\lambda/\delta},\ g_\beta\rangle.
\]
Substituting these into \eqref{e:cont-expand-dual} gives
\[
\Cont_{\mu,\lambda}(\E_{13}\E_{14}\E_{23}\E_{24})
=\sum_{\alpha,\beta,\gamma,\delta}
\langle f_\gamma,\ g_{\mu/\delta}\rangle\ \langle f_{\lambda/\delta},\ g_\beta\rangle\
\bigl(f_\alpha f_\beta\otimes g_\alpha g_\gamma\bigr).
\]

Now use the dual-basis reconstruction in each homogeneous degree:
for any $x\in H$, $x=\sum_{\beta}\langle x,g_\beta\rangle f_\beta$, and for any $y\in H^\vee$, $y=\sum_{\gamma}\langle f_\gamma,y\rangle g_\gamma$.
Thus
\[
\sum_{\beta}\langle f_{\lambda/\delta},g_\beta\rangle f_\beta=f_{\lambda/\delta},
\qquad
\sum_{\gamma}\langle f_\gamma,g_{\mu/\delta}\rangle g_\gamma=g_{\mu/\delta}.
\]
Therefore
\begin{multline*}
\Cont_{\mu,\lambda}(\E_{13}\E_{14}\E_{23}\E_{24})
=\sum_{\alpha,\delta} (f_\alpha f_{\lambda/\delta})\otimes (g_\alpha g_{\mu/\delta})\\
=\Big(\sum_{\alpha} f_\alpha\otimes g_\alpha\Big)\Big(\sum_{\delta} f_{\lambda/\delta}\otimes g_{\mu/\delta}\Big)
=\E\;\sum_{\delta\in\mathcal I} f_{\lambda/\delta}\otimes g_{\mu/\delta}.
\end{multline*}

Finally, applying $\Cont_{\mu,\lambda}$ to $(\Delta\otimes\Delta^\vee)\E=\E_{13}\E_{14}\E_{23}\E_{24}$ yields
\[
\sum_{\rho\in\mathcal I} f_{\rho/\mu}\otimes g_{\rho/\lambda}
=\E\;\sum_{\tau\in\mathcal I} f_{\lambda/\tau}\otimes g_{\mu/\tau},
\]
which is exactly \eqref{e:skew-cauchy-dual}.
\end{proof}

\begin{lem}[Orthogonality]\label{l:orthogonality-dual}
For all $\eta,\tau\in\mathcal I$ one has
\begin{equation}\label{e:orthogonality-dual}
\sum_{\mu\in\mathcal I} S^\vee(g_{\eta/\mu})\,g_{\mu/\tau}
=\delta_{\eta,\tau}\,1_{H^\vee}.
\end{equation}
\end{lem}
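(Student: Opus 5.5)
The plan is to derive the orthogonality relation \eqref{e:orthogonality-dual} directly from the antipode axiom for $S^\vee$, combined with the skew sum rule of Lemma~\ref{l:sumrule-dual}. Note first that $g_{\eta/\tau}$ is defined for every pair $\eta,\tau$ by the expansion \eqref{e:skew-via-coprod-dual}. By Lemma~\ref{l:sumrule-dual},
\[
\Delta^\vee(g_{\eta/\tau})=\sum_{\mu\in\mathcal I} g_{\eta/\mu}\otimes g_{\mu/\tau}.
\]
Applying $m^\vee(S^\vee\otimes\id)$ and using the antipode axiom \eqref{e:antipode-axioms-abstract} for $H^\vee$ gives
\[
\sum_{\mu} S^\vee(g_{\eta/\mu})\,g_{\mu/\tau}=u^\vee\varepsilon^\vee(g_{\eta/\tau})=\varepsilon^\vee(g_{\eta/\tau})\,1_{H^\vee}.
\]
Each sum here is finite in the relevant degree, since $g_{\eta/\mu}$ and $g_{\mu/\tau}$ are homogeneous pieces of $\Delta^\vee$ applied to a homogeneous element.

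It then remains to show $\varepsilon^\vee(g_{\eta/\tau})=\delta_{\eta,\tau}$. For this I would apply the counit axiom $(\varepsilon^\vee\otimes\id)\Delta^\vee=\id$ to $g_\eta$:
\[
g_\eta=\sum_{\tau}\varepsilon^\vee(g_{\eta/\tau})\,g_\tau .
\]
Since $\{g_\tau\}$ is a basis, comparing coefficients yields $\varepsilon^\vee(g_{\eta/\tau})=\delta_{\eta\tau}$. Alternatively, by Lemma~\ref{l:skew-structure-constants-dual} we have $g_{\eta/\tau}=\sum_\nu\tilde c^{\eta}_{\nu,\tau}g_\nu$. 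Since $\varepsilon^\vee$ kills positive degree and $H^\vee_0=F\cdot 1$, only the degree-zero index $\nu=\varnothing$ (with $g_\varnothing=1$) contributes, and $\tilde c^{\eta}_{\varnothing,\tau}=\delta_{\eta\tau}$ because $f_\varnothing=1$ after normalization. The first argument avoids the normalization issue, so I would use it.

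The only delicate point is bookkeeping: the skew sum rule is stated for $\Delta^\vee(g_{\lambda/\mu})$ with the second tensor leg carrying the skew toward the smaller index, and the antipode must be applied to the \emph{left} leg. So I would check that the order $g_{\eta/\mu}\otimes g_{\mu/\tau}$ matches $m^\vee(S^\vee\otimes\id)$, which it does. No commutativity of $H^\vee$ is needed for this lemma.
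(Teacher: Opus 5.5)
Your proposal is correct and follows the paper's proof essentially step for step. The skew sum rule, followed by $m^\vee(S^\vee\otimes\id)$ and the antipode axiom, reduces the left side to $\varepsilon^\vee(g_{\eta/\tau})\,1_{H^\vee}$, and the counit axiom applied to $\Delta^\vee(g_\eta)$ then gives $\varepsilon^\vee(g_{\eta/\tau})=\delta_{\eta,\tau}$. You are also right that this lemma needs no commutativity of $H^\vee$.
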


\begin{proof}
By the skew sum rule (Lemma~\ref{l:sumrule-dual}) applied to $g_{\eta/\tau}$, we have
\[
\Delta^\vee(g_{\eta/\tau})=\sum_{\mu\in\mathcal I} g_{\eta/\mu}\otimes g_{\mu/\tau}.
\]
Apply $m^\vee(S^\vee\otimes \id)$ to both sides.
Using the antipode axiom $m^\vee(S^\vee\otimes \id)\Delta^\vee=u^\vee\varepsilon^\vee$, we obtain
\[
\sum_{\mu} S^\vee(g_{\eta/\mu})\,g_{\mu/\tau}
=u^\vee\varepsilon^\vee(g_{\eta/\tau})
=\varepsilon^\vee(g_{\eta/\tau})\cdot 1_{H^\vee}.
\]
It remains to compute $\varepsilon^\vee(g_{\eta/\tau})$.
Apply $(\varepsilon^\vee\otimes\id)$ to the defining skew expansion
\[
\Delta^\vee(g_\eta)=\sum_{\tau\in\mathcal I} g_{\eta/\tau}\otimes g_\tau.
\]
Since $(\varepsilon^\vee\otimes\id)\Delta^\vee=\id$, we get
\[
g_\eta=\sum_{\tau\in\mathcal I} \varepsilon^\vee(g_{\eta/\tau})\,g_\tau.
\]
By linear independence of the basis $\{g_\tau\}$, this forces $\varepsilon^\vee(g_{\eta/\tau})=\delta_{\eta,\tau}$.
Substituting into the previous equation gives \eqref{e:orthogonality-dual}.
\end{proof}

\begin{thm}[Skew Murnaghan--Nakayama rule]\label{t:skew-MN-dual}
We assume that $H^\vee$ is commutative. (Equivalently, $H$ is cocommutative under Hopf duality.) For all $\lambda,\eta\in\mathcal I$, one has in $\widehat{H\otimes H^\vee}$:
\begin{equation}\label{e:skew-MN-dual}
\E\;\bigl(f_{\lambda/\eta}\otimes 1_{H^\vee}\bigr)
=\sum_{\rho,\mu\in\mathcal I} f_{\rho/\mu}\otimes S^\vee(g_{\eta/\mu})\,g_{\rho/\lambda}.
\end{equation}
\end{thm}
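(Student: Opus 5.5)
The plan is to derive the identity from the skew Cauchy identity (Proposition~\ref{p:skew-cauchy-dual}) together with the orthogonality relation (Lemma~\ref{l:orthogonality-dual}). Commutativity of $H^\vee$ is used to move factors of the second tensor leg past one another.

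\textbf{Step 1.} Apply Proposition~\ref{p:skew-cauchy-dual} with $\mu$ replaced by a summation index $\mu$:
\[
\E\sum_{\tau} f_{\lambda/\tau}\otimes g_{\mu/\tau}=\sum_{\rho} f_{\rho/\mu}\otimes g_{\rho/\lambda}.
\]
Multiply both sides on the right by $1_H\otimes S^\vee(g_{\eta/\mu})$, then sum over $\mu\in\mathcal I$.

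On the right-hand side this produces
\[
\sum_{\rho,\mu} f_{\rho/\mu}\otimes g_{\rho/\lambda}\,S^\vee(g_{\eta/\mu}).
\]
Since $H^\vee$ is commutative, this equals the right-hand side of \eqref{e:skew-MN-dual}.

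\textbf{Step 2.} On the left-hand side, rearrange to
\[
\E\sum_{\tau} f_{\lambda/\tau}\otimes\Bigl(\sum_{\mu} g_{\mu/\tau}\,S^\vee(g_{\eta/\mu})\Bigr).
\]
Again by commutativity, the inner sum equals $\sum_\mu S^\vee(g_{\eta/\mu})\,g_{\mu/\tau}$. By Lemma~\ref{l:orthogonality-dual} this is $\delta_{\eta,\tau}1_{H^\vee}$, so the whole left-hand side collapses to $\E\,(f_{\lambda/\eta}\otimes 1_{H^\vee})$. This gives the theorem.

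\textbf{Main obstacle: legitimacy of the rearrangement in the completion.} The sum over $\mu$ must be well defined in $\widehat{H\otimes H^\vee}$, and interchanging the sums over $\tau$ and $\mu$ with multiplication by $\E$ must be justified. I would check this degreewise.

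First, $g_{\eta/\mu}$ is nonzero only when $\deg g_\mu\le\deg g_\eta$, so for fixed $\eta$ only finitely many $\mu$ contribute. Second, $g_{\mu/\tau}$ vanishes unless $\deg\tau\le\deg\mu$, so only finitely many $\tau$ appear. Hence all the sums are finite sums of elements of the completion. Right multiplication by a fixed element of $H\otimes H^\vee$ is continuous, so distributing it over the completed sum $\E$ is legitimate.

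Commutativity is needed only in the $H^\vee$ leg, which is where it was used above. No cocommutativity of $H$ itself is needed beyond what the hypothesis provides.
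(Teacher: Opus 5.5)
Your proposal is correct and is essentially the paper's proof: multiply the skew Cauchy identity (Proposition~\ref{p:skew-cauchy-dual}) by $1\otimes S^\vee(g_{\eta/\mu})$, sum over $\mu$, and collapse the $\tau$-sum with Lemma~\ref{l:orthogonality-dual}. The only difference is cosmetic: the paper multiplies on the left and uses commutativity of $H^\vee$ to move $1\otimes S^\vee(g_{\eta/\mu})$ past $\E$, whereas you multiply on the right and use commutativity only inside the $H^\vee$ leg; your degreewise-finiteness check is a small addition that the paper leaves implicit.
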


\begin{proof}
Start from the skew Cauchy identity (Proposition~\ref{p:skew-cauchy-dual}), which is valid for every $\mu\in\mathcal I$:
\[
\E\;\sum_{\tau\in\mathcal I} f_{\lambda/\tau}\otimes g_{\mu/\tau}
=\sum_{\rho\in\mathcal I} f_{\rho/\mu}\otimes g_{\rho/\lambda}.
\]
Multiply both sides on the left by $1\otimes S^\vee(g_{\eta/\mu})$ and sum over $\mu\in\mathcal I$.
Since $H^\vee$ is commutative, the element $1\otimes S^\vee(g_{\eta/\mu})$ commutes with $\E\in H\otimes H^\vee$, hence
\begin{align*}
\sum_{\mu} (1\otimes S^\vee(g_{\eta/\mu}))\,
\E\;\sum_{\tau} f_{\lambda/\tau}\otimes g_{\mu/\tau}
&=
\E\;\sum_{\tau} f_{\lambda/\tau}\otimes \sum_{\mu} S^\vee(g_{\eta/\mu})\,g_{\mu/\tau}.
\end{align*}
By Lemma~\ref{l:orthogonality-dual}, the inner sum equals $\delta_{\eta,\tau}\,1_{H^\vee}$, so the left-hand side becomes
\[
\E\;\sum_{\tau} f_{\lambda/\tau}\otimes \delta_{\eta,\tau}\,1_{H^\vee}
=\E\;\bigl(f_{\lambda/\eta}\otimes 1_{H^\vee}\bigr).
\]
On the other hand, the right-hand side becomes
\[
\sum_{\mu} (1\otimes S^\vee(g_{\eta/\mu}))\;\sum_{\rho} f_{\rho/\mu}\otimes g_{\rho/\lambda}
=\sum_{\rho,\mu} f_{\rho/\mu}\otimes S^\vee(g_{\eta/\mu})\,g_{\rho/\lambda}.
\]
Equating both sides yields \eqref{e:skew-MN-dual}.
\end{proof}

\begin{cor}[Straight special case]\label{c:straight-MN-dual}
Let $\varnothing\in\mathcal I$ denote the index of the unit elements $f_{\varnothing}=1_H$ and $g_{\varnothing}=1_{H^\vee}$.
Then the straight version of \eqref{e:skew-MN-dual} is
\begin{equation}\label{e:straight-dual}
\E\;\bigl(f_{\lambda}\otimes 1_{H^\vee}\bigr)
=\sum_{\rho\in\mathcal I} f_{\rho}\otimes g_{\rho/\lambda}.
\end{equation}
\end{cor}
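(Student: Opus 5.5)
The plan is to specialize Theorem~\ref{t:skew-MN-dual} at $\eta=\varnothing$; the only work is computing the skew elements $f_{\lambda/\varnothing}$ and $g_{\varnothing/\mu}$. Since the statement is the straight case, I would also note that it follows independently from Proposition~\ref{p:skew-cauchy-dual} with $\mu=\varnothing$, which avoids the commutativity hypothesis altogether. I would present that route as a check.

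First I determine $f_{\lambda/\varnothing}$. Apply $(\id\otimes\varepsilon)$ to $\Delta(f_\lambda)=\sum_\mu f_{\lambda/\mu}\otimes f_\mu$. By the counit axiom this gives $f_\lambda=\sum_\mu \varepsilon(f_\mu)f_{\lambda/\mu}$. Since $H$ is graded connected and the bases are homogeneous, $\varepsilon(f_\mu)=\langle f_\mu,1_{H^\vee}\rangle=\langle f_\mu,g_\varnothing\rangle=\delta_{\mu\varnothing}$. Here $\varepsilon(a)=\langle a,1\rangle$ follows from the Hopf pairing axioms together with $H_0=F$; alternatively, $\varepsilon$ kills positive degree and $\varepsilon(1)=1$. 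Hence $f_{\lambda/\varnothing}=f_\lambda$.

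Second I determine $g_{\varnothing/\mu}$. The element $g_\varnothing=1_{H^\vee}$ is grouplike, so $\Delta^\vee(1)=1\otimes1=g_\varnothing\otimes g_\varnothing$. By linear independence, $g_{\varnothing/\mu}=\delta_{\mu\varnothing}1_{H^\vee}$. Then $S^\vee(g_{\varnothing/\mu})=\delta_{\mu\varnothing}1_{H^\vee}$, because the antipode fixes the unit.

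Substituting $\eta=\varnothing$ into \eqref{e:skew-MN-dual} collapses the $\mu$-sum to $\mu=\varnothing$. Using $f_{\rho/\varnothing}=f_\rho$ once more yields \eqref{e:straight-dual}. For the alternative route, set $\mu=\varnothing$ in \eqref{e:skew-cauchy-dual}. The left side becomes $\E\sum_\tau f_{\lambda/\tau}\otimes g_{\varnothing/\tau}=\E(f_\lambda\otimes1)$ by the second step, and the right side becomes $\sum_\rho f_\rho\otimes g_{\rho/\lambda}$ by the first step. The only point requiring any care is the counit evaluation $\varepsilon(f_\mu)=\delta_{\mu\varnothing}$, which rests on connectedness and the normalization $f_\varnothing=1_H$.
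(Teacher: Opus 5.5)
Your proposal is correct and follows the paper's proof: specialize Theorem~\ref{t:skew-MN-dual} at $\eta=\varnothing$ and use $\Delta^\vee(1_{H^\vee})=1\otimes 1$ to collapse the $\mu$-sum. Your counit argument also makes explicit the identity $f_{\lambda/\varnothing}=f_\lambda$, which the paper uses without comment, and your alternative check via Proposition~\ref{p:skew-cauchy-dual} at $\mu=\varnothing$ is valid and shows that the straight identity does not require $H^\vee$ to be commutative.
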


\begin{proof}
Take $\eta=\varnothing$ in \eqref{e:skew-MN-dual}.
Since $\Delta^\vee(1_{H^\vee})=1\otimes 1$, the skew elements satisfy $g_{\varnothing/\mu}=0$ unless $\mu=\varnothing$, and
$g_{\varnothing/\varnothing}=1_{H^\vee}$.
Hence $S^\vee(g_{\varnothing/\mu})=\delta_{\mu,\varnothing}\,1_{H^\vee}$ and the sum over $\mu$ collapses to $\mu=\varnothing$:
\[
\E\;(f_{\lambda}\otimes 1_{H^\vee})
=\sum_{\rho} f_{\rho/\varnothing}\otimes g_{\rho/\lambda}
=\sum_{\rho} f_{\rho}\otimes g_{\rho/\lambda},
\]
which is \eqref{e:straight-dual}.
\end{proof}

\chapter{Classical special cases: \texorpdfstring{$H=H^\vee=\Lambda_F$}{}}\label{s:special-cases-Lambda}

\section{Hopf algebra of symmetric function} 
Throughout this chapter, $F$ denotes a field of characteristic zero and we set $\Lambda_F=\Lambda\otimes_{\mathbb Q}F$.
We regard $\Lambda_F$ as a graded vector space $\Lambda_F=\bigoplus_{n\ge0}\Lambda_F^n$. Let us recall the standard Hopf structure on $\Lambda_{F}$.
\begin{defn}[Hopf algebra structure on $\Lambda_F$]\label{d:hopf-structure-LambdaF}
We view $\Lambda_F$ as a graded connected Hopf algebra
\[
(\Lambda_F,m,u,\Delta,\varepsilon,S),
\]
where $\Lambda_F=\bigoplus_{n\ge0}\Lambda_F^n$ is graded by total degree and $\Lambda_F^0=F\cdot 1$.

\noindent\textit{(1) Product and unit.}
The product is the usual multiplication
\[
m:\Lambda_F\otimes \Lambda_F\to \Lambda_F,\qquad m(f\otimes g)=fg,
\]
and the unit map is
\[
u:F\to \Lambda_F,\qquad u(c)=c\cdot 1.
\]

\noindent\textit{(2) Coproduct.}
The coproduct is the unique graded algebra morphism
\[
\Delta:\Lambda_F\to \Lambda_F\otimes \Lambda_F
\]
determined by
\[
\Delta(p_n)=p_n\otimes 1+1\otimes p_n\qquad(n\ge1),
\]
where $\{p_n\}_{n\ge1}$ are the power-sum symmetric functions.

Equivalently, for any alphabet decomposition $X+Y$ and any $f\in\Lambda_F$, one has
\begin{equation}\label{e:coproduct-pleth-def}
(\Delta f)[X,Y]=f[X+Y],
\end{equation}
where if $\Delta f=\sum_i f_i^{(1)}\otimes f_i^{(2)}$ then
\[
(\Delta f)[X,Y]:=\sum_i f_i^{(1)}[X]\;f_i^{(2)}[Y].
\]

\noindent\textit{(3) Counit.}
The counit is the graded algebra morphism
\[
\varepsilon:\Lambda_F\to F,\qquad \varepsilon(f)=f[\mathbf 0],
\]
where $\mathbf 0:=0+0+\cdots$ is the zero alphabet.

\noindent\textit{(4) Antipode.}
The antipode is the unique graded anti-algebra morphism
\[
S:\Lambda_F\to \Lambda_F
\]
determined by
\[
S(p_n)=-p_n\qquad(n\ge1).
\]
Equivalently, for any $f\in\Lambda_F$,
\begin{equation}\label{e:antipode-pleth-def}
(Sf)[X]=f[-X].
\end{equation}

\noindent\textit{(5) Hopf axioms.}
The maps $(m,u,\Delta,\varepsilon,S)$ satisfy the standard Hopf algebra axioms:
$\Delta$ is coassociative, $\varepsilon$ is a counit, and $S$ satisfies
\[
m(S\otimes \id)\Delta=u\varepsilon
\qquad\text{and}\qquad
m(\id\otimes S)\Delta=u\varepsilon.
\]
\end{defn}

If we restrict ourselves to $H=H^\vee=\Lambda_F$, then the condition for Hopf pairing becomes
\begin{equation}\label{e:hopf-pairing-dual-Lambda}
\langle ab,c\rangle=\langle a\otimes b,\ \Delta(c)\rangle,
\qquad
\langle a,bc\rangle=\langle \Delta(a),\ b\otimes c\rangle
\quad
(a,b,c\in \Lambda_F),
\end{equation} 
and the Cauchy element becomes\footnote{The index set becomes the set of partition $\P$}
\[
\E := \sum_{\lambda\in\P} f_\lambda\otimes g_\lambda,
\quad\text{and}\quad
\E[X|Y]:=\sum_{\lambda\in\P} f_\lambda[X]\;g_\lambda[Y].
\]

We use the results in the previous chapter to obtain:
\begin{thm}[Skew Murnaghan--Nakayama for $\Lambda_F$]\label{t:skew-M-N-Lambda}
For $\eta\subseteq\lambda$,
\begin{align}\label{e:skew-M-N-Lambda}
\E[X|Y]\;f_{\lambda/\eta}[X]
=\sum_{\rho,\mu\in\P} g_{\rho/\lambda}[Y]\;g_{\eta/\mu}[-Y]\;f_{\rho/\mu}[X].
\end{align}
\end{thm}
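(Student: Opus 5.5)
This is a direct specialization of Theorem~\ref{t:skew-MN-dual} to $H=H^\vee=\Lambda_F$, followed by a translation into plethystic language. The hypothesis of that theorem holds because $\Lambda_F$ is commutative (and cocommutative). So for any Hopf-dual pair $(f_\lambda),(g_\lambda)$ with respect to a Hopf pairing satisfying \eqref{e:hopf-pairing-dual-Lambda}, we get the identity in $\widehat{\Lambda_F\otimes\Lambda_F}$
\[
\E\,(f_{\lambda/\eta}\otimes 1)=\sum_{\rho,\mu\in\P} f_{\rho/\mu}\otimes S(g_{\eta/\mu})\,g_{\rho/\lambda}.
\]
The remaining work is to evaluate both sides under the map $a\otimes b\mapsto a[X]\,b[Y]$.

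The first step is to check that this evaluation is an algebra homomorphism $\Lambda_F\otimes\Lambda_F\to F[[X,Y]]$, and that it extends continuously to the degree completion. It is multiplicative on pure tensors, and it is graded, so each homogeneous piece of the completion maps to a homogeneous polynomial of bounded degree. Under this map, $\E$ goes to $\E[X|Y]=\sum_\lambda f_\lambda[X]g_\lambda[Y]$, and the product $\E\cdot(f_{\lambda/\eta}\otimes1)$ goes to $\E[X|Y]\,f_{\lambda/\eta}[X]$.

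The second step is to use the plethystic description of the antipode, \eqref{e:antipode-pleth-def}, which gives $S(g_{\eta/\mu})[Y]=g_{\eta/\mu}[-Y]$. Each term on the right-hand side therefore becomes $f_{\rho/\mu}[X]\,g_{\eta/\mu}[-Y]\,g_{\rho/\lambda}[Y]$. Reordering the commuting factors gives exactly \eqref{e:skew-M-N-Lambda}.

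The only point needing care is the meaning of the infinite sum over $\rho,\mu$. Everything is degreewise finite: in each total degree only finitely many $\rho$ contribute, since $f_{\rho/\mu}$ and $g_{\rho/\lambda}$ vanish unless degrees match. So the identity holds coefficientwise in $F[[X,Y]]$. The hypothesis $\eta\subseteq\lambda$ is not needed for the proof; it just restricts to the cases of interest.

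I expect no genuine obstacle. The only step to verify explicitly is the compatibility of the evaluation map with the completion and with the antipode.
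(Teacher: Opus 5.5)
Your proposal is correct and is essentially the paper's argument: the paper obtains this theorem directly from Theorem~\ref{t:skew-MN-dual} with $H=H^\vee=\Lambda_F$. It uses the commutativity of $\Lambda_F$, evaluates under $a\otimes b\mapsto a[X]\,b[Y]$, and applies the plethystic antipode $S(g)[Y]=g[-Y]$, exactly as you do. Your added remarks on degreewise finiteness and continuity of the evaluation simply make explicit what the paper leaves implicit.
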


Define the adjoint operator $ f^{\perp} $ (with respect to $ \langle\cdot,\cdot\rangle $) by
\[
\langle f g,h\rangle=\langle g,f^{\perp}h\rangle\qquad\text{for all }g,h,
\]
and set
\[
\E^{\perp}[X|Y]:=\sum_{\la\in\P} f^{\perp}_{\la}\;g_{\la}[Y],
\]
where $f^{\perp}_{\la}$ acts on symmetric functions in the $X$-alphabet.

\begin{cor}
The straight Murnaghan--Nakayama rule and its dual version hold:
\begin{align}
\label{e:straight-Lambda}
\E[X|Y]\;f_{\la}[X]&=\sum_{\rho} g_{\rho/\la}[Y]\;f_{\rho}[X],\\
\label{e:straight-dual-Lambda}
\E^{\perp}[X|Y]\;g_{\rho}[X]&=\sum_{\la} g_{\rho/\la}[Y]\;g_{\la}[X].
\end{align}
\end{cor}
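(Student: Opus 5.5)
The first identity \eqref{e:straight-Lambda} is the specialization $\eta=\varnothing$ of Theorem~\ref{t:skew-M-N-Lambda}, or equivalently Corollary~\ref{c:straight-MN-dual} read through the plethystic dictionary. Since $\Lambda_F$ is commutative and cocommutative, Theorem~\ref{t:skew-MN-dual} applies. With $\eta=\varnothing$ we have $g_{\varnothing/\mu}=\delta_{\mu,\varnothing}$, so $g_{\varnothing/\mu}[-Y]=\delta_{\mu,\varnothing}$ and the sum over $\mu$ collapses. Also $f_{\rho/\varnothing}=f_\rho$, because $(\id\otimes\varepsilon)\Delta=\id$ and $\varepsilon(f_\mu)=\delta_{\mu\varnothing}$, as in the proof of Lemma~\ref{l:orthogonality-dual}. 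Evaluating the first tensor factor at $X$ and the second at $Y$, together with $S(f)[Y]=f[-Y]$ from \eqref{e:antipode-pleth-def}, then gives \eqref{e:straight-Lambda}.

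For the dual identity \eqref{e:straight-dual-Lambda}, I will work directly with adjoints rather than through a second Cauchy element. Fix $\rho$ and expand $f_\lambda^\perp g_\rho$ in the basis $\{g_\nu\}$; the coefficient of $g_\nu$ is $\langle f_\nu, f_\lambda^\perp g_\rho\rangle=\langle f_\lambda f_\nu, g_\rho\rangle$. By the Hopf pairing axiom \eqref{e:hopf-pairing-dual-Lambda} this equals $\langle f_\lambda\otimes f_\nu,\Delta(g_\rho)\rangle$. Using the expansion $\Delta(g_\rho)=\sum_{\tau} g_{\rho/\tau}\otimes g_\tau$, this becomes $\langle f_\lambda, g_{\rho/\nu}\rangle$. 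Hence
\[
f_\lambda^\perp g_\rho=\sum_{\nu}\langle f_\lambda,g_{\rho/\nu}\rangle\, g_\nu .
\]
Multiplying by $g_\lambda[Y]$, summing over $\lambda$, and applying dual-basis reconstruction $\sum_\lambda\langle f_\lambda,y\rangle g_\lambda=y$ (in each degree) with $y=g_{\rho/\nu}$ gives $\E^\perp[X|Y]\,g_\rho[X]=\sum_\nu g_{\rho/\nu}[Y]\,g_\nu[X]$. Renaming $\nu$ to $\lambda$ yields \eqref{e:straight-dual-Lambda}. All sums are finite because $\rho$ is fixed and everything is graded.

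The only delicate point is bookkeeping: $\Delta$ must be used in the form $\Delta(g_\rho)=\sum g_{\rho/\tau}\otimes g_\tau$, with the skew index in the first leg, consistent with Definition~\ref{d:skew-elements-dual}. Cocommutativity of $\Lambda_F$ removes any ambiguity about which leg carries the skew element, so I expect no real obstacle. A slicker alternative would be to note that $\E^\perp$ realizes $\Delta$ composed with evaluation, giving $g_\rho[X+Y]$ via \eqref{e:coproduct-pleth-def}, but the direct computation above is self-contained.
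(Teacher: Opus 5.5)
Your proof is correct. For \eqref{e:straight-Lambda} you do what the paper does: set $\eta=\varnothing$ in Theorem~\ref{t:skew-M-N-Lambda}.

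For \eqref{e:straight-dual-Lambda} your route is different. The paper derives it from \eqref{e:straight-Lambda} by adjunction. Pairing with $g_\rho[X]$ gives $g_{\rho/\la}[Y]=\langle \E[X|Y]\,f_\la,\,g_\rho\rangle=\langle f_\la,\,\E^{\perp}[X|Y]\,g_\rho\rangle$. So the dual rule is literally the adjoint of the straight rule, and its proof rests on the Cauchy-element machinery (grouplike factorization and the skew Cauchy identity). You instead compute $f_\la^{\perp}g_\rho=\sum_\nu\langle f_\la,g_{\rho/\nu}\rangle\,g_\nu$ directly from the axiom $\langle ab,c\rangle=\langle a\otimes b,\Delta(c)\rangle$ and the definition of skew elements. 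This is just the adjoint form of Lemma~\ref{l:skew-structure-constants-dual}. You then resum over $\la$ by dual-basis reconstruction in $Y$.

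Your version buys independence. It uses neither \eqref{e:straight-Lambda}, nor the factorization of $\E$, nor any commutativity or cocommutativity, since you used the skew leg consistently. The same computation therefore goes through for an arbitrary graded Hopf dual pair. The paper's version buys brevity once the first identity is in hand, and it shows transparently why the two formulas form an adjoint pair.

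Your suggested shortcut also works. For every choice of $\beta$ one has $\E^{\perp}[X|Y]=\exp\bigl(\sum_{n\ge1}p_n[Y]\,\partial/\partial p_n[X]\bigr)$. Hence $\E^{\perp}[X|Y]\,g_\rho[X]=g_\rho[X+Y]$ by \eqref{e:plethystic-translation}, and this is the form in which the paper later uses the identity in \eqref{e:Mac-shift}.
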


\begin{proof}
Taking $ \eta=\varnothing $ in \eqref{e:skew-M-N-Lambda} gives \eqref{e:straight-Lambda}. Using duality,
\[
g_{\rho/\la}[Y]
=\big\langle \E[X|Y]\,f_{\la}[X],\,g_{\rho}[X]\big\rangle
=\big\langle f_{\la}[X],\,\E^{\perp}[X|Y]\,g_{\rho}[X]\big\rangle,
\]
which implies \eqref{e:straight-dual-Lambda}.
\end{proof}

Fix nonzero scalars $\beta_n\in F^\times$ for $n\ge1$, put
$\beta_\la:=\prod_{j=1}^{\ell(\la)}\beta_{\la_j}$, and equip $\Lambda_F$ with the graded bilinear form
\[
\langle p_{\la},p_{\mu}\rangle_{\beta}
=\delta_{\la,\mu}\,z_{\la}\beta_\la.
\]
This form is nondegenerate and is a Hopf pairing. Indeed, since $\Delta$ is an algebra morphism and $\Delta(p_n)=p_n\otimes 1+1\otimes p_n$, for every partition $\mu$ we have
\[
\Delta(p_\mu)
=\sum_{\nu\sqcup\tau=\mu}
\left(\prod_{r\ge1}\binom{m_r(\mu)}{m_r(\nu)}\right)
p_\nu\otimes p_\tau,
\]
where $\nu\sqcup\tau=\mu$ means that the multiset of parts of $\mu$ is the disjoint union of those of $\nu$ and $\tau$.
Using the tensor product pairing
$\langle a\otimes b,\ c\otimes d\rangle_{\beta}=\langle a,c\rangle_{\beta}\,\langle b,d\rangle_{\beta}$,
both sides of
\[
\langle p_\nu p_\tau,\ p_\mu\rangle_{\beta}
=\langle p_\nu\otimes p_\tau,\ \Delta(p_\mu)\rangle_{\beta},
\]
vanish unless $\nu\sqcup\tau=\mu$. In the remaining case, the identity follows from $\beta_\mu=\beta_\nu\beta_\tau$ and
\[
\left(\prod_{r\ge1}\binom{m_r(\mu)}{m_r(\nu)}\right)z_\nu z_\tau
=z_\mu,
\]
which is immediate from $z_\lambda=\prod_{r\ge1}r^{m_r(\lambda)}m_r(\lambda)!$.
The second identity $\langle p_\nu,\ p_\tau p_\mu\rangle=\langle\Delta(p_\nu),\ p_\tau\otimes p_\mu\rangle$
is proved similarly.
Consequently, the above form is a nondegenerate graded Hopf pairing on $\Lambda_F$. In particular, the Hall inner product corresponds to $\beta_n=1$, while for the $q,t$-Hall inner product \eqref{e:innerprod} one has
$\beta_n=\dfrac{1-q^n}{1-t^n}$.

Moreover, the Cauchy element has the uniform exponential form
\[
\E[X|Y]
=\sum_{\lambda\in\P}\frac{p_\lambda[X]p_\lambda[Y]}{z_\lambda\beta_\lambda}
=\exp\!\left(\sum_{n\ge1}\frac{1}{n\,\beta_n}\,p_n[X]\,p_n[Y]\right).
\]

\bigskip
We now start from the Macdonald specialization of \eqref{e:skew-M-N-Lambda}.

Let $\beta_n=\dfrac{1-q^n}{1-t^n}$ and $ (f_{\la},g_{\la})=(P_{\la}[X;q,t],Q_{\la}[X;q,t]) $ be the Macdonald polynomial. Then 
\begin{align}
    \E[X|Y]=\sum_{\la\in\P}P_{\la}[X;q,t]Q_{\la}[Y;q,t]
    =\exp\left(\sum_{n=1}^{\infty}\frac{1}{n}\frac{1-t^n}{1-q^n}p_n[X]p_n[Y]\right).
\end{align}
In this case, \eqref{e:skew-M-N-Lambda} becomes
\begin{multline}\label{e:ge-skew-Mac}
 \exp\left(\sum_{n=1}^{\infty}\frac{1}{n}\frac{1-t^n}{1-q^n}p_n[X]p_n[Y]\right)P_{\la/\eta}[X;q,t]\\
=\sum_{\rho,\mu}Q_{\rho/\la}[Y;q,t]\;Q_{\eta/\mu}[-Y;q,t]\;P_{\rho/\mu}[X;q,t].   
\end{multline}

\section{Skew Pieri rules.} \label{ss:skew-P}
We obtain skew Pieri-type expansions by specializing the $Y$-alphabet in \eqref{e:ge-skew-Mac} to one-variable (virtual) alphabets.
Here we use the standard plethystic convention that $\frac{1-q}{1-t}$ denotes the virtual alphabet $(1-q)(1+t+t^2+\cdots)$, so that
$p_n\!\left[\frac{1-q}{1-t}\right]=\frac{1-q^n}{1-t^n}$.
Choose $ Y=\frac{q-1}{1-t}z $, $ Y=\frac{1-q}{1-t}z $ and $ Y=z+0+0+\cdots $. Then \eqref{e:ge-skew-Mac} reduces respectively to
\begin{multline}\label{e:e}
   \exp\left(\sum_{n=1}^{\infty}\frac{-1}{n}p_n[X]z^n\right)P_{\la/\eta}[X;q,t]\\
   =\sum_{\rho,\mu}Q_{\rho/\la}\left[\frac{q-1}{1-t}z;q,t\right]Q_{\eta/\mu}\left[\frac{1-q}{1-t}z;q,t\right]P_{\rho/\mu}[X;q,t]\\
=\sum_{\rho,\mu}z^{|\rho/\la|+|\eta/\mu|}Q_{\rho/\la}\left[\frac{q-1}{1-t};q,t\right]Q_{\eta/\mu}\left[\frac{1-q}{1-t};q,t\right]P_{\rho/\mu}[X;q,t],  
\end{multline}

\begin{multline}\label{e:h}
   \exp\left(\sum_{n=1}^{\infty}\frac{1}{n}p_n[X]z^n\right)P_{\la/\eta}[X;q,t]\\
=\sum_{\rho,\mu}z^{|\rho/\la|+|\eta/\mu|}Q_{\rho/\la}\left[\frac{1-q}{1-t};q,t\right]Q_{\eta/\mu}\left[\frac{q-1}{1-t};q,t\right]P_{\rho/\mu}[X;q,t] 
\end{multline}
and
\begin{multline}\label{e:g}
  \exp\left(\sum_{n=1}^{\infty}\frac{1}{n}\frac{1-t^n}{1-q^n}p_n[X]z^n\right)P_{\la/\eta}[X;q,t]\\
=\sum_{\rho,\mu}z^{|\rho/\la|+|\eta/\mu|}Q_{\rho/\la}[1;q,t]Q_{\eta/\mu}[-1;q,t]P_{\rho/\mu}[X;q,t].
\end{multline}
We substitute $ z\rightarrow -z $ in \eqref{e:e}:
\begin{multline}\label{e:e2}
   \exp\left(\sum_{n=1}^{\infty}\frac{(-1)^{n-1}}{n}p_n[X]z^n\right)P_{\la/\eta}[X;q,t]\\
=\sum_{\rho,\mu}(-z)^{|\rho/\la|+|\eta/\mu|}Q_{\rho/\la}\left[\frac{q-1}{1-t};q,t\right]Q_{\eta/\mu}\left[\frac{1-q}{1-t};q,t\right]P_{\rho/\mu}[X;q,t].  
\end{multline}
Recall the generating functions of the elementary symmetric functions, complete symmetric functions, and one-row Macdonald functions:
\begin{align}
\label{e:gen-e}
     \sum_{k\geq 0}e_k(X)z^k=&\exp\left(\sum_{n\geq 1}\frac{(-1)^{n-1}}{n}p_n[X]z^n\right),\\\label{e:gen-h}
    \sum_{k\geq 0}h_k(X)z^k=&\exp\left(\sum_{n\geq 1}\frac{1}{n}p_n[X]z^n\right),\\\label{e:gene-g}
   \sum_{k\geq 0}g_k(X;q,t)z^k=&\exp\left(\sum_{n\geq 1}\frac{1}{n}\frac{1-t^n}{1-q^n}p_n[X]z^n\right). 
\end{align}
Substituting \eqref{e:gen-e} into \eqref{e:e2}, \eqref{e:gen-h} into \eqref{e:h}, and \eqref{e:gene-g} into \eqref{e:g}, respectively, and taking the coefficient of $ z^n $ implies that
\begin{multline}\label{e:eP}
e_n(X)P_{\la/\eta}(X;q,t)\\
=\sum_{\rho,\mu}(-1)^{|\rho/\la|+|\eta/\mu|}Q_{\rho/\la}\left[\frac{q-1}{1-t};q,t\right]Q_{\eta/\mu}\left[\frac{1-q}{1-t};q,t\right]P_{\rho/\mu}[X;q,t],
\end{multline}
\begin{align}\label{e:hP}
h_n(X)P_{\la/\eta}(X;q,t)=&\sum_{\rho,\mu}Q_{\rho/\la}\left[\frac{1-q}{1-t};q,t\right]Q_{\eta/\mu}\left[\frac{q-1}{1-t};q,t\right]P_{\rho/\mu}[X;q,t],\\\label{e:gP}
    g_n(X;q,t)P_{\la/\eta}(X;q,t)=&\sum_{\rho,\mu}Q_{\rho/\la}[1;q,t]Q_{\eta/\mu}[-1;q,t]P_{\rho/\mu}[X;q,t],
\end{align}
where, in all three sums, $ \rho\supset\la $, $ \mu\subset\eta $ and $ |\rho/\la|+|\eta/\mu|=n $.

We remark that these three formulas were first derived in \cite[Theorem 1.2]{War13} using the $ q $-binomial theorem for Macdonald polynomials \cite{LW11}. (The notation there differs; see \cite[(1.7a)–(1.7e)]{War13} for a dictionary.) The corresponding Hall--Littlewood cases can be found in \cite[Theorems 2--4]{KL13}. Next, we treat the Schur and Schur $ Q $ cases. For this, we need the following specializations.

\begin{lem}\label{l:specializations}
    Let $ \mu\subset\la $ be two partitions. Then
   \begin{align}\label{e:s[1]}
       s_{\la/\mu}[1]=&
       \begin{cases}
           1, &\text{if }\la/\mu\text{ is a horizontal strip},\\
           0, &\text{otherwise,}
       \end{cases}\\\label{e:s[-1]}
       s_{\la/\mu}[-1]=&
       \begin{cases}
           (-1)^{|\la/\mu|}, &\text{if }\la/\mu\text{ is a vertical strip},\\
           0, &\text{otherwise,}
       \end{cases}
   \end{align}
   If, in addition, $\la$ and $\mu$ are strict partitions, then
   \begin{align}\label{e:Q[1]}
       Q_{\la/\mu}[1]=&
       \begin{cases}
           2^{a(\la/\mu)}, &\text{if }\la/\mu\text{ is a horizontal strip},\\
           0, &\text{otherwise,}
       \end{cases}\\\label{e:Q[-1]}
       Q_{\la/\mu}[-1]=&
       \begin{cases}
           (-1)^{|\la/\mu|}2^{a(\la/\mu)}, &\text{if }\la/\mu\text{ is a horizontal strip},\\
           0, &\text{otherwise,}
       \end{cases}
   \end{align}
   where $ a(\la/\mu) $ is the number of integers $ i\geq 1 $ such that $ \la/\mu $ has a box in the $ i^{\rm th} $ column but not in the $ (i+1)^{\rm st} $ column.
\end{lem}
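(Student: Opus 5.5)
The plan is to read off all four specializations from the tableau definitions, interpreting $f[1]$ as evaluation at the one-variable alphabet $X=x_1$ with $x_1=1$ and all other variables set to $0$. The negative specializations $f[-1]$ are then reduced to the positive ones by Lemma~\ref{l:plethysm}.

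For \eqref{e:s[1]}, set $x_1=1$ and $x_i=0$ for $i\ge2$ in $s_{\la/\mu}(X)=\sum_T x^{\wt(T)}$. Only SSYT $T$ of shape $\la/\mu$ filled entirely with the letter $1$ survive, each contributing $1$. Column strictness allows at most one box per column to carry a $1$, and row weak increase imposes no further condition. So such a filling exists, and is unique, exactly when $\la/\mu$ is a horizontal strip. This gives \eqref{e:s[1]}.

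For \eqref{e:s[-1]}, apply the consequence of Lemma~\ref{l:plethysm} recorded after it, $s_{\la/\mu}[-X]=(-1)^{|\la/\mu|}s_{\la^t/\mu^t}[X]$, at $X=1$. Combined with \eqref{e:s[1]}, this leaves the condition that $\la^t/\mu^t$ is a horizontal strip, which is equivalent to $\la/\mu$ being a vertical strip.

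For \eqref{e:Q[1]}, specialize $Q_{\la/\mu}(X)=\sum_U 2^{\kappa(U)}x^U$ the same way. Only shifted tableaux $U$ of shape $S(\la/\mu)$ with every entry equal to $1$ contribute. In the diagonal-strict unmarked model, such a filling is allowed exactly when no NW–SE diagonal contains two boxes. For a shifted skew diagram this is the condition that $\la/\mu$ is a horizontal strip in the sense used for strict partitions, i.e. $\la_1\ge\mu_1\ge\la_2\ge\mu_2\ge\cdots$. The weight is then $2^{\kappa(U)}$, where $\kappa(U)$ is the number of connected components (ribbons, here rows or border pieces) of $S(\la/\mu)$. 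I expect the main obstacle to be matching $\kappa(U)$ with $a(\la/\mu)$, the number of $i$ such that the skew shape has a box in column $i$ but none in column $i+1$. I would argue that in a shifted horizontal strip each connected component is a ribbon whose columns form a contiguous interval, and that distinct components occupy disjoint column intervals separated by at least one empty column. Interlacing forces this: two boxes in adjacent columns but different rows either lie on a common diagonal or are edge-adjacent. So components correspond bijectively to maximal runs of occupied columns, and each run ends at exactly one column $i$ with column $i+1$ empty. This gives $\kappa=a(\la/\mu)$. I would state this identification carefully, possibly checking the convention with a small example such as $\la=(3,1)$, $\mu=(2)$.

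Finally, \eqref{e:Q[-1]} follows from \eqref{e:Q[1]} together with the recorded identity $Q_{\la/\mu}[-X]=(-1)^{|\la/\mu|}Q_{\la/\mu}[X]$. That identity holds because $Q_{\la/\mu}$ lies in the subring generated by odd power sums, on which $\omega$ acts trivially, so Lemma~\ref{l:plethysm} applies.
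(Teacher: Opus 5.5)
Your proposal follows the same route as the paper (one-variable evaluation of the tableau definitions, only the all-$1$ filling survives, and the $[-1]$ cases reduced to the $[1]$ cases via Lemma~\ref{l:plethysm} and $\omega(Q_{\la/\mu})=Q_{\la/\mu}$), and it is correct apart from one point in the $\kappa=a(\la/\mu)$ step. Read in the shifted diagram, your dichotomy ``adjacent columns, different rows $\Rightarrow$ common diagonal or edge-adjacent'' misses SW--NE neighbours: your own test case $\la=(3,1)$, $\mu=(2)$ gives $S(\la/\mu)=\{(1,3),(2,2)\}$, two components in adjacent columns. So the columns must be taken in the unshifted diagram $\la/\mu$. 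There, interlacing and strictness force two boxes in different rows and adjacent columns $c,c+1$ into consecutive rows $i+1,i$ with $c=\la_{i+1}=\mu_i$, and these boxes become vertically adjacent after shifting; conversely, edge-adjacent boxes of $S(\la/\mu)$ always occupy adjacent unshifted columns. This gives the bijection between components and maximal runs of occupied columns, hence $\kappa=a(\la/\mu)$.
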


\begin{proof}
\textbf{Schur case.}
In one variable $ X=1 $, any SSYT of shape $ \la/\mu $ must be the all-$1$ filling.
Column-strictness forces at most one box per column, i.e.\ $ \la/\mu $ is a horizontal strip; then the all-$1$ tableau is unique, so $ s_{\la/\mu}[1]=1 $, and otherwise it is $0$.
By Lemma~\ref{l:plethysm} and $ \omega(s_{\la/\mu})=s_{\la^t/\mu^t} $,
\[
s_{\la/\mu}[-1]=(-1)^{|\la/\mu|}\,s_{\la^t/\mu^t}[1].
\]
This is nonzero if and only if $ \la^t/\mu^t $ is horizontal, i.e.\ if and only if $ \la/\mu $ is vertical, yielding the stated value.

\textbf{Schur $ Q $ case.}
With the shifted tableau definition, at $ X=1 $ a contributing tableau must be the all-$1$ filling of the shifted skew shape $ S(\la/\mu) $.
In this case, admissibility is equivalent to (after de-shifting) the ordinary skew shape $ \la/\mu $ being a horizontal strip.
When $\la/\mu$ is horizontal, the all-$1$ filling is admissible and its number of ribbon components equals the number of right ends, namely
$ \kappa(\la/\mu)=a(\la/\mu) $.
Hence $ Q_{\la/\mu}[1]=2^{a(\la/\mu)} $; otherwise it vanishes.
Using Lemma~\ref{l:plethysm} and the fact that $ \omega(Q_{\la/\mu})=Q_{\la/\mu} $,
\[
Q_{\la/\mu}[-1]=(-1)^{|\la/\mu|}\,\omega\!\big(Q_{\la/\mu}\big)[1]
= (-1)^{|\la/\mu|}\,Q_{\la/\mu}[1],
\]
and the claim follows from the $ [1] $-case.
\end{proof}

By Lemma \ref{l:specializations}, specializing \eqref{e:eP}–\eqref{e:gP} at $ (q,t)=(0,0) $ gives\footnote{At $(q,t)=(0,0)$, \eqref{e:hP} and \eqref{e:gP} coincide.}
\begin{align}\label{e:es}
    e_n(X)s_{\la/\eta}(X)=&\sum_{\rho,\mu}(-1)^{|\rho/\la|+|\eta/\mu|}s_{\rho/\la}[-1]s_{\eta/\mu}[1]s_{\rho/\mu}(X)
    \ =\ \sum_{\rho,\mu}(-1)^{|\eta/\mu|}s_{\rho/\mu}(X),\\\label{e:hs}
    h_n(X)s_{\la/\eta}(X)=&\sum_{\rho,\mu}s_{\rho/\la}[1]s_{\eta/\mu}[-1]s_{\rho/\mu}(X)
    \ =\ \sum_{\rho,\mu}(-1)^{|\eta/\mu|}s_{\rho/\mu}(X).
\end{align}
The sum in \eqref{e:es} (respectively, \eqref{e:hs}) runs over all $ \rho\supset\la $ and $ \mu\subset\eta $ such that $ |\rho/\la|+|\eta/\mu|=n $, with $ \rho/\la $ a vertical (respectively, horizontal) strip and $ \eta/\mu $ a horizontal (respectively, vertical) strip. 

Equations \eqref{e:hs} and \eqref{e:es} are the skew Pieri rules for Schur functions, first obtained in \cite[Theorem 3.2 and Corollary 3.3]{AM11} by a purely combinatorial method.

For the Schur $Q$ case, let $\la$ and $\eta$ be strict partitions. Specializing \eqref{e:gP} at $(q,t)=(0,-1)$ and applying Lemma~\ref{l:specializations} gives
\begin{multline}\label{e:gSP}
    q_n(X)P_{\la/\eta}(X)=\sum_{\rho,\mu}Q_{\rho/\la}[1]Q_{\eta/\mu}[-1]P_{\rho/\mu}(X)\\
    =\sum_{\rho,\mu}(-1)^{|\eta/\mu|}2^{a(\rho/\la)+a(\eta/\mu)}P_{\rho/\mu}(X).
\end{multline}
Here the sum runs over strict partitions $\rho\supset\la$ and $\mu\subset\eta$ such that $|\rho/\la|+|\eta/\mu|=n$ and both $\rho/\la$ and $\eta/\mu$ are horizontal strips, and $q_n(X):=g_n(X;0,-1)$ is the one-row Schur $Q$-function.

\section{Skew Murnaghan-Nakayama rules.} \label{ss:skew-MN}
\phantomsection\label{def:q-parameter}
Choose $Y=(\q-1)z$ in \eqref{e:ge-skew-Mac}\footnote{Here we view $\q$ as a parameter.}. Then
\begin{multline}\label{e:ge-skew-M-N}
 \exp\left(\sum_{n=1}^{\infty}\frac{1}{n}\frac{1-t^n}{1-q^n}p_n[X](\q^n-1)z^n\right)P_{\la/\eta}[X;q,t]\\
=\sum_{\rho,\mu}z^{|\rho/\la|+|\eta/\mu|}Q_{\rho/\la}[\q-1;q,t]Q_{\eta/\mu}[1-\q;q,t]P_{\rho/\mu}[X;q,t].   
\end{multline}
This is equivalent to
\begin{multline}\label{e:ge-skew-M-N2}
 \exp\left(\sum_{n=1}^{\infty}\frac{1}{n}\frac{1-t^n}{1-q^n}p_n[(\q-1)X]z^n\right)P_{\la/\eta}[X;q,t]\\
=\sum_{\rho,\mu}z^{|\rho/\la|+|\eta/\mu|}Q_{\rho/\la}[\q-1;q,t]Q_{\eta/\mu}[1-\q;q,t]P_{\rho/\mu}[X;q,t].   
\end{multline}
By \eqref{e:gene-g}, we have
\begin{align}
   \exp\left(\sum_{m=1}^{\infty}\frac{1}{m}\frac{1-t^m}{1-q^m}p_m[(\q-1)X]z^m\right)
   =\sum_{n\ge 0} g_n[(\q-1)X;q,t]\,z^n.
\end{align}
For $n\ge1$, the specialization $g_n[(\q-1)X;q,t]$ is divisible by $\q-1$, and we set
\[
\tilde g_n[(\q-1)X;q,t]:=\frac{1}{\q-1}g_n[(\q-1)X;q,t]\qquad(n\ge1).
\]
Equivalently,
\begin{align}\label{e:gene-g2}
\sum_{n\ge1}\tilde g_n[(\q-1)X;q,t]\,z^n
=\frac{\exp\left(\sum_{m=1}^{\infty}\frac{1}{m}\frac{1-t^m}{1-q^m}p_m[(\q-1)X]z^m\right)-1}{\q-1}.
\end{align}
Substituting \eqref{e:gene-g2} into \eqref{e:ge-skew-M-N2} and comparing the coefficient of $z^n$ gives $(n\geq1)$
\begin{align}\label{e:skew-M-N-Mac}
 \tilde{g}_n[(\q-1)X;q,t]P_{\la/\eta}[X;q,t]=\sum_{\rho,\mu}\frac{Q_{\rho/\la}[\q-1;q,t]Q_{\eta/\mu}[1-\q;q,t]}{\q-1}P_{\rho/\mu}[X;q,t]  
\end{align}
summed over all partitions $ \rho\supset\la $ and $ \mu\subset\eta $ such that $ |\rho/\la|+|\eta/\mu|=n $.

We remark that \eqref{e:skew-M-N-Mac} is the skew Murnaghan--Nakayama rule for Macdonald polynomials, which reduces to the straight Murnaghan--Nakayama rule \cite[Corollary 3.2]{JL24} when $\eta=\varnothing$. The normalization by $\frac{1}{\q-1}$ is required for the computation of irreducible characters (see also Chapter \ref{s:GF-Hecke}). Moreover, the special cases for Schur functions and Schur $Q$-functions admit clean combinatorial models. We now introduce these models.

A skew diagram is a {\em generalized ribbon} if it contains no $2\times 2$ blocks of boxes (no connectivity is required). Any generalized ribbon is a union of its connected components, each of which is a ribbon. If the generalized ribbon $\K$ has $m$ connected components $(\xi_1,\xi_2,\ldots,\xi_m)$, we define its weight by
$$
\wt_{\q}(\K)= (\q-1)^{m}\prod\limits_{i=1}^{m}(-1)^{r(\xi_{i})-1}\q^{c(\xi_i)-1}.
$$
Here $r(\xi_i)$ (resp.\ $c(\xi_i)$) denotes the number of rows (resp.\ columns) in the ribbon $\xi_i$. See Figure \ref{fig:generalized-ribbon} for an example.
\begin{figure}
    \centering
\begin{tikzpicture}[scale = 0.4]
  \begin{scope}
    \clip (0,0) -| (1,2) -| (3,3) -| (4,4) -| (5,4)  -| (7,5) -| (9,6) -| (6,5) -| (5,4) -| (2,3) -| (1,2) -| (0,0);
    \draw [color=black!25] (0,0) grid (9,6);
  \end{scope}
   \draw [thick] (0,0) -| (1,2) -| (3,3) -| (4,4) -| (5,4)  -| (7,5) -| (9,6) -| (6,5) -| (5,4) -| (2,3) -| (1,2) -| (0,0);
   \draw[thick] (7,6) -| (0,6) -| (0,2);
  \draw [thick, rounded corners] (0.5,0.5) -- (0.5,1.5);
  \draw [color=black,fill=black,thick] (0.5,1.5) circle (.5ex);
  \node [draw, circle, fill = white, inner sep = 1.5pt] at (0.5,0.5) { };
  \draw [thick, rounded corners] (1.5,2.5) -- (2.5,2.5) -- (2.5,3.5) -- (3.5,3.5);
  \draw [color=black,fill=black,thick] (3.5,3.5) circle (.5ex);
  \node [draw, circle, fill = white, inner sep = 1.5pt] at (1.5,2.5) { };
  \draw [thick, rounded corners] (5.5,4.5) -- (6.5,4.5) -- (6.5,5.5) -- (8.5,5.5);
  \draw [color=black,fill=black,thick] (8.5,5.5) circle (.5ex);
  \node [draw, circle, fill = white, inner sep = 1.5pt] at (5.5,4.5) { };
 \end{tikzpicture}
 \caption{$\la=(9,7,4,3,1,1)$, $\mu=(6,5,2,1)$. This generalized ribbon has three connected components with weight $-\q^5(\q-1)^3$.} \label{fig:generalized-ribbon}
 \end{figure}

Let $\pi=\la/\mu$ be a skew shape determined by strict partitions $\mu\subset\la$. It is a {\em generalized double strip} if, in the shifted diagram $S(\pi)$, every NW--SE diagonal contains at most two boxes.
Such a $\pi$ decomposes (not necessarily into connected pieces) into two parts: 
$\alpha(\pi)$, consisting of all diagonals of length $2$, and $\beta(\pi)$, consisting of the remaining boxes (those lying on diagonals of length $1$). 
Then $\beta(\pi)$ is a union of its connected components, each of which is a ribbon. Denote by $m(\pi)$ the number of connected components of $\beta(\pi)$.

Let $\tau=(\tau_1,\tau_2,\cdots,\tau_a)$ and $\rho=(\rho_1,\rho_2,\cdots,\rho_b)$ be two compositions. We say that $\rho$ is a {\it refinement} of $\tau$, denoted $\rho\prec\tau$, if there exists $i_0=0< i_1< i_2<\cdots< i_{a-1}< b=i_{a}$ such that
$\tau_j=\rho_{i_{j-1}+1}+\rho_{i_{j-1}+2}+\cdots+\rho_{i_j}$ for $j=1,2,\cdots,a$. 
Equivalently, $\tau$ is a {\it coarsening} of $\rho$. In particular, $\tau\prec\tau$.
Note that a given composition has only finitely many coarsenings.

Given a generalized double strip $\la/\mu$, suppose $\beta(\la/\mu)$ has $m$ connected components denoted by $\gamma^{(1)},\cdots,\gamma^{(m)}$. We define its weight by
\begin{align}
\tilde{\wt}_{\q}(\la/\mu)=(-\q)^{\frac{|\alpha(\la/\mu)|}{2}}2^{\delta_{\ell(\la),\ell(\mu)+2}}\prod_{i=1}^{m}\left(\sum_{\gamma^{(i)}\prec\tau}(-1)^{\ell(\gamma^{(i)})-\ell(\tau)}d_{\tau}\right),
\end{align}
where $d_{\tau}=d_{\tau_1}d_{\tau_2}\cdots$ and $d_k=2(\q-1)\frac{\q^k-(-1)^k}{\q+1}$. Figure \ref{fig:generalized-double} shows an example of a generalized double strip.
\begin{figure}
    \centering
\begin{tikzpicture}[scale = 0.4]
  \begin{scope}
    \clip (0,0) -| (1,1) -| (2,2) -| (3,6) -| (4,7)  -| (7,9) -| (6,8) -| (4,7) -| (2,6) -| (1,2) -| (1,2) -| (-1,1) -| (0,1) -| (0,0);
    \draw [color=black!25] (0,0) grid (7,9);
  \end{scope}
   \draw [thick] (0,0) -| (1,1) -| (2,2) -| (3,6) -| (4,7)  -| (7,9) -| (6,8) -| (4,7) -| (2,6) -| (1,2) -| (1,2) -| (-1,1) -| (0,1) -| (0,0);
   \draw[thick] (6,9) -| (-8,9) |- (-7,8) |- (-6,7) |- (-5,6) |- (-4,5) |- (-3,4) |- (-2,3) |- (-1,2);
  \draw [thick, rounded corners] (0.5,1.5) -- (1.5,1.5) -- (1.5,2.5);
  \draw [color=black,fill=black,thick] (1.5,2.5) circle (.5ex);
  \node [draw, circle, fill = white, inner sep = 1.5pt] at (0.5,1.5) { };
  \draw [thick, rounded corners] (2.5,5.5) -- (2.5,6.5) -- (3.5,6.5);
  \draw [color=black,fill=black,thick] (3.5,6.5) circle (.5ex);
  \node [draw, circle, fill = white, inner sep = 1.5pt] at (2.5,5.5) { };
  \draw [thick, rounded corners] (4.5,7.5) -- (6.5,7.5) -- (6.5,8.5);
  \draw [color=black,fill=black,thick] (6.5,8.5) circle (.5ex);
  \node [draw, circle, fill = white, inner sep = 1.5pt] at (4.5,7.5) { };
  \node at (0.5,0.5) {$*$}; \node at (-0.5,1.5) {$*$}; \node at (2.5,2.5) {$*$}; \node at (1.5,3.5) {$*$}; \node at (2.5,3.5) {$*$}; \node at (1.5,4.5) {$*$}; \node at (2.5,4.5) {$*$}; \node at (1.5,5.5) {$*$}; 
 \end{tikzpicture}
 \caption{$\la=(15,14,10,8,7,6,5,3,1)$, $\mu=(14,11,8,6,5,4,3)$. The boxes of $\alpha(\la/\mu)$ are marked by $*$. $\beta(\la/\mu)$ has three connected components. The weight of $\la/\mu$ is $(-\q)^4\cdot 2\cdot (d_2d_1-d_3)^2\cdot (d_1d_3-d_4)=16\q^4(\q-1)^6(\q^2-3\q+1)^2$.} \label{fig:generalized-double}
 \end{figure}

The following lemma provides a bridge from plethystic specializations to the combinatorial models above.
\begin{lem}\label{l:h-1}
    Let $\la/\mu$ be a skew diagram. Then
    \begin{align}
        s_{\la/\mu}[\q-1]=&
        \begin{cases}
            \wt_{\q}(\la/\mu), &\text{if $\la/\mu$ is a generalized ribbon},\\
            0, &\text{otherwise.}
        \end{cases}
    \end{align}
    If $\la$ and $\mu$ are strict partitions, then
    \begin{align}
        Q_{\la/\mu}[\q-1]=&
        \begin{cases}
           \tilde{\wt}_{\q}(\la/\mu), &\text{if $\la/\mu$ is a generalized double strip},\\
           0, &\text{otherwise.} 
        \end{cases}
    \end{align}
\end{lem}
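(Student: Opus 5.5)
The plan is to compute both specializations through the coproduct, using the plethystic alphabet decomposition $\q-1=\q+(-1)$. By \eqref{e:coproduct-pleth-def} and the skew sum rule (Lemma~\ref{l:sumrule-dual}) for skew Schur functions,
\[
s_{\la/\mu}[\q-1]=\sum_{\mu\subset\nu\subset\la} s_{\la/\nu}[\q]\,s_{\nu/\mu}[-1].
\]
By Lemma~\ref{l:specializations} and homogeneity, $s_{\la/\nu}[\q]=\q^{|\la/\nu|}$ when $\la/\nu$ is a horizontal strip and $0$ otherwise, while $s_{\nu/\mu}[-1]=(-1)^{|\nu/\mu|}$ when $\nu/\mu$ is a vertical strip. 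So $s_{\la/\mu}[\q-1]$ is a signed count of the ways to split $\la/\mu$ into an inner vertical strip $\nu/\mu$ and an outer horizontal strip $\la/\nu$. If $\la/\mu$ contains a $2\times2$ block, no such split exists: its upper-right box cannot lie in the inner vertical strip (two boxes in one row) or in the outer horizontal strip (the box below it would then also be outer, giving two outer boxes in one column). Hence the specialization vanishes unless $\la/\mu$ is a generalized ribbon.

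For a generalized ribbon the sum factors over connected components, because the admissibility conditions only involve adjacent boxes. It therefore suffices to treat a single ribbon $\xi$ with $r$ rows and $c$ columns and show that the sum equals $(\q-1)(-1)^{r-1}\q^{c-1}$. Walk along $\xi$ from its starting box to its ending box. The admissible splits are determined locally. A box with a box to its right, other than the first box of its row, is forced to be outer. A box with a box above it, other than the first box of its column, is forced to be inner. The only free choices occur at the corner boxes. A direct count should show that all choices cancel except a telescoping pair, which gives the factor $\q-1$ times $\q^{c-1}(-1)^{r-1}$.

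I would make this cancellation rigorous with a sign-reversing involution that toggles the status of the first free box. An alternative is induction on $|\xi|$: adding a box at the end of the ribbon multiplies the weight by $\q$ if the step is horizontal and by $-1$ if it is vertical. The base case $s_{(1)}[\q-1]=\q-1$ completes the induction.

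For Schur $Q$-functions I would split the alphabet the same way:
\[
Q_{\la/\mu}[\q-1]=\sum_{\nu} Q_{\la/\nu}[\q]\,Q_{\nu/\mu}[-1],
\]
where the coproduct of $Q$ is expressed in the $Q$/$P$ conventions. By \eqref{e:Q[1]} and \eqref{e:Q[-1]}, both factors are supported on horizontal strips in the shifted sense, with weights $\q^{|\la/\nu|}2^{a(\la/\nu)}$ and $(-1)^{|\nu/\mu|}2^{a(\nu/\mu)}$. A union of two such strips has at most two boxes on each NW--SE diagonal, which gives the vanishing statement. On a length-$2$ diagonal the inner box is forced to be of the $[-1]$ type and the outer box of the $[\q]$ type, contributing $-\q$ per pair. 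A factor $2$ also arises when the strips create the extra row, which accounts for the term $2^{\delta_{\ell(\la),\ell(\mu)+2}}$.

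The main obstacle is the component factor for $\beta(\pi)$. On each ribbon component $\gamma$, the sum over splits weighted by powers of $2$ must equal $\sum_{\gamma\prec\tau}(-1)^{\ell(\gamma)-\ell(\tau)}d_\tau$. I would establish this by using the one-row values $Q_{(k)}[\q-1]=d_k$, computed from $\prod(1+x)/(1-x)$ at the alphabet $\q-1$, together with a Jacobi--Trudi-type (ribbon) determinant expansion of $Q_\gamma$ in terms of one-row functions. Expanding that determinant produces exactly the alternating sum over coarsenings.
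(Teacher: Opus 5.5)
Your proof of the first identity is correct, and it is a self-contained alternative to the paper, which proves nothing here and simply quotes \cite[Lemma~4.1]{JL24} and \cite[Corollary~3.10]{JL23}. The split $s_{\la/\mu}[\q-1]=\sum_\nu s_{\la/\nu}[\q]\,s_{\nu/\mu}[-1]$, the $2\times2$ obstruction and the factorization over components are all right, and your induction finishes the argument.

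Your description of the local structure is wrong, however. In a ribbon every box except the starting box is forced:
\begin{itemize}
\item a box entered by a horizontal step must lie in $\la/\nu$;
\item a box entered by a vertical step must lie in $\nu/\mu$;
\item neither condition constrains the earlier boxes.
\end{itemize}
So each component has exactly two splits, differing only at the starting box, and they add up to $(\q-1)\q^{c-1}(-1)^{r-1}$. There are no free choices at corners, nothing cancels, and no involution is needed.

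For the $Q$ part your strategy can be made to work, but the distribution of the powers of $2$ is a genuine gap, and as you state it, it is false. (Note that in \eqref{e:Q[1]}--\eqref{e:Q[-1]} a horizontal strip means at most one box per NW--SE diagonal of the shifted diagram, not per column.) A split has weight $\q^{|\la/\nu|}(-1)^{|\nu/\mu|}2^{a(\la/\nu)+a(\nu/\mu)}$, where $a$ counts the diagonals on which a strip has a box but has none on the next diagonal. This statistic is not additive over the components of $\beta(\pi)$. If a component is followed by an $\alpha$-diagonal, both strips occupy that diagonal, so neither has a right end at the component's last diagonal. Its local sum is then only half of $\sum_{\gamma\prec\tau}(-1)^{\ell(\gamma)-\ell(\tau)}d_\tau$.

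For example, take $\la=(4,2)$ and $\mu=(1)$. Then $\alpha=\{(1,2),(2,3)\}$, the $\beta$-components are $\{(2,2)\}$ and $\{(1,3),(1,4)\}$, and $\tilde{\wt}_{\q}=-\q\,d_1d_2$. Yet the local sum over $\{(2,2)\}$ is $\q-1=d_1/2$. The missing $2$ is the right end that one strip always has at the $\alpha$-diagonal $1$. Likewise, ``a factor $2$ arises when the strips create the extra row'' does not explain $2^{\delta_{\ell(\la),\ell(\mu)+2}}$.

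What is needed uses the strictness of $\la$ and $\mu$:
\begin{itemize}
\item[(i)] boxes on consecutive single diagonals are adjacent, so the $\beta$-components are exactly the maximal runs of single diagonals;
\item[(ii)] a double diagonal is never followed by an empty one, because $\la_i>\la_{i+1}$, and it is preceded by an empty one only if it is diagonal $0$, because $\mu_i>\mu_{i+1}$ or $\mu_{i+1}=0$;
\item[(iii)] the containment conditions between a $\beta$-box and its neighbouring $\alpha$-boxes never restrict the $\beta$-choices (the inner $\alpha$-box lies to its left or above, the outer one below or to its right).
\end{itemize}
Then the exponent of $2$ equals $\sum_\gamma(1+c_\gamma)+\delta_{\ell(\la),\ell(\mu)+2}$. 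Here $c_\gamma$ is the number of adjacent pairs in $\gamma$ split between $\nu/\mu$ and $\la/\nu$, and the excess $\delta$ comes from a run of double diagonals starting at diagonal $0$, which has no preceding $\beta$-run to absorb its right end.

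Only after this does the sum factor as $(-\q)^{|\alpha|/2}2^{\delta}\prod_\gamma Q_\gamma[\q-1]$. Your final step also needs a stated justification. $Q$ of a shifted skew diagram depends only on its boxes, by the marked-tableau model. For an ordinary skew shape $\gamma$ it equals $S_\gamma(X;-1)$, which is why a ribbon Jacobi--Trudi expansion in the $Q_{(k)}$ holds here; for general shifted shapes one gets Pfaffians instead.
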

\begin{proof}
    The proofs of these two identities can be found in \cite[Lemma 4.1]{JL24} and \cite[Corollary 3.10]{JL23}, respectively.
    For the first identity, see also \cite[Equation (6.7)]{HR95} and \cite[Equation 9]{Las05}.
\end{proof}

The skew quantum rules for Schur functions and Schur $Q$-functions are stated as follows.
\begin{prop}\label{p:skew-M-N-Schur-SchurQ}
Let $n\ge1$ and set
\[
\tilde{h}_n=\frac{1}{\q-1}h_n,
\qquad
\tilde{q}_n=\frac{1}{\q-1}q_n.
\]
For partitions $\eta\subset\la$, we have
\begin{align}\label{e:skew-M-N-Schur}
        \tilde{h}_n[(\q-1)X]s_{\la/\eta}(X)
        =\sum_{\rho,\mu}(-1)^{|\eta/\mu|}\frac{\wt_{\q}(\rho/\la)\wt_{\q}(\eta^t/\mu^t)}{\q-1}s_{\rho/\mu}(X),
\end{align}
where the sum ranges over all partitions $\rho,\mu$ such that both $\rho/\la$ and $\eta^t/\mu^t$ are generalized ribbons and $|\rho/\la|+|\eta/\mu|=n$.

If $\la$ and $\eta$ are strict partitions, then
\begin{align}\label{e:skew-M-N-Schur-Q}
        \tilde{q}_n[(\q-1)X]P_{\la/\eta}(X)
        =\sum_{\rho,\mu}(-1)^{|\eta/\mu|}\frac{\tilde{\wt}_{\q}(\rho/\la)\tilde{\wt}_{\q}(\eta/\mu)}{\q-1}P_{\rho/\mu}(X),
\end{align}
where the sum ranges over all strict partitions $\rho,\mu$ such that both $\rho/\la$ and $\eta/\mu$ are generalized double strips and $|\rho/\la|+|\eta/\mu|=n$.
\end{prop}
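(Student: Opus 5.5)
Both identities are specializations of the Macdonald skew Murnaghan--Nakayama rule \eqref{e:skew-M-N-Mac}, or more directly of \eqref{e:ge-skew-M-N2} before dividing by $\q-1$. For the Schur case I will take $(q,t)=(0,0)$. Then $P_{\la/\eta}(X;0,0)=Q_{\la/\eta}(X;0,0)=s_{\la/\eta}(X)$, and $g_n(X;0,0)=h_n(X)$. Hence $\tilde g_n[(\q-1)X;0,0]=\tilde h_n[(\q-1)X]$, and \eqref{e:skew-M-N-Mac} becomes
\[
\tilde h_n[(\q-1)X]\,s_{\la/\eta}(X)=\sum_{\rho,\mu}\frac{s_{\rho/\la}[\q-1]\,s_{\eta/\mu}[1-\q]}{\q-1}\,s_{\rho/\mu}(X).
\]
Lemma~\ref{l:h-1} evaluates the first factor: it is $\wt_{\q}(\rho/\la)$ when $\rho/\la$ is a generalized ribbon, and $0$ otherwise. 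For the second factor I use Lemma~\ref{l:plethysm} with $X$ replaced by the virtual alphabet $\q-1$; the identity $f[-A]=(-1)^{\deg f}(\omega f)[A]$ holds for any alphabet $A$, since the proof only uses $p_k[-A]=-p_k[A]$. This gives
\[
s_{\eta/\mu}[1-\q]=(-1)^{|\eta/\mu|}s_{\eta^t/\mu^t}[\q-1].
\]
By Lemma~\ref{l:h-1} again, this equals $(-1)^{|\eta/\mu|}\wt_{\q}(\eta^t/\mu^t)$ if $\eta^t/\mu^t$ is a generalized ribbon, and vanishes otherwise. Substituting both evaluations gives \eqref{e:skew-M-N-Schur}, with the index set as stated.

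For the Schur $Q$ case I will take $(q,t)=(0,-1)$ and restrict to strict partitions. Here the point needing care is that the Hall--Littlewood functions at $t=-1$ are well defined on strict shapes. At $t=-1$ the Hall--Littlewood $P_\la(X;-1)$ and $Q_\la(X;-1)$ for strict $\la$ are the Schur $P$- and $Q$-functions. The skew versions $P_{\la/\eta}(X;-1)$ and $Q_{\la/\eta}(X;-1)$ then agree with the tableau definitions. Only strict $\rho$ and $\mu$ survive in the expansion, because the relevant subring $\Gamma$ generated by odd power sums is spanned by the $P_\la$ with $\la$ strict, and $\E$ at $t=-1$ lies in $\Gamma\otimes\Gamma$.

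Rather than specialize a general-$t$ identity, I plan to rerun Theorem~\ref{t:skew-M-N-Lambda} directly on the Hopf dual pair $(\Gamma,\Gamma)$ with dual bases $(P_\la,Q_\la)$ indexed by strict $\la$. Its Cauchy element is $\prod(1+x_iy_j)/(1-x_iy_j)$, whose exponential form has $\beta_n=1/(1-(-1)^n)$ on odd $n$. This avoids specializing $t$ altogether. With $g_n(X;0,-1)=q_n(X)$, the same extraction as in \eqref{e:skew-M-N-Mac} gives
\[
\tilde q_n[(\q-1)X]\,P_{\la/\eta}(X)=\sum_{\rho,\mu}\frac{Q_{\rho/\la}[\q-1]\,Q_{\eta/\mu}[1-\q]}{\q-1}P_{\rho/\mu}(X).
\]
Since $\omega Q_{\eta/\mu}=Q_{\eta/\mu}$, Lemma~\ref{l:plethysm} gives $Q_{\eta/\mu}[1-\q]=(-1)^{|\eta/\mu|}Q_{\eta/\mu}[\q-1]$. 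Applying Lemma~\ref{l:h-1} to both $Q$-factors then yields the weights $\tilde\wt_{\q}(\rho/\la)$ and $\tilde\wt_{\q}(\eta/\mu)$, nonvanishing exactly on generalized double strips, and this gives \eqref{e:skew-M-N-Schur-Q}.
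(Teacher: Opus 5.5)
Your Schur case is exactly the paper's argument: specialize \eqref{e:skew-M-N-Mac} at $(q,t)=(0,0)$, evaluate $s_{\rho/\la}[\q-1]$ by Lemma~\ref{l:h-1}, and rewrite $s_{\eta/\mu}[1-\q]$ as $(-1)^{|\eta/\mu|}s_{\eta^t/\mu^t}[\q-1]$.

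For the Schur $Q$ case your route is genuinely different. The paper simply specializes \eqref{e:skew-M-N-Mac} at $(q,t)=(0,-1)$. You instead apply the abstract Theorem~\ref{t:skew-MN-dual} directly to the Hopf dual pair $(\Gamma,\Gamma)$, where $\Gamma=F[p_1,p_3,p_5,\dots]$, with pairing $\langle p_\la,p_\mu\rangle=\delta_{\la\mu}z_\la 2^{-\ell(\la)}$ and dual bases $(P_\la,Q_\la)_{\la\in\mathscr S}$. Strictly speaking this is the abstract theorem rather than Theorem~\ref{t:skew-M-N-Lambda}, which is stated for $\Lambda_F$. It applies because $\Gamma$ is a commutative Hopf subalgebra (odd power sums are primitive and $S(p_n)=-p_n$), and the Hopf-pairing check is the same computation the paper does for $\langle\cdot,\cdot\rangle_\beta$.

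What your route buys is that the index set consists of strict partitions from the outset. In the paper's specialization, the $t$-Hall pairing degenerates at $t=-1$, since $b_\rho(-1)=0$ for non-strict $\rho$. Moreover, the coefficients of $P_{\rho/\mu}(X;0,t)$ in the $P_\sigma$-basis carry a factor $1/b_\rho(t)$. So the disappearance of the non-strict terms in the limit is left implicit by the one-line specialization. Your remark that $\E$ lies in $\Gamma\otimes\Gamma$ would not by itself settle that, but your direct route makes it unnecessary.

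The price is that you must identify the framework's skew elements $Q_\eta^{\perp}P_\la$ and $P_\mu^{\perp}Q_\rho$ with the tableau-defined $P_{\la/\eta}=2^{\ell(\eta)-\ell(\la)}Q_{\la/\eta}$ and $Q_{\rho/\mu}$ (Macdonald, Ch.~III, \S8). You need this identification anyway in order to invoke Lemma~\ref{l:h-1}.
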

\begin{proof}
   These two identities follow by specializing \eqref{e:skew-M-N-Mac} at $(q,t)=(0,0)$ and $(q,t)=(0,-1)$, respectively, and then applying Lemma \ref{l:h-1}.
\end{proof}

We next record the classical limit of these two skew quantum rules as $\q\to1$.
For the Schur $Q$ case, following \cite[Remark~3.12]{JL23}, we single out the generalized double strips that survive in the limit.
A generalized double strip $\theta$ with connected $\beta(\theta)$ will be called a \emph{classical spin strip}.
If $\alpha(\theta)=\varnothing$, it is a shifted border strip; if $\alpha(\theta)\ne\varnothing$, it is a double strip.
For such a strip, set
\[
\pi(\theta):=
\begin{cases}
(-1)^{\rht(\beta(\theta))}, & \text{if $\theta$ is a shifted border strip},\\[1mm]
2(-1)^{\,|\alpha(\theta)|/2+\rht(\beta(\theta))},
& \text{if $\theta$ is a double strip}.
\end{cases}
\]

\begin{cor}\label{c:classical-limit-skew-MN}
Let $\lambda/\eta$ be a skew diagram and let $n\ge1$.
\begin{enumerate}
    \item The Schur specialization of \eqref{e:skew-M-N-Schur} at $\q=1$ gives
    \begin{align}\label{e:classical-skew-MN-Schur}
    p_n(X)s_{\lambda/\eta}(X)
    =
    \sum_{\rho}(-1)^{\rht(\rho/\lambda)}s_{\rho/\eta}(X)
    -
    \sum_{\mu}(-1)^{\rht(\eta/\mu)}s_{\lambda/\mu}(X),
    \end{align}
    where the first sum is over all $\rho\supset\lambda$ such that $\rho/\lambda$ is an $n$-ribbon, and the second sum is over all $\mu\subset\eta$ such that $\eta/\mu$ is an $n$-ribbon.

    \item Suppose that $\lambda$ and $\eta$ are strict partitions and that $n$ is odd.
    The Schur $P/Q$ specialization of \eqref{e:skew-M-N-Schur-Q} at $\q=1$ gives
    \begin{align}\label{e:classical-skew-MN-SchurQ}
    p_n(X)P_{\lambda/\eta}(X)
    =
    \sum_{\rho}\pi(\rho/\lambda)P_{\rho/\eta}(X)
    -
    \sum_{\mu}\pi(\eta/\mu)P_{\lambda/\mu}(X),
    \end{align}
    where the first sum is over all strict partitions $\rho\supset\lambda$ such that $\rho/\lambda$ is a classical spin strip of size $n$, and the second sum is over all strict partitions $\mu\subset\eta$ such that $\eta/\mu$ is a classical spin strip of size $n$.
\end{enumerate}
\end{cor}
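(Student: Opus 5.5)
The plan is to take the limit $\q\to1$ in Proposition~\ref{p:skew-M-N-Schur-SchurQ}, after first identifying the left-hand sides at $\q=1$.

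\textbf{Left-hand side.} From \eqref{e:gene-g2} at $(q,t)=(0,0)$ we have
\[
\sum_{n\ge1}\tilde h_n[(\q-1)X]z^n=\frac{\exp\bigl(\sum_m \frac{\q^m-1}{m}p_m(X)z^m\bigr)-1}{\q-1}.
\]
Expanding the exponential, every term of degree $\ge2$ in the exponent is divisible by $(\q-1)^2$. Hence as $\q\to1$ the right side tends to $\sum_m \lim\frac{\q^m-1}{m(\q-1)}p_mz^m=\sum_m p_m z^m$, so $\tilde h_n[(\q-1)X]\to p_n$.

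The Schur $Q$ case uses $(q,t)=(0,-1)$, so the exponent is $\sum_m \frac{1-(-1)^m}{m}(\q^m-1)p_mz^m$. The same argument gives the limit $2p_n$ for $n$ odd. The right-hand side of \eqref{e:skew-M-N-Schur-Q} will therefore also carry a factor $2$, and this factor will be matched against the normalization of $d_k$ below.

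\textbf{Right-hand side, Schur case.} Since $\wt_{\q}(\K)$ carries the factor $(\q-1)^{m(\K)}$, the coefficient $\wt_{\q}(\rho/\la)\wt_{\q}(\eta^t/\mu^t)/(\q-1)$ has order $(\q-1)^{m_1+m_2-1}$. It survives at $\q=1$ only when $m_1+m_2=1$. In that case exactly one of the two shapes is empty and the other is a single connected ribbon of size $n$.

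\begin{itemize}
\item If $\mu=\eta$ and $\rho/\la$ is an $n$-ribbon $\xi$, the limit is $(-1)^{r(\xi)-1}=(-1)^{\rht(\rho/\la)}$.
\item If $\rho=\la$ and $\eta^t/\mu^t$ is an $n$-ribbon, the coefficient is $(-1)^{n}(-1)^{r-1}$, where $r$ is the number of rows of $\eta^t/\mu^t$, i.e.\ the number of columns $c$ of $\eta/\mu$. Since $r(\eta/\mu)+c-1=n$ for a ribbon, this equals $(-1)^{n+c-1}=(-1)^{r(\eta/\mu)}=-(-1)^{\rht(\eta/\mu)}$.
\end{itemize}

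This produces \eqref{e:classical-skew-MN-Schur}.

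\textbf{Right-hand side, Schur $Q$ case.} Each $d_k$ is divisible by $(\q-1)$, and
\[
\lim_{\q\to1}\frac{d_k}{\q-1}=2\cdot\frac{1-(-1)^k}{2}=1-(-1)^k.
\]
The factor in $\tilde\wt_{\q}$ attached to a component $\gamma$ is $\sum_{\gamma\prec\tau}\pm d_\tau$. This has order at least $(\q-1)^1$, with leading term coming from $\tau=(|\gamma|)$ only. Hence $\tilde\wt_{\q}(\theta)$ is of order $(\q-1)^{m(\theta)}$, and its $(\q-1)^{m(\theta)}$-coefficient is
\[
(-1)^{|\alpha|/2}\,2^{\delta}\prod_i (-1)^{\ell(\gamma^{(i)})-1}\bigl(1-(-1)^{|\gamma^{(i)}|}\bigr).
\]
As in the Schur case, only pairs with $m(\rho/\la)+m(\eta/\mu)=1$ survive. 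Here the empty shape is a generalized double strip with $m=0$ and weight $1$, and the other shape is a classical spin strip $\theta$ of size $n$.

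Since $n$ is odd, $|\alpha(\theta)|$ is even and $|\beta(\theta)|$ is odd, so the factor $1-(-1)^{|\beta|}$ equals $2$. This $2$ cancels the $2$ from the left side. The remaining factor $2^{\delta_{\ell+2}}$ distinguishes the double-strip case from the shifted-border-strip case. Next, $\ell(\gamma)-1$ (the composition $\gamma$ read by rows) is $\rht(\beta)$. Finally, the factor $(-1)^{|\eta/\mu|}=-1$ supplies the minus sign in the second sum. Together these give $\pi(\theta)$.

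\textbf{Main obstacle.} I expect the hardest part to be the bookkeeping in the $Q$ case. Specifically, I need to check that $2^{\delta_{\ell(\la),\ell(\mu)+2}}$ is exactly the factor $2$ that distinguishes double strips in $\pi(\theta)$, following \cite[Remark~3.12]{JL23}. I also need to confirm the convention by which $\gamma^{(i)}$ is regarded as a composition.
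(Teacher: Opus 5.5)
Your proposal follows the paper's proof step for step: the limits $\tilde h_n[(\q-1)X]\to p_n$ and $\tilde q_n[(\q-1)X]\to 2p_n$ ($n$ odd) come from the generating functions, after dividing by $\q-1$ only the terms in which exactly one shape is nonempty with a single connected component survive, and the removed ribbon carries the sign $(-1)^n(-1)^{c-1}=-(-1)^{\rht(\eta/\mu)}$. The only difference is that in the Schur $Q$ case the paper does not expand $\tilde{\wt}_{\q}$ but quotes \cite[Remark~3.12]{JL23} for $\lim_{\q\to1}\tilde{\wt}_{\q}(\theta)/(\q-1)=2\pi(\theta)$, which is exactly the bookkeeping you flag; both of your worries resolve easily anyway, since for a component of odd size $r+c-1$ one has $(-1)^{r-1}=(-1)^{c-1}$ (so the row/column convention for $\gamma$ is irrelevant), and a length-two diagonal off the main diagonal leaves no vertical $\beta$--$\beta$ adjacency between its two rows, so a classical spin strip with $\alpha\neq\varnothing$ must have $\ell(\rho)=\ell(\lambda)+2$, making $2^{\delta_{\ell(\rho),\ell(\lambda)+2}}$ precisely the extra factor $2$ in $\pi(\theta)$.
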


\begin{proof}
The left-hand sides follow from the generating functions.  Indeed,
\[
\sum_{n\ge0}h_n[(\q-1)X]z^n
=
\exp\left(\sum_{r\ge1}\frac{\q^r-1}{r}p_r[X]z^r\right),
\]
hence
\[
\lim_{\q\to1}\tilde h_n[(\q-1)X]=p_n[X]\qquad(n\ge1).
\]
Similarly,
\[
\sum_{n\ge0}q_n[(\q-1)X]z^n
=
\exp\left(2\sum_{\substack{r\ge1\\ r\ {\rm odd}}}\frac{\q^r-1}{r}p_r[X]z^r\right),
\]
and therefore
\[
\lim_{\q\to1}\tilde q_n[(\q-1)X]
=
\begin{cases}
2p_n[X], & \text{if $n$ is odd},\\
0, & \text{if $n$ is even}.
\end{cases}
\]

For the Schur formula, the factor $\wt_{\q}(\theta)$ is divisible by $(\q-1)^{m(\theta)}$, where $m(\theta)$ is the number of connected components of the generalized ribbon $\theta$.
Thus, after division by $\q-1$, only the terms in which exactly one of $\rho/\lambda$ and $\eta^t/\mu^t$ is a connected ribbon survive.
If $\rho/\lambda$ is an $n$-ribbon and $\mu=\eta$, the limit gives the coefficient $(-1)^{\rht(\rho/\lambda)}$.
If $\rho=\lambda$ and $\eta/\mu$ is an $n$-ribbon, then
\[
(-1)^n(-1)^{\rht((\eta/\mu)^t)}
=-(-1)^{\rht(\eta/\mu)},
\]
which gives the second sum in \eqref{e:classical-skew-MN-Schur}.

For the Schur $P/Q$ formula, \cite[Remark~3.12]{JL23} gives
\[
\lim_{\q\to1}\frac{\tilde{\wt}_{\q}(\theta)}{\q-1}
=
\begin{cases}
2\pi(\theta), & \text{if $\theta$ is a classical spin strip of odd size},\\
0, & \text{otherwise}.
\end{cases}
\]
Consequently, after division by $\q-1$, only the terms in which exactly one of $\rho/\lambda$ and $\eta/\mu$ is a classical spin strip survive.
Since the left-hand side tends to $2p_n(X)P_{\lambda/\eta}(X)$ for odd $n$, division by $2$ gives \eqref{e:classical-skew-MN-SchurQ}; the minus sign in the second sum comes from $(-1)^{|\eta/\mu|}=-1$.
\end{proof}

\begin{rem}
\begin{enumerate}
    \item Equation \eqref{e:skew-M-N-Schur} was first established in \cite[Theorem~5]{Kon12}. The statement there differs slightly from ours; it can be recovered from our formula by replacing $\q-1$ with $1-\q$. This normalization is compatible with the character-theoretic applications in Chapter~\ref{s:GF-Hecke}. The identity \eqref{e:skew-M-N-Schur} specializes to several known formulas, including the skew Pieri rule for Schur functions \cite{AM11}, the skew Murnaghan--Nakayama rule for Schur functions \cite[Theorem~3]{Kon12}, and the Murnaghan--Nakayama rule for Hecke algebras of type~$A$ \cite{Ram91,Van91,JL22b}; see \cite[p.~526]{Kon12} for details. The limit \eqref{e:classical-skew-MN-Schur} is the usual skew Murnaghan--Nakayama rule, equivalently \eqref{e:skew-ps}.
    \item Equation \eqref{e:skew-M-N-Schur-Q} may be viewed as a skew quantum spin analogue of \eqref{e:skew-M-N-Schur}, extending the Murnaghan--Nakayama rule for Hecke--Clifford algebras \cite{JL23}. Its classical limit \eqref{e:classical-skew-MN-SchurQ} gives the corresponding skew spin version, with the shifted border-strip and double-strip model inherited from \cite[Remark~3.12]{JL23}.
    \item In \cite{LLS11}, Lam--Lauve--Sottile gave a general skew Littlewood--Richardson rule for any pair of dual Hopf algebras. However, evaluating the coefficients that appear in these expansions is computationally difficult, which led Konvalinka to ask in \cite[§7.3]{Kon12} for which special shapes of $ \la,\mu,\sigma,\tau $\footnote{See \cite[Thms.~3.2 and 4.1]{LLS11} for the notation and conventions on $ \la,\mu,\sigma,\tau $.} the coefficients could be computed explicitly. For Schur functions, \eqref{e:skew-M-N-Schur} and \eqref{e:es}--\eqref{e:hs} answer respectively the cases $ (\sigma,\tau)=(\varnothing,\text{hook}) $ and $ (\sigma,\tau)=(\varnothing,\text{one column or one row}) $. For the Schur $P/Q$ case, \eqref{e:skew-M-N-Schur-Q} and \eqref{e:gSP} address respectively the cases $ (\sigma,\tau)=(\varnothing,\text{two rows}) $\footnote{$\tilde{q}_n[(\q-1)X]$ expands into Schur $Q$-functions indexed by two-row partitions.} and $ (\sigma,\tau)=(\varnothing,\text{one row}) $.
\end{enumerate}
\end{rem}

\chapter{More special cases}\label{s:more}

Throughout this chapter, $F$ denotes a field of characteristic zero.
We specialize the general skew Murnaghan--Nakayama rule (Theorem~\ref{t:skew-MN-dual}) to several other Hopf--dual settings arising in algebraic combinatorics.
Each specialization is obtained by choosing a concrete dual Hopf pair $(H,H^\vee)$ together with a Hopf--dual pair of homogeneous bases, and then evaluating the resulting identity in alphabets $X$ and $Y$.
We focus on three families of examples: the dual pair $(\mathrm{NSym},\mathrm{QSym})$ with (noncommutative/quasisymmetric) Schur bases indexed by compositions, the $k$-bounded subalgebra $\Lambda_{(k)}$ and its graded Hopf dual $\Lambda^{(k)}$ with $k$-Schur and dual $k$-Schur bases indexed by $k$-bounded partitions, and the type $C$ affine Grassmannian Hopf--dual pair $(\Gamma_{(n)},\Gamma^{(n)})$ with Schubert bases indexed by affine Grassmannian elements.

\section{Noncommutative Schur functions}
\label{ss:NSym-QSym-skewMN}
The Hopf algebra $\mathrm{NSym}$ was introduced in~\cite{GKL+95}.
It is graded Hopf-dual to $\mathrm{QSym}$; see also~\cite{MR95} and the modern
treatment in~\cite[Ch.~5]{GR20}.

Let
\[
H=\mathrm{NSym},\qquad H^\vee=\mathrm{QSym}
\]
over $F$.  The Hopf algebra $\mathrm{NSym}$ is the free associative algebra
\[
\mathrm{NSym}\cong F\langle H_1,H_2,\dots\rangle,\qquad \deg(H_r)=r,
\]
with product given by concatenation and unit $1$.
Its coproduct is the algebra morphism determined by
\[
\Delta(H_r)=\sum_{i+j=r} H_i\otimes H_j,\qquad H_0:=1,
\]
and the counit is determined by $\varepsilon(H_0)=1$ and $\varepsilon(H_r)=0$ for $r>0$.
The antipode $S$ is the unique linear map satisfying the antipode axioms.

The Hopf algebra $\mathrm{QSym}$ is the commutative Hopf algebra of quasisymmetric functions.
We regard $\mathrm{QSym}$ as the graded Hopf dual of $\mathrm{NSym}$; in particular, $\mathrm{QSym}$ is commutative, so the
commutativity hypothesis in Theorem~\ref{t:skew-MN-dual} holds.

Let the index set be the set $\mathcal I=\Comp$ of finite compositions, including the empty composition $\varnothing$.
Let $\{M_\alpha\}_{\alpha\in\Comp}$ denote the monomial basis of $\mathrm{QSym}$ and write $H_\alpha:=H_{\alpha_1}\cdots H_{\alpha_\ell}\in\mathrm{NSym}$.
The canonical graded Hopf pairing $\langle\cdot,\cdot\rangle_{\mathrm NQ}:\mathrm{NSym}\times\mathrm{QSym}\to F$ is characterized by
\[
\langle H_\alpha,\ M_\beta\rangle_{\mathrm NQ}=\delta_{\alpha\beta}\qquad(\alpha,\beta\in\Comp).
\]

For the present specialization we fix a Hopf-dual pair of homogeneous bases
\[
f_\alpha=\mathbf{s}_\alpha\in\mathrm{NSym},\qquad
g_\alpha=\mathcal{S}_\alpha\in\mathrm{QSym}\qquad(\alpha\in\Comp),
\]
where $\{\mathbf{s}_\alpha\}$ is the \emph{noncommutative Schur} basis of~\cite{BLvW11}
and $\{\mathcal{S}_\alpha\}$ is the \emph{quasisymmetric Schur} basis of~\cite{HLMvW11},
dual under the above pairing:
\[
\langle \mathbf{s}_\alpha,\ \mathcal{S}_\beta\rangle_{\mathrm NQ}=\delta_{\alpha\beta}.
\]

The completed Cauchy element is
\[
\E^{\mathrm{NQ}}
=\sum_{\alpha\in\Comp}H_\alpha\otimes M_\alpha
=\sum_{\alpha\in\Comp}\mathbf{s}_\alpha\otimes \mathcal S_\alpha
\ \in\ \widehat{\mathrm{NSym}\otimes\mathrm{QSym}}.
\]
Fix evaluation maps (algebra morphisms) $\ev_X:\mathrm{NSym}\to A_X$ and $\ev_Y:\mathrm{QSym}\to A_Y$
into commutative $F$-algebras $A_X,A_Y$.
We write $\mathbf{s}_\alpha[X]:=\ev_X(\mathbf{s}_\alpha)$ and $\mathcal{S}_\alpha[Y]:=\ev_Y(\mathcal{S}_\alpha)$, and set
\[
\E^{\mathrm{NQ}}[X\mid Y]
:=\sum_{\alpha\in\Comp}\mathbf{s}_\alpha[X]\ \mathcal{S}_\alpha[Y].
\]

As before, define skew elements by the coproduct expansions
\[
\Delta(\mathbf{s}_\lambda)=\sum_{\eta\in\Comp}\mathbf{s}_{\lambda/\eta}\otimes \mathbf{s}_\eta,
\qquad
\Delta^\vee(\mathcal{S}_\lambda)=\sum_{\eta\in\Comp}\mathcal{S}_{\lambda/\eta}\otimes \mathcal{S}_\eta.
\]

Here $S^\vee:\mathrm{QSym}\to\mathrm{QSym}$ is the Hopf antipode, i.e.\ the convolution inverse of $\id$:
\[
m^\vee(S^\vee\otimes\id)\Delta^\vee=u^\vee\varepsilon^\vee
\quad\text{and}\quad
m^\vee(\id\otimes S^\vee)\Delta^\vee=u^\vee\varepsilon^\vee.
\]
The standard monomial-basis formula~\cite[Thm.~5.1.11]{GR20} is
\[
S^\vee(M_\alpha)=(-1)^{\ell(\alpha)}\sum_{\mathrm{rev}(\alpha)\prec\beta} M_\beta,
\]
where $\mathrm{rev}(\alpha)$ is the reversed composition and $\prec$ is the refinement
relation introduced above; thus $\mathrm{rev}(\alpha)\prec\beta$ means that $\beta$ is a
\emph{coarsening} of $\mathrm{rev}(\alpha)$ (obtained by summing adjacent parts).
Thus $S^\vee(\mathcal{S}_{\eta/\mu})[Y]$ means: first compute $S^\vee(\mathcal{S}_{\eta/\mu})$ inside $\mathrm{QSym}$,
then apply the evaluation $\ev_Y$.

\begin{thm}[Skew MN rule in $(\mathrm{NSym},\mathrm{QSym})$]\label{t:skew-MN-rule-NQ}
For $(H,H^\vee)=(\mathrm{NSym}, \mathrm{QSym})$, we have the following skew MN rule for noncommutative Schur functions:
\begin{equation}\label{e:skew-MN-rule-NQ}
\E^{\mathrm{NQ}}[X|Y]\ \mathbf{s}_{\lambda/\eta}[X]
=\sum_{\rho,\mu\in\Comp}S^\vee\!\bigl(\mathcal{S}_{\eta/\mu}\bigr)[Y] \mathcal{S}_{\rho/\lambda}[Y]\mathbf{s}_{\rho/\mu}[X].    
\end{equation}
\end{thm}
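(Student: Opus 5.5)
The plan is to obtain \eqref{e:skew-MN-rule-NQ} as a direct specialization of Theorem~\ref{t:skew-MN-dual}, followed by an application of the evaluation maps. First I will check the hypotheses of Chapter~\ref{s:general-rule} for $(H,H^\vee)=(\mathrm{NSym},\mathrm{QSym})$.
\begin{enumerate}
\item Both algebras are graded connected Hopf algebras with finite-dimensional graded pieces; the degree-$n$ parts are indexed by compositions of $n$.
\item The pairing $\langle\cdot,\cdot\rangle_{\mathrm{NQ}}$ is a graded nondegenerate Hopf pairing. It is nondegenerate because $\{H_\alpha\}$ and $\{M_\alpha\}$ are dual bases, and it satisfies \eqref{e:hopf-pairing-dual} by the standard duality of $\mathrm{NSym}$ and $\mathrm{QSym}$ \cite[Ch.~5]{GR20}.
\item The pair $(\mathbf{s}_\alpha,\mathcal S_\alpha)$ is a Hopf-dual pair of homogeneous bases in the sense of Definition~\ref{d:hopf-pairing-dual}.
\item $H^\vee=\mathrm{QSym}$ is commutative, which is the extra hypothesis of Theorem~\ref{t:skew-MN-dual}.
\end{enumerate}
I will also note that $\E^{\mathrm{NQ}}$ does not depend on the choice of dual bases. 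The identity $\sum_\alpha H_\alpha\otimes M_\alpha=\sum_\alpha\mathbf{s}_\alpha\otimes\mathcal S_\alpha$ holds because the canonical element of a finite-dimensional nondegenerate pairing is basis independent, degree by degree.

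Next I apply Theorem~\ref{t:skew-MN-dual} with $f=\mathbf{s}$, $g=\mathcal S$. This gives, in $\widehat{\mathrm{NSym}\otimes\mathrm{QSym}}$,
\[
\E^{\mathrm{NQ}}\,(\mathbf{s}_{\lambda/\eta}\otimes 1)=\sum_{\rho,\mu}\mathbf{s}_{\rho/\mu}\otimes S^\vee(\mathcal S_{\eta/\mu})\,\mathcal S_{\rho/\lambda}.
\]
Here the skew elements are exactly those defined by the coproduct expansions recorded before the theorem. I then apply the map $\ev_X\otimes\ev_Y$ followed by multiplication into $A_X\otimes A_Y$, or into a common commutative algebra containing both.

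This map is an algebra morphism from $\mathrm{NSym}\otimes\mathrm{QSym}$, since both evaluations are algebra morphisms and the targets commute. On the left-hand side it therefore turns the product $\E^{\mathrm{NQ}}(\mathbf{s}_{\lambda/\eta}\otimes 1)$ into $\E^{\mathrm{NQ}}[X|Y]\,\mathbf{s}_{\lambda/\eta}[X]$. On the right-hand side it gives $S^\vee(\mathcal S_{\eta/\mu})[Y]\,\mathcal S_{\rho/\lambda}[Y]\,\mathbf{s}_{\rho/\mu}[X]$, using the stated convention that $S^\vee$ is computed in $\mathrm{QSym}$ before evaluating. Commutativity of $A_X$ and $A_Y$ lets me reorder the factors freely.

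The only delicate point is extending the evaluation to the completion, since $\E^{\mathrm{NQ}}$ is an infinite sum. I would handle this by requiring the evaluations to be graded, or at least compatible with a degree filtration on $A_X,A_Y$, for example by introducing a formal variable that tracks degree. Then everything holds degree by degree: in each total degree both sides of Theorem~\ref{t:skew-MN-dual} are finite sums, namely finitely many $\rho,\mu$ with $|\rho|-|\mu|$ and the $Y$-degree fixed. The evaluated identity then follows componentwise, in the same sense in which $\E^{\mathrm{NQ}}[X|Y]$ itself is defined. I expect this bookkeeping, rather than any algebraic content, to be the main thing requiring care.
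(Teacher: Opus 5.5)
Your proposal is correct and follows the same route as the paper, which states this theorem without a separate proof as an immediate specialization of Theorem~\ref{t:skew-MN-dual} to the dual bases $(\mathbf{s}_\alpha,\mathcal S_\alpha)$ (using commutativity of $\mathrm{QSym}$), followed by applying the evaluation maps. Your degreewise treatment of extending $\ev_X\otimes\ev_Y$ to the completion fills in bookkeeping the paper leaves implicit.
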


We next record the skew noncommutative Schur Murnaghan--Nakayama rule obtained by combining
Theorem~\ref{t:skew-MN-rule-NQ} with Tewari's Murnaghan--Nakayama rule for noncommutative
Schur functions~\cite{Tew16}.
Let $\mathbf{r}_\gamma$ denote the noncommutative ribbon Schur function indexed by a composition
$\gamma$.  The first-kind noncommutative power-sum element is, as in~\cite[(3)]{Tew16},
\phantomsection\label{def:first-kind-nsym-powersum}
\[
\Psi_r:=\sum_{a=0}^{r-1}(-1)^a\,\mathbf{r}_{(1^a,r-a)}\qquad(r\ge1),
\]
where $(1^0,r)$ is interpreted as the one-part composition $(r)$.
The elements $\Psi_r$ are primitive in $\mathrm{NSym}$, hence $S(\Psi_r)=-\Psi_r$.

We recall the necessary composition-shape terminology from~\cite[Sec.~5]{Tew16}.
Draw the reverse composition diagram of a composition with rows numbered from top to bottom and columns
from left to right.  For compositions $\alpha,\beta$, write $\alpha\lessdot_c\beta$ if either
\[
\beta=(1,\alpha_1,\ldots,\alpha_\ell),
\]
or
\[
\beta=(\alpha_1,\ldots,\alpha_{p-1},\alpha_p+1,\alpha_{p+1},\ldots,\alpha_\ell)
\quad\text{and}\quad
\alpha_j\ne\alpha_p\ \text{for all }j<p.
\]
Let $<_c$ be the transitive closure of this relation.
If $\alpha<_c\beta$, the skew reverse composition shape $\beta/\!/\alpha$ is obtained by drawing
$\alpha$ in the bottom-left corner of $\beta$ and taking the set difference.
This double-slash notation is only used for reverse composition shapes; the single slash
$\mathcal S_{\beta/\alpha}$ continues to denote the Hopf skew element defined by the coproduct.

For a skew reverse composition shape $\Theta$, set
\[
\operatorname{supp}(\Theta):=\{j\ge1\mid \Theta\text{ has a box in column }j\}.
\]
The shape $\Theta$ is called an interval shape if $\operatorname{supp}(\Theta)$ is an interval in
$\mathbb Z_{\ge1}$.  An interval shape $\beta/\!/\alpha$ is an \emph{nc border strip} if it satisfies:
\begin{enumerate}
    \item if $\beta/\!/\alpha$ has boxes in positions $(i,1)$ and $(i,2)$, then the box $(i,1)$ is the bottommost box in column $1$;
    \item if $\beta/\!/\alpha$ has boxes in positions $(i,j)$ and $(i,j+1)$ with $j\ge2$, then the box $(i,j)$ is the topmost box in column $j$.
\end{enumerate}
Its height is
\[
\operatorname{ht}_{\mathrm{nc}}(\beta/\!/\alpha)
:=\#\{\text{rows meeting }\beta/\!/\alpha\}-1.
\]
For an interval shape, say that column $j+1$ is north-east of column $j$ if column $j+1$ is nonempty and
for every pair of boxes $(i_1,j+1),(i_2,j)$ in the shape one has $i_1<i_2$.
Let
\[
\operatorname{NE}(\beta/\!/\alpha)
:=\{j\mid \text{column }j+1\text{ is north-east of column }j\}.
\]
\phantomsection\label{def:nc-border-strip-zero}
For $\alpha,\beta\in\Comp$, write
\[
\beta/\!/\alpha\in\mathrm{NCB}^{0}_{r}
\]
if $\alpha<_c\beta$, the shape $\beta/\!/\alpha$ is an nc border strip of size $r$, and
$\operatorname{NE}(\beta/\!/\alpha)=\varnothing$.
The superscript $0$ records this last condition.

\begin{cor}[Skew noncommutative Schur Murnaghan--Nakayama rule]\label{c:skew-ncSchur-MN}
For $r\ge1$ and $\lambda,\eta\in\Comp$, one has in $\mathrm{NSym}$
\begin{equation}\label{e:skew-ncSchur-MN}
\Psi_r\,\mathbf{s}_{\lambda/\eta}
=
\sum_{\rho:\,\rho/\!/\lambda\in\mathrm{NCB}^{0}_{r}}
(-1)^{\operatorname{ht}_{\mathrm{nc}}(\rho/\!/\lambda)}
\mathbf{s}_{\rho/\eta}
-
\sum_{\mu:\,\eta/\!/\mu\in\mathrm{NCB}^{0}_{r}}
(-1)^{\operatorname{ht}_{\mathrm{nc}}(\eta/\!/\mu)}
\mathbf{s}_{\lambda/\mu}.
\end{equation}
\end{cor}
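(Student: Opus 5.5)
The idea is to extract the degree-$r$ primitive part of the Cauchy element $\E^{\mathrm{NQ}}$ in the $X$-variable and apply Theorem~\ref{t:skew-MN-rule-NQ} with a suitable evaluation $\ev_Y$. As in the classical case, where $Y=(\q-1)z$ and $\q\to1$, we specialize $Y$ to a one-parameter "primitive" evaluation. Concretely, choose $\ev_Y:\mathrm{QSym}\to F[z]/(z^{r+1})$, or $F[[z]]$, to be the algebra morphism sending $M_\alpha\mapsto c_\alpha z^{|\alpha|}$. The constants are arranged so that $\sum_\alpha H_\alpha\, c_\alpha z^{|\alpha|}=\exp$-like in $\Psi$: we want $\E^{\mathrm{NQ}}[X\mid Y]=\sum_{\alpha}\ev_X(H_\alpha)\ev_Y(M_\alpha)$ to equal $1+\Psi_r z^r+(\text{terms of other degrees or of order }\epsilon^2)$. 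The cleanest option is to adjoin $\epsilon$ with $\epsilon^2=0$. We take the character $\phi:\mathrm{QSym}\to F[\epsilon]$ equal to $\varepsilon^\vee+\epsilon\,\pi_r$, where $\pi_r$ is the linear functional on $\mathrm{QSym}_r$ given by pairing with $\Psi_r$.

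First we check that $\phi$ is multiplicative. Because $\Psi_r$ is primitive, $\langle \Psi_r,ab\rangle=\langle \Psi_r\otimes 1+1\otimes\Psi_r,a\otimes b\rangle$, so $\pi_r(ab)=\pi_r(a)\varepsilon^\vee(b)+\varepsilon^\vee(a)\pi_r(b)$. Hence $\phi$ is an algebra morphism modulo $\epsilon^2$. Next, $(\id\otimes\phi)\E^{\mathrm{NQ}}=\sum_\alpha \mathbf s_\alpha\,\phi(\mathcal S_\alpha)=1+\epsilon\sum_\alpha \langle\Psi_r,\mathcal S_\alpha\rangle\mathbf s_\alpha=1+\epsilon\Psi_r$, by dual-basis reconstruction. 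Here $X$ is kept formal, i.e.\ $\ev_X=\id_{\mathrm{NSym}}$, which is permissible since Theorem~\ref{t:skew-MN-dual} holds in $\widehat{H\otimes H^\vee}$ before evaluation, and only $H^\vee$ needs to be commutative. Applying $\id\otimes\phi$ to \eqref{e:skew-MN-dual} and reading off the $\epsilon$-coefficient gives
\[
\Psi_r\,\mathbf s_{\lambda/\eta}=\sum_{\rho,\mu}\bigl[\epsilon\bigr]\Bigl(\phi\bigl(S^\vee(\mathcal S_{\eta/\mu})\bigr)\phi(\mathcal S_{\rho/\lambda})\Bigr)\mathbf s_{\rho/\mu}.
\]
The $\epsilon^0$ parts are $\varepsilon^\vee$-values, and $\varepsilon^\vee(\mathcal S_{\eta/\mu})=\delta_{\eta\mu}$ by the argument in Lemma~\ref{l:orthogonality-dual}. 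So the $\epsilon$-coefficient splits into two pieces: $\pi_r(\mathcal S_{\rho/\lambda})$ with $\mu=\eta$, and $\pi_r(S^\vee\mathcal S_{\eta/\mu})=\langle S(\Psi_r),\mathcal S_{\eta/\mu}\rangle=-\langle\Psi_r,\mathcal S_{\eta/\mu}\rangle$ with $\rho=\lambda$. This yields
\[
\Psi_r\mathbf s_{\lambda/\eta}=\sum_\rho\langle\Psi_r,\mathcal S_{\rho/\lambda}\rangle\mathbf s_{\rho/\eta}-\sum_\mu\langle\Psi_r,\mathcal S_{\eta/\mu}\rangle\mathbf s_{\lambda/\mu}.
\]

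It remains to identify the coefficients $\langle\Psi_r,\mathcal S_{\rho/\lambda}\rangle$. By Lemma~\ref{l:skew-structure-constants-dual}, $\mathcal S_{\rho/\lambda}=\sum_\nu\tilde c^\rho_{\nu,\lambda}\mathcal S_\nu$, where $\mathbf s_\nu\mathbf s_\lambda=\sum\tilde c^\rho_{\nu,\lambda}\mathbf s_\rho$. Therefore $\langle\Psi_r,\mathcal S_{\rho/\lambda}\rangle$ is exactly the coefficient of $\mathbf s_\rho$ in $\Psi_r\mathbf s_\lambda$. This is the straight rule, and the same computation is the case $\eta=\varnothing$ of Corollary~\ref{c:straight-MN-dual}. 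Tewari's Murnaghan--Nakayama rule \cite{Tew16} for left multiplication by $\Psi_r$ states that this coefficient is $(-1)^{\operatorname{ht}_{\mathrm{nc}}(\rho/\!/\lambda)}$ when $\rho/\!/\lambda\in\mathrm{NCB}^0_r$, and $0$ otherwise. Substituting this for both sums gives \eqref{e:skew-ncSchur-MN}.

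The main obstacle is the side and convention matching. We must make sure that the skew element $\mathcal S_{\rho/\lambda}$ in our coproduct convention (second tensor factor $=\mathcal S_\lambda$) corresponds to multiplying $\Psi_r$ on the \emph{left} of $\mathbf s_\lambda$, since $\mathrm{NSym}$ is noncommutative. We must also check that Tewari's rule is stated for $\Psi_r\mathbf s_\lambda$ rather than $\mathbf s_\lambda\Psi_r$, and with the same reverse-composition-diagram conventions. I would check this directly against \cite[Thm.~5.x]{Tew16} and Lemma~\ref{l:skew-structure-constants-dual}, where $\tilde c^\rho_{\nu,\tau}$ has $\nu$ as the left factor. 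If the sides turned out to be reversed, I would instead use the other skew convention $\Delta^\vee(\mathcal S_\rho)=\sum\mathcal S_\tau\otimes\mathcal S_{\rho/\tau}$ in the same derivation.
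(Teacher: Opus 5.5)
Your proof is correct and follows the paper's argument. The dual-number character $\varepsilon^\vee+\epsilon\pi_r$ repackages the paper's direct application of the linear functional $F\mapsto\langle\Psi_r,F\rangle_{\mathrm NQ}$ to the second tensor factor of Theorem~\ref{t:skew-MN-dual}. In both, primitivity of $\Psi_r$ splits the coefficient into the same two terms, and Tewari's rule, via $\langle\Psi_r,\mathcal S_{\rho/\lambda}\rangle_{\mathrm NQ}=$ the coefficient of $\mathbf s_\rho$ in the left product $\Psi_r\mathbf s_\lambda$, supplies the signs. This also settles your side-convention concern exactly as you computed.
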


\begin{proof}
Let
\[
\vartheta_r:\mathrm{QSym}\longrightarrow F,\qquad
\vartheta_r(F):=\langle \Psi_r,F\rangle_{\mathrm NQ}.
\]
Apply $\id\otimes\vartheta_r$ to Theorem~\ref{t:skew-MN-dual} in the present
$(\mathrm{NSym},\mathrm{QSym})$ specialization:
\[
\E^{\mathrm{NQ}}\bigl(\mathbf{s}_{\lambda/\eta}\otimes1\bigr)
=
\sum_{\rho,\mu\in\Comp}
\mathbf{s}_{\rho/\mu}\otimes
S^\vee(\mathcal S_{\eta/\mu})\,\mathcal S_{\rho/\lambda}.
\]
Since
\[
\Psi_r=\sum_{\alpha\in\Comp}\langle\Psi_r,\mathcal S_\alpha\rangle_{\mathrm NQ}\,\mathbf{s}_\alpha,
\]
the left-hand side becomes $\Psi_r\,\mathbf{s}_{\lambda/\eta}$.

For the right-hand side, put $a=\mathcal S_{\eta/\mu}$ and $b=\mathcal S_{\rho/\lambda}$.
Using the Hopf pairing and the primitiveness of $\Psi_r$,
\[
\begin{aligned}
\vartheta_r(S^\vee(a)b)
&=\langle \Psi_r,S^\vee(a)b\rangle_{\mathrm NQ}  \\
&=\langle \Delta(\Psi_r),S^\vee(a)\otimes b\rangle_{\mathrm NQ} \\
&=\langle \Psi_r,S^\vee(a)\rangle_{\mathrm NQ}\,\varepsilon^\vee(b)
  +\varepsilon^\vee(S^\vee(a))\,\langle\Psi_r,b\rangle_{\mathrm NQ} \\
&=-\delta_{\rho,\lambda}\,\langle\Psi_r,\mathcal S_{\eta/\mu}\rangle_{\mathrm NQ}
  +\delta_{\eta,\mu}\,\langle\Psi_r,\mathcal S_{\rho/\lambda}\rangle_{\mathrm NQ}.
\end{aligned}
\]
Here we used $\langle\Psi_r,S^\vee(a)\rangle_{\mathrm NQ}
=\langle S(\Psi_r),a\rangle_{\mathrm NQ}
=-\langle\Psi_r,a\rangle_{\mathrm NQ}$ and
$\varepsilon^\vee(\mathcal S_{\alpha/\beta})=\delta_{\alpha,\beta}$.
Thus
\[
\Psi_r\,\mathbf{s}_{\lambda/\eta}
=
\sum_{\rho\in\Comp}\langle\Psi_r,\mathcal S_{\rho/\lambda}\rangle_{\mathrm NQ}
\mathbf{s}_{\rho/\eta}
-
\sum_{\mu\in\Comp}\langle\Psi_r,\mathcal S_{\eta/\mu}\rangle_{\mathrm NQ}
\mathbf{s}_{\lambda/\mu}.
\]

It remains only to identify the coefficients.
Tewari's rule~\cite[Thm.~5.16]{Tew16}, translated to the present notation, says that
\[
\Psi_r\,\mathbf{s}_\alpha
=
\sum_{\beta:\,\beta/\!/\alpha\in\mathrm{NCB}^{0}_{r}}
(-1)^{\operatorname{ht}_{\mathrm{nc}}(\beta/\!/\alpha)}
\mathbf{s}_\beta .
\]
On the other hand,
\[
\langle\Psi_r\mathbf{s}_\alpha,\mathcal S_\beta\rangle_{\mathrm NQ}
=\langle\Psi_r\otimes\mathbf{s}_\alpha,\Delta^\vee(\mathcal S_\beta)\rangle_{\mathrm NQ}
=\langle\Psi_r,\mathcal S_{\beta/\alpha}\rangle_{\mathrm NQ}.
\]
Therefore
\[
\langle\Psi_r,\mathcal S_{\beta/\alpha}\rangle_{\mathrm NQ}
=
\begin{cases}
(-1)^{\operatorname{ht}_{\mathrm{nc}}(\beta/\!/\alpha)},&
\text{if }\beta/\!/\alpha\in\mathrm{NCB}^{0}_{r},\\
0,&\text{otherwise.}
\end{cases}
\]
Substituting this coefficient formula with $(\beta,\alpha)=(\rho,\lambda)$ and
$(\beta,\alpha)=(\eta,\mu)$ gives \eqref{e:skew-ncSchur-MN}.
\end{proof}

We now turn to Pieri-type formulas.  Lam--Lauve--Sottile~\cite[Sec.~5]{LLS11} applied
their Hopf-algebraic skew Littlewood--Richardson formula to the noncommutative ribbon
Schur basis \(\mathbf r_\alpha\) (denoted \(R_\alpha\) in their paper).  This is different from the noncommutative Schur basis
\(\mathbf{s}_\alpha\) used here.  For the basis \(\mathbf{s}_\alpha\), Tewari--van Willigenburg
\cite{TvW18} recalled both the right and left straight Pieri rules
\cite[Thms.~3.11--3.12]{TvW18}, and proved skew right Pieri rules
\cite[Thm.~4.9]{TvW18}.  We record the corresponding skew left Pieri rules, obtained
from the present Cauchy-kernel formalism and the straight left Pieri rule.

We use the left-strip terminology of~\cite{TvW18}.  For a composition \(\alpha\), set
\(t_0(\alpha)=\alpha\).  For \(i\ge1\), define \(t_i\) by
\[
t_1(\alpha)=(1,\alpha_1,\ldots,\alpha_{\ell(\alpha)}),
\]
and, for \(i\ge2\), let \(t_i(\alpha)\) be obtained by adding \(1\) to the leftmost part of
\(\alpha\) equal to \(i-1\); if no such part exists, set \(t_i(\alpha)=0\).
Here $0$ is a formal absorbing value, so $t_j(0)=0$ for every $j\ge1$.
\phantomsection\label{def:nc-left-strips}
For compositions \(\alpha,\beta\) and \(m\ge0\), write
\[
\alpha\xrightarrow{\mathrm{LH}}_m\beta
\]
if either \(m=0\) and \(\alpha=\beta\), or \(m>0\) and
\(\beta=t_{i_m}\cdots t_{i_1}(\alpha)\) for some \(1\le i_1<\cdots<i_m\).  Similarly, write
\[
\alpha\xrightarrow{\mathrm{LV}}_m\beta
\]
if either \(m=0\) and \(\alpha=\beta\), or \(m>0\) and
\(\beta=t_{i_m}\cdots t_{i_1}(\alpha)\) for some \(i_1\ge\cdots\ge i_m\ge1\).
Thus \(\xrightarrow{\mathrm{LH}}_m\) and \(\xrightarrow{\mathrm{LV}}_m\) mean adding an
\(m\)-left horizontal strip and an \(m\)-left vertical strip, respectively, in the sense of
Tewari--van Willigenburg.

\begin{cor}[Skew left Pieri rules for noncommutative Schur functions]\label{c:skew-left-Pieri-ncSchur}
For \(n\ge1\) and \(\lambda,\eta\in\Comp\), one has in \(\mathrm{NSym}\)
\begin{equation}\label{e:skew-left-Pieri-ncSchur-h}
\mathbf{s}_{(n)}\,\mathbf{s}_{\lambda/\eta}
=
\sum_{\substack{a+b=n\\a,b\ge0}}(-1)^b
\sum_{\substack{\lambda\xrightarrow{\mathrm{LH}}_a\rho\\
\mu\xrightarrow{\mathrm{LV}}_b\eta}}
\mathbf{s}_{\rho/\mu},
\end{equation}
and
\begin{equation}\label{e:skew-left-Pieri-ncSchur-e}
\mathbf{s}_{(1^n)}\,\mathbf{s}_{\lambda/\eta}
=
\sum_{\substack{a+b=n\\a,b\ge0}}(-1)^b
\sum_{\substack{\lambda\xrightarrow{\mathrm{LV}}_a\rho\\
\mu\xrightarrow{\mathrm{LH}}_b\eta}}
\mathbf{s}_{\rho/\mu}.
\end{equation}
Here \(\mathbf{s}_{(0)}=\mathbf{s}_{(1^0)}=1\), and the relations with subscript \(0\) mean equality.
\end{cor}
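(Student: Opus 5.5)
The plan is to imitate the proof of Corollary~\ref{c:skew-ncSchur-MN}, replacing the primitive element $\Psi_r$ by $\mathbf{s}_{(n)}=H_n$, resp.\ $\mathbf{s}_{(1^n)}$. Define $\vartheta:\mathrm{QSym}\to F$ by $\vartheta(G):=\langle \mathbf{s}_{(n)},G\rangle_{\mathrm{NQ}}$ and apply $\id\otimes\vartheta$ to the identity of Theorem~\ref{t:skew-MN-dual} for $(\mathrm{NSym},\mathrm{QSym})$. This is legitimate because $\mathrm{QSym}$ is commutative.

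On the left-hand side, $\sum_\alpha \langle\mathbf{s}_{(n)},\mathcal S_\alpha\rangle\,\mathbf{s}_\alpha=\mathbf{s}_{(n)}$, so we obtain $\mathbf{s}_{(n)}\mathbf{s}_{\lambda/\eta}$. On the right-hand side, $\mathbf{s}_{(n)}$ is no longer primitive: $\Delta(H_n)=\sum_{a+b=n}H_b\otimes H_a$. Put $x=\mathcal S_{\eta/\mu}$ and $y=\mathcal S_{\rho/\lambda}$. The Hopf pairing axiom together with duality of antipodes, $\langle u,S^\vee(v)\rangle=\langle S(u),v\rangle$, gives
\[
\vartheta\bigl(S^\vee(x)\,y\bigr)=\sum_{a+b=n}\langle S(H_b),x\rangle\,\langle H_a,y\rangle .
\]
I will check the antipode duality directly: $S^*$ is the convolution inverse of $\id$ in the dual, so it equals $S^\vee$. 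In $\mathrm{NSym}$ one has $S(H_b)=(-1)^b\Lambda_b$, where $\Lambda_b=\mathbf r_{(1^b)}=\mathbf{s}_{(1^b)}$ is the noncommutative elementary function. The identification $\mathbf{s}_{(1^b)}=\Lambda_b$ is standard for the noncommutative Schur basis of~\cite{BLvW11}.

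Next, the pairings become straight structure constants. As in the last step of the proof of Corollary~\ref{c:skew-ncSchur-MN},
\[
\langle u,\mathcal S_{\beta/\alpha}\rangle=\langle u\,\mathbf{s}_\alpha,\mathcal S_\beta\rangle ,
\]
which is the coefficient of $\mathbf{s}_\beta$ in $u\,\mathbf{s}_\alpha$. The straight left Pieri rules~\cite[Thms.~3.11--3.12]{TvW18} state that $\mathbf{s}_{(a)}\mathbf{s}_\lambda=\sum_{\lambda\xrightarrow{\mathrm{LH}}_a\rho}\mathbf{s}_\rho$ and $\mathbf{s}_{(1^b)}\mathbf{s}_\mu=\sum_{\mu\xrightarrow{\mathrm{LV}}_b\eta}\mathbf{s}_\eta$. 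Hence $\langle H_a,\mathcal S_{\rho/\lambda}\rangle=[\lambda\xrightarrow{\mathrm{LH}}_a\rho]$ and $\langle S(H_b),\mathcal S_{\eta/\mu}\rangle=(-1)^b[\mu\xrightarrow{\mathrm{LV}}_b\eta]$. The degenerate cases $a=0$ or $b=0$ are handled by $\varepsilon^\vee(\mathcal S_{\alpha/\beta})=\delta_{\alpha\beta}$, which is shown in Lemma~\ref{l:orthogonality-dual}. Substituting these values yields \eqref{e:skew-left-Pieri-ncSchur-h}. For \eqref{e:skew-left-Pieri-ncSchur-e}, run the same argument with $\mathbf{s}_{(1^n)}=\Lambda_n$, using $\Delta(\Lambda_n)=\sum\Lambda_b\otimes\Lambda_a$ and $S(\Lambda_b)=(-1)^bH_b$; this interchanges the roles of LH and LV.

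The main obstacle is bookkeeping of conventions. First, the skew element is defined by the second tensor factor, $\Delta^\vee(\mathcal S_\beta)=\sum\mathcal S_{\beta/\alpha}\otimes\mathcal S_\alpha$. This makes the relevant product $u\,\mathbf{s}_\alpha$ with $u$ on the left, which is why the \emph{left} (rather than right) Pieri rules of~\cite{TvW18} are the correct input. Second, the order of factors in $\Delta(H_n)$ must match the order in $S^\vee(x)\,y$. Since $\Delta(H_n)$ is cocommutative in its components, this last point is harmless, but I will verify both $S(H_b)=(-1)^b\mathbf{s}_{(1^b)}$ and the exact statement of Thm.~3.12 of~\cite{TvW18} (left vertical strips with weakly decreasing indices $i_1\ge\cdots\ge i_m$) before concluding.
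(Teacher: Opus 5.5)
Your proposal is correct and is essentially the paper's proof: apply the functional $F\mapsto\langle \mathbf{s}_{(n)},F\rangle_{\mathrm{NQ}}$ to the second tensor factor of the $(\mathrm{NSym},\mathrm{QSym})$ skew MN identity, split the pairing using $\Delta(\mathbf{s}_{(n)})=\sum_{a+b=n}\mathbf{s}_{(b)}\otimes\mathbf{s}_{(a)}$ and $S(\mathbf{s}_{(b)})=(-1)^b\mathbf{s}_{(1^b)}$, and identify $\langle \mathbf{s}_{(a)},\mathcal S_{\rho/\lambda}\rangle_{\mathrm{NQ}}$ and $\langle \mathbf{s}_{(1^b)},\mathcal S_{\eta/\mu}\rangle_{\mathrm{NQ}}$ with the straight left Pieri coefficients of Tewari--van Willigenburg. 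The second identity then follows by the same argument with $\mathbf{s}_{(n)}$ and $\mathbf{s}_{(1^n)}$, and hence LH and LV, interchanged, exactly as you describe.
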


\begin{proof}
The straight left Pieri rules of Tewari--van Willigenburg~\cite[Thm.~3.12]{TvW18}, in the
present notation, are
\[
\mathbf{s}_{(a)}\mathbf{s}_\alpha
=\sum_{\alpha\xrightarrow{\mathrm{LH}}_a\beta}\mathbf{s}_\beta,
\qquad
\mathbf{s}_{(1^a)}\mathbf{s}_\alpha
=\sum_{\alpha\xrightarrow{\mathrm{LV}}_a\beta}\mathbf{s}_\beta.
\]
Equivalently, by the Hopf pairing,
\begin{align}\label{e:nc-left-Pieri-coeff-h}
\langle \mathbf{s}_{(a)},\mathcal S_{\beta/\alpha}\rangle_{\mathrm NQ}
&=
\begin{cases}
1, & \alpha\xrightarrow{\mathrm{LH}}_a\beta,\\
0, & \text{otherwise,}
\end{cases}\\
\label{e:nc-left-Pieri-coeff-e}
\langle \mathbf{s}_{(1^a)},\mathcal S_{\beta/\alpha}\rangle_{\mathrm NQ}
&=
\begin{cases}
1, & \alpha\xrightarrow{\mathrm{LV}}_a\beta,\\
0, & \text{otherwise.}
\end{cases}
\end{align}

Apply the linear functional \(F\mapsto\langle \mathbf{s}_{(n)},F\rangle_{\mathrm NQ}\) to the
second tensor factor in the tensor form underlying \eqref{e:skew-MN-rule-NQ}.  The left-hand
side becomes \(\mathbf{s}_{(n)}\mathbf{s}_{\lambda/\eta}\).  For the coefficient of
\(\mathbf{s}_{\rho/\mu}\) on the right-hand side, we use the coproduct and antipode formulas
for one-row and one-column noncommutative Schur functions recorded immediately before
\cite[Thm.~4.9]{TvW18}.  In the present notation, \(\mathbf{s}_{(n)}=H_n\) and
\(\mathbf{s}_{(1^n)}\) is the degree-\(n\) noncommutative elementary generator, so these formulas are
\[
\Delta(\mathbf{s}_{(n)})=\sum_{a+b=n}\mathbf{s}_{(b)}\otimes \mathbf{s}_{(a)},
\qquad
S(\mathbf{s}_{(b)})=(-1)^b\mathbf{s}_{(1^b)}.
\]
Thus
\[
\begin{aligned}
&\left\langle \mathbf{s}_{(n)},
S^\vee(\mathcal S_{\eta/\mu})\,\mathcal S_{\rho/\lambda}
\right\rangle_{\mathrm NQ}  \\
&\qquad =
\sum_{a+b=n}
\left\langle S(\mathbf{s}_{(b)}),\mathcal S_{\eta/\mu}\right\rangle_{\mathrm NQ}
\left\langle \mathbf{s}_{(a)},\mathcal S_{\rho/\lambda}\right\rangle_{\mathrm NQ} \\
&\qquad =
\sum_{a+b=n}(-1)^b
\left\langle \mathbf{s}_{(1^b)},\mathcal S_{\eta/\mu}\right\rangle_{\mathrm NQ}
\left\langle \mathbf{s}_{(a)},\mathcal S_{\rho/\lambda}\right\rangle_{\mathrm NQ}.
\end{aligned}
\]
Substituting \eqref{e:nc-left-Pieri-coeff-h}--\eqref{e:nc-left-Pieri-coeff-e} gives
\eqref{e:skew-left-Pieri-ncSchur-h}.

The proof of \eqref{e:skew-left-Pieri-ncSchur-e} is identical, using
\[
\Delta(\mathbf{s}_{(1^n)})=\sum_{a+b=n}\mathbf{s}_{(1^b)}\otimes \mathbf{s}_{(1^a)},
\qquad
S(\mathbf{s}_{(1^b)})=(-1)^b\mathbf{s}_{(b)}.
\]
Then the first pairing is evaluated by \eqref{e:nc-left-Pieri-coeff-h}, and the second by
\eqref{e:nc-left-Pieri-coeff-e}.
\end{proof}

\section{\texorpdfstring{$k$-Schur functions}{k-Schur functions}}
\label{ss:kSchur-skewMN}
Recall that $\Lambda_F$ is the (commutative) Hopf algebra of symmetric functions over $F$ with the standard coproduct
characterized by $\Delta(h_r)=\sum_{i+j=r}h_i\otimes h_j$.
Fix $k\ge 1$.  Following the standard $k$-Schur convention, set
\[
H=\Lambda_{(k)}:=F[h_1,\dots,h_k]\subseteq \Lambda_F,
\]
a graded connected Hopf subalgebra with Hopf structure induced from $\Lambda_F$.
Its graded Hopf dual with respect to the Hall inner product is the quotient
\[
H^\vee=\Lambda^{(k)}
:=\Lambda_F/\langle p_j:j>k\rangle
=\Lambda_F/\langle m_\nu:\nu_1>k\rangle.
\]
Both $H$ and $H^\vee$ are commutative Hopf algebras, hence satisfy the commutativity hypothesis in Theorem~\ref{t:skew-MN-dual}.

Let $\mathcal I=\mathscr P^{(k)}$ be the set of $k$-bounded partitions (i.e.\ $\lambda_1\le k$).
Let $\langle\cdot,\cdot\rangle_{\mathrm{Hall}}$ denote the induced Hall pairing between
$\Lambda_{(k)}$ and $\Lambda^{(k)}$.  Let $s^{(k)}_{\lambda}$ denote the {\em $k$-Schur
function} and $\widetilde{F}^{(k)}_\lambda$ the {\em dual $k$-Schur function}; these form
dual bases under the Hall pairing.  See~\cite{LM07, LLM+10} for the $k$-Schur setting and
\cite[p.~1591]{BSZ11}, \cite[Sec.~6]{LLS11} for the duality convention used here.

Set
\[
f_\lambda=s^{(k)}_\lambda\in \Lambda_{(k)},\qquad
g_\lambda=\widetilde{F}^{(k)}_\lambda\in \Lambda^{(k)}\qquad(\lambda\in\mathscr P^{(k)}),
\]
characterized by
\[
\langle s^{(k)}_\lambda,\ \widetilde{F}^{(k)}_\mu\rangle_{\mathrm{Hall}}=\delta_{\lambda\mu}.
\]

The Cauchy element is
\[
\E^{(k)}=\sum_{\lambda\in\mathscr P^{(k)}} s^{(k)}_\lambda\otimes \widetilde{F}^{(k)}_\lambda
\ \in\ \widehat{\Lambda_{(k)}\otimes \Lambda^{(k)}},
\]
and after evaluation in alphabets $X$ and $Y$ we write
\[
\E^{(k)}[X| Y]
:=\sum_{\lambda\in\mathscr P^{(k)}} s^{(k)}_\lambda[X]\ \widetilde{F}^{(k)}_\lambda[Y].
\]

Define skew $k$-Schur and skew dual $k$-Schur elements by
\[
\Delta\bigl(s^{(k)}_\lambda\bigr)=\sum_{\mu\in\mathscr P^{(k)}} s^{(k)}_{\lambda/\mu}\otimes s^{(k)}_\mu,
\qquad
\Delta^\vee\bigl(\widetilde{F}^{(k)}_\lambda\bigr)=\sum_{\mu\in\mathscr P^{(k)}} \widetilde{F}^{(k)}_{\lambda/\mu}\otimes \widetilde{F}^{(k)}_\mu.
\]

In the symmetric-function Hopf algebra $\Lambda$, the antipode $S$ is given plethystically by
\[
S(F)[Y]=F[-Y]\qquad(F\in\Lambda),
\]
equivalently $S(p_r)=-p_r$ and $S(h_r)=(-1)^r e_r$.
Since $S(h_r)=(-1)^r e_r\in F[h_1,\ldots,h_k]$ for $1\le r\le k$, the subalgebra
$\Lambda_{(k)}=F[h_1,\ldots,h_k]$ is stable under the usual antipode of $\Lambda$.
Hence the antipode on the graded Hopf dual $\Lambda^{(k)}$ is the one induced by this restricted antipode.
Under the standard symmetric-function realization of representatives of $\Lambda^{(k)}$ by dual
$k$-Schur functions \cite{LM07, LLM+10}, this means that if $g\in\Lambda^{(k)}$ is represented by a
symmetric function $G$, then $S^\vee(g)$ is represented by $G[-Y]$ modulo the quotient defining
$\Lambda^{(k)}$. We write this as
\[
S^\vee(g)[Y]=g[-Y]\qquad(g\in \Lambda^{(k)}),
\]
i.e.\ apply the alphabet substitution $Y\mapsto -Y$ to a representative in the ambient symmetric-function
ring and interpret the result in $\Lambda^{(k)}$.

\begin{thm}[Skew MN rule in $(\Lambda_{(k)},\Lambda^{(k)})$]\label{t:skew-MN-rule-k-Schur}
    For $(H,H^\vee)=(\Lambda_{(k)},\Lambda^{(k)})$, the following skew MN rule holds:
    \begin{equation}\label{e:skew-MN-rule-k-Schur}
        \E^{(k)}[X|Y]\ s^{(k)}_{\lambda/\eta}[X]
=\sum_{\rho,\mu\in\mathscr P^{(k)}} \widetilde{F}^{(k)}_{\eta/\mu}[-Y] \widetilde{F}^{(k)}_{\rho/\lambda}[Y]s^{(k)}_{\rho/\mu}[X].
    \end{equation}
\end{thm}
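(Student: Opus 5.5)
This is a direct specialization of Theorem~\ref{t:skew-MN-dual}. Take $(H,H^\vee)=(\Lambda_{(k)},\Lambda^{(k)})$ with the induced Hall pairing and the dual bases $f_\lambda=s^{(k)}_\lambda$, $g_\lambda=\widetilde F^{(k)}_\lambda$ indexed by $\mathcal I=\mathscr P^{(k)}$. The proof has three steps: check the hypotheses of Chapter~\ref{s:general-rule}, invoke the tensor identity \eqref{e:skew-MN-dual}, and apply $\ev_X\otimes\ev_Y$.

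\textbf{Step 1: the hypotheses.}
\begin{itemize}
\item $\Lambda_{(k)}=F[h_1,\dots,h_k]$ is a graded connected Hopf subalgebra of $\Lambda_F$. It is closed under $\Delta$ because $\Delta(h_r)=\sum h_i\otimes h_{r-i}$ with all $i\le k$. It is closed under the antipode because $S(h_r)=(-1)^re_r\in F[h_1,\dots,h_k]$, as noted in the text.
\item Its graded pieces are finite dimensional.
\item $\Lambda^{(k)}=\Lambda_F/\langle p_j:j>k\rangle$ is a Hopf quotient, since the ideal is generated by primitives and so is a Hopf ideal.
\item The Hall pairing descends to a pairing $\Lambda_{(k)}\times\Lambda^{(k)}\to F$. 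It vanishes on $\Lambda_{(k)}\times\langle m_\nu:\nu_1>k\rangle$ because $\langle h_\lambda,m_\nu\rangle=\delta_{\lambda\nu}$ and $h_\lambda$ with $\lambda_1\le k$ spans $\Lambda_{(k)}$.
\item It is nondegenerate degreewise, since $\{h_\lambda\}_{\lambda_1\le k}$ and $\{m_\lambda\}_{\lambda_1\le k}$ are dual bases.
\item The Hopf compatibilities \eqref{e:hopf-pairing-dual} are inherited from the self-duality of $\Lambda_F$: the sub/quotient structure maps are restrictions and reductions of those on $\Lambda_F$.
\item $\Lambda^{(k)}$ is commutative, being a quotient of $\Lambda_F$.
\end{itemize}

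\textbf{Step 2: the tensor identity.} Theorem~\ref{t:skew-MN-dual} now gives, in $\widehat{\Lambda_{(k)}\otimes\Lambda^{(k)}}$,
\[
\E^{(k)}\bigl(s^{(k)}_{\lambda/\eta}\otimes1\bigr)=\sum_{\rho,\mu\in\mathscr P^{(k)}}s^{(k)}_{\rho/\mu}\otimes S^\vee\bigl(\widetilde F^{(k)}_{\eta/\mu}\bigr)\,\widetilde F^{(k)}_{\rho/\lambda},
\]
where the skew elements are those defined by the coproduct expansions preceding the theorem.

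\textbf{Step 3: evaluation.} Apply $\ev_X\otimes\ev_Y$, which is multiplicative and continuous degreewise. It sends $\E^{(k)}$ to $\E^{(k)}[X|Y]$ and the products to products of evaluations. The remaining point is the identification $\ev_Y(S^\vee(g))=g[-Y]$.

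This is the only step needing care. $S^\vee$ is the antipode of the quotient Hopf algebra, hence is induced by $S$ on $\Lambda_F$. For a representative $G$ of $g$, this gives $S^\vee(g)=\overline{S(G)}=\overline{G[-Y]}$, using \eqref{e:antipode-pleth-def}. The result is well defined because $S$ preserves the ideal $\langle p_j:j>k\rangle$, as $S(p_j)=-p_j$.

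With the convention $S^\vee(g)[Y]=g[-Y]$ fixed in the text, Step~3 yields \eqref{e:skew-MN-rule-k-Schur}. The sum is degreewise finite, so all manipulations are legitimate in the completion.
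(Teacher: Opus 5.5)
Your proposal is correct and follows the paper's route: the theorem is the specialization of Theorem~\ref{t:skew-MN-dual} to $f_\lambda=s^{(k)}_\lambda$, $g_\lambda=\widetilde F^{(k)}_\lambda$, with the antipode of $\Lambda^{(k)}$ computed on representatives as $S^\vee(g)[Y]=g[-Y]$, which is exactly the convention the paper fixes in the paragraph preceding the statement. Your Step~1 makes explicit the hypotheses the paper only asserts: the Hopf subalgebra and Hopf quotient structures, the descent and degreewise nondegeneracy of the Hall pairing, and the commutativity of $\Lambda^{(k)}$.
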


We next record the skew $k$-Schur Murnaghan--Nakayama rule obtained by combining
\eqref{e:skew-MN-rule-k-Schur} with the $k$-Schur Murnaghan--Nakayama rule
in~\cite{BSZ11}.  Let $\mathcal C_{k+1}$ denote the set of $(k+1)$-cores, i.e.,
partitions with no removable $(k+1)$-border strip. Let
\[
\mathrm{core}_{k+1}:\mathscr P^{(k)}\to\mathcal C_{k+1}
\]
denote the usual bijection from $k$-bounded partitions to $(k+1)$-cores~\cite{LM05}.
The same notation and the associated $k$-conjugation convention are recalled
in~\cite[Sec.~2.1]{BSZ11}.  We write
\[
\lambda^{(k)}:=\mathrm{core}_{k+1}^{-1}\bigl(\mathrm{core}_{k+1}(\lambda)^t\bigr)
\]
for the $k$-conjugate of $\lambda$.
For $\alpha,\beta\in\mathscr P^{(k)}$ and $1\le r\le k$, write
\[
\beta/\alpha\in\mathrm{KR}^{(k)}_r
\]
if $\beta/\alpha$ is a BSZ $k$-ribbon of size $r$, in the sense of~\cite[Def.~1.1]{BSZ11}.
For convenience, we recall that this condition is characterized by the containment, size,
core-ribbon, $k$-connectedness, and height-statistic conditions
\[
\alpha\subseteq\beta,\qquad
\alpha^{(k)}\subseteq\beta^{(k)},\qquad
|\beta/\alpha|=r,
\]
the skew core
$\mathrm{core}_{k+1}(\beta)/\mathrm{core}_{k+1}(\alpha)$ is a $k$-connected ribbon, and
\[
\operatorname{ht}^{(k)}(\beta/\alpha)+
\operatorname{ht}^{(k)}\bigl(\beta^{(k)}/\alpha^{(k)}\bigr)=r-1.
\]
Here $\operatorname{ht}^{(k)}$ denotes the BSZ height statistic, distinguished from the usual
ribbon height: $\operatorname{ht}^{(k)}(\beta/\alpha)$ is the number of vertical dominoes in
$\beta/\alpha$.

\begin{cor}[Skew $k$-Schur Murnaghan--Nakayama rule]\label{c:skew-kSchur-MN}
For $1\le r\le k$ and $\lambda,\eta\in\mathscr P^{(k)}$, one has
\begin{equation}\label{e:skew-kSchur-MN}
p_r[X]\,s^{(k)}_{\lambda/\eta}[X]
=
\sum_{\rho:\,\rho/\lambda\in\mathrm{KR}^{(k)}_r}
(-1)^{\operatorname{ht}^{(k)}(\rho/\lambda)}
s^{(k)}_{\rho/\eta}[X]
-
\sum_{\mu:\,\eta/\mu\in\mathrm{KR}^{(k)}_r}
(-1)^{\operatorname{ht}^{(k)}(\eta/\mu)}
s^{(k)}_{\lambda/\mu}[X].
\end{equation}
\end{cor}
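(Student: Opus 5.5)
The plan mirrors the proof of Corollary~\ref{c:skew-ncSchur-MN}. For $1\le r\le k$, the power sum $p_r$ lies in $\Lambda_{(k)}=F[h_1,\dots,h_k]$, because Newton's identities express $p_r$ polynomially in $h_1,\dots,h_r$. It is primitive, $\Delta(p_r)=p_r\otimes1+1\otimes p_r$, and therefore $S(p_r)=-p_r$. Define the linear functional $\vartheta_r:\Lambda^{(k)}\to F$ by $\vartheta_r(G)=\langle p_r,G\rangle_{\mathrm{Hall}}$. Apply $\id\otimes\vartheta_r$ to the tensor form of Theorem~\ref{t:skew-MN-dual} in the present specialization (which is Theorem~\ref{t:skew-MN-rule-k-Schur} before evaluation). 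On the left, the expansion $p_r=\sum_{\alpha}\langle p_r,\widetilde F^{(k)}_\alpha\rangle s^{(k)}_\alpha$ turns $\E^{(k)}(s^{(k)}_{\lambda/\eta}\otimes1)$ into $p_r\,s^{(k)}_{\lambda/\eta}$.

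On the right, put $a=\widetilde F^{(k)}_{\eta/\mu}$ and $b=\widetilde F^{(k)}_{\rho/\lambda}$. The Hopf pairing axiom and primitivity give
\[
\vartheta_r(S^\vee(a)b)=\langle p_r,S^\vee(a)\rangle\varepsilon^\vee(b)+\varepsilon^\vee(S^\vee(a))\langle p_r,b\rangle .
\]
Use $\langle p_r,S^\vee(a)\rangle=\langle S(p_r),a\rangle=-\langle p_r,a\rangle$, which holds because the dual antipode is the transpose of $S$ under a Hopf pairing. Also use $\varepsilon^\vee(\widetilde F^{(k)}_{\alpha/\beta})=\delta_{\alpha\beta}$, as in the proof of Lemma~\ref{l:orthogonality-dual}, together with $\varepsilon^\vee\circ S^\vee=\varepsilon^\vee$. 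Together these yield
\[
p_r\,s^{(k)}_{\lambda/\eta}=\sum_\rho\langle p_r,\widetilde F^{(k)}_{\rho/\lambda}\rangle s^{(k)}_{\rho/\eta}-\sum_\mu\langle p_r,\widetilde F^{(k)}_{\eta/\mu}\rangle s^{(k)}_{\lambda/\mu}.
\]

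It remains to identify the coefficients. The Hopf pairing gives $\langle p_r s^{(k)}_\alpha,\widetilde F^{(k)}_\beta\rangle=\langle p_r\otimes s^{(k)}_\alpha,\Delta^\vee\widetilde F^{(k)}_\beta\rangle=\langle p_r,\widetilde F^{(k)}_{\beta/\alpha}\rangle$, so $\langle p_r,\widetilde F^{(k)}_{\beta/\alpha}\rangle$ is exactly the coefficient of $s^{(k)}_\beta$ in $p_r s^{(k)}_\alpha$. The straight $k$-Schur Murnaghan--Nakayama rule of \cite{BSZ11} states that
\[
p_r s^{(k)}_\alpha=\sum_{\beta/\alpha\in\mathrm{KR}^{(k)}_r}(-1)^{\operatorname{ht}^{(k)}(\beta/\alpha)}s^{(k)}_\beta ,
\]
so the coefficient is $(-1)^{\operatorname{ht}^{(k)}(\beta/\alpha)}$ when $\beta/\alpha\in\mathrm{KR}^{(k)}_r$ and $0$ otherwise. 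Substituting this with $(\beta,\alpha)=(\rho,\lambda)$ and $(\eta,\mu)$ gives \eqref{e:skew-kSchur-MN}.

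The main point requiring care is the last step. The BSZ rule must be matched exactly to the conventions used here: their definition of $k$-ribbon, their sign statistic, and their normalization of $s^{(k)}_\lambda$ and of the dual $k$-Schur basis. I would check this against \cite[Thm.~1.2/Def.~1.1]{BSZ11}, and if their statement is phrased via $k$-conjugates, rewrite it in the form recorded before the corollary. A minor point is that $S^\vee$ on the quotient $\Lambda^{(k)}$ is well defined and is the transpose of $S$; this was recorded just before Theorem~\ref{t:skew-MN-rule-k-Schur}.
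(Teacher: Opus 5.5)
Your proposal is correct and is essentially the paper's argument. The paper specializes $Y$ to the virtual alphabet $Y_r(z)$ with $p_m[Y_r(z)]=rz\,\delta_{m,r}$ and extracts the coefficient of $z$, which realizes exactly your functional $\langle p_r,\cdot\rangle_{\mathrm{Hall}}$, with multiplicativity of evaluation playing the role of your primitivity/antipode step; it then identifies the coefficients via the BSZ rule just as you do, and your Hopf-functional version is the one the paper itself uses for Corollary~\ref{c:skew-ncSchur-MN}.
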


\begin{proof}
Let $Y_r(z)$ be the virtual alphabet determined by
\[
p_m[Y_r(z)]=r z\,\delta_{m,r}\qquad(m\ge1).
\]
For any homogeneous $g\in\Lambda^{(k)}$, expanding $g$ in the quotient power-sum basis
$\{p_\nu:\nu_1\le k\}$ gives
\[
[z]\,g[Y_r(z)]=\langle p_r,g\rangle_{\mathrm{Hall}},
\qquad
[z]\,g[-Y_r(z)]=-\langle p_r,g\rangle_{\mathrm{Hall}}.
\]
Moreover,
\[
[z]\,\E^{(k)}[X|Y_r(z)]
=\sum_{\nu\in\mathscr P^{(k)}}s^{(k)}_\nu[X]\,
\langle p_r,\widetilde F^{(k)}_\nu\rangle_{\mathrm{Hall}}
=p_r[X],
\]
and
\[
[z^0]\,\widetilde F^{(k)}_{\alpha/\beta}[Y_r(z)]
=\delta_{\alpha,\beta}.
\]
Taking the coefficient of $z$ in \eqref{e:skew-MN-rule-k-Schur} with $Y=Y_r(z)$ gives
\[
p_r[X]\,s^{(k)}_{\lambda/\eta}[X]
=
\sum_{\rho\in\mathscr P^{(k)}}
\langle p_r,\widetilde F^{(k)}_{\rho/\lambda}\rangle_{\mathrm{Hall}}\,
s^{(k)}_{\rho/\eta}[X]
-
\sum_{\mu\in\mathscr P^{(k)}}
\langle p_r,\widetilde F^{(k)}_{\eta/\mu}\rangle_{\mathrm{Hall}}\,
s^{(k)}_{\lambda/\mu}[X].
\]
By the definition of skew dual $k$-Schur functions,
\[
\langle p_r,\widetilde F^{(k)}_{\rho/\lambda}\rangle_{\mathrm{Hall}}
=
\langle p_r\,s^{(k)}_\lambda,\widetilde F^{(k)}_\rho\rangle_{\mathrm{Hall}}.
\]
The $k$-Schur Murnaghan--Nakayama rule~\cite[Thm.~1.2 and Cor.~1.4]{BSZ11} states that
\[
p_r\,s^{(k)}_\lambda
=
\sum_{\substack{\rho\in\mathscr P^{(k)}\\
\rho/\lambda\in\mathrm{KR}^{(k)}_r}}
(-1)^{\operatorname{ht}^{(k)}(\rho/\lambda)}s^{(k)}_\rho .
\]
Thus, for every $\gamma\in\mathscr P^{(k)}$,
\[
\langle p_r,\widetilde F^{(k)}_{\gamma/\lambda}\rangle_{\mathrm{Hall}}
=
\begin{cases}
(-1)^{\operatorname{ht}^{(k)}(\gamma/\lambda)},&
\gamma/\lambda\in\mathrm{KR}^{(k)}_r,\\
0,&\text{otherwise}.
\end{cases}
\]
The same argument with $(\lambda,\gamma)$ replaced by $(\mu,\eta)$ gives the coefficients in the
second sum. This proves \eqref{e:skew-kSchur-MN}.
\end{proof}

We finally record the parallel Pieri specialization.  This uses the straight \(k\)-Pieri
rule rather than the straight \(k\)-Schur Murnaghan--Nakayama rule.

\phantomsection\label{def:k-weak-pieri-strip}
For $\alpha,\beta\in\mathscr P^{(k)}$ and $0\le r\le k$, write
\[
\alpha\rightsquigarrow^{(k)}_r\beta
\]
if $r=0$ and $\alpha=\beta$, or if $r>0$, $\alpha\subseteq\beta$, $|\beta/\alpha|=r$,
$\beta/\alpha$ is a horizontal strip, and $\beta^{(k)}/\alpha^{(k)}$ is a vertical strip.
Thus $\rightsquigarrow^{(k)}_r$ is the weak $k$-Pieri strip relation in the bounded-partition
notation of Lapointe--Morse~\cite[Thm.~29]{LM07}.  With this notation, the straight
$k$-Pieri rules of~\cite[Thms.~29 and 33]{LM07} are
\begin{align}
h_a[X]\,s^{(k)}_\alpha[X]
&=\sum_{\beta:\,\alpha\rightsquigarrow^{(k)}_a\beta}s^{(k)}_\beta[X],
\label{e:straight-k-Pieri-h}\\
e_b[X]\,s^{(k)}_\alpha[X]
&=\sum_{\beta:\,\alpha^{(k)}\rightsquigarrow^{(k)}_b\beta^{(k)}}s^{(k)}_\beta[X],
\label{e:straight-k-Pieri-e}
\end{align}
for $0\le a,b\le k$.  Under the affine Grassmannian realization, \eqref{e:straight-k-Pieri-h}
is the homology Pieri rule of Lam--Lapointe--Morse--Shimozono~\cite[Thm.~4.12]{LLM+10},
with the above relation corresponding to the weak-strip condition.

\begin{cor}[Skew $k$-Schur Pieri rule]\label{c:skew-kSchur-Pieri}
For $1\le r\le k$ and $\lambda,\eta\in\mathscr P^{(k)}$, one has
\begin{equation}\label{e:skew-kSchur-Pieri}
h_r[X]\,s^{(k)}_{\lambda/\eta}[X]
=
\sum_{a+b=r}(-1)^b
\sum_{\substack{\rho,\nu\in\mathscr P^{(k)}\\
\lambda\rightsquigarrow^{(k)}_a\rho\\
\nu^{(k)}\rightsquigarrow^{(k)}_b\eta^{(k)}}}
s^{(k)}_{\rho/\nu}[X].
\end{equation}
\end{cor}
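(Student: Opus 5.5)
The plan is to follow the proof of Corollary~\ref{c:skew-left-Pieri-ncSchur}, replacing Tewari--van Willigenburg's Pieri rules by the straight $k$-Pieri rules \eqref{e:straight-k-Pieri-h}--\eqref{e:straight-k-Pieri-e}. Since $h_r\in\Lambda_{(k)}$ for $r\le k$, the functional $\vartheta(g):=\langle h_r,g\rangle_{\mathrm{Hall}}$ on $\Lambda^{(k)}$ is well defined. Apply $\id\otimes\vartheta$ to the tensor identity of Theorem~\ref{t:skew-MN-dual} for $(\Lambda_{(k)},\Lambda^{(k)})$, that is, to \eqref{e:skew-MN-rule-k-Schur} before evaluation. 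On the left, $\sum_\nu \langle h_r,\widetilde F^{(k)}_\nu\rangle s^{(k)}_\nu=h_r$ by dual-basis reconstruction in $\Lambda_{(k)}$, so the left side becomes $h_r\,s^{(k)}_{\lambda/\eta}$. On the right, the coefficient of $s^{(k)}_{\rho/\mu}$ is $\langle h_r,S^\vee(\widetilde F^{(k)}_{\eta/\mu})\widetilde F^{(k)}_{\rho/\lambda}\rangle$.

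To evaluate this coefficient, use the Hopf pairing axiom together with $\Delta(h_r)=\sum_{a+b=r}h_b\otimes h_a$ (both tensor factors lie in $\Lambda_{(k)}$). The pairing compatibility of the antipodes, $\langle x,S^\vee y\rangle=\langle Sx,y\rangle$, holds by construction because $S^\vee$ is induced from the restriction of $S$ to $\Lambda_{(k)}$. Combined with $S(h_b)=(-1)^be_b$, this gives
\[
\sum_{a+b=r}(-1)^b\,\langle e_b,\widetilde F^{(k)}_{\eta/\mu}\rangle\,\langle h_a,\widetilde F^{(k)}_{\rho/\lambda}\rangle .
\]
Each factor is then read off from the straight rules. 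Since $\langle h_a,\widetilde F^{(k)}_{\rho/\lambda}\rangle=\langle h_a s^{(k)}_\lambda,\widetilde F^{(k)}_\rho\rangle$, this equals $1$ exactly when $\lambda\rightsquigarrow^{(k)}_a\rho$ by \eqref{e:straight-k-Pieri-h}. Similarly, $\langle e_b,\widetilde F^{(k)}_{\eta/\mu}\rangle=\langle e_b s^{(k)}_\mu,\widetilde F^{(k)}_\eta\rangle$ equals $1$ exactly when $\mu^{(k)}\rightsquigarrow^{(k)}_b\eta^{(k)}$ by \eqref{e:straight-k-Pieri-e}. Renaming $\mu$ as $\nu$ then yields \eqref{e:skew-kSchur-Pieri}.

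The main point needing care is justifying $\langle h_a,g/\lambda\text{-skew}\rangle=\langle h_a s^{(k)}_\lambda,\cdot\rangle$. This is the Lemma~\ref{l:skew-structure-constants-dual}-type identity $\langle xy,z\rangle=\langle x\otimes y,\Delta^\vee z\rangle$ combined with the defining expansion of $\Delta^\vee(\widetilde F^{(k)}_\rho)$, exactly as in the proof of Corollary~\ref{c:skew-kSchur-MN}. Alternatively, one could specialize $Y$ to a single variable $z$ and use $h_a[-z]=(-1)^ae_a[z]$; the functional approach above avoids plethystic issues in the quotient $\Lambda^{(k)}$.
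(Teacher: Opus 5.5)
Your proposal is correct and is essentially the paper's own argument: apply the functional $g\mapsto\langle h_r,g\rangle_{\mathrm{Hall}}$ to the second tensor factor of the identity underlying \eqref{e:skew-MN-rule-k-Schur}, split the coefficient using $\Delta(h_r)=\sum_{a+b=r}h_b\otimes h_a$ and $S(h_b)=(-1)^be_b$, and identify the two resulting pairings with straight $k$-Pieri coefficients via \eqref{e:straight-k-Pieri-h}--\eqref{e:straight-k-Pieri-e}. Your explicit remarks on the adjointness of the antipodes and on rewriting $\langle h_a,\widetilde F^{(k)}_{\rho/\lambda}\rangle_{\mathrm{Hall}}$ as $\langle h_a s^{(k)}_\lambda,\widetilde F^{(k)}_\rho\rangle_{\mathrm{Hall}}$ spell out steps that the paper passes over quickly.
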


\begin{proof}
Let $\pi_r:\Lambda^{(k)}\to F$ be the linear functional
\[
\pi_r(g)=\langle h_r,\ g\rangle_{\mathrm{Hall}}.
\]
Applying $\operatorname{id}\otimes\pi_r$ to the tensor form underlying
\eqref{e:skew-MN-rule-k-Schur} gives
\[
h_r[X]\,s^{(k)}_{\lambda/\eta}[X]
=
\sum_{\rho,\nu\in\mathscr P^{(k)}}
\left\langle h_r,\,
S^\vee\bigl(\widetilde F^{(k)}_{\eta/\nu}\bigr)\widetilde F^{(k)}_{\rho/\lambda}
\right\rangle_{\mathrm{Hall}}
s^{(k)}_{\rho/\nu}[X].
\]
Since $\Delta(h_r)=\sum_{a+b=r}h_b\otimes h_a$ and
$S(h_b)=(-1)^b e_b$, the coefficient in the last display is
\[
\sum_{a+b=r}(-1)^b
\left\langle e_b,\widetilde F^{(k)}_{\eta/\nu}\right\rangle_{\mathrm{Hall}}
\left\langle h_a,\widetilde F^{(k)}_{\rho/\lambda}\right\rangle_{\mathrm{Hall}}.
\]
By the definition of the skew dual $k$-Schur functions, the second pairing is the
coefficient of $s^{(k)}_\rho$ in $h_a s^{(k)}_\lambda$, while the first pairing is the
coefficient of $s^{(k)}_\eta$ in $e_b s^{(k)}_\nu$.  The two straight Pieri rules
\eqref{e:straight-k-Pieri-h}--\eqref{e:straight-k-Pieri-e} therefore give
\[
\left\langle h_a,\widetilde F^{(k)}_{\rho/\lambda}\right\rangle_{\mathrm{Hall}}
=
\begin{cases}
1,& \lambda\rightsquigarrow^{(k)}_a\rho,\\
0,& \text{otherwise},
\end{cases}
\qquad
\left\langle e_b,\widetilde F^{(k)}_{\eta/\nu}\right\rangle_{\mathrm{Hall}}
=
\begin{cases}
1,& \nu^{(k)}\rightsquigarrow^{(k)}_b\eta^{(k)},\\
0,& \text{otherwise}.
\end{cases}
\]
Substitution gives \eqref{e:skew-kSchur-Pieri}.
\end{proof}

\begin{rem}
After translating notation, with $\rightsquigarrow^{(k)}_r$ read as the bounded-partition
form of the weak-strip relation, \eqref{e:skew-kSchur-Pieri} recovers the skew
$k$-Pieri rule of Lam--Lauve--Sottile~\cite[Thm.~6.1]{LLS11}.  Here the skew formula is
derived directly from the completed Cauchy identity \eqref{e:skew-MN-rule-k-Schur}
together with the straight Pieri input \eqref{e:straight-k-Pieri-h}--\eqref{e:straight-k-Pieri-e}.
\end{rem}

\section{\texorpdfstring{Affine Grassmannian of the symplectic group (type $C$)}{Affine Grassmannian of the symplectic group (type C)}}
\label{ss:SpAffineGr-skewMN}

Fix $n\ge 1$.  Following Lam--Schilling--Shimozono~\cite[Sec.~1.3 and Thm.~1.3]{LSS10}, let
\[
\Gamma_*:=\mathbb Z[P_1,P_2,\ldots]\otimes_{\mathbb Z}F,\qquad
\Gamma^*:=\mathbb Z[Q_1,Q_2,\ldots]\otimes_{\mathbb Z}F
\]
be the dual Hopf algebras of Schur \(P\)- and \(Q\)-functions, with their standard Hopf
pairing, after extension of scalars to $F$.  Set
\[
H=\Gamma_{(n)}:=F[P_1,\ldots,P_{2n}]\subseteq\Gamma_*,\qquad
H^\vee=\Gamma^{(n)},
\]
where \(\Gamma^{(n)}\) is the quotient of \(\Gamma^*\) by the annihilator of
\(\Gamma_{(n)}\) under the standard pairing.  Thus \(\Gamma^{(n)}\) is the graded Hopf dual
of the type \(C\) affine Grassmannian homology Hopf algebra \(\Gamma_{(n)}\).  Since
\(\Gamma^*\) is commutative, so is \(\Gamma^{(n)}\), and Theorem~\ref{t:skew-MN-dual} applies.

Let $\mathcal I=\widetilde C_n^{\,0}$ be the index set of type $C$ affine Grassmannian elements (equivalently, the Schubert index set).
Fix a graded nondegenerate Hopf pairing $\langle\cdot,\cdot\rangle_{Sp}:\Gamma_{(n)}\times \Gamma^{(n)}\to F$ and dual homogeneous bases
\[
f_w=P^{(n)}_w\in \Gamma_{(n)},\qquad g_w=Q^{(n)}_w\in \Gamma^{(n)}\qquad(w\in\widetilde C_n^{\,0}),
\]
characterized by
\[
\langle P^{(n)}_u,\ Q^{(n)}_v\rangle_{Sp}=\delta_{uv}.
\]
Here $\{P^{(n)}_w\}_{w\in \widetilde C_n^{\,0}}$ and
$\{Q^{(n)}_w\}_{w\in \widetilde C_n^{\,0}}$ are the dual Schubert bases of
\cite[Thm.~1.3]{LSS10}. 

The Cauchy element is
\[
\E^{Sp(n)}
=\sum_{w\in\widetilde C_n^{\,0}} P^{(n)}_w\otimes Q^{(n)}_w
\ \in\ \widehat{\Gamma_{(n)}\otimes\Gamma^{(n)}},
\]
and its evaluated Cauchy kernel is
\[
\E^{Sp(n)}[X| Y]
:=\sum_{w\in\widetilde C_n^{\,0}} P^{(n)}_w[X]\ Q^{(n)}_w[Y].
\]

Define skew elements by the coproduct expansions
\[
\Delta\bigl(P^{(n)}_w\bigr)=\sum_{v\in\widetilde C_n^{\,0}} P^{(n)}_{w/v}\otimes P^{(n)}_v,
\qquad
\Delta^\vee\bigl(Q^{(n)}_w\bigr)=\sum_{v\in\widetilde C_n^{\,0}} Q^{(n)}_{w/v}\otimes Q^{(n)}_v.
\]

By the Hopf-dual realization of \(\Gamma_{(n)}\) and \(\Gamma^{(n)}\)
in~\cite[Sec.~1.3]{LSS10}, these Hopf algebras are obtained from the Schur \(P\)- and
\(Q\)-function Hopf algebras above by Hopf subalgebra and graded-dual constructions.
Their antipodes are therefore induced by the usual antipode of $\Lambda$.
Accordingly, for $g\in \Gamma^{(n)}$ the induced antipode is computed plethystically by
\[
S^\vee(g)[Y]=g[-Y],
\]
i.e.\ apply the alphabet substitution $Y\mapsto -Y$ in the ambient symmetric-function realization and interpret the
result in $\Gamma^{(n)}$.

\begin{thm}[Skew MN rule in type $C$ affine Grassmannian]\label{t:skew-MN-rule-C}
    Let $(H,H^\vee)=(\Gamma_{(n)},\Gamma^{(n)})$. Then we have the following skew MN rule for the affine Grassmannian of the symplectic group:
    \begin{equation}\label{e:skew-MN-rule-C}
        \E^{Sp(n)}[X| Y]\ P^{(n)}_{w/v}[X]
=\sum_{x,y\in\widetilde C_n^{\,0}} Q^{(n)}_{v/y}[-Y] Q^{(n)}_{x/w}[Y]P^{(n)}_{x/y}[X].
    \end{equation}
\end{thm}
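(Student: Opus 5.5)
The plan is to obtain Theorem~\ref{t:skew-MN-rule-C} as a direct specialization of the abstract skew Murnaghan--Nakayama rule, Theorem~\ref{t:skew-MN-dual}, in the same way Theorems~\ref{t:skew-MN-rule-NQ} and~\ref{t:skew-MN-rule-k-Schur} were obtained. First I will check the hypotheses of Chapter~\ref{s:general-rule} for the pair $(H,H^\vee)=(\Gamma_{(n)},\Gamma^{(n)})$.
\begin{enumerate}
\item Both are graded connected Hopf algebras with finite-dimensional graded pieces. $\Gamma_{(n)}=F[P_1,\dots,P_{2n}]$ is a graded subalgebra of $\Gamma_*$ that is stable under the coproduct, since $\Delta(P_r)$ is a sum of products $P_i\otimes P_j$ with $i+j=r$, and $\Gamma^{(n)}$ is its graded dual quotient.
\item The induced pairing $\langle\cdot,\cdot\rangle_{Sp}$ is graded and nondegenerate degreewise. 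This holds by construction, because we quotient by exactly the annihilator.
\item It satisfies the Hopf pairing axioms \eqref{e:hopf-pairing-dual}, since these are inherited from the standard pairing $\Gamma_*\times\Gamma^*\to F$.
\item $\{P^{(n)}_w\}$ and $\{Q^{(n)}_w\}$ form a Hopf-dual pair of homogeneous bases indexed by $\mathcal I=\widetilde C_n^{\,0}$, by \cite[Thm.~1.3]{LSS10}.
\item $H^\vee$ is commutative, being a quotient of the commutative algebra $\Gamma^*$.
\end{enumerate}

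Applying Theorem~\ref{t:skew-MN-dual} with $(\lambda,\eta)=(w,v)$ and renaming $(\rho,\mu)=(x,y)$ then gives, in $\widehat{\Gamma_{(n)}\otimes\Gamma^{(n)}}$,
\[
\E^{Sp(n)}\bigl(P^{(n)}_{w/v}\otimes 1\bigr)=\sum_{x,y\in\widetilde C_n^{\,0}} P^{(n)}_{x/y}\otimes S^\vee\bigl(Q^{(n)}_{v/y}\bigr)\,Q^{(n)}_{x/w}.
\]
Next I apply the evaluation $\ev_X\otimes\ev_Y$. These maps are algebra morphisms into commutative algebras, so they extend degreewise to the completion. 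The left side becomes $\E^{Sp(n)}[X|Y]\,P^{(n)}_{w/v}[X]$, and the right side becomes $\sum P^{(n)}_{x/y}[X]\,S^\vee(Q^{(n)}_{v/y})[Y]\,Q^{(n)}_{x/w}[Y]$.

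Finally, I substitute $S^\vee(g)[Y]=g[-Y]$ to reach \eqref{e:skew-MN-rule-C}. I expect this antipode identification to be the only real obstacle. I would justify it as follows.
\begin{enumerate}
\item The antipode of $\Gamma_*\subset\Lambda$ is $f\mapsto f[-X]$, and it preserves $\Gamma_{(n)}$. Indeed, $S(P_r)$ is a polynomial in $P_1,\dots,P_r$ with $r\le 2n$, by the recursion $\sum_i S(P_i)P_{r-i}=0$ read off from $m(S\otimes\id)\Delta=u\varepsilon$.
\item The antipode of the graded dual is the transpose: $\langle a,S^\vee g\rangle=\langle Sa,g\rangle$. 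This is forced by uniqueness of the convolution inverse, because transposition turns convolution on $H$ into convolution on $H^\vee$.
\item On $\Gamma^*$, the transpose of $S$ is again $g\mapsto g[-Y]$, since the standard pairing is Hopf and $S$ is its own transpose on the self-dual power-sum side ($S(p_r)=-p_r$).
\item The annihilator of $\Gamma_{(n)}$ is $S$-stable, because $\Gamma_{(n)}$ is $S$-stable. Hence $g\mapsto g[-Y]$ descends to $\Gamma^{(n)}$ and agrees there with $S^\vee$.
\end{enumerate}
With this, evaluating at $Y$ gives $Q^{(n)}_{v/y}[-Y]$ in the sense fixed before the statement, and the theorem follows.
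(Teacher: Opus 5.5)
Your proposal is correct and follows the same route as the paper. The paper obtains this theorem as a direct specialization of Theorem~\ref{t:skew-MN-dual} to $(\Gamma_{(n)},\Gamma^{(n)})$, combined with the plethystic antipode convention $S^\vee(g)[Y]=g[-Y]$ stated just before it; your transpose/annihilator argument actually justifies that convention more carefully than the paper does.

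One small slip does not affect your conclusion. The recursion $\sum_i S(P_i)P_{r-i}=0$ is the one satisfied by the $Q_r$, because $\Delta(Q_r)=\sum_{i+j=r}Q_i\otimes Q_j$. For the $P_r$ one has instead $\Delta(P_r)=P_r\otimes 1+1\otimes P_r+2\sum_{i,j\ge1,\,i+j=r}P_i\otimes P_j$, so the correct recursion carries a factor $2$ on the middle terms. In fact $S(P_r)=(-1)^rP_r$ directly, since $\Gamma$ is generated by the odd power sums.
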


We next extract the one-variable specialization of \eqref{e:skew-MN-rule-C}.  This gives a skew
version of the Lam--Schilling--Shimozono Pieri rule.
Let \(I_{\mathrm{af}}=\{0,1,\ldots,n\}\) be the affine type \(C\) index set.  For
\(u\in\widetilde C_n\), let \(\operatorname{Supp}(u)\subseteq I_{\mathrm{af}}\) be the support of any
reduced word for \(u\), and let \(\operatorname{cc}(u)\) be the number of connected components of
\(\operatorname{Supp}(u)\) in the affine Dynkin diagram.
For \(1\le a\le 2n\), the special affine Grassmannian element \(\rho_a\in\widetilde C_n^{\,0}\)
is
\[
\rho_a=
\begin{cases}
s_{a-1}s_{a-2}\cdots s_1s_0, & 1\le a\le n,\\
\bigl(s_{2n+1-a}s_{2n+2-a}\cdots s_{n-1}\bigr)s_ns_{n-1}\cdots s_1s_0,
& n+1\le a\le 2n.
\end{cases}
\]
In the second line the parenthesized product is understood to be empty when \(a=n+1\).
Following~\cite[Sec.~1.5]{LSS10}, let \(\mathcal Z^{(n)}\) be the Bruhat order ideal in \(\widetilde C_n\)
generated by the conjugates of \(\rho_{2n}\); its elements are the type \(C\) Pieri factors.  Put
\[
\mathcal Z^{(n)}_a:=\{u\in\mathcal Z^{(n)}:\ell(u)=a\}.
\]
Define the straight type \(C\) Pieri coefficients \(\Pi_a^C(x,w)\), for
\(x,w\in\widetilde C_n^{\,0}\), by
\[
\Pi_0^C(x,w):=\delta_{x,w},
\]
and, for \(a>0\), by
\[
\Pi_a^C(x,w):=
\sum_{\substack{u\in\mathcal Z^{(n)}_a\\ x=uw\\
\ell(x)=\ell(u)+\ell(w)}}
2^{\operatorname{cc}(u)-1}.
\]
Under the identification of~\cite[Thm.~1.3]{LSS10}, the special Schubert basis element
satisfies $P_{\rho_a}^{(n)}=P_a$ for $1\le a\le 2n$.  Consequently, the
Lam--Schilling--Shimozono homology Pieri rule~\cite[Thm.~1.4]{LSS10}
is
\begin{equation}\label{e:type-C-straight-Pieri-coeff}
P_{\rho_a}^{(n)}[X]\,P_w^{(n)}[X]
=
\sum_{x\in\widetilde C_n^{\,0}}\Pi_a^C(x,w)\,P_x^{(n)}[X]
\qquad(1\le a\le 2n).
\end{equation}

\begin{cor}[Skew Pieri rule in type \(C\) affine Grassmannian]\label{c:skew-Pieri-type-C}
For \(1\le r\le 2n\) and \(w,v\in\widetilde C_n^{\,0}\), one has
\begin{equation}\label{e:skew-Pieri-type-C}
P_{\rho_r}^{(n)}[X]\,P_{w/v}^{(n)}[X]
=
\sum_{\substack{a+b=r\\ a,b\ge0}}
(-1)^b\,\epsilon(a,b)
\sum_{x,y\in\widetilde C_n^{\,0}}
\Pi_a^C(x,w)\Pi_b^C(v,y)\,P_{x/y}^{(n)}[X],
\end{equation}
where
\[
\epsilon(a,b)=
\begin{cases}
1, & a=0\text{ or }b=0,\\
2, & a>0\text{ and }b>0.
\end{cases}
\]
\end{cor}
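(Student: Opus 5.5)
The plan follows the proof of Corollary~\ref{c:skew-kSchur-Pieri}. Let $\pi_r:\Gamma^{(n)}\to F$ be the functional $\pi_r(g)=\langle P_{\rho_r}^{(n)},g\rangle_{Sp}$. Apply $\id\otimes\pi_r$ to the tensor form of Theorem~\ref{t:skew-MN-dual} in the $(\Gamma_{(n)},\Gamma^{(n)})$ specialization, i.e.\ the identity underlying \eqref{e:skew-MN-rule-C}. On the left-hand side, the expansion $P_{\rho_r}^{(n)}=\sum_u\langle P_{\rho_r}^{(n)},Q_u^{(n)}\rangle_{Sp}P^{(n)}_u$ turns $\E^{Sp(n)}(P^{(n)}_{w/v}\otimes 1)$ into $P_{\rho_r}^{(n)}P^{(n)}_{w/v}$. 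The coefficient of $P^{(n)}_{x/y}$ on the right is
\[
\bigl\langle P_{\rho_r}^{(n)},\,S^\vee(Q^{(n)}_{v/y})\,Q^{(n)}_{x/w}\bigr\rangle_{Sp}
=\sum \bigl\langle S(P_{(1)}),Q^{(n)}_{v/y}\bigr\rangle_{Sp}\bigl\langle P_{(2)},Q^{(n)}_{x/w}\bigr\rangle_{Sp},
\]
where $\Delta(P_{\rho_r}^{(n)})=\sum P_{(1)}\otimes P_{(2)}$. This uses the Hopf pairing axiom together with $\langle a,S^\vee(b)\rangle=\langle S(a),b\rangle$, which holds because the antipodes are adjoint for a nondegenerate Hopf pairing.

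Next we need the coproduct and antipode of $P_r=P_{\rho_r}^{(n)}$ inside $\Gamma_{(n)}\subseteq\Gamma_*$. From the generating function $\sum_r Q_r z^r=\prod_i\frac{1+x_iz}{1-x_iz}$ and $P_r=\tfrac12 Q_r$ for $r\ge1$, with $P_0=Q_0=1$, we get
\[
\Delta(P_r)=P_r\otimes1+1\otimes P_r+2\sum_{\substack{a+b=r\\a,b>0}}P_b\otimes P_a .
\]
This is the source of $\epsilon(a,b)$. For the antipode, $Q(z)Q(-z)=1$ gives $S(Q_b)=Q_b[-X]=(-1)^bQ_b$ (since $\omega Q_b=Q_b$), hence $S(P_b)=(-1)^bP_b$. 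All indices satisfy $a,b\le r\le 2n$, so every term stays in $\Gamma_{(n)}$, and $P_a=P^{(n)}_{\rho_a}$.

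Substituting, the coefficient becomes $\sum_{a+b=r}(-1)^b\epsilon(a,b)\langle P^{(n)}_{\rho_b},Q^{(n)}_{v/y}\rangle\langle P^{(n)}_{\rho_a},Q^{(n)}_{x/w}\rangle$, with the convention $P^{(n)}_{\rho_0}=1$. The pairings are then identified with the straight Pieri coefficients. By the skew definition, $\langle P^{(n)}_{\rho_a},Q^{(n)}_{x/w}\rangle=\langle P^{(n)}_{\rho_a}P^{(n)}_w,Q^{(n)}_x\rangle=\Pi^C_a(x,w)$ by \eqref{e:type-C-straight-Pieri-coeff}. In the same way, $\langle P^{(n)}_{\rho_b},Q^{(n)}_{v/y}\rangle=\Pi^C_b(v,y)$. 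For $a=0$ or $b=0$ we use $\langle 1,Q^{(n)}_{x/w}\rangle=\varepsilon^\vee(Q^{(n)}_{x/w})=\delta_{x,w}$, as in Lemma~\ref{l:orthogonality-dual}; this matches $\Pi^C_0$. Collecting terms yields \eqref{e:skew-Pieri-type-C}.

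The main obstacle is the coproduct and antipode step. It requires checking that the LSS identification $P^{(n)}_{\rho_a}=P_a$ is compatible with the Hopf structure of $\Gamma_*$ and its normalization of $P_r$ versus $Q_r$, in particular that the cross terms carry the factor $2$ and not some other normalization. I would verify this first on the level of $\prod_i\frac{1+x_iy_jz}{1-x_iy_jz}$, and check it in degree $2$.
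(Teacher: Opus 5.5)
Your proof is correct, but it takes a different route from the paper's.

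The paper specializes the auxiliary alphabet to one variable, $Y=z$. From the LSS factorization formula it gets $Q^{(n)}_u[z]=2^{\operatorname{cc}(u)}z^{\ell(u)}$ for Pieri factors $u$. With the straight Pieri rule at $w=e$, this gives $\E^{Sp(n)}[X|z]=1+2\sum_{a=1}^{2n}P^{(n)}_{\rho_a}[X]z^a$. The straight skew Cauchy identity then yields $Q^{(n)}_{x/w}[z]=\sum_a\gamma_a(x,w)z^a$, with $\gamma_0=\Pi^C_0$ and $\gamma_a=2\Pi^C_a$ for $a>0$. The antipode contributes $(-1)^b$ in degree $b$. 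Comparing coefficients of $z^r$ in \eqref{e:skew-MN-rule-C} and dividing by $2$ gives $\gamma_a\gamma_b/2=\epsilon(a,b)\Pi^C_a\Pi^C_b$. So in the paper, $\epsilon(a,b)$ comes from the normalization of the one-variable kernel.

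You instead apply the functional $g\mapsto\langle P^{(n)}_{\rho_r},g\rangle_{Sp}$ to the second tensor factor, as the paper does in its $k$-Schur and $\mathrm{NSym}$ Pieri corollaries. Here $\epsilon(a,b)$ appears directly as the cross-term coefficient of $\Delta(P_r)$. Your formulas $\Delta(P_r)=P_r\otimes1+1\otimes P_r+2\sum_{a,b>0}P_b\otimes P_a$ and $S(P_b)=(-1)^bP_b$ follow correctly from $Q_r=2P_r$ and $\Delta(Q_r)=\sum_{a+b=r}Q_a\otimes Q_b$.

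Each route buys something different:
\begin{itemize}
\item Your route needs neither the factorization formula (1.2) of LSS nor single-variable evaluation. Such evaluation is not a priori well defined on the quotient $\Gamma^{(n)}$: it must be done on representatives, or degreewise in degrees $\le 2n$, where $\Gamma^*\to\Gamma^{(n)}$ is an isomorphism. Your functional, by contrast, is intrinsically defined on $\Gamma^{(n)}$.
\item The paper's route gives, as by-products, closed forms for the one-variable kernel and for $Q^{(n)}_{x/w}[z]$ in terms of the Pieri coefficients.
\end{itemize}

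The obstacle you flag is harmless. The Hopf structure on $\Gamma_{(n)}$ is the one restricted from $\Gamma_*$. The normalization $P^{(n)}_{\rho_a}=P_a$ (rather than $Q_a$) is the one the paper fixes. It is also consistent: $Q^{(n)}_{s_0}[z]=2z$ forces $Q^{(n)}_{s_0}=2p_1=Q_1$, which is dual to $P_1$.
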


\begin{proof}
We first record the one-variable Cauchy specialization.  By the defining factorization
formula for \(Q_w^{(n)}[Y]\) in~\cite[(1.2)]{LSS10}, one has
\[
Q_u^{(n)}[z]=2^{\operatorname{cc}(u)}z^{\ell(u)}
\qquad(u\in\mathcal Z^{(n)}),
\]
while \(Q_u^{(n)}[z]=0\) for \(u\notin\mathcal Z^{(n)}\).  Therefore
\[
\E^{Sp(n)}[X|z]
=1+\sum_{a=1}^{2n}z^a
\sum_{\substack{u\in\mathcal Z^{(n)}_a\\ u\in\widetilde C_n^{\,0}}}
2^{\operatorname{cc}(u)}P_u^{(n)}[X].
\]
Taking \(w=e\) in the straight Pieri rule \eqref{e:type-C-straight-Pieri-coeff} gives
\[
P_{\rho_a}^{(n)}[X]
=
\sum_{\substack{u\in\mathcal Z^{(n)}_a\\ u\in\widetilde C_n^{\,0}}}
2^{\operatorname{cc}(u)-1}P_u^{(n)}[X],
\]
and hence
\begin{equation}\label{e:type-C-one-variable-Cauchy}
\E^{Sp(n)}[X|z]
=1+2\sum_{a=1}^{2n}P_{\rho_a}^{(n)}[X]z^a.
\end{equation}

Next, the straight specialization of the skew Cauchy identity gives
\[
\E^{Sp(n)}[X|z]\,P_w^{(n)}[X]
=
\sum_{x\in\widetilde C_n^{\,0}}Q_{x/w}^{(n)}[z]\,P_x^{(n)}[X].
\]
Combining this with \eqref{e:type-C-one-variable-Cauchy} and the straight Pieri rule
\eqref{e:type-C-straight-Pieri-coeff}, we obtain
\begin{equation}\label{e:type-C-skew-Q-one-var}
Q_{x/w}^{(n)}[z]=\sum_{a\ge0}\gamma_a(x,w)z^a,
\qquad
\gamma_0(x,w)=\Pi_0^C(x,w),\quad
\gamma_a(x,w)=2\Pi_a^C(x,w)\quad(a>0).
\end{equation}
Since \(Q_{v/y}^{(n)}\) lies in the Schur \(Q\)-function Hopf algebra, equivalently in the
subalgebra generated by odd power sums, the plethystic antipode acts on its homogeneous
degree \(b\) component by the scalar \((-1)^b\).  Hence \eqref{e:type-C-skew-Q-one-var}
also gives
\[
Q_{v/y}^{(n)}[-z]=\sum_{b\ge0}(-1)^b\gamma_b(v,y)z^b.
\]
Now put \(Y=z\) in \eqref{e:skew-MN-rule-C}.  The coefficient of \(z^r\) on the left-hand
side is, by \eqref{e:type-C-one-variable-Cauchy},
\[
2P_{\rho_r}^{(n)}[X]\,P_{w/v}^{(n)}[X].
\]
The coefficient of \(z^r\) on the right-hand side is
\[
\sum_{\substack{a+b=r\\a,b\ge0}}(-1)^b
\sum_{x,y\in\widetilde C_n^{\,0}}
\gamma_a(x,w)\gamma_b(v,y)P_{x/y}^{(n)}[X].
\]
Dividing by \(2\) and substituting
\(\gamma_0=\Pi_0^C\), \(\gamma_a=2\Pi_a^C\) for \(a>0\), gives exactly
\eqref{e:skew-Pieri-type-C}.
\end{proof}

\begin{rem}
For \(1\le r\le 2n-1\), after translating notation, \eqref{e:skew-Pieri-type-C}
recovers the type \(C\) skew Pieri rule of Lam--Lauve--Sottile~\cite[Thm.~7.1]{LLS11}.
The endpoint \(r=2n\) follows here from the Lam--Schilling--Shimozono homology Pieri
rule~\cite[Thm.~1.4]{LSS10} together with the one-variable Cauchy specialization
\eqref{e:type-C-one-variable-Cauchy}.  In the present formulation, the coefficients
\(\Pi_a^C\) package the Pieri factors and their multiplicities, while the factor
\(\epsilon(a,b)\) comes from the normalization of \eqref{e:type-C-one-variable-Cauchy}.
\end{rem}

\begin{rem}
The three examples treated in this chapter are not exhaustive.  The same completed-Cauchy
formalism can be applied, at least at the level of Hopf-dual skew identities, to other
graded Hopf dual pairs.  For instance:
\begin{enumerate}
    \item The peak algebra and the Hopf algebra of peak quasisymmetric functions form
    a natural dual pair related to Schur \(Q\)-theory and \(q=0\) Hecke--Clifford
    algebras; see Stembridge's enriched \(P\)-partitions and the Hopf-algebraic
    treatment in~\cite{Ste97,BHT04}.
    \item Tensor products such as \(\Lambda^{\otimes m}\), equipped with the tensor
    Hall pairing, provide colored symmetric-function models relevant to wreath
    products and cyclotomic Hecke algebras~\cite{AK94,Sho00}.
    \item A further filtered analogue is furnished by \(K\)-\(k\)-Schur functions and
    affine stable Grothendieck polynomials in \(K\)-theoretic affine Grassmannian
    Schubert calculus~\cite{LSS10K}.
\end{enumerate}
To turn these formal skew identities into explicit combinatorial formulas, one must
additionally identify the appropriate straight Pieri or Murnaghan--Nakayama input in
each setting.
\end{rem}

\chapter{Generating functions for irreducible characters}\label{s:GF-Hecke}
In this chapter we derive generating functions for the irreducible characters of Ariki--Koike algebras and Hecke--Clifford algebras, building on the results of the preceding chapters. Such generating functions provide a global perspective on the structure of irreducible characters, as will be illustrated by the application below.

\section{Ariki--Koike algebras and specializations}\label{sec:ariki-koike-algebras}
\phantomsection\label{def:multipartitions-arrays}
In this section, we need more notations. A {\em $m$-multipartition} of $n$ is an ordered tuple $\bm{\la}=(\la^{(1)},\la^{(2)},\cdots,\la^{(m)})$ of partitions $\la^{(i)}$ such that $|\bm{\la}|=\sum_{i=1}^{m}|\la^{(i)}|=n$. We denote by $\mathscr{P}_{n,m}$ the set of all $m$-multipartitions of $n$. Define $\ell(\bm{\la})=\sum_{i\geq1}^ml(\la^{(i)})$. 
For $a\ge0$ define
\[
C_{a,m}:=\Bigl\{c=(c_1,\dots,c_m)\in\mathbb Z_{\ge0}^m:\ \sum_{r=1}^m c_r=a\Bigr\},\qquad
\kappa(c):=\max\{r:\ c_r\ne0\}\ (a\ge1).
\]
Introduce auxiliary variables $Z^{(i)}:=z_{i,1}+z_{i,2}+\cdots$ $(1\le i\le m)$ and let
\begin{align}\label{e:def-A}
\mathscr A_m:=\Bigl\{\bm\alpha=(\alpha_{i,j})_{1\le i\le m,\ j\ge1}:\ 
\alpha_{i,j}\in\mathbb Z_{\ge0},\ \#\{(i,j):\alpha_{i,j}\ne0\}<\infty\Bigr\}.
\end{align}
For $\bm\alpha\in\mathscr A_m$ define the monomial
\[
Z^{\bm\alpha}:=\prod_{i=1}^m\prod_{j\ge1} z_{i,j}^{\,\alpha_{i,j}},\qquad
|\bm\alpha|:=\sum_{i,j}\alpha_{i,j},\qquad
\ell(\bm\alpha):=\#\{(i,j):\alpha_{i,j}>0\}.
\]

\subsection{Generating functions for Ariki--Koike algebras}\label{ss:plethysm-linear-comb}

Let $\mathscr W_{m,n}$ be the complex reflection group of type $G(m,1,n)$ (cf. \cite{ST54}), generated by
$s_1,s_2,\dots,s_n$ with relations
\begin{align}
\begin{split}
&s_{1}^{m}=1, \quad s_{2}^2=\cdots=s^2_{n}=1,\\
&s_1s_2s_1s_2=s_2s_1s_2s_1,\\
&s_is_j=s_js_i, \quad\text{if $|i-j|>1$},\\
&s_is_{i+1}s_i=s_{i+1}s_is_{i+1}, \quad\text{for $2\leq i<n$}.
\end{split}
\end{align}
Fix $\mathbb K=\mathbb C(\q,u_1,\dots,u_m)$. As the Hecke-deformation of $\mathscr W_{m,n}$, the Ariki--Koike algebra  \cite{AK94,BM93,C87} (also called the cyclotomic Hecke algebra of type $G(m,1,n)$)
$\mathscr H_{m,n}(\q,\bm u)$ is the unital $\mathbb K$-algebra generated by $g_1,g_2,\dots,g_n$ subject to
\begin{align}
\begin{split}
&(g_1-u_1)\cdots(g_1-u_m)=0,\\
&g_1g_2g_1g_2=g_2g_1g_2g_1,\\
&g_i^2=(\q-1)g_i+\q, \quad \text{for $2\leq i\leq n$},\\
&g_ig_j=g_jg_i, \quad \text{for $|i-j| > 1$},\\
&g_ig_{i+1}g_i = g_{i+1}g_ig_{i+1},\quad \text{for $2 \leq i < n$}.
\end{split}
\end{align}

It is known \cite{AK94} that $\mathscr H_{m,n}(\q,\bm u)$ is a free $\mathbb K$-module with
\[
\operatorname{rank}_{\mathbb K}\mathscr H_{m,n}(\q,\bm u)=n!m^n.
\]
Moreover, $\mathscr H_{m,n}(\q,\bm u)$ specializes to $\mathscr W_{m,n}$ at
\[
\q=1,\qquad
\bm u=\bm\omega=(1,\omega_m,\dots,\omega_m^{m-1}),
\]
where $u_i=\omega_m^{i-1}$ and $\omega_m$ is a primitive $m$th root of unity.
For a survey on the representation theory of $\mathscr{H}_{m,n}(q,\bm{u})$ and $q$-Schur algebras, see \cite{Mat04}.

Let $A$ be the matrix of degree $m$ whose $(a,b)$-entry is $u^a_b$  for $0 \leq a\leq m-1$ and $1\leq b \leq m$. Its determinant $\Delta=\det A=\prod_{i>j}(u_i-u_j)$ is the Vandermonde determinant. Let $B=A^*$ be the adjoint matrix of $A$. We write $B=(v_{b,a}(\bm{u}))$, where $v_{b,a}(\bm{u})$ is a polynomial in $\bm{u}$. For $1 \leq b \leq m$, we define $F_b(X)$ as the generating polynomial of $v_{b, a}(\bm u)$ as follows:
\begin{align*}
F_b(X)=\sum_{0\leq a\leq m-1}v_{b,a}(\bm{u})X^a.
\end{align*}

Following Shoji \cite{Sho00}, define $\mathscr H^\natural$ as the algebra generated by $g_2,\dots,g_n$ and commuting elements
$\xi_1,\dots,\xi_n$ with
\begin{align*}
&g_i^2=(\q-1)g_i+\q, \quad 2 \leq i \leq n;\quad (\xi_i-u_1)\cdots(\xi_i-u_m) = 0,\quad 1 \leq i \leq n;\\
&g_ig_{i+1}g_i = g_{i+1}g_ig_{i+1},\quad 2 \leq i < n;\quad g_ig_j = g_jg_i,\quad |i-j| \geq 2;\\
&\xi_i\xi_j = \xi_j\xi_i,\quad 1 \leq i, j \leq n;\quad g_j\xi_i = \xi_ig_j ,\quad i \neq j-1, j;\\
&g_j\xi_j
= \xi_{j-1}g_j + \Delta^{-2}\sum_{a<b}(u_a-u_b)(\q-\q^{-1})F_a(\xi_{j-1})F_b(\xi_j),
\quad 2\leq j\leq n;\\
&g_j\xi_{j-1}
= \xi_{j}g_j-\Delta^{-2}\sum_{a<b}(u_a-u_b)(\q-\q^{-1})F_a(\xi_{j-1})F_b(\xi_j),
\quad 2\leq j\leq n.
\end{align*}
By \cite[Th.\ 3.7]{Sho00}, $\mathscr H^\natural\simeq \mathscr H_{m,n}(\q,\bm u)$. We henceforth identify
$\mathscr H^\natural$ with $\mathscr H_{m,n}(\q,\bm u)$.

For $1\le i\le m$ and $r\ge1$, define in $\mathscr H^\natural_{m,r}$ the $i$-cycle element
\[
g_{r,i}:=\xi_r^{\,i-1}g_rg_{r-1}\cdots g_2,\qquad (g_{1,i}:=\xi_1^{\,i-1}).
\]
For a partition $\mu=(\mu_1,\dots,\mu_\ell)$ of $r$, using the canonical block-embedding
$\iota_\mu:\mathscr H^\natural_{m,\mu_1}\otimes\cdots\otimes\mathscr H^\natural_{m,\mu_\ell}\hookrightarrow \mathscr H^\natural_{m,r}$,
set
\[
g(\mu,i):=\iota_\mu\bigl(g_{\mu_1,i}\otimes\cdots\otimes g_{\mu_\ell,i}\bigr)\in\mathscr H^\natural_{m,r}.
\]
For a multipartition $\bm\mu=(\mu^{(1)},\dots,\mu^{(m)})\in\mathscr P_{n,m}$ define the {\em standard element}
\[
g(\bm\mu):=g(\mu^{(1)},1)\,g(\mu^{(2)},2)\cdots g(\mu^{(m)},m)\in\mathscr H^\natural_{m,n}.
\]
By \cite[Prop.\ 7.5]{Sho00}, irreducible characters are fully determined by their values on $\{g(\bm\mu)\}$.
Thus for $\bm\lambda,\bm\mu\in\mathscr P_{n,m}$ we write
\phantomsection\label{def:chi-AK}
\[
\chi^{\bm\lambda}_{\bm\mu}:=\chi^{\bm\lambda}\!\bigl(g(\bm\mu)\bigr).
\]

Fix $m\ge1$. For each $r=1,\dots,m$ let $X^{(r)}:=x^{(r)}_1+x^{(r)}_2+\cdots$ and set
$$s_{\bm\lambda}(X):=\prod_{r=1}^m s_{\lambda^{(r)}}(X^{(r)}).$$
Let $q_a(\cdot\,;\q^{-1})$ be the one-row Hall--Littlewood functions with parameter $\q^{-1}$. Put
\[
H_r(w):=\sum_{a\ge0} q_a\!\bigl(X^{(r)};\q^{-1}\bigr)\,w^a,\qquad
P_0(w):=1,\qquad P_k(w):=\prod_{r=1}^k H_r(w).
\]
Thus we have the plethystic forms:
\begin{align}\label{e:plethysm-form}
H_r(w)=\sigma_{w}\big[(1-\q^{-1})X^{(r)}\big],\qquad P_k(w)=\sigma_w\big[(1-\q^{-1})X^{\leq k}\big],
\end{align}
where $X^{\le k}:=X^{(1)}+\cdots+X^{(k)}$ and $\sigma_w[A]:=\sum_{a\ge0}h_a[wA]$.

For $1\le i\le m$ set
\[
q^{(i)}_0(X;\q,\bm u):=\frac1{\q-1},\qquad
q^{(i)}_a(X;\q,\bm u):=\frac{\q^{a}}{\q-1}
\sum_{c\in C_{a,m}} u_{\kappa(c)}^{\,i-1}\prod_{r=1}^m q_{c_r}\!\bigl(X^{(r)};\q^{-1}\bigr)\quad(a\ge1).
\]
For $\bm\mu\in\mathscr P_{n,m}$ define
\[
q_{\bm\mu}(X;\q,\bm u):=\prod_{i=1}^m\ \prod_{j=1}^{\ell(\mu^{(i)})} q^{(i)}_{\mu^{(i)}_j}(X;\q,\bm u).
\]
Shoji's Frobenius formula states that for each $\bm\mu\in\mathscr P_{n,m}$,
\begin{equation}\label{eq:Frob-self}
q_{\bm\mu}(X;\q,\bm u)=\sum_{\bm\lambda\in\mathscr P_{n,m}} \chi^{\bm\lambda}_{\bm\mu}\, s_{\bm\lambda}(X).
\end{equation}

We introduce the modifications by
\[
\widehat q^{(i)}_a(X;\q,\bm u):=(\q-1)\,\q^{-a}\,q^{(i)}_a(X;\q,\bm u),\qquad
\widehat q_{\bm\alpha}:=\prod_{i=1}^m\ \prod_{j\ge1}\widehat q^{(i)}_{\alpha_{i,j}},
\]
so that $\widehat q^{(i)}_0=1$ and, for every finite-support exponent array $\bm\alpha$, i.e., $\bm \alpha\in\mathscr A_m$,
\begin{equation}\label{eq:qhat-factor-self}
\widehat q_{\bm\alpha}(X;\q,\bm u)=(\q-1)^{\ell(\bm\alpha)}\,\q^{-|\bm\alpha|}\,q_{\bm\alpha}(X;\q,\bm u),
\end{equation}
where
\[
q_{\bm\alpha}:=\prod_{\alpha_{i,j}>0}q^{(i)}_{\alpha_{i,j}}.
\]

For $\bm\alpha\in\mathscr A_m$, let $\alpha^{(i)}=(\alpha_{i,1},\alpha_{i,2},\ldots)$ be the $i$th component.
Reorder the positive entries of $\alpha^{(i)}$ in weakly decreasing order to obtain a partition, denoted $\alpha^{(i)\downarrow}$, and set
\[
\bm\alpha^\downarrow:=\bigl(\alpha^{(1)\downarrow},\ldots,\alpha^{(m)\downarrow}\bigr)\in\mathscr P_{*,m}:=\bigcup_{n\ge0}\mathscr P_{n,m}.
\]
Since both $q_{\bm\alpha}$ and $\widehat q_{\bm\alpha}$ are invariant under permuting the indices $j$ within each fixed $i$, we have
\begin{equation}\label{eq:qalpha-invariance}
q_{\bm\alpha}(X;\q,\bm u)=q_{\bm\alpha^\downarrow}(X;\q,\bm u),\qquad
\widehat q_{\bm\alpha}(X;\q,\bm u)=\widehat q_{\bm\alpha^\downarrow}(X;\q,\bm u).
\end{equation}
We make the convention $\chi^{\bla}_{\bm \alpha}=0$ if $|\bla|\neq|\bm \alpha|$.

For each $\bm\alpha\in\mathscr A_m$, define $\chi^{\bm\lambda}_{\bm\alpha}\in\mathbb K$ by the Schur expansion
\begin{equation}\label{eq:Frob-alpha}
q_{\bm\alpha}(X;\q,\bm u)=\sum_{\bm\lambda\in\mathscr P_{*,m}}\chi^{\bm\lambda}_{\bm\alpha}\, s_{\bm\lambda}(X).
\end{equation}
By \eqref{eq:qalpha-invariance}, the coefficient $\chi^{\bm\lambda}_{\bm\alpha}$ depends only on $\bm\alpha^\downarrow$; equivalently,
\[
\chi^{\bm\lambda}_{\bm\alpha}=\chi^{\bm\lambda}_{\bm\alpha^\downarrow}\qquad(\bm\alpha\in\mathscr A_m).
\]
If $\bm\mu\in\mathscr P_{n,m}$ is a multipartition, we view it as an element of $\mathscr A_m$ via $\alpha_{i,j}=\mu^{(i)}_j$
(with $\mu^{(i)}_j=0$ for $j>\ell(\mu^{(i)})$). Then \eqref{eq:Frob-alpha} reduces to Shoji's Frobenius formula
\eqref{eq:Frob-self}, and in particular $\chi^{\bm\lambda}_{\bm\mu}$ coincides with the character value
$\chi^{\bm\lambda}(g(\bm\mu))$ defined above.

We introduce the generating function of $\chi^{\bla}_{\bm \alpha}$ by
\begin{equation}\label{eq:G-def-Zalpha}
\mathscr G_{\bm{\lambda}}(Z;\q,\bm u)
:=\sum_{\bm\alpha\in\mathscr A_m}
(\q-1)^{\ell(\bm\alpha)}\,\chi^{\bm\lambda}_{\bm\alpha}\,Z^{\bm\alpha}.
\end{equation}
In particular, when $\bm\mu\in\mathscr P_{n,m}$, the coefficient of $Z^{\bm\mu}$ in $\mathscr G_{\bm\lambda}$ is the normalized character value
$(\q-1)^{\ell(\bm\mu)}\chi^{\bm\lambda}_{\bm\mu}$.

Define $\widehat Q^{(i)}(z):=\sum_{a\ge0}\widehat q^{(i)}_a z^a$ and the kernel
\begin{equation}\label{eq:kernel-def-Zalpha}
\mathcal K(X,Z;\q,\bm u):=\prod_{i=1}^m\ \prod_{j\ge1}\widehat Q^{(i)}(z_{i,j})
=\sum_{\bm\alpha\in\mathscr A_m} \widehat q_{\bm\alpha}(X;\q,\bm u)\, Z^{\bm\alpha}.
\end{equation}
Using \eqref{eq:Frob-alpha} and \eqref{eq:qhat-factor-self} we obtain the Schur expansion
\[
\mathcal K(X,Z;\q,\bm u)=\sum_{\bm\lambda\in\mathscr P_{*,m}} \q^{-|\bla|}\,s_{\bm\lambda}(X)\,\mathscr G_{\bm\lambda}(Z;\q,\bm u).
\]

\begin{lem}
For $1\le i\le m$ one has
\begin{equation}\label{eq:Qhat-lin-self}
\widehat Q^{(i)}(z)=1+\sum_{k=1}^m u_k^{\,i-1}\bigl(P_k(z)-P_{k-1}(z)\bigr)
=\sum_{k=0}^m b_k^{(i)}\,P_k(z),
\end{equation}
where
\begin{align}\label{e:def-b}
b^{(i)}_k=
\begin{cases}
    1-u_1^{i-1}, &\text{if $k=0$}\\
    u_k^{\,i-1}-u_{k+1}^{\,i-1}, &\text{if $1\le k\le m-1$}\\
    u_m^{i-1}, &\text{if $k=m$}.
\end{cases}
\end{align}
\end{lem}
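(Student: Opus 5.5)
Unwinding the definitions reduces this to a direct generating-function computation. By definition, $\widehat Q^{(i)}(z)=\sum_{a\ge0}\widehat q^{(i)}_a z^a$ with $\widehat q^{(i)}_0=1$, and for $a\ge1$,
\[
\widehat q^{(i)}_a=(\q-1)\q^{-a}q^{(i)}_a=\sum_{c\in C_{a,m}}u_{\kappa(c)}^{\,i-1}\prod_{r=1}^m q_{c_r}\bigl(X^{(r)};\q^{-1}\bigr).
\]
I would split the sum over $c\in C_{a,m}$ according to $k=\kappa(c)\in\{1,\dots,m\}$, the largest index with $c_k\neq0$. For fixed $k$, the compositions with $\kappa(c)=k$ are those with $c_r=0$ for $r>k$ and $c_k\ge1$.

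Summing over all $a\ge1$ with weight $z^a$, the contribution of a fixed $k$ is $u_k^{\,i-1}$ times
\[
\sum_{\substack{c_1,\dots,c_{k-1}\ge0\\ c_k\ge1}}\prod_{r=1}^k q_{c_r}\bigl(X^{(r)};\q^{-1}\bigr)z^{c_r}
=\Bigl(\prod_{r=1}^{k-1}H_r(z)\Bigr)\bigl(H_k(z)-1\bigr),
\]
since $q_0=1$. This equals $P_k(z)-P_{k-1}(z)$. Adding the constant term $1$ gives the first equality
\[
\widehat Q^{(i)}(z)=1+\sum_{k=1}^m u_k^{\,i-1}\bigl(P_k(z)-P_{k-1}(z)\bigr).
\]
These are formal power series in $z$ with coefficients in $\Lambda^{\otimes m}$, and each coefficient involves only finitely many terms, so there is no convergence issue.

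For the second equality, I would apply Abel summation, writing $1=P_0(z)$. The coefficient of $P_k$ is $u_k^{\,i-1}-u_{k+1}^{\,i-1}$ for $1\le k\le m-1$, and $u_m^{\,i-1}$ for $k=m$. The coefficient of $P_0$ is $1-u_1^{\,i-1}$, coming from the leading $1$ together with $-u_1^{\,i-1}P_0$. This matches \eqref{e:def-b}.

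The only step needing care is the bookkeeping of the constraint $c_k\ge1$, which turns $H_k$ into $H_k-1$. It should also be noted that for $i=1$ all $u_k^{0}=1$, so the sum telescopes to $P_m$, which is consistent with the formula.
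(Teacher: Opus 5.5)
Your proof is correct and follows the paper's argument: the paper likewise groups the compositions $c\in C_{a,m}$ by the last nonzero position $\kappa(c)=k$ to obtain $P_{k-1}(z)\bigl(H_k(z)-1\bigr)=P_k(z)-P_{k-1}(z)$. Your summation by parts then gives the coefficients $b^{(i)}_k$, which the paper leaves implicit.
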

\begin{proof}
For $a\ge1$,
\[
\widehat q^{(i)}_a=\sum_{c\in C_{a,m}} u_{\kappa(c)}^{\,i-1}\prod_{r=1}^m q_{c_r}\!\bigl(X^{(r)};\q^{-1}\bigr),
\]
and summing over $a$ with fixed last nonzero position $\kappa(c)=k$ gives $P_k(z)-P_{k-1}(z)$.
\end{proof}

For $N\ge1$, let
\[
\mathscr G_{\bm{\lambda}}^{[N]}(Z;\q,\bm u)
:=\mathscr G_{\bm{\lambda}}(Z;\q,\bm u)\big|_{z_{i,j}=0\ {\rm for}\ j>N}.
\]
For each finite choice
\[
\kappa=(\kappa_{i,j})_{1\le i\le m,\ 1\le j\le N}
\in\{0,1,\dots,m\}^{\{1,\dots,m\}\times\{1,\dots,N\}},
\]
define alphabets
\[
Y_{r,N}(\kappa):=
\sum_{\substack{1\le i\le m,\ 1\le j\le N\\ \kappa_{i,j}\ge r}} z_{i,j}
\qquad (r=1,\dots,m).
\]
\begin{thm}\label{t:gene-AK}
For every multipartition $\bm\lambda$ and every $N\ge1$, we have
\begin{align}\label{eq:gene-AK-finite}
\mathscr G_{\bm{\lambda}}^{[N]}(Z;\q,\bm u)
=
\sum_{\kappa\in\{0,1,\dots,m\}^{\{1,\dots,m\}\times\{1,\dots,N\}}}
\left(\prod_{i=1}^m\ \prod_{j=1}^N b^{(i)}_{\kappa_{i,j}}\right)
\ \prod_{r=1}^m
s_{\lambda^{(r)}}\!\Bigl[(\q-1)\,Y_{r,N}(\kappa)\Bigr],
\end{align}
where $b_k^{(i)}$ are as in \eqref{e:def-b}. Consequently,
$\mathscr G_{\bm{\lambda}}(Z;\q,\bm u)$ is the coefficientwise stable limit of
the finite sums in \eqref{eq:gene-AK-finite} as $N\to\infty$.
\end{thm}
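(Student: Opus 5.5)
The plan is to extract $\mathscr G_{\bm\lambda}$ from the kernel $\mathcal K$ by pairing against $s_{\bm\lambda}$, and to compute that pairing with the plethystic Cauchy identity.

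\textbf{Step 1: truncate the kernel.} Set $z_{i,j}=0$ for $j>N$ in \eqref{eq:kernel-def-Zalpha}. This leaves the finite product $\prod_{i\le m,\,j\le N}\widehat Q^{(i)}(z_{i,j})$, since $\widehat Q^{(i)}(0)=1$. By the Schur expansion displayed just before the lemma, this product equals
\[
\sum_{\bm\lambda}\q^{-|\bm\lambda|}\,s_{\bm\lambda}(X)\,\mathscr G^{[N]}_{\bm\lambda}(Z;\q,\bm u).
\]

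\textbf{Step 2: expand the product and compute each term.} Substitute \eqref{eq:Qhat-lin-self} and expand the product. This gives a sum over $\kappa\in\{0,\dots,m\}^{m\times N}$ of $\prod b^{(i)}_{\kappa_{i,j}}\prod_{i,j}P_{\kappa_{i,j}}(z_{i,j})$. By \eqref{e:plethysm-form}, $P_k(z)=\prod_{r\le k}\sigma_z[(1-\q^{-1})X^{(r)}]$, so
\[
\prod_{i,j}P_{\kappa_{i,j}}(z_{i,j})
=\prod_{r=1}^m\prod_{(i,j):\,\kappa_{i,j}\ge r}\sigma_{z_{i,j}}\big[(1-\q^{-1})X^{(r)}\big]
=\prod_{r=1}^m\sigma_1\big[(1-\q^{-1})X^{(r)}Y_{r,N}(\kappa)\big].
\]
The last equality holds because $\sigma$ is exponential in the alphabet, i.e.\ $\prod_j\sigma_{z_j}[A]=\sigma_1[A(z_1+z_2+\cdots)]$.

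\textbf{Step 3: apply the Cauchy identity.} In each factor $r$ we have the ordinary Cauchy kernel $\E[X|Y]$ of Chapter~\ref{s:special-cases-Lambda} with $\beta_n=1$, evaluated at the alphabet $(1-\q^{-1})Y_{r,N}$. Hence
\[
\sigma_1\big[X^{(r)}(1-\q^{-1})Y_{r,N}\big]=\sum_{\nu}s_\nu(X^{(r)})\,s_\nu\big[(1-\q^{-1})Y_{r,N}(\kappa)\big].
\]
Using $(1-\q^{-1})=\q^{-1}(\q-1)$ and homogeneity of $s_\nu$ of degree $|\nu|$, we get $s_\nu[(1-\q^{-1})Y]=\q^{-|\nu|}s_\nu[(\q-1)Y]$.

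\textbf{Step 4: compare coefficients.} Take the product over $r$ and compare the coefficient of $s_{\bm\lambda}(X)$ on both sides. Linear independence of $\{s_{\bm\lambda}(X)\}$ lets us do this coefficientwise in $Z$, and the factor $\q^{-|\bm\lambda|}$ cancels on both sides. This yields \eqref{eq:gene-AK-finite}.

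\textbf{Step 5: stable limit.} Any fixed monomial $Z^{\bm\alpha}$ involves only $z_{i,j}$ with $j\le N$ for $N$ large. Its coefficient in $\mathscr G^{[N]}_{\bm\lambda}$ therefore equals its coefficient in $\mathscr G_{\bm\lambda}$ once $N$ exceeds the support of $\bm\alpha$, which gives the coefficientwise stable limit.

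The main point needing care is Step~1. The specialization $z_{i,j}=0$ for $j>N$ must commute with the Schur expansion of $\mathcal K$. This holds because everything is a formal power series in $Z$ whose coefficients are polynomials in finitely many Schur functions per degree, and specialization acts coefficientwise.
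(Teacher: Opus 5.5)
Your proposal is correct and follows the paper's proof essentially step for step. Truncating $\mathcal K$, expanding $\prod_{i,j}\widehat Q^{(i)}(z_{i,j})$ over $\kappa$ via \eqref{eq:Qhat-lin-self}, collecting the $z_{i,j}$ into $Y_{r,N}(\kappa)$ by plethystic multiplicativity, applying the Schur Cauchy identity at $(1-\q^{-1})Y_{r,N}(\kappa)$, using homogeneity, and comparing coefficients of $s_{\bm\lambda}(X)$ are exactly the paper's moves; your justification that the truncation commutes with the Schur expansion is a harmless clarification the paper leaves implicit.
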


\begin{proof}
Let
\[
\mathcal K^{[N]}(X,Z;\q,\bm u)
:=\prod_{i=1}^m\ \prod_{j=1}^N\widehat Q^{(i)}(z_{i,j})
=\mathcal K(X,Z;\q,\bm u)\big|_{z_{i,j}=0\ {\rm for}\ j>N}.
\]
Then
\[
\mathcal K^{[N]}(X,Z;\q,\bm u)
=\sum_{\bm\lambda\in\mathscr P_{*,m}}
\q^{-|\bla|}\,s_{\bm\lambda}(X)\,
\mathscr G_{\bm{\lambda}}^{[N]}(Z;\q,\bm u).
\]
Insert \eqref{eq:Qhat-lin-self} into this finite product:
\[
\mathcal K^{[N]}(X,Z;\q,\bm u)
=\prod_{i=1}^m\prod_{j=1}^N
\left(\sum_{k=0}^m b_k^{(i)}\,P_k(z_{i,j})\right)
=
\sum_{\kappa}
\left(\prod_{i=1}^m\prod_{j=1}^N b^{(i)}_{\kappa_{i,j}}\right)
\prod_{i=1}^m\prod_{j=1}^N P_{\kappa_{i,j}}(z_{i,j}),
\]
where the sum is over
$\kappa\in\{0,1,\dots,m\}^{\{1,\dots,m\}\times\{1,\dots,N\}}$.
Since $P_k(w)=\prod_{r=1}^k \sigma_w\!\bigl[(1-\q^{-1})X^{(r)}\bigr]$ by \eqref{e:plethysm-form}, plethystic multiplicativity gives
\[
\prod_{i=1}^m\prod_{j=1}^N P_{\kappa_{i,j}}(z_{i,j})
=\prod_{r=1}^m \sigma_{Y_{r,N}(\kappa)}\!\bigl[(1-\q^{-1})X^{(r)}\bigr].
\]
Recall that \eqref{e:skew-M-N-Mac} was obtained from the skew Cauchy identity \eqref{e:ge-skew-Mac} by specializing the auxiliary alphabet, normalizing, and extracting homogeneous components. Returning to the latter and specializing $(q,t,\la,\eta)=(0,0,\varnothing,\varnothing)$ gives, for any alphabet $B$,
\[
\exp\left(\sum_{n\ge1}\frac{p_n[X^{(r)}]p_n[B]}{n}\right)
=\sum_{\lambda\in\mathscr P}s_\lambda[X^{(r)}]s_\lambda[B].
\]
Taking $B=(1-\q^{-1})Y_{r,N}(\kappa)$, we obtain
\[
\begin{aligned}
\sigma_{Y_{r,N}(\kappa)}\!\bigl[(1-\q^{-1})X^{(r)}\bigr]
&=\sum_{a\geq 0}h_a\bigl[ (1-\q^{-1})Y_{r,N}(\kappa) X^{(r)} \bigr]\\
&=\sum_{a\geq 0}\sum_{\lambda^{(r)}\vdash a}
s_{\lambda^{(r)}}[X^{(r)}]\,
s_{\lambda^{(r)}}\!\Bigl[(1-\q^{-1})Y_{r,N}(\kappa)\Bigr].
\end{aligned}
\]
Multiplying over $r$ yields
\[
\begin{aligned}
\prod_{i=1}^m\prod_{j=1}^N P_{\kappa_{i,j}}(z_{i,j})
&=\sum_{\bm\lambda} s_{\bm\lambda}(X)\,
\prod_{r=1}^m
s_{\lambda^{(r)}}\!\Bigl[(1-\q^{-1})Y_{r,N}(\kappa)\Bigr]\\
&=\sum_{\bm\lambda} \q^{-|\bla|}s_{\bm\lambda}(X)\,
\prod_{r=1}^m
s_{\lambda^{(r)}}\!\Bigl[(\q-1)Y_{r,N}(\kappa)\Bigr].
\end{aligned}
\]
Substitute back and compare the coefficient of $s_{\bm\lambda}(X)$ with the Schur expansion
\[
\mathcal K^{[N]}=\sum_{\bm\lambda} \q^{-|\bla|}s_{\bm\lambda}(X)\mathscr G_{\bm\lambda}^{[N]}(Z;\q,\bm u).
\]
This gives \eqref{eq:gene-AK-finite}. The stable-limit statement follows because every monomial in the variables $Z$ involves only finitely many indices $j$.
\end{proof}

Theorem \ref{t:gene-AK} presents generating functions in the conjugacy-class parameter (the ``lower" index). Next we give generating functions that involve both the conjugacy-class parameter and the representation-theory partition (the ``upper" index). Before that, we need to make some preparations: recall the vertex operator realization of Hall--Littlewood functions and introduce some notations.

Let us recall the vertex operator realization of the Hall--Littlewood function. Introduce the map $H_n: \Lambda_{\mathbb Q(t)}\longrightarrow\Lambda_{\mathbb Q(t)}$ by
\begin{align}
    H(u)=\exp\left(\sum_{n=1}^{\infty}\frac{1-t^n}{n}p_nu^n\right)\exp\left(-\sum_{n=1}^{\infty}\frac{\partial}{\partial p_n}u^{-n}\right)=\sum_{n\in\mathbb Z}H_nu^n.
\end{align}
Then for a partition $\la$, according to \cite[Section 3]{J91}, $H_{\la_1}H_{\la_2}\cdots\,1$ is exactly the Hall--Littlewood function $Q_{\la}(X;t)$ labeled by $\la$. The following lemma gives the generating function of Hall--Littlewood functions.
\begin{lem}\label{l:gene-HL}
    Let $U=u_1+u_2+\cdots$. Denote $U^{\la}:=u_1^{\la_1}u_2^{\la_2}\cdots u_{\ell(\la)}^{\la_{\ell(\la)}}$ for a partition $\la$. Then 
    \begin{align}\label{e:gene-HL}
        \sum_{\la\in \P}Q_{\la}(X;t)U^{\la}=\prod_{i<j}\frac{u_i-u_j}{u_i-tu_j}\prod_{i,j}\frac{1-x_itu_j}{1-x_iu_j}.
    \end{align}
\end{lem}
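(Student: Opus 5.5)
The natural route is Jing's vertex operator realization: $Q_\la(X;t)=H_{\la_1}H_{\la_2}\cdots H_{\la_\ell}\,1$. Since the coefficient of $u_1^{\la_1}\cdots u_\ell^{\la_\ell}$ in $H(u_1)\cdots H(u_\ell)\,1$ is $H_{\la_1}\cdots H_{\la_\ell}\,1$, I would study the product $H(u_1)H(u_2)\cdots H(u_\ell)\,1$. The plan is to normal order this product, read off a closed form, and then explain why extracting only partition exponents is compatible with the stated product over infinitely many $u_j$.

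The first step is the commutation relation. For $i<j$, moving the annihilation part $\exp(-\sum_n \partial_{p_n}u_i^{-n})$ of $H(u_i)$ past the creation part $\exp(\sum_n\frac{1-t^n}{n}p_nu_j^n)$ of $H(u_j)$ produces the scalar
\[
\exp\Bigl(-\sum_{n\ge1}\frac{1-t^n}{n}\bigl(\tfrac{u_j}{u_i}\bigr)^n\Bigr)=\frac{1-u_j/u_i}{1-tu_j/u_i}=\frac{u_i-u_j}{u_i-tu_j}.
\]
All annihilation operators kill $1$, so
\[
H(u_1)\cdots H(u_\ell)\,1=\prod_{i<j}\frac{u_i-u_j}{u_i-tu_j}\,\exp\Bigl(\sum_n\frac{1-t^n}{n}p_n\sum_j u_j^n\Bigr)=\prod_{i<j}\frac{u_i-u_j}{u_i-tu_j}\prod_{i,j}\frac{1-tx_iu_j}{1-x_iu_j},
\]
where the rational factors are expanded in the region $|u_1|>|u_2|>\cdots$, i.e.\ as power series in $u_j/u_i$.

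The main obstacle is reconciling this full-exponent expansion with the left-hand side of \eqref{e:gene-HL}, which only sums over partitions $\la$. The coefficient of $u^\alpha$ for an arbitrary integer vector $\alpha$ is $H_{\alpha_1}\cdots H_{\alpha_\ell}\,1$, and this need not vanish when $\alpha$ is not a partition; one then uses the straightening relations to rewrite it as a combination of partition-labeled $Q_\la$. So the identity must be read as an equality after restricting the right-hand side to partition monomials (equivalently, as a stated truncation), and I would check the intended convention in \cite{J91}. Concretely, I would expand both sides, truncate to $u_{\ell+1}=u_{\ell+2}=\cdots=0$, note that $H_n1=0$ for $n<0$ and $H_0 1=1$ (so trailing zero parts are harmless), and argue with the triangularity of the straightening that the partition coefficients match. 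This is the step I expect to require real care.

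Finally, stability in $\ell$ follows because setting $u_{\ell+1}=0$ kills every factor involving $u_{\ell+1}$ (both the $\frac{u_i-u_{\ell+1}}{u_i-tu_{\ell+1}}$ factors and the $x$-product become $1$). This gives the statement with $U=u_1+u_2+\cdots$ as an inverse limit.
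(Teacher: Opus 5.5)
Your proposal follows the paper's proof: Jing's realization $Q_\la=H_{\la_1}\cdots H_{\la_\ell}\cdot 1$, normal ordering that produces the factors $\frac{u_i-u_j}{u_i-tu_j}$, and the annihilation parts killing $1$. You are right that the displayed identity holds only coefficientwise at partition monomials, which is exactly how the paper's proof uses it via $Q_\la=[U^\la](\cdots)$. The straightening/triangularity step you single out as delicate is unnecessary. As your own first paragraph already notes, the $U^\la$-coefficient of the right-hand side for a partition $\la$ is literally $H_{\la_1}\cdots H_{\la_\ell}\cdot 1=Q_\la$, and the non-partition coefficients never need to be matched with anything.
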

\begin{proof}
    By the vertex operator realization of Hall--Littlewood functions,
    \begin{multline}
        Q_{\la}(X;t)=[U^{\la}]\prod_{j=1}^{\infty}\exp\left(\sum_{n=1}^{\infty}\frac{1-t^n}{n}p_nu_j^n\right)\exp\left(-\sum_{n=1}^{\infty}\frac{\partial}{\partial p_n}u_j^{-n}\right)\cdot1\\
        =[U^{\la}]\prod_{i<j}\frac{u_i-u_j}{u_i-tu_j}\prod_{j=1}^{\infty}\exp\left(\sum_{n=1}^{\infty}\frac{1-t^n}{n}p_nu_j^n\right)\\
        =[U^{\la}]\prod_{i<j}\frac{u_i-u_j}{u_i-tu_j}\prod_{j=1}^{\infty}\prod_{i=1}^{\infty}\frac{1-x_itu_j}{1-x_iu_j}.
    \end{multline}
    We use the Baker--Campbell--Hausdorff formula (see e.g. \cite[Theorem 5.1]{Hal15}) in the second identity. This completes the proof.  
\end{proof}
Specializing \eqref{e:gene-HL} at $t=0$ gives 
\begin{align}\label{e:gene-S}
     \sum_{\la\in \P}s_{\la}(X)U^{\la}=&\prod_{i<j}\left(1-\frac{u_j}{u_i}\right)\prod_{i,j}\frac{1}{1-x_iu_j}.
\end{align}

We introduce
$m$ independent auxiliary alphabets
\[
U^{(r)}:=u_{r,1}+u_{r,2}+\cdots\qquad (1\le r\le m),
\]
For a partition $\la$ we write
\[
(U^{(r)})^\la:=u_{r,1}^{\la_1}u_{r,2}^{\la_2}\cdots u_{r,\ell(\la)}^{\la_{\ell(\la)}},
\]
and for a multipartition $\bm\lambda=(\lambda^{(1)},\ldots,\lambda^{(m)})$ set
\[
U^{\bm\lambda}:=\prod_{r=1}^m (U^{(r)})^{\lambda^{(r)}}.
\]
Define the bivariate generating function
\begin{equation}\label{eq:AK-bivar-def}
\mathscr F(Z,U;\q,\bm u)
:=\sum_{\bm\lambda\in\mathscr P_{*,m}}\mathscr G_{\bm\lambda}(Z;\q,\bm u)\,U^{\bm\lambda}
=\sum_{\bm\lambda\in\mathscr P_{*,m}}\ \sum_{\bm\alpha\in\mathscr A_m}
(\q-1)^{\ell(\bm\alpha)}\chi^{\bm\lambda}_{\bm\alpha}\,Z^{\bm\alpha}U^{\bm\lambda}.
\end{equation}

For $1\le r\le m$ set
\begin{equation}\label{eq:Delta-Phi-def}
\begin{aligned}
\Delta\!\bigl(U^{(r)}\bigr)&:=\prod_{a<b}\left(1-\frac{u_{r,b}}{u_{r,a}}\right),\\
\Phi_r(z)&:=\prod_{a\ge1}\frac{1-z\,u_{r,a}}{1-\q z\,u_{r,a}}
=\sigma_{z}\bigl[(\q-1)U^{(r)}\bigr],
\end{aligned}
\end{equation}
and set
\[
\Phi_{\le k}(z):=\prod_{r=1}^k\Phi_r(z),
\qquad
\Phi_{\le0}(z):=1.
\]
(Here $\sigma_z[A]=\sum_{d\ge0}h_d[zA]$ is plethystic; in particular,
$\sigma_z[(\q-1)B]=\sigma_z[\q B]/\sigma_z[B]$, hence the ratio in \eqref{eq:Delta-Phi-def}.)

\begin{thm}\label{t:AK-bivariate}
With the above notation, one has the closed product formula
\begin{equation}\label{eq:AK-bivar-formula}
\mathscr F(Z,U;\q,\bm u)
=
\left(\prod_{r=1}^m \Delta\!\bigl(U^{(r)}\bigr)\right)
\ \prod_{i=1}^m\ \prod_{j\ge1}
\left(
\sum_{k=0}^m b_k^{(i)}\,\Phi_{\le k}(z_{i,j})
\right),
\end{equation}
where the coefficients $b_k^{(i)}$ are given by \eqref{e:def-b}.
The products over $j\ge1$ are understood coefficientwise, as stable limits of
the corresponding finite products over $1\le j\le N$.
Equivalently,
\[
\mathscr F(Z,U;\q,\bm u)
=
\left(\prod_{r=1}^m \prod_{a<b}\left(1-\frac{u_{r,b}}{u_{r,a}}\right)\right)
\prod_{i=1}^m\prod_{j\ge1}
\left(
\sum_{k=0}^m b_k^{(i)}
\prod_{r=1}^k\ \prod_{a\ge1}\frac{1-z_{i,j}u_{r,a}}{1-\q z_{i,j}u_{r,a}}
\right).
\]
\end{thm}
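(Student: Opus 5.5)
Start from Theorem~\ref{t:gene-AK}: at fixed truncation $N$, $\mathscr G^{[N]}_{\bm\lambda}$ is a sum over $\kappa$ of the weights $\prod_{i,j} b^{(i)}_{\kappa_{i,j}}$ times $\prod_r s_{\lambda^{(r)}}[(\q-1)Y_{r,N}(\kappa)]$. Multiply by $U^{\bm\lambda}$ and sum over $\bm\lambda\in\mathscr P_{*,m}$. The $\kappa$-weights do not depend on $\bm\lambda$, so the $\bm\lambda$-sum factors over $r$. The key input is the Schur generating function \eqref{e:gene-S}, applied to the alphabet $(\q-1)Y_{r,N}(\kappa)$ in place of $X$. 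Write \eqref{e:gene-S} plethystically as
\[
\sum_\lambda s_\lambda[A]\,U^\lambda=\prod_{a<b}\Bigl(1-\frac{u_b}{u_a}\Bigr)\,\sigma_1[AU],
\]
where $U^\lambda=u_1^{\lambda_1}u_2^{\lambda_2}\cdots$. This holds for any virtual alphabet $A$, because the vertex-operator derivation in Lemma~\ref{l:gene-HL} (at $t=0$) is an identity in $\Lambda$ and the kernel $\prod_{i,j}(1-x_iu_j)^{-1}=\exp\bigl(\sum_n p_n[X]p_n[U]/n\bigr)$ is expressed through power sums. One then specializes along the algebra map $p_n\mapsto p_n[A]$.

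With $A=(\q-1)Y_{r,N}(\kappa)$ we get
\[
\sum_{\lambda}s_{\lambda}[(\q-1)Y_{r,N}(\kappa)]\,(U^{(r)})^{\lambda}=\Delta(U^{(r)})\,\sigma_1\bigl[(\q-1)Y_{r,N}(\kappa)U^{(r)}\bigr].
\]
Since $Y_{r,N}(\kappa)=\sum_{\kappa_{i,j}\ge r}z_{i,j}$, plethystic multiplicativity splits the $\sigma$-factor into $\prod_{(i,j):\kappa_{i,j}\ge r}\sigma_{z_{i,j}}[(\q-1)U^{(r)}]=\prod_{\kappa_{i,j}\ge r}\Phi_r(z_{i,j})$, using \eqref{eq:Delta-Phi-def}. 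Taking the product over $r=1,\dots,m$ and regrouping by $(i,j)$ gives
\[
\prod_{r=1}^m\prod_{\kappa_{i,j}\ge r}\Phi_r(z_{i,j})=\prod_{i,j}\Phi_{\le\kappa_{i,j}}(z_{i,j}).
\]
The $\kappa$-sum now factors over $(i,j)$ as $\prod_{i=1}^m\prod_{j=1}^N\sum_{k=0}^m b^{(i)}_k\Phi_{\le k}(z_{i,j})$. Together with the prefactor $\prod_r\Delta(U^{(r)})$, this is exactly the truncation of \eqref{eq:AK-bivar-formula} to $j\le N$.

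Finally, pass to $N\to\infty$ coefficientwise. For a fixed monomial $Z^{\bm\alpha}U^{\bm\lambda}$, only finitely many $j$ occur. Moreover, every factor with $z_{i,j}$ set to $0$ equals $\sum_k b^{(i)}_k=1$, because the $b^{(i)}_k$ telescope to $1$. Hence the finite products stabilize and agree with the truncations of $\mathscr F$.

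The main obstacle is justifying the interchange of the $\bm\lambda$-sum with the $\kappa$-sum and applying \eqref{e:gene-S} to a virtual alphabet, given that the $\Delta(U^{(r)})$ factor involves negative powers $u_{r,b}/u_{r,a}$. I would handle this by working in the ring of formal series in the $z$'s whose coefficients are Laurent series in the $u$'s, expanded in the region $|u_{r,1}|\gg|u_{r,2}|\gg\cdots$ as in Lemma~\ref{l:gene-HL}. In that ring, at fixed $Z$-degree each $\lambda$-sum is finite in the relevant degree, since $s_\lambda[(\q-1)Y]$ is homogeneous of degree $|\lambda|$ in $Z$.
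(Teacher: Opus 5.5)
Your proposal is the paper's proof step for step: truncate at $N$, apply \eqref{eq:Schur-monomial-gf-pleth} with $A=(\q-1)Y_{r,N}(\kappa)$, regroup the factors into $\Phi_{\le\kappa_{i,j}}(z_{i,j})$, factor the $\kappa$-sum and let $N\to\infty$; your check that $\sum_k b^{(i)}_k=1$ makes explicit the compatibility of truncations that the paper leaves implicit. One caveat, which you share with the paper: \eqref{e:gene-S}, and hence \eqref{eq:Schur-monomial-gf-pleth} and the theorem itself, holds only as an equality of $U^{\lambda}$-coefficients for partitions $\lambda$ (the right side of \eqref{e:gene-S} has coefficient $h_2-h_1^2=-e_2\neq0$ on $u_2^2$), so your Laurent-series ring does not make it a literal identity, and the statement must be read with the coefficient-extraction convention used in the proof of Lemma~\ref{l:gene-HL}.
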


\begin{proof}
For $N\ge1$, let
\[
\mathscr F^{[N]}(Z,U;\q,\bm u)
:=\mathscr F(Z,U;\q,\bm u)\big|_{z_{i,j}=0\ {\rm for}\ j>N}.
\]
Starting from the finite formula \eqref{eq:gene-AK-finite}, multiply both sides by $U^{\bm\lambda}$ and sum over all
$\bm\lambda=(\lambda^{(1)},\ldots,\lambda^{(m)})\in\mathscr P_{*,m}$. For each fixed $r$ we use the
Schur generating function (the specialization $t=0$ of Lemma~\ref{l:gene-HL}): for any alphabet $A$,
\begin{equation}\label{eq:Schur-monomial-gf-pleth}
\sum_{\la\in\P}s_\la[A]\,(U^{(r)})^\la
=
\Delta\!\bigl(U^{(r)}\bigr)\ \prod_{a\ge1}\sigma_{u_{r,a}}[A].
\end{equation}
Applying \eqref{eq:Schur-monomial-gf-pleth} with $A=(\q-1)Y_{r,N}(\kappa)$ yields, for each finite $\kappa$,
\[
\sum_{\lambda^{(r)}\in\P} s_{\lambda^{(r)}}\!\bigl[(\q-1)Y_{r,N}(\kappa)\bigr]\,(U^{(r)})^{\lambda^{(r)}}
=
\Delta\!\bigl(U^{(r)}\bigr)\ \prod_{a\ge1}\ \prod_{\substack{1\le i\le m,\ 1\le j\le N\\ \kappa_{i,j}\ge r}}
\frac{1-z_{i,j}u_{r,a}}{1-\q z_{i,j}u_{r,a}}.
\]
Multiplying over $r=1,\ldots,m$ gives
\[
\sum_{\bm\lambda\in\mathscr P_{*,m}}
\prod_{r=1}^m s_{\lambda^{(r)}}\!\bigl[(\q-1)Y_{r,N}(\kappa)\bigr]\ U^{\bm\lambda}
=
\left(\prod_{r=1}^m \Delta\!\bigl(U^{(r)}\bigr)\right)
\prod_{i=1}^m\prod_{j=1}^N\Phi_{\le \kappa_{i,j}}(z_{i,j}),
\]
because $z_{i,j}$ appears in $Y_{r,N}(\kappa)$ precisely for $1\le r\le \kappa_{i,j}$.
Substituting this identity into \eqref{eq:gene-AK-finite} and summing over all finite $\kappa$ we obtain
\[
\mathscr F^{[N]}(Z,U;\q,\bm u)
=
\left(\prod_{r=1}^m \Delta\!\bigl(U^{(r)}\bigr)\right)
\sum_{\kappa}
\left(\prod_{i=1}^m\prod_{j=1}^N b^{(i)}_{\kappa_{i,j}}\right)
\prod_{i=1}^m\prod_{j=1}^N\Phi_{\le \kappa_{i,j}}(z_{i,j}).
\]
Since $\kappa_{i,j}\in\{0,1,\ldots,m\}$ and the summand factorizes over the pairs $(i,j)$, the $\kappa$-sum
splits as a product:
\[
\sum_{\kappa}\prod_{i=1}^m\prod_{j=1}^N
\Bigl(b^{(i)}_{\kappa_{i,j}}\Phi_{\le\kappa_{i,j}}(z_{i,j})\Bigr)
=
\prod_{i=1}^m\prod_{j=1}^N
\left(\sum_{k=0}^m b_k^{(i)}\Phi_{\le k}(z_{i,j})\right).
\]
This proves the finite truncation of \eqref{eq:AK-bivar-formula}. Letting $N\to\infty$ coefficientwise gives
\eqref{eq:AK-bivar-formula}.
\end{proof}

\subsection{Iwahori--Hecke algebra in type $A$}\label{ss:typeA-hecke}

In this subsection we specialize the Ariki--Koike algebra to the Iwahori--Hecke algebra in type $A$.

The Iwahori--Hecke algebra in type $A$ (denoted by $\mathscr{H}_{n}(\q)$) is the unital associative algebra
over $\mathbb{C}(q)$ generated by generators $T_{1}, T_{2},\ldots, T_{n-1}$ subject to the relations
\begin{align}\label{e:Hecke1}
T_{i}T_{j}&=T_{j}T_{i},  ~~~~\text{if}~ |i-j|>1,\\ \label{e:Hecke2}
T_{i}T_{i+1}T_{i}&=T_{i+1}T_{i}T_{i+1},\\ \label{e:Hecke3}
T_{i}^{2}&=(\q-1)T_{i}+\q.
\end{align}
It is clear that $\mathscr{H}_{m,n}(q,\bm{u})$ specializes to $\mathscr{H}_{n}(q)$ at $m=1$ and $\bm{u}=1$. In this case, multipartitions are just partitions; we write $\lambda,\mu\vdash n$.

For $\mu=(\mu_1,\dots,\mu_\ell)\vdash n$ the standard element $g(\mu)$ defined in the previous subsection
is the block-embedding product of these ``cycle elements''. The set of all irreducible modules of $\mathscr H_n(\q)$ is parametrized by $\P_n$. Let $\phi^{\la}$ denote the irreducible character of $\mathscr H_n(\q)$ associated with the partition $\la$. We continue to write
\phantomsection\label{def:phi-typeA}
$\phi^\lambda_\mu:=\phi^\lambda\bigl(g(\mu)\bigr)$ (see \cite{Ram91}).

\medskip

We also simplify the auxiliary alphabets as follows.  Since $m=1$, the $Z$-variables reduce to a single family
\[
Z:=Z^{(1)}=z_1+z_2+\cdots \qquad (z_j:=z_{1,j}),
\]
and the exponent arrays $\bm\alpha\in\mathscr A_1$ may be identified with finite-support sequences
$\alpha=(\alpha_1,\alpha_2,\dots)$, with monomials $Z^\alpha=\prod_{j\ge1}z_j^{\alpha_j}$ and
$\ell(\alpha)=\#\{j:\alpha_j>0\}$.
We keep the convention $\chi^\lambda_\alpha=0$ unless $|\lambda|=|\alpha|$.

\begin{cor}\label{c:gene-typeA}
Assume $m=1$ and $u_1=1$.  Then for every partition $\lambda$ one has the closed plethystic formula
\begin{equation}\label{eq:gene-typeA}
\mathscr G_\lambda(Z;\q)
=\sum_{\alpha\in\mathscr A_1}(\q-1)^{\ell(\alpha)}\phi^\lambda_\alpha\,Z^\alpha
= s_\lambda\!\bigl[(\q-1)\,Z\bigr].
\end{equation}
Therefore, for every partition $\mu\vdash n$,
\begin{equation}\label{eq:coeff-typeA}
(\q-1)^{\ell(\mu)}\,\phi^\lambda_\mu
=\bigl[Z^\mu\bigr]\ s_\lambda\!\bigl[(\q-1)\,Z\bigr].
\end{equation}
Here we use $[u]v$ to denote the coefficient of $u$ in the expansion of $v$.
\end{cor}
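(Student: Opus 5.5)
The plan is to specialize Theorem~\ref{t:gene-AK} to $m=1$, $u_1=1$ and show that the sum over $\kappa$ collapses to a single term. With $m=1$ we have $\kappa_{1,j}\in\{0,1\}$. The coefficients \eqref{e:def-b} become $b^{(1)}_0=1-u_1^{0}=0$ and $b^{(1)}_1=u_1^{0}=1$. This holds regardless of $u_1$, since $i-1=0$; the hypothesis $u_1=1$ is only needed so that the Ariki--Koike algebra is $\mathscr H_n(\q)$ and $\chi^\lambda_\mu=\phi^\lambda_\mu$.

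It follows that in \eqref{eq:gene-AK-finite} every $\kappa$ with some entry $0$ has zero weight. The only surviving term is $\kappa\equiv 1$, with weight $1$. For that term $Y_{1,N}(\kappa)=z_1+\cdots+z_N$, so
\[
\mathscr G^{[N]}_\lambda(Z;\q)=s_\lambda\bigl[(\q-1)(z_1+\cdots+z_N)\bigr].
\]
Equivalently, $\widehat Q^{(1)}(z)=P_1(z)=\sigma_z[(1-\q^{-1})X]$, and the kernel becomes the ordinary Cauchy kernel, whose Schur expansion gives $\q^{-|\lambda|}s_\lambda[(\q-1)Z]$ directly.

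Next I take the stable limit $N\to\infty$. The substitution $z_j=0$ for $j>N$ commutes with plethystic evaluation, because $p_r[(\q-1)Z]=(\q^r-1)p_r[Z]$. The truncations of $s_\lambda[(\q-1)Z]$ are therefore exactly these finite expressions, and coefficientwise stability yields \eqref{eq:gene-typeA}.

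Finally, \eqref{eq:coeff-typeA} follows by extracting the coefficient of $Z^\mu$ for a partition $\mu$ viewed as an element of $\mathscr A_1$. Here I use that $\chi^\lambda_\mu$ agrees with the character value on $g(\mu)$, which in this specialization is $\phi^\lambda_\mu$. The only point needing care is this identification: one must check that Shoji's standard element $g(\mu)$ and the Frobenius formula \eqref{eq:Frob-self} at $m=1$ match Ram's normalization of $\phi^\lambda_\mu$. I would take this as the definition adopted in the text, since $\phi^\lambda_\mu:=\phi^\lambda(g(\mu))$ is declared above. Beyond that, the argument is a direct collapse with no real obstacle.
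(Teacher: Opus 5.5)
Your proposal is correct and follows the paper's proof essentially verbatim. The coefficients reduce to $b^{(1)}_0=0$ and $b^{(1)}_1=1$, the $\kappa$-sum in \eqref{eq:gene-AK-finite} collapses to $\kappa\equiv 1$ with $Y_{1,N}(\kappa)=z_1+\cdots+z_N$, and the coefficientwise stable limit followed by extracting $[Z^\mu]$ gives both displayed identities. Your extra remarks are correct refinements of steps the paper leaves implicit: that $b^{(1)}_k$ does not actually depend on $u_1$, and that truncating $z_j=0$ for $j>N$ commutes with the plethystic evaluation.
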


\begin{proof}
When $m=1$ and $u_1=1$, the coefficients $b_k^{(i)}$ in \eqref{e:def-b} reduce to
\[
b^{(1)}_0=1-u_1^{0}=0,\qquad b^{(1)}_1=u_1^{0}=1.
\]
Hence, in the finite formula \eqref{eq:gene-AK-finite}, the sum collapses to the unique choice
$\kappa_{1,j}=1$ for $1\le j\le N$. For this choice we have
$Y_{1,N}(\kappa)=z_1+\cdots+z_N$, and the product of $b$-factors equals $1$.
Passing to the coefficientwise stable limit gives \eqref{eq:gene-typeA}, and \eqref{eq:coeff-typeA} follows by taking
the coefficient of $Z^\mu$.
\end{proof}

Next we specialize the bivariate generating function \eqref{eq:AK-bivar-def}.
To avoid notational collision with the cyclotomic parameter $u_1=1$, we rename the auxiliary alphabet
$U^{(1)}$ as
\[
V:=v_1+v_2+\cdots,
\qquad
V^\lambda:=v_1^{\lambda_1}v_2^{\lambda_2}\cdots v_{\ell(\lambda)}^{\lambda_{\ell(\lambda)}}.
\]
Set
\[
\Delta(V):=\prod_{a<b}\Bigl(1-\frac{v_b}{v_a}\Bigr),
\qquad
\Phi(z):=\prod_{a\ge1}\frac{1-zv_a}{1-\q z\,v_a}
=\sigma_z\bigl[(\q-1)V\bigr].
\]

\begin{cor}\label{c:bivar-typeA}
Assume $m=1$ and $u_1=1$.  Then the bivariate generating function
\[
\mathscr F(Z,V;\q)
=\sum_{\lambda\in\mathscr P}\mathscr G_\lambda(Z;\q)\,V^\lambda
=\sum_{\lambda\in\mathscr P}\ \sum_{\alpha\in\mathscr A_1}(\q-1)^{\ell(\alpha)}\phi^\lambda_\alpha\,Z^\alpha V^\lambda
\]
admits the closed product form
\begin{equation}\label{eq:bivar-typeA}
\mathscr F(Z,V;\q)
=\Delta(V)\ \prod_{j\ge1}\Phi(z_j)
=\left(\prod_{a<b}\Bigl(1-\frac{v_b}{v_a}\Bigr)\right)\ \prod_{j\ge1}\ \prod_{a\ge1}\frac{1-z_jv_a}{1-\q z_jv_a}.
\end{equation}
Namely,
\begin{equation}\label{eq:Cauchy-typeA}
\sum_{\lambda\in\mathscr P} s_\lambda\!\bigl[(\q-1)Z\bigr]\ V^\lambda
=\Delta(V)\ \prod_{j\ge1}\ \prod_{a\ge1}\frac{1-z_jv_a}{1-\q z_jv_a}.
\end{equation}
\end{cor}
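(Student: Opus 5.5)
This is a direct specialization of Theorem~\ref{t:AK-bivariate} to $m=1$, $u_1=1$, with the alphabet $U^{(1)}$ renamed to $V$. The first equality in \eqref{eq:bivar-typeA} is just the definition \eqref{eq:AK-bivar-def} in this case, with $\chi^\lambda_\alpha=\phi^\lambda_\alpha$. So the content is the product formula.

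For the right-hand side of \eqref{eq:AK-bivar-formula} at $m=1$, I would use the values from the proof of Corollary~\ref{c:gene-typeA}:
\[
b^{(1)}_0=1-u_1^{0}=0,\qquad b^{(1)}_1=u_1^{0}=1.
\]
The inner sum $\sum_{k=0}^{1} b_k^{(1)}\Phi_{\le k}(z_{1,j})$ then collapses to $\Phi_{\le1}(z_j)=\Phi_1(z_j)$. After renaming, this is $\Phi(z_j)=\sigma_{z_j}[(\q-1)V]$, and the prefactor $\Delta(U^{(1)})$ becomes $\Delta(V)$. This gives \eqref{eq:bivar-typeA}. As in the theorem, the product over $j\ge1$ is read as the coefficientwise stable limit of the finite products over $j\le N$.

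For \eqref{eq:Cauchy-typeA}, I would substitute $\mathscr G_\lambda(Z;\q)=s_\lambda[(\q-1)Z]$ from Corollary~\ref{c:gene-typeA} into the left side.

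As an independent check, I would also verify it directly. Apply \eqref{eq:Schur-monomial-gf-pleth} with $A=(\q-1)Z$:
\[
\prod_{a\ge1}\sigma_{v_a}[(\q-1)Z]=\prod_{a,j}\frac{1-z_jv_a}{1-\q z_jv_a},
\]
which is symmetric in the roles of $z$ and $v$.

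There is no real obstacle. The only point needing care is that the finite-$N$ truncations and the formal products in $V$ (with $\Delta(V)$ involving ratios $v_b/v_a$) are compatible. This was already handled in Theorem~\ref{t:AK-bivariate}, so I would simply cite it.
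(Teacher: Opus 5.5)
Your proposal is correct and is essentially the paper's proof. The paper also specializes Theorem~\ref{t:AK-bivariate} via $b^{(1)}_0=0$, $b^{(1)}_1=1$, and, as an alternative, sums \eqref{eq:gene-typeA} against $V^\lambda$ and applies \eqref{eq:Schur-monomial-gf-pleth} with $A=(\q-1)Z$. One caveat about the point you flagged: citing Theorem~\ref{t:AK-bivariate} does not settle it. Like \eqref{e:gene-S} and \eqref{eq:Schur-monomial-gf-pleth}, the identities \eqref{eq:bivar-typeA}--\eqref{eq:Cauchy-typeA} hold only after extracting the coefficients of $V^\lambda$ for partitions $\lambda$. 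Already in $Z$-degree $0$ the left side is $1$, while the right side is $\Delta(V)$, which contains the monomial $-v_2/v_1$.
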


\begin{proof}
This follows immediately from Theorem~\ref{t:AK-bivariate} by the same specialization
$b^{(1)}_0=0$, $b^{(1)}_1=1$, which reduces \eqref{eq:AK-bivar-formula} to \eqref{eq:bivar-typeA}.
Alternatively, summing \eqref{eq:gene-typeA} over $\lambda$ against $V^\lambda$ and applying the Schur monomial
generating function \eqref{eq:Schur-monomial-gf-pleth} with the alphabet $A=(\q-1)Z$ yields \eqref{eq:Cauchy-typeA}.
\end{proof}

\subsection{Iwahori--Hecke algebra in type $B$}\label{ss:typeB-hecke}

The Iwahori--Hecke algebra $\mathscr{B}_n(u,\q)$ of type $B$ is the associative algebra over
$\mathbb C(u,\q)$ with generators $g_1,g_2,\ldots,g_n$ and relations
\begin{align*}
g_ig_j &= g_jg_i, \quad|i-j|>1,\\
g_ig_{i+1}g_i &= g_{i+1}g_ig_{i+1},\quad 2\le i\le n-1,\\
g_1g_2g_1g_2 &= g_2g_1g_2g_1,\\
g_1^2 &= (u-1)g_1+u,\\
g_i^2 &= (\q-1)g_i+\q,\quad 2\le i\le n.
\end{align*}
Note that $(g_1+1)(g_1-u)=0$ is equivalent to $g_1^2=(u-1)g_1+u$.
Hence $\mathscr H_{m,n}(\q,\bm u)$ specializes to $\mathscr B_n(u,\q)$ at
\[
m=2,\qquad \bm u=(u_1,u_2)=(-1,u).
\]
In this case multipartitions are bipartitions; we write
\[
\bm\lambda=(\lambda^{(1)},\lambda^{(2)})=(\lambda^+,\lambda^-),\qquad
\bm\mu=(\mu^{(1)},\mu^{(2)})=(\mu^+,\mu^-).
\]
The standard elements are $g(\bm\mu)=g(\mu^+,1)\,g(\mu^-,2)$, $\bm \mu\in\P_{n,2}$. The set of all irreducible modules of $\mathscr B_n(u,\q)$ is parametrized by $\P_{n,2}$. Let $\varphi^{\bm \la}$ ($\bm \la\in\P_{n,2}$) denote the irreducible character of $\mathscr B_n(u,\q)$ associated with the bipartition $\bm \la$. We continue to write
\phantomsection\label{def:varphi-typeB}
$\varphi^{\bm \lambda}_{\bm \mu}:=\varphi^{\bm \lambda}\bigl(g(\bm \mu)\bigr)$. 

\medskip

We also rename the two $Z$-alphabets as
\[
Z^+:=Z^{(1)}=z^+_1+z^+_2+\cdots,\qquad
Z^-:=Z^{(2)}=z^-_1+z^-_2+\cdots
\quad (z^+_j:=z_{1,j},\quad z^-_j:=z_{2,j}),
\]
so that for $\bm\alpha=(\alpha^+,\alpha^-)\in\mathscr A_2$ we have
$Z^{\bm\alpha}=\prod_{j\ge1}(z^+_j)^{\alpha^+_j}\prod_{j\ge1}(z^-_j)^{\alpha^-_j}$ and
$\ell(\bm\alpha)=\ell(\alpha^+)+\ell(\alpha^-)$.
We keep the convention $\varphi^{\bm\lambda}_{\bm\alpha}=0$ unless $|\bm\lambda|=|\bm\alpha|$.

\begin{cor}\label{c:gene-typeB}
Assume $m=2$ and $(u_1,u_2)=(-1,u)$. For $N\ge1$, set
\[
\mathscr G_{\bm\lambda}^{[N]}(Z^+,Z^-;\q,u)
:=\mathscr G_{\bm\lambda}(Z^+,Z^-;\q,u)\big|_{z_j^\pm=0\ {\rm for}\ j>N},
\qquad
Z^+_{\le N}:=\sum_{j=1}^N z^+_j.
\]
Then for every bipartition $\bm\lambda=(\lambda^+,\lambda^-)$ and every $N\ge1$ one has
\begin{align}\label{eq:gene-typeB}
\mathscr G_{\bm\lambda}^{[N]}(Z^+,Z^-;\q,u)
&=
\sum_{\kappa\in\{0,1,2\}^{\{1,\dots,N\}}}
\left(\prod_{j=1}^N c_{\kappa_j}\right)\,
s_{\lambda^+}\!\Bigl[(\q-1)\,Y_{1,N}(\kappa)\Bigr]\,
s_{\lambda^-}\!\Bigl[(\q-1)\,Y_{2,N}(\kappa)\Bigr],
\end{align}
where
\[
c_0=2,\qquad c_1=-(u+1),\qquad c_2=u,
\]
and the (plethystic) alphabets $Y_{1,N}(\kappa),Y_{2,N}(\kappa)$ are given by
\[
\begin{aligned}
Z^-_{\ge1,N}(\kappa)&:=\sum_{\substack{1\le j\le N\\ \kappa_j\ge1}} z^-_j,
&
Z^-_{=2,N}(\kappa)&:=\sum_{\substack{1\le j\le N\\ \kappa_j=2}} z^-_j,\\
Y_{1,N}(\kappa)&:=Z^+_{\le N}+ Z^-_{\ge1,N}(\kappa),
&
Y_{2,N}(\kappa)&:=Z^+_{\le N}+ Z^-_{=2,N}(\kappa).
\end{aligned}
\]
Consequently, $\mathscr G_{\bm\lambda}(Z^+,Z^-;\q,u)$ is the coefficientwise stable limit of
\eqref{eq:gene-typeB} as $N\to\infty$, and for every bipartition $\bm\mu=(\mu^+,\mu^-)\vdash n$,
\begin{equation}\label{eq:coeff-typeB}
(\q-1)^{\ell(\bm\mu)}\,\varphi^{\bm\lambda}_{\bm\mu}
=\bigl[(Z^+)^{\mu^+}(Z^-)^{\mu^-}\bigr]\ \mathscr G_{\bm\lambda}(Z^+,Z^-;\q,u).
\end{equation}
\end{cor}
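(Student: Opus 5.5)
The plan is to obtain Corollary~\ref{c:gene-typeB} as a direct specialization of the finite formula \eqref{eq:gene-AK-finite} in Theorem~\ref{t:gene-AK}, in the same way that Corollary~\ref{c:gene-typeA} was obtained at $m=1$. First I would set $m=2$ and $(u_1,u_2)=(-1,u)$ in \eqref{e:def-b} and compute the coefficients $b^{(i)}_k$ for $i=1,2$ and $k=0,1,2$.

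\begin{itemize}
\item For $i=1$ the exponent is $i-1=0$, so every $u_k^{0}=1$. This gives $b^{(1)}_0=1-1=0$, $b^{(1)}_1=1-1=0$ and $b^{(1)}_2=1$.
\item For $i=2$ the exponent is $1$. This gives $b^{(2)}_0=1-u_1=2$, $b^{(2)}_1=u_1-u_2=-(u+1)$ and $b^{(2)}_2=u_2=u$.
\end{itemize}

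These are exactly the constants $c_0,c_1,c_2$ in the statement, attached to the $Z^-$ variables.

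Next I would insert these values into \eqref{eq:gene-AK-finite}. In the sum over $\kappa\in\{0,1,2\}^{\{1,2\}\times\{1,\dots,N\}}$, any $\kappa$ with some $\kappa_{1,j}\ne2$ contributes a factor $b^{(1)}_{\kappa_{1,j}}=0$. So only $\kappa_{1,j}=2$ for all $j$ survives in the first row, with total weight $1$. The remaining freedom is the second row $\kappa_j:=\kappa_{2,j}\in\{0,1,2\}$, and it carries the weight $\prod_j c_{\kappa_j}$.

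I would then read off the alphabets from the definition of $Y_{r,N}(\kappa)$. Each $z^+_j=z_{1,j}$ has $\kappa_{1,j}=2\ge r$ for $r=1,2$, so it lies in both $Y_{1,N}$ and $Y_{2,N}$; together these give the common summand $Z^+_{\le N}$. Each $z^-_j$ lies in $Y_{1,N}$ iff $\kappa_j\ge1$, and in $Y_{2,N}$ iff $\kappa_j=2$. This reproduces the alphabets in the statement and hence \eqref{eq:gene-typeB}. The stable-limit claim is inherited verbatim from Theorem~\ref{t:gene-AK}.

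Finally, for \eqref{eq:coeff-typeB}, I would use the definition \eqref{eq:G-def-Zalpha}. There the coefficient of $Z^{\bm\mu}$ is $(\q-1)^{\ell(\bm\mu)}\chi^{\bm\lambda}_{\bm\mu}$, and for a multipartition $\bm\mu$ this $\chi^{\bm\lambda}_{\bm\mu}$ is the character value $\chi^{\bm\lambda}(g(\bm\mu))$ defined via Shoji's formula \eqref{eq:Frob-self}. Under the specialization $\mathscr H_{2,n}(\q,(-1,u))=\mathscr B_n(u,\q)$, this value is $\varphi^{\bm\lambda}_{\bm\mu}$ by definition, since the standard elements are $g(\bm\mu)=g(\mu^+,1)g(\mu^-,2)$.

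The only point I expect to need care is this last identification. The generic-parameter Frobenius formula and the character values must remain valid after specializing $u_1=-1$, $u_2=u$; in particular the Vandermonde factor $u_2-u_1=u+1$ should stay nonzero over $\mathbb C(u,\q)$. I would note explicitly that this holds, so that Shoji's presentation, and hence $\chi^{\bm\lambda}_{\bm\mu}=\varphi^{\bm\lambda}_{\bm\mu}$, specializes without degeneration.
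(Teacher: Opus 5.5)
Your proposal is correct and follows the paper's proof essentially verbatim. You specialize the coefficients $b^{(i)}_k$ of \eqref{e:def-b} at $m=2$, $(u_1,u_2)=(-1,u)$, collapse the first row of $\kappa$ to $\kappa_{1,j}=2$, read off $Y_{1,N},Y_{2,N}$, and get \eqref{eq:coeff-typeB} directly from \eqref{eq:G-def-Zalpha}. Your remark that $u_2-u_1=u+1\neq 0$ keeps Shoji's presentation and Frobenius formula nondegenerate under this specialization is care the paper leaves implicit.
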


\begin{proof}
We specialize the coefficients $b_k^{(i)}$ in \eqref{e:def-b} (with $m=2$).
For $i=1$ (so $i-1=0$) we get
\[
b^{(1)}_0=1-u_1^0=0,\qquad b^{(1)}_1=u_1^0-u_2^0=0,\qquad b^{(1)}_2=u_2^0=1,
\]
so, in the finite formula \eqref{eq:gene-AK-finite}, the $\kappa_{1,j}$-choices collapse to
$\kappa_{1,j}=2$ for $1\le j\le N$.
For $i=2$ (so $i-1=1$) we have
\[
b^{(2)}_0=1-u_1=2,\qquad b^{(2)}_1=u_1-u_2=-(u+1),\qquad b^{(2)}_2=u_2=u,
\]
which we denote by $c_0,c_1,c_2$.
Writing $\kappa_j:=\kappa_{2,j}\in\{0,1,2\}$ and observing that $Y_{r,N}(\kappa)$ contains all
$z^+_j$ with $1\le j\le N$ (since $\kappa_{1,j}=2\ge r$ for $r=1,2$), we obtain exactly
\eqref{eq:gene-typeB}. The stable-limit statement follows by letting $N\to\infty$ coefficientwise, and
\eqref{eq:coeff-typeB} is immediate from the definition \eqref{eq:G-def-Zalpha}.
\end{proof}

\medskip

Next we specialize the bivariate generating function \eqref{eq:AK-bivar-def}.
To avoid notational collision with the Hecke parameter $u$, we rename the auxiliary alphabets
\[
V^+:=v^+_1+v^+_2+\cdots,\qquad V^-:=v^-_1+v^-_2+\cdots,
\qquad (V^\pm)^\la:=(v^\pm_1)^{\la_1}(v^\pm_2)^{\la_2}\cdots.
\]
For a bipartition $\bm\lambda=(\lambda^+,\lambda^-)$ set $V^{\bm\lambda}:=(V^+)^{\lambda^+}(V^-)^{\lambda^-}$, and define
\[
\Delta(V^\pm):=\prod_{a<b}\left(1-\frac{v^\pm_b}{v^\pm_a}\right),\qquad
\Phi_\pm(z):=\prod_{a\ge1}\frac{1-z\,v^\pm_a}{1-\q z\,v^\pm_a}
=\sigma_z\bigl[(\q-1)V^\pm\bigr].
\]
We also write $\Phi_{\pm,\le2}(z):=\Phi_+(z)\Phi_-(z)$ for brevity.

\begin{cor}\label{c:bivar-typeB}
Assume $m=2$ and $(u_1,u_2)=(-1,u)$.  Then the bivariate generating function
\[
\mathscr F(Z^+,Z^-,V^+,V^-;\q,u)
=\sum_{\bm\lambda\in\mathscr P_{*,2}}\mathscr G_{\bm\lambda}(Z^+,Z^-;\q,u)\,V^{\bm\lambda}
\]
admits the closed product form
\begin{multline}\label{eq:bivar-typeB}
\mathscr F(Z^+,Z^-,V^+,V^-;\q,u)\\
=
\Delta(V^+)\Delta(V^-)\,
\prod_{j\ge1}\Phi_+(z^+_j)\Phi_-(z^+_j)\,
\prod_{j\ge1}\Bigl(2-(u+1)\Phi_+(z^-_j)+u\,\Phi_+(z^-_j)\Phi_-(z^-_j)\Bigr).
\end{multline}
The products over $j\ge1$ are understood coefficientwise, as stable limits of
the finite products over $1\le j\le N$.
Equivalently,
\begin{multline}\label{eq:Cauchy-typeB}
\sum_{\lambda^+,\lambda^-\in\mathscr P}
\mathscr G_{(\lambda^+,\lambda^-)}(Z^+,Z^-;\q,u)\ (V^+)^{\lambda^+}(V^-)^{\lambda^-}\\
=
\left(\prod_{a<b}\left(1-\frac{v^+_b}{v^+_a}\right)\right)
\left(\prod_{a<b}\left(1-\frac{v^-_b}{v^-_a}\right)\right)
\prod_{j\ge1}\left(\prod_{a\ge1}\frac{1-z^+_j v^+_a}{1-\q z^+_j v^+_a}\right)
\left(\prod_{a\ge1}\frac{1-z^+_j v^-_a}{1-\q z^+_j v^-_a}\right)\\
\cdot
\prod_{j\ge1}\left(
2-(u+1)\prod_{a\ge1}\frac{1-z^-_j v^+_a}{1-\q z^-_j v^+_a}
+u\left(\prod_{a\ge1}\frac{1-z^-_j v^+_a}{1-\q z^-_j v^+_a}\right)
\left(\prod_{a\ge1}\frac{1-z^-_j v^-_a}{1-\q z^-_j v^-_a}\right)
\right).
\end{multline}
\end{cor}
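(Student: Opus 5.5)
The plan is to specialize Theorem~\ref{t:AK-bivariate} at $m=2$ and $(u_1,u_2)=(-1,u)$, and then read off the factors. This parallels the proof of Corollary~\ref{c:bivar-typeA}. The closed formula \eqref{eq:AK-bivar-formula} already factorizes over the pairs $(i,j)$, so the only work is to evaluate the coefficients $b^{(i)}_k$ and the products $\Phi_{\le k}$ in the present notation.

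First, I will rename the auxiliary alphabets: $U^{(1)}=V^+$ and $U^{(2)}=V^-$, which gives $\Delta(U^{(1)})=\Delta(V^+)$ and $\Delta(U^{(2)})=\Delta(V^-)$. Comparing with \eqref{eq:Delta-Phi-def}, we get $\Phi_1=\Phi_+$ and $\Phi_2=\Phi_-$. Hence $\Phi_{\le0}=1$, $\Phi_{\le1}=\Phi_+$ and $\Phi_{\le2}=\Phi_+\Phi_-$. Next, I will reuse the evaluation of \eqref{e:def-b} that was already carried out in the proof of Corollary~\ref{c:gene-typeB}.
\begin{itemize}
\item For $i=1$: $(b^{(1)}_0,b^{(1)}_1,b^{(1)}_2)=(0,0,1)$, so the factor attached to $z_{1,j}=z^+_j$ is $\Phi_+(z^+_j)\Phi_-(z^+_j)$.
\item For $i=2$: $(b^{(2)}_0,b^{(2)}_1,b^{(2)}_2)=(2,-(u+1),u)$, so the factor attached to $z_{2,j}=z^-_j$ is $2-(u+1)\Phi_+(z^-_j)+u\,\Phi_+(z^-_j)\Phi_-(z^-_j)$.
\end{itemize}
Substituting these into \eqref{eq:AK-bivar-formula} gives \eqref{eq:bivar-typeB} directly. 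The statement about stable limits is inherited from Theorem~\ref{t:AK-bivariate}, since the specialization is compatible with truncation at $z^{\pm}_j=0$ for $j>N$.

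For \eqref{eq:Cauchy-typeB}, I will expand the definitions: $\Delta(V^\pm)=\prod_{a<b}(1-v^\pm_b/v^\pm_a)$ and $\Phi_\pm(z)=\prod_{a\ge1}(1-zv^\pm_a)/(1-\q zv^\pm_a)$. Then I will write the left side as $\mathscr F$ using \eqref{eq:AK-bivar-def}.

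As a cross-check, one can also sum \eqref{eq:gene-typeB} against $V^{\bm\lambda}$ using \eqref{eq:Schur-monomial-gf-pleth}. With this route, the alphabet $Y_{1,N}(\kappa)$ contributes $\Phi_+(z^+_j)$ for every $j$, and contributes $\Phi_+(z^-_j)$ exactly when $\kappa_j\ge1$. Likewise, $Y_{2,N}(\kappa)$ contributes $\Phi_-(z^+_j)$ for every $j$ and $\Phi_-(z^-_j)$ exactly when $\kappa_j=2$. The sum over $\kappa$ then factorizes into the stated product.

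There is no real obstacle here. The only point that needs care is the bookkeeping of which factor $\Phi_\pm$ is attached to which $z^\pm_j$. I will check this against the definition of $Y_{r,N}(\kappa)$, namely that $z_{i,j}$ appears in $Y_{r,N}(\kappa)$ exactly when $r\le\kappa_{i,j}$.
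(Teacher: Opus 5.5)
Your proposal is correct and matches the paper's proof: both specialize Theorem~\ref{t:AK-bivariate} at $m=2$, $(u_1,u_2)=(-1,u)$, using $(b^{(1)}_k)=(0,0,1)$ and $(b^{(2)}_k)=(2,-(u+1),u)$. Your cross-check via \eqref{eq:gene-typeB} and \eqref{eq:Schur-monomial-gf-pleth} is also sound, and the bookkeeping of $\Phi_\pm$ against $z^\pm_j$ is right.
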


\begin{proof}
This is a direct specialization of Theorem~\ref{t:AK-bivariate}.
Indeed, for $i=1$ we have $b_2^{(1)}=1$ and $b_0^{(1)}=b_1^{(1)}=0$, hence
\[
\sum_{k=0}^2 b_k^{(1)}\Phi_{\le k}(z)=\Phi_{\le2}(z)=\Phi_+(z)\Phi_-(z).
\]
For $i=2$ we have $(b_0^{(2)},b_1^{(2)},b_2^{(2)})=(2,-(u+1),u)$, hence
\[
\sum_{k=0}^2 b_k^{(2)}\Phi_{\le k}(z)=2-(u+1)\Phi_+(z)+u\,\Phi_+(z)\Phi_-(z).
\]
Substituting these into \eqref{eq:AK-bivar-formula} (with $m=2$) yields \eqref{eq:bivar-typeB},
and \eqref{eq:Cauchy-typeB} is just the same identity written out as a product.
\end{proof}

\section{Hecke--Clifford algebras}\label{sec:hecke-clifford-algebras}
As a deformation of the Sergeev algebra $\mathfrak{H}^c_n$, the Hecke--Clifford algebra $\mathscr{H}^c_n(\q)$ was defined by Olshanski \cite{Ol92} as the semidirect product of the Hecke algebra $\mathcal H_n$ with the Clifford algebra ${\rm Cl}_n$. It is the associative superalgebra over the field $\mathbb{C}(\q^{1/2})$ with even generators $T_1,\dots,T_{n-1}$ and odd generators $c_1,\dots,c_n$ subject to the following relations\footnote{Here $\q$ is also viewed as a parameter.}:
\begin{align*}
(T_i-\q)(T_i+1)=0, \quad &1\le i\le n-1,\\
T_iT_j=T_jT_i, \quad &1\le i,j\le n-1,\ |i-j|>1,\\
T_iT_{i+1}T_i=T_{i+1}T_iT_{i+1}, \quad &1\le i\le n-2,\\
c_i^2=1,\ \ c_ic_j=-c_jc_i, \quad &1\le i\ne j\le n,\\
T_ic_j=c_jT_i, \quad &j\ne i,i+1,\ 1\le i\le n-1,\ 1\le j\le n,\\
T_ic_i=c_{i+1}T_i, \quad &1\le i\le n-1,
\end{align*}

Let $\sigma=s_{i_1}s_{i_2}\cdots s_{i_r}\in \mathfrak{S}_n$ be a reduced expression. Define
\[
T_{\sigma}:=T_{i_1}T_{i_2}\cdots T_{i_r}.
\]
Then $\{T_{\sigma}\mid \sigma\in\mathfrak{S}_n\}$ forms a linear basis of $\mathscr{H}_{n}(\q)$. (Here $T_{\sigma}$ is independent of the choice of reduced expression of $\sigma$.)

For a composition $\mu=(\mu_1,\mu_2,\dots,\mu_l)$ of $n$, let
\[
\sigma_{\mu}:=(1\,2\,\cdots\,\mu_1)(\mu_1+1\,\cdots\,\mu_1+\mu_2)\cdots
(n-\mu_l+1\,\cdots\,n)\in\mathfrak{S}_n,
\]
and set $T_{\sigma_{\mu}}:=T_{\sigma_{\mu_1}}T_{\sigma_{\mu_2}}\cdots T_{\sigma_{\mu_l}}$, where each $\sigma_{\mu_i}$ denotes the corresponding cycle on its block of letters.

It is well known that the irreducible characters $\zeta^{\la}$ of $\mathscr{H}^c_n(\q)$ are parametrized by the strict partitions of $n$. If $\psi^{\nu}:\mathscr {H}^c_{n}(\q)\to {\rm End}(W)$ is an irreducible representation of $\mathscr{H}^c_n(\q)$ labeled by the strict partition $\nu$, then
\phantomsection\label{def:zeta-HC}
\begin{align}
    \zeta^{\nu}_{\mu}(\q):={\rm Tr}_{W}(T_{\sigma_{\mu}}).
\end{align}
Denote by $\O_n$ the set of partitions all of whose parts are odd (such partitions are called \emph{odd partitions}) and $\O=\bigsqcup_{n\geq 0}\O_n$. It is known that $\zeta^{\nu}$ is determined by the values $\zeta^{\nu}_{\mu}(\q)$ for $\mu\in\O_n$; see \cite{WW13}.

The Frobenius-type formula for $\mathscr H^c_n(\q)$ can be found in  \cite{WW13} and reads
\begin{align}
    \tilde{q}_{\mu}[(\q-1)X]&=\sum_{\nu\in\mathscr{S}_n}2^{-\frac{\ell(\nu)+\delta(\nu)}{2}}\zeta^{\nu}_{\mu}(\q)\,Q_{\nu}(X),
\end{align}
where $\tilde{q}_{\mu}\bigl[(\q-1)X\bigr]=\tilde{q}_{\mu_1}\bigl[(\q-1)X\bigr]\tilde{q}_{\mu_2}\bigl[(\q-1)X\bigr]\cdots$, and
\[
\delta(\nu)=
\begin{cases}
0, & \text{if $\ell(\nu)$ is even},\\
1, & \text{if $\ell(\nu)$ is odd}.
\end{cases}
\]
Here recall $\tilde{q}_n(\cdot)=\frac{1}{\q-1}q_n(\cdot)$ for $n\ge1$, and $q_{n}(\cdot)$ is the one-row Schur $Q$-function.

For $\nu\in\mathscr S$, we introduce the generating function for $\zeta^{\nu}$ by
\[
\mathscr G^c_{\nu}(Z;\q):=\sum_{\alpha\in\mathscr A_1}(\q-1)^{\ell(\alpha)}\zeta^{\nu}_{\alpha}(\q)Z^{\alpha}.
\]
Here $Z^{\alpha}:=z_1^{\alpha_1}z_2^{\alpha_2}\cdots $ for the alphabet $Z=z_1+z_2+\cdots$. Recall that $\mathscr A_1$ denotes the set of all non-negative compositions with finite supports (cf. \eqref{e:def-A}).

For $\alpha=(\alpha_1,\alpha_2,\ldots)\in\mathscr A_1$, set
\[
q_\alpha:=\prod_{\alpha_j>0} q_{\alpha_j},\qquad 
\tilde q_\alpha:=\prod_{\alpha_j>0} \tilde q_{\alpha_j}.
\]
We keep $q_0=1$, and the symbol $\tilde q_0$ is not used in these finite-support products.
Then
\begin{equation}\label{eq:qtilde-factor-HC}
(\q-1)^{\ell(\alpha)}\,\tilde q_\alpha = q_\alpha .
\end{equation}

Define the (formal) kernel
\begin{equation}\label{eq:kernel-HC-def}
\mathcal K^c(X,Z;\q)
:=\sum_{\alpha\in\mathscr A_1}(\q-1)^{\ell(\alpha)}\,\tilde q_\alpha[(\q-1)X]\;Z^\alpha.
\end{equation}
Using \eqref{eq:qtilde-factor-HC}, we may also write
\begin{equation}\label{eq:kernel-HC-def2}
\mathcal K^c(X,Z;\q)
=\sum_{\alpha\in\mathscr A_1} q_\alpha[(\q-1)X]\;Z^\alpha
=\prod_{j\ge1}\left(\sum_{a\ge0} q_a[(\q-1)X]\;z_j^{a}\right).
\end{equation}

For $\alpha\in\mathscr A_1$ we define $\zeta^\nu_\alpha(\q)$ by the $Q$-expansion
\begin{equation}\label{eq:Frob-alpha-HC}
\tilde q_\alpha[(\q-1)X]
=\sum_{\nu\in\mathscr S}2^{-\frac{\ell(\nu)+\delta(\nu)}{2}}\,
\zeta^\nu_\alpha(\q)\,Q_\nu(X),
\end{equation}
so that for $\alpha=\mu\in\O$ this specializes to the Frobenius-type formula in \cite{WW13}.
Substituting \eqref{eq:Frob-alpha-HC} into \eqref{eq:kernel-HC-def} and exchanging sums gives
\begin{equation}\label{eq:kernel-HC-Qexp}
\mathcal K^c(X,Z;\q)
=\sum_{\nu\in\mathscr S}2^{-\frac{\ell(\nu)+\delta(\nu)}{2}}\,
Q_\nu(X)\,\mathscr G^c_\nu(Z;\q).
\end{equation}

\begin{thm}\label{t:gene-HC}
One has
\begin{align}\label{e:gene-HC}
\mathscr G^c_{\nu}(Z;\q)=2^{\frac{-\ell(\nu)+\delta(\nu)}{2}}\,Q_{\nu}\bigl[(\q-1)Z\bigr].
\end{align}
\end{thm}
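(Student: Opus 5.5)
The plan is to compute the kernel $\mathcal K^c(X,Z;\q)$ in closed form and expand it in a second way, through the Schur $P/Q$ Cauchy identity. Comparing the result with \eqref{eq:kernel-HC-Qexp} and using linear independence of $\{Q_\nu\}_{\nu\in\mathscr S}$ then gives \eqref{e:gene-HC}. This is the Hecke--Clifford analogue of the type $A$ computation in Corollary~\ref{c:gene-typeA}, with the ordinary Cauchy kernel replaced by the Schur $Q$ one.

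First I recall the one-row generating function. Since $q_a(X)=g_a(X;0,-1)$, the definition of $g_r$ gives
\[
\sum_{a\ge0}q_a[A]\,z^a=\exp\Bigl(2\sum_{r\ \mathrm{odd}}\frac{p_r[A]}{r}z^r\right)
\]
for any (virtual) alphabet $A$. Inserting this into the product form \eqref{eq:kernel-HC-def2} with $A=(\q-1)X$, and using $p_r[(\q-1)X]\,z_j^r=p_r[X]\,p_r[(\q-1)z_j]$, I get
\[
\mathcal K^c(X,Z;\q)=\exp\Bigl(2\sum_{r\ \mathrm{odd}}\frac{1}{r}\,p_r[X]\,p_r[(\q-1)Z]\Bigr).
\]
Here the product over $j$ becomes a sum over $j$ in the exponent, hence the alphabet $Z$. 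The factorization of the power sums is just plethystic multiplicativity, $p_r[AB]=p_r[A]p_r[B]$.

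Next I apply the Cauchy identity $\sum_{\la\in\mathscr S}Q_\la(X)P_\la(Y)=\prod_{i,j}\frac{1+x_iy_j}{1-x_iy_j}=\exp\bigl(2\sum_{r\ \mathrm{odd}}p_r(X)p_r(Y)/r\bigr)$, with $Y$ specialized plethystically to $(\q-1)Z$. This is legitimate because the identity is an identity in power sums of $Y$. It yields
\[
\mathcal K^c(X,Z;\q)=\sum_{\nu\in\mathscr S}Q_\nu(X)\,P_\nu[(\q-1)Z]=\sum_{\nu\in\mathscr S}2^{-\ell(\nu)}Q_\nu(X)\,Q_\nu[(\q-1)Z].
\]
I then compare this with \eqref{eq:kernel-HC-Qexp} coefficientwise in $Z$. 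The $Q_\nu(X)$, $\nu\in\mathscr S$, are linearly independent, and each $Z$-monomial coefficient is a finite combination of them in a fixed degree. Hence
\[
2^{-\frac{\ell(\nu)+\delta(\nu)}{2}}\mathscr G^c_\nu(Z;\q)=2^{-\ell(\nu)}Q_\nu[(\q-1)Z],
\]
which rearranges to the exponent $\frac{-\ell(\nu)+\delta(\nu)}{2}$ in \eqref{e:gene-HC}.

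I expect the main obstacle to be justifying the formal manipulations rather than any identity. The sum over $\alpha\in\mathscr A_1$ must be exchanged with the $Q$-expansion, and the infinite product over $j$ must be handled as a stable limit. As in Theorem~\ref{t:gene-AK}, I would truncate to $z_j=0$ for $j>N$ and argue degreewise, then let $N\to\infty$ coefficientwise.
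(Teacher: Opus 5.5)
Your proof is correct. The final comparison step is exactly the paper's: you match against \eqref{eq:kernel-HC-Qexp}, use $P_\nu=2^{-\ell(\nu)}Q_\nu$, and invoke linear independence of the $Q_\nu$. Where you differ is in how you obtain the expansion $\mathcal K^c(X,Z;\q)=\sum_{\nu\in\mathscr S}Q_\nu[(\q-1)Z]\,P_\nu[X]$.

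You get it in one stroke. You rewrite the kernel as $\exp\bigl(2\sum_{r\ \mathrm{odd}}\frac1r\, p_r[X]\,p_r[(\q-1)Z]\bigr)$ and apply the Schur $P/Q$ Cauchy identity with the plethystic substitution $Y\mapsto(\q-1)Z$.

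The paper proceeds in several steps:
\begin{enumerate}
\item It specializes its skew Murnaghan--Nakayama rule \eqref{e:skew-M-N-Mac} at $(q,t,\eta)=(0,-1,\varnothing)$.
\item This gives the one-variable action \eqref{eq:Omega-action} of $\Omega_X(z_j)$ on $P_\lambda[X]$.
\item It applies that action successively for $j=1,\dots,N$, starting from $P_\varnothing=1$, and proves the truncated expansion by induction on $N$.
\item It passes to the stable limit.
\end{enumerate}

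Your route is shorter and more self-contained. It needs only the standard Cauchy identity recorded in the preliminaries. It sidesteps the induction on $N$, which tacitly uses the branching rule $\sum_{\lambda}Q_\lambda[A]\,Q_{\rho/\lambda}[B]=Q_\rho[A+B]$. It also avoids specializing the Macdonald-level rule at $(q,t)=(0,-1)$.

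The paper's route buys the narrative point that the character generating function is an application of its skew-MN framework. Along the way it records the finer one-variable identity \eqref{eq:Omega-action}. Mathematically, both arguments rest on the same Cauchy kernel: iterating \eqref{eq:Omega-action} simply rebuilds the Cauchy identity one variable at a time.

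Your worry about the formal manipulations is milder than you suggest. The coefficient of each $Z$-monomial is a finite, homogeneous expression in $X$, so all the identities hold coefficientwise with no genuine limiting argument.
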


\begin{proof}

For an arbitrary alphabet $A$ introduce the generating series
\[
\Omega_A(z):=\sum_{n\ge0} q_n[(\q-1)A]\;z^n
=1+(\q-1)\sum_{n\ge1}\tilde q_n[(\q-1)A]\;z^n.
\]
Using \eqref{e:skew-M-N-Mac} for $n\ge1$ in the specialization $(q,t,\eta)=(0,-1,\varnothing)$, and adding the constant term, we obtain for every strict partition
$\lambda$:
\begin{multline}\label{eq:Omega-action}
\Omega_X(z)\,P_\lambda[X]
=P_\lambda[X]+\sum_{n\ge1}(\q-1)\tilde q_n[(\q-1)X]\,z^n\,P_\lambda[X] \\
=\sum_{n\ge0}\ \sum_{\rho\in\mathscr S_{n+|\lambda|}} Q_{\rho/\lambda}[\q-1]\;z^n\,P_\rho[X]\\
=\sum_{\rho\in\mathscr S} Q_{\rho/\lambda}\!\bigl[(\q-1)z\bigr]\;P_\rho[X].
\end{multline}
Now truncate $Z$ by $Z_{\le N}:=z_1+\cdots+z_N$ and set
\[
\mathcal K^c_N(X,Z;\q):=\prod_{j=1}^N\Omega_X(z_j).
\]
Applying \eqref{eq:Omega-action} repeatedly to $P_\varnothing[X]=1$,
one proves by induction on $N$ that
\begin{equation}\label{eq:kernel-trunc}
\mathcal K^c_N(X,Z;\q)\cdot 1
=\sum_{\nu\in\mathscr S} Q_\nu\!\bigl[(\q-1)Z_{\le N}\bigr]\;P_\nu[X].
\end{equation}
Passing to the stable limit $N\to\infty$ gives
\begin{equation}\label{eq:kernel-Pexp}
\mathcal K^c(X,Z;\q)
=\sum_{\nu\in\mathscr S} Q_\nu\!\bigl[(\q-1)Z\bigr]\;P_\nu[X].
\end{equation}
Finally, using $Q_\nu=2^{\ell(\nu)}P_\nu$ for strict partitions $\nu$, we rewrite \eqref{eq:kernel-Pexp} as
\begin{equation}\label{eq:kernel-Qexp2}
\mathcal K^c(X,Z;\q)
=\sum_{\nu\in\mathscr S}2^{-\ell(\nu)}\,Q_\nu(X)\,Q_\nu\!\bigl[(\q-1)Z\bigr].
\end{equation}
Comparing the coefficients of $Q_\nu(X)$ in \eqref{eq:kernel-HC-Qexp} and \eqref{eq:kernel-Qexp2}, we get
\[
2^{-\frac{\ell(\nu)+\delta(\nu)}{2}}\;\mathscr G^c_\nu(Z;\q)
=2^{-\ell(\nu)}\,Q_\nu\!\bigl[(\q-1)Z\bigr].
\]
Therefore
\[
\mathscr G^c_\nu(Z;\q)=2^{\frac{-\ell(\nu)+\delta(\nu)}{2}}\,Q_\nu\!\bigl[(\q-1)Z\bigr],
\]
which is exactly \eqref{e:gene-HC}.
\end{proof}

Now we introduce the bi-variables generating function of $\zeta^{\nu}$ by
\begin{align}
    \mathscr F^c(Z,U;\q):=\sum_{\nu\in\mathscr S} 2^{\frac{\ell(\nu)-\delta(\nu)}{2}}\mathscr G^c_{\nu}(Z;\q)U^{\nu}
\end{align}
for an arbitrary alphabet $U=u_1+u_2+\cdots$. Here $U^{\nu}:=u_1^{\nu_1}u_2^{\nu_2}\cdots$.

\begin{thm}\label{t:bigene-HC}
    We have
    \begin{align}\label{e:bigene-HC}
       \mathscr F^c(Z,U;\q)=\prod_{i<j}\frac{u_i-u_j}{u_i+u_j}\prod_{i,j}\frac{1+\q z_iu_j}{1-\q z_iu_j}\frac{1- z_iu_j}{1+ z_iu_j}.
    \end{align}
\end{thm}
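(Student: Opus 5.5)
By Theorem~\ref{t:gene-HC}, $2^{\frac{\ell(\nu)-\delta(\nu)}{2}}\mathscr G^c_\nu(Z;\q)=Q_\nu[(\q-1)Z]$. Hence
\[
\mathscr F^c(Z,U;\q)=\sum_{\nu\in\mathscr S}Q_\nu\bigl[(\q-1)Z\bigr]\,U^\nu ,
\]
and the statement reduces to a generating function for Schur $Q$-functions in the monomials $U^\nu$, evaluated at the virtual alphabet $(\q-1)Z$.

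For that generating function I would mimic Lemma~\ref{l:gene-HL}, now at $t=-1$. Jing's vertex operator realization with $t=-1$ reads
\[
Q(u)=\exp\Bigl(\sum_{n\ \mathrm{odd}}\tfrac{2}{n}p_nu^n\Bigr)\exp\Bigl(-\sum_{n\ \mathrm{odd}}\tfrac{\partial}{\partial p_n}u^{-n}\Bigr)=\sum_n Q_nu^n .
\]
For a strict partition $\nu$, the product $Q_{\nu_1}Q_{\nu_2}\cdots 1$ equals $Q_\nu$. I would then take $[U^\nu]$ of $Q(u_1)Q(u_2)\cdots 1$. Normal ordering (BCH, exactly as in the proof of Lemma~\ref{l:gene-HL}) produces the contraction factor
\[
\exp\Bigl(-\sum_{n\ \mathrm{odd}}\tfrac{2}{n}(u_j/u_i)^n\Bigr)=\frac{1-u_j/u_i}{1+u_j/u_i}=\frac{u_i-u_j}{u_i+u_j}\qquad(i<j).
\]
This gives
\[
\sum_{\nu}Q_\nu[X]\,U^\nu=\prod_{i<j}\frac{u_i-u_j}{u_i+u_j}\,\prod_j\exp\Bigl(\sum_{n\ \mathrm{odd}}\tfrac{2}{n}p_n[X]u_j^n\Bigr),
\]
which is equivalently the specialization $t=-1$ of \eqref{e:gene-HL}.

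One subtlety is that the coefficient extraction yields $Q_\alpha$ for arbitrary compositions $\alpha$, not only strict $\nu$. I would handle this the same way the paper treats Lemma~\ref{l:gene-HL} and \eqref{e:gene-S}: interpret the identity in the region $|u_1|>|u_2|>\cdots$. In that region the antisymmetry factor makes the expansion consistent, with $Q_{(a,a)}=0$ and $Q_{(a,b)}=-Q_{(b,a)}$. The claimed identity should then be read with the same convention, so that the left side is $\sum_\nu Q_\nu U^\nu$ after straightening.

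Next, substitute $X\mapsto(\q-1)Z$. Using $p_n[(\q-1)Z]=(\q^n-1)p_n[Z]$ and summing over odd $n$ with $\sum_{n\ \mathrm{odd}}\frac{2}{n}w^n=\log\frac{1+w}{1-w}$, each factor becomes
\[
\exp\Bigl(\sum_{n\ \mathrm{odd}}\tfrac{2}{n}(\q^n-1)\sum_i z_i^n u_j^n\Bigr)=\prod_i\frac{1+\q z_iu_j}{1-\q z_iu_j}\cdot\frac{1-z_iu_j}{1+z_iu_j}.
\]
This is the claimed product. Alternatively, one can avoid the vertex-operator input by expanding $Q_\nu[(\q-1)Z]$ through the Cauchy identity for $Q$/$P$ and \eqref{eq:kernel-Pexp}.

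I expect the main obstacle to be justifying the vertex operator identity at $t=-1$, in particular confirming that the normal-ordering constant is $\frac{u_i-u_j}{u_i+u_j}$ and that the non-strict terms straighten consistently. I would first check the two-variable case $Q_{(a,b)}$ against the known Pfaffian/$Q_{(r,s)}$ formula.
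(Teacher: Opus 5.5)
Your argument is essentially the paper's. You rewrite $\mathscr F^c=\sum_{\nu\in\mathscr S}Q_\nu[(\q-1)Z]\,U^\nu$ via Theorem~\ref{t:gene-HC}, use the $t=-1$ case of \eqref{e:gene-HL} (which you re-derive from the Schur $Q$ vertex operators), and substitute $X\mapsto(\q-1)Z$. Your caveat about non-strict exponents is justified, but ``straightening'' does not repair it: the right-hand side genuinely has nonzero coefficients off strict partitions (e.g.\ $-Q_{(3,1)}[(\q-1)Z]$ at $u_1u_2^3$), so the identity must be read as $[U^\nu]$-extraction for strict $\nu$, a point the paper's proof passes over silently.
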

\begin{proof}
    Specializing \eqref{e:gene-HL} at $t=-1$ gives 
\begin{align}\label{e:gene-SQ}
      \sum_{\la\in \mathscr{S}}Q_{\la}(X)U^{\la}=&\prod_{i<j}\frac{u_i-u_j}{u_i+u_j}\prod_{i,j}\frac{1+x_iu_j}{1-x_iu_j}.
\end{align}
    Substituting $X\rightarrow (\q-1)Z$ in \eqref{e:gene-SQ} implies that
    \begin{align}\label{e:gene-SQ-h}
        \sum_{\la\in \mathscr S}Q_{\la}[(\q-1)Z]U^{\la}=&\prod_{i<j}\frac{u_i-u_j}{u_i+u_j}\prod_{i,j}\frac{1+\q z_iu_j}{1-\q z_iu_j}\frac{1-z_iu_j}{1+z_iu_j}.
    \end{align}
    Then \eqref{e:bigene-HC} is obtained by plugging \eqref{e:gene-HC} into \eqref{e:gene-SQ-h}.
\end{proof}

\section{\texorpdfstring{$\q$-rook monoid algebras}{q-rook monoid algebras}}\label{sec:q-rook-monoid-algebras}

As before, we let $\q$ be an indeterminate. For $n\geq2$, the $\q$-rook monoid $\R_n(\q)$ is defined as the associative $\mathbb{C}(\q)$-algebra with generators $1, T_1,\cdots, T_{n-1}$, and $ P_1, \cdots , P_n$ subject to the relations\footnote{Here we adopt Halverson's presentation of $R_{n}(\q)$ \cite{Hal04} for our purposes.} (cf. \cite{Hal04}):
\begin{align*}
(T_i-\q)(T_i+1)=0, \quad &1\leq i\leq n-1,\\
T_iT_j=T_jT_i, \quad &1\leq i,j\leq n-1,\ \lvert i-j\rvert>1,\\
T_iT_{i+1}T_i=T_{i+1}T_iT_{i+1}, \quad &1\leq i\leq n-2,\\
T_iP_j=P_jT_i=\q P_j, \quad &1\leq i< j\leq n,\\
T_iP_j=P_jT_i, \quad &1\leq j< i\leq n-1,\\
P^2_{i}=P_i, \quad &1\leq i\leq n,\\
P_{i+1}=\q P_iT^{-1}_{i}P_i, \quad &1\leq i \leq n-1.
\end{align*}

Define $\R_0(\q) = \mathbb{C}(\q)$, and $\R_1(\q)$ is the associative $\mathbb{C}(\q)$-algebra spanned by $1$ and
$P_1$ subject to $P^2_1 = P_1$. The subalgebra of $\R_n(\q)$ generated by $T_1,\cdots, T_{n-1}$ is isomorphic
to the Iwahori-Hecke algebra $\mathscr H_n(\q)$ in type $A$. 

Let $s_i\in\mathfrak S_n$ be the transposition of $i$ and $i+1$. We define $\gamma_1=T_{\gamma_1}=1$ and, for $2\leq t\leq n$,
\begin{align*}
    \gamma_t&=s_1s_2\cdots s_{t-1},\\
    T_{\gamma_t}&=T_1\cdots T_{t-1}.
\end{align*}
Set $P_0:=1$. For a composition $\mu=(\mu_1,\ldots,\mu_\ell)\models k$, where $0\leq k\leq n$, put
\[
b_i:=n-k+\sum_{a<i}\mu_a,\qquad
T_{\gamma_{\mu_i}}^{[b_i]}:=T_{b_i+1}T_{b_i+2}\cdots T_{b_i+\mu_i-1}
\quad(1\le i\le\ell),
\]
with the last product interpreted as $1$ when $\mu_i=1$, and define
\[
T_\mu:=P_{n-k}\,
T_{\gamma_{\mu_1}}^{[b_1]}\cdots
T_{\gamma_{\mu_\ell}}^{[b_\ell]}\in\R_n(\q).
\]
Equivalently, this is the image of
$P_{n-k}\otimes T_{\gamma_{\mu_1}}\otimes\cdots\otimes T_{\gamma_{\mu_\ell}}$
under the standard block embedding
$\R_{n-k}(\q)\otimes\R_{\mu_1}(\q)\otimes\cdots\otimes\R_{\mu_\ell}(\q)\subset\R_n(\q)$.
It is called the {\it standard element}; in particular, $T_\mu=1$ when $\mu=(1^n)$, while the empty composition gives $T_\varnothing=P_n$.

It is known that irreducible characters of $\R_n(\q)$ are parameterized by partitions of $j\leq n, j=0, 1, \ldots, n$. Here a partition of $0$ means $\varnothing$. The irreducible
$\R_n(\q)$-characters are completely determined by their values on the set of standard elements $T_{\mu}, \mu\vdash k, 0 \leq k \leq n.$ 

The irreducible module $V^{\la}$ indexed by $\la\vdash k\leq n$ have been studied
in \cite{G02, M57, Sol02}, and the seminormal bases of $V^{\la}$ were found in \cite{Hal04}. \phantomsection\label{def:Upsilon-rook}Define $\Upsilon^{\la}(T_{\mu})$ as the value of the irreducible characters $\Upsilon^{\la}$ of $\R_n(\q)$ indexed by $\la\vdash k (\leq n)$ at the standard element $T_{\mu}$. In the following we sometimes write $\Upsilon^{\la}_{\mu}(\q)$ for $\Upsilon^{\la}(T_{\mu})$ for brevity.

Dieng--Halverson--Poladian \cite{DHP03} used the Schur--Weyl
duality to prove the following Frobenius-type formula in the ring of symmetric functions ($\mu\vdash n$)
\begin{align}\label{chara formula}
\frac{\q^{|\mu|}}{(\q-1)^{\ell(\mu)}}\hat{q}_{\mu}(X;\q^{-1})=\sum_{j=0}^{n}\sum_{\la\vdash j}\Upsilon^{\la}(T_{\mu})s_{\la}(X)
\end{align}
where $\hat q_{\mu}(X;t)=q_{\mu}[1+X;t]$ in the plethystic form and $q_{\mu}(X;t)$ is the product of the one-row Hall--Littlewood functions in parameter $t$. A Murnaghan--Nakayama rule for the irreducible character of the $\q$-rook monoid algebra can be found in \cite{DHP03, JWL26}.

As in the previous subsections, we extend the lower index from partitions to arbitrary weak compositions.
Recall from \eqref{e:def-A} that
\[
\mathscr A_1=\{\,\alpha=(\alpha_1,\alpha_2,\ldots):\ \alpha_j\in\mathbb Z_{\ge0},\ 
\#\{j:\alpha_j\neq0\}<\infty\,\}.
\]
For \(\alpha\in\mathscr A_1\), set
\[
Z:=z_1+z_2+\cdots,\qquad
Z^\alpha:=\prod_{j\ge1} z_j^{\alpha_j},\qquad
|\alpha|:=\sum_{j\ge1}\alpha_j,\qquad
\ell(\alpha):=\#\{j:\alpha_j>0\}.
\]
Let \(\alpha^\downarrow\) be the partition obtained by rearranging the positive entries of \(\alpha\) in weakly decreasing order.

For \(a\ge0\) define
\[
\hat q_a(X;t):=q_a[1+X;t],
\]
and for \(\alpha\in\mathscr A_1\) set
\[
\hat q_\alpha(X;t):=\prod_{j\ge1}\hat q_{\alpha_j}(X;t).
\]
Since the product is invariant under permuting the indices \(j\), one has
\[
\hat q_\alpha(X;t)=\hat q_{\alpha^\downarrow}(X;t).
\]

For every \(\alpha\in\mathscr A_1\), define \(\Upsilon^\lambda_\alpha(\q)\in\mathbb C(\q)\) by the Schur expansion
\begin{equation}\label{eq:Frob-alpha-rook}
\q^{|\alpha|}\hat q_\alpha(X;\q^{-1})
=
\sum_{\lambda\in\mathscr P}
(\q-1)^{\ell(\alpha)}\,\Upsilon^\lambda_\alpha(\q)\,s_\lambda(X).
\end{equation}
When \(\alpha=\mu\in\mathscr P_n\), this is exactly \eqref{chara formula} for the algebra \(\R_n(\q)\); in particular,
\[
\Upsilon^\lambda_\mu(\q)=\Upsilon^\lambda(T_\mu).
\]
Moreover,
\[
\Upsilon^\lambda_\alpha(\q)=\Upsilon^\lambda_{\alpha^\downarrow}(\q).
\]
Since \(\hat q_\alpha(X;\q^{-1})\) has total degree at most \(|\alpha|\), the sum in \eqref{eq:Frob-alpha-rook} is finite and
\[
\Upsilon^\lambda_\alpha(\q)=0\qquad\text{unless}\qquad |\lambda|\le |\alpha|.
\]

For a fixed partition \(\lambda\), define the generating function
\begin{equation}\label{eq:G-rook-def}
\mathscr G^R_\lambda(Z;\q)
:=
\sum_{\alpha\in\mathscr A_1}
(\q-1)^{\ell(\alpha)}\,\Upsilon^\lambda_\alpha(\q)\,Z^\alpha.
\end{equation}
Thus, for every partition \(\mu\), the coefficient of \(Z^\mu\) in \(\mathscr G^R_\lambda(Z;\q)\) is the normalized character value
\((\q-1)^{\ell(\mu)}\Upsilon^\lambda_\mu(\q)\).

Define
\[
\bar q_a(X;\q):=\q^a\hat q_a(X;\q^{-1}),\qquad
\bar q_\alpha(X;\q):=\prod_{j\ge1}\bar q_{\alpha_j}(X;\q)
=\q^{|\alpha|}\hat q_\alpha(X;\q^{-1}),
\]
and introduce the kernel
\begin{equation}\label{eq:kernel-rook-def}
\mathcal K^R(X,Z;\q)
:=
\sum_{\alpha\in\mathscr A_1}\bar q_\alpha(X;\q)\,Z^\alpha
=
\prod_{j\ge1}\bar Q(z_j),
\qquad
\bar Q(z):=\sum_{a\ge0}\bar q_a(X;\q)\,z^a.
\end{equation}
Using \eqref{eq:Frob-alpha-rook}, we may rewrite \(\mathcal K^R\) as
\begin{equation}\label{eq:kernel-rook-Schur}
\mathcal K^R(X,Z;\q)
=
\sum_{\lambda\in\mathscr P}s_\lambda(X)\,\mathscr G^R_\lambda(Z;\q).
\end{equation}

\begin{lem}\label{l:Qbar-rook}
One has
\begin{equation}\label{eq:Qbar-rook}
\bar Q(z)
=
\sigma_z\bigl[(\q-1)(1+X)\bigr]
=
\frac{1-z}{1-\q z}\,\sigma_z\bigl[(\q-1)X\bigr]
=
\frac{1-z}{1-\q z}\prod_{i\ge1}\frac{1-zx_i}{1-\q zx_i}.
\end{equation}
\end{lem}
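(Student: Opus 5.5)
The plan is to compute $\bar Q(z)$ directly from the definitions, using only the plethystic generating function of the one-row Hall--Littlewood functions. By definition $\bar q_a(X;\q)=\q^a\hat q_a(X;\q^{-1})=\q^a q_a[1+X;\q^{-1}]$. Recall from the plethystic section that $q_a(X;t)=h_a[(1-t)X]$, so that $\sum_a q_a[A;t]w^a=\sigma_w[(1-t)A]$ for any alphabet $A$. Taking $A=1+X$, $t=\q^{-1}$ and $w=\q z$ gives
\[
\bar Q(z)=\sum_{a\ge0}q_a[1+X;\q^{-1}](\q z)^a=\sigma_{\q z}\bigl[(1-\q^{-1})(1+X)\bigr].
\]
The key step is then to absorb the scalar $\q$ into the alphabet. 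Since $\sigma_{w}[A]=\exp\bigl(\sum_n p_n[A]w^n/n\bigr)$ and $p_n[\q A]=\q^np_n[A]$, we have $\sigma_{\q z}[A]=\sigma_z[\q A]$. Also $\q(1-\q^{-1})=\q-1$. Hence $\bar Q(z)=\sigma_z[(\q-1)(1+X)]$, which is the first equality.

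For the second equality, use additivity $\sigma_z[A+B]=\sigma_z[A]\sigma_z[B]$ (immediate from $p_n[A+B]=p_n[A]+p_n[B]$) to split off $\sigma_z[\q-1]$. Then compute
\[
\sigma_z[\q-1]=\exp\Bigl(\sum_n\frac{(\q^n-1)z^n}{n}\Bigr)=\frac{1-z}{1-\q z}.
\]
For the third equality, write $\sigma_z[(\q-1)X]=\exp\bigl(\sum_{n}(\q^n-1)p_n(X)z^n/n\bigr)$ and factor over the variables $x_i$. This gives $\prod_i(1-zx_i)/(1-\q zx_i)$, exactly as in the definition of $\Phi_r$ in \eqref{eq:Delta-Phi-def}.

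There is no real obstacle here. The only points requiring care are the sign conventions, namely checking that $q_a(X;t)=h_a[(1-t)X]$ (from \eqref{e:qgen}, since $p_n[(1-t)X]=(1-t^n)p_n$), and the rescaling $\sigma_{\q z}[A]=\sigma_z[\q A]$, which is the step where a slip would produce the wrong factor.
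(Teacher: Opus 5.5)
Your proposal is correct and follows essentially the same route as the paper: rewrite $\bar Q(z)$ as $\sigma_{\q z}\bigl[(1-\q^{-1})(1+X)\bigr]$ via the one-row Hall--Littlewood generating function, absorb $\q$ into the alphabet using $\q(1-\q^{-1})=\q-1$, then split off $\sigma_z[\q-1]$ by additivity and factor over the variables. You spell out the computations $\sigma_z[\q-1]=\frac{1-z}{1-\q z}$ and the product formula slightly more explicitly than the paper, but the argument is the same.
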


\begin{proof}
By definition and the generating function of the one-row Hall--Littlewood functions,
\[
\bar Q(z)
=
\sum_{a\ge0}\q^a q_a[1+X;\q^{-1}]\,z^a
=
\sum_{a\ge0}q_a[1+X;\q^{-1}](\q z)^a
=
\sigma_{\q z}\bigl[(1-\q^{-1})(1+X)\bigr].
\]
Since \((\q z)(1-\q^{-1})=z(\q-1)\), this equals \(\sigma_z[(\q-1)(1+X)]\). The second equality in
\eqref{eq:Qbar-rook} follows from plethystic additivity:
\[
(\q-1)(1+X)=(\q-1)+(\q-1)X,
\]
and the last equality is the explicit product expression for \(\sigma_z[(\q-1)X]\).
\end{proof}

\begin{thm}\label{t:gene-rook}
For every partition \(\lambda\), one has
\begin{equation}\label{eq:gene-rook}
\mathscr G^R_\lambda(Z;\q)
=
\sigma_Z[\q-1]\;s_\lambda\bigl[(\q-1)Z\bigr]
=
\left(\prod_{j\ge1}\frac{1-z_j}{1-\q z_j}\right)\,
s_\lambda\bigl[(\q-1)Z\bigr].
\end{equation}
Consequently, for every partition \(\mu\),
\begin{equation}\label{eq:coeff-rook}
(\q-1)^{\ell(\mu)}\,\Upsilon^\lambda_\mu(\q)
=
\bigl[Z^\mu\bigr]
\left(
\left(\prod_{j\ge1}\frac{1-z_j}{1-\q z_j}\right)\,
s_\lambda\bigl[(\q-1)Z\bigr]
\right).
\end{equation}
\end{thm}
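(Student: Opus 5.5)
The plan is to compute the kernel $\mathcal K^R(X,Z;\q)$ in two ways and compare coefficients of $s_\lambda(X)$, following the pattern of Theorems~\ref{t:gene-AK} and~\ref{t:gene-HC}. On one side, \eqref{eq:kernel-rook-Schur} already gives
\[
\mathcal K^R(X,Z;\q)=\sum_{\lambda\in\mathscr P}s_\lambda(X)\,\mathscr G^R_\lambda(Z;\q).
\]
On the other side, we use the product form \eqref{eq:kernel-rook-def} together with Lemma~\ref{l:Qbar-rook}. Everything will first be done for the truncation $Z_{\le N}=z_1+\cdots+z_N$, with $N\to\infty$ taken coefficientwise at the end, exactly as in the Ariki--Koike case.

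For the truncated product we write
\[
\prod_{j=1}^N\bar Q(z_j)=\prod_{j=1}^N\frac{1-z_j}{1-\q z_j}\cdot\prod_{j=1}^N\sigma_{z_j}\bigl[(\q-1)X\bigr].
\]
By plethystic multiplicativity, the second product equals $\sigma_1\bigl[(\q-1)X Z_{\le N}\bigr]=\sum_{a\ge0}h_a\bigl[(\q-1)XZ_{\le N}\bigr]$. Next we apply the straight Cauchy identity, which is the case $(q,t,\lambda,\eta)=(0,0,\varnothing,\varnothing)$ of \eqref{e:ge-skew-Mac}, or equivalently Corollary~\ref{c:straight-MN-dual} with the Hall pairing. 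Taking the alphabet $B=(\q-1)Z_{\le N}$, this gives
\[
\sigma_1[XB]=\sum_{\lambda}s_\lambda(X)\,s_\lambda\bigl[(\q-1)Z_{\le N}\bigr].
\]
The first factor is independent of $X$, and we identify it plethystically as $\sigma_{Z_{\le N}}[\q-1]$: since $p_n\bigl[(\q-1)z\bigr]=(\q^n-1)z^n$, we get $\sigma[(\q-1)z]=(1-z)/(1-\q z)$.

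Comparing the coefficients of $s_\lambda(X)$ then yields the truncated form of \eqref{eq:gene-rook}. Letting $N\to\infty$ coefficientwise gives \eqref{eq:gene-rook}, and \eqref{eq:coeff-rook} follows by extracting the coefficient of $Z^\mu$ from \eqref{eq:G-rook-def}.

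The main point needing care is the exchange of the infinite sums over $\alpha$ and $\lambda$ in \eqref{eq:kernel-rook-Schur}. The expansion \eqref{eq:Frob-alpha-rook} is not homogeneous: $\Upsilon^\lambda_\alpha$ can be nonzero for $|\lambda|<|\alpha|$. We will therefore argue by fixing the $Z$-degree. For each fixed $Z$-monomial, only finitely many $\lambda$ contribute, namely those with $|\lambda|\le|\alpha|$. Hence each coefficient of $Z^\alpha$ is a finite sum of Schur functions, and the comparison of $s_\lambda(X)$-coefficients is legitimate degreewise. The truncation $z_j=0$ for $j>N$ commutes with all of these operations.
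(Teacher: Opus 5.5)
Your proposal is correct and follows the paper's proof essentially step for step. Both arguments factor each $\bar Q(z_j)$ via Lemma~\ref{l:Qbar-rook}, merge the $X$-dependent factors by plethystic multiplicativity, expand with the straight Cauchy identity at the alphabet $(\q-1)Z$, and compare $s_\lambda(X)$-coefficients with \eqref{eq:kernel-rook-Schur}. The $Z_{\le N}$ truncation and your finiteness remark about the non-homogeneous expansion (only $|\lambda|\le|\alpha|$ contribute to each $Z^\alpha$) are extra care the paper leaves implicit, and both are sound.
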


\begin{proof}
By Lemma~\ref{l:Qbar-rook},
\[
\mathcal K^R(X,Z;\q)
=
\prod_{j\ge1}\sigma_{z_j}\bigl[(\q-1)(1+X)\bigr]
=
\sigma_Z[\q-1]\prod_{j\ge1}\sigma_{z_j}\bigl[(\q-1)X\bigr].
\]
Using plethystic multiplicativity, we get
\[
\prod_{j\ge1}\sigma_{z_j}\bigl[(\q-1)X\bigr]
=
\sigma_Z\bigl[(\q-1)X\bigr].
\]
Using the same Schur specialization of the parent skew Cauchy identity \eqref{e:ge-skew-Mac} underlying \eqref{e:skew-M-N-Mac}, now with the second alphabet equal to $(\q-1)Z$, we obtain
\[
\sigma_Z\bigl[(\q-1)X\bigr]=\sum_{n\geq 0}h_n[(\q-1)ZX]
=
\sum_{\lambda\in\mathscr P}s_\lambda(X)\,s_\lambda\bigl[(\q-1)Z\bigr].
\]
Therefore
\[
\mathcal K^R(X,Z;\q)
=
\sum_{\lambda\in\mathscr P}
s_\lambda(X)\,
\left(
\sigma_Z[\q-1]\;s_\lambda\bigl[(\q-1)Z\bigr]
\right).
\]
Comparing the coefficient of \(s_\lambda(X)\) with \eqref{eq:kernel-rook-Schur} gives \eqref{eq:gene-rook}. The coefficient extraction formula \eqref{eq:coeff-rook} follows immediately from \eqref{eq:G-rook-def}.
\end{proof}

We remark that the \(\q\)-rook generating function differs from the type \(A\) formula \eqref{eq:gene-typeA} by the extra scalar factor
\(\sigma_Z[\q-1]\), which arises exactly from the plethystic shift \(X\mapsto 1+X\).
\smallskip

Next we introduce the bivariate generating function. Let
\[
U:=u_1+u_2+\cdots,\qquad
U^\lambda:=u_1^{\lambda_1}u_2^{\lambda_2}\cdots
\qquad(\lambda\in\mathscr P),
\]
and define
\begin{equation}\label{eq:rook-bivar-def}
\mathscr F^R(Z,U;\q)
:=
\sum_{\lambda\in\mathscr P}\mathscr G^R_\lambda(Z;\q)\,U^\lambda
=
\sum_{\lambda\in\mathscr P}\ \sum_{\alpha\in\mathscr A_1}
(\q-1)^{\ell(\alpha)}\,\Upsilon^\lambda_\alpha(\q)\,Z^\alpha U^\lambda.
\end{equation}
Set
\[
\Delta(U):=\prod_{a<b}\left(1-\frac{u_b}{u_a}\right).
\]

\begin{cor}\label{c:rook-bivar}
One has
\begin{multline}\label{eq:rook-bivar}
\mathscr F^R(Z,U;\q)
=
\Delta(U)\,\sigma_Z\bigl[(\q-1)(1+U)\bigr]\\
=
\left(\prod_{a<b}\left(1-\frac{u_b}{u_a}\right)\right)
\prod_{j\ge1}
\left(
\frac{1-z_j}{1-\q z_j}\prod_{a\ge1}\frac{1-z_ju_a}{1-\q z_ju_a}
\right).
\end{multline}
\end{cor}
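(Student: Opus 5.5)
The argument mirrors Corollary~\ref{c:bivar-typeA}. Multiply the closed form \eqref{eq:gene-rook} of Theorem~\ref{t:gene-rook} by $U^\lambda$ and sum over all partitions $\lambda$. The prefactor $\sigma_Z[\q-1]=\prod_{j\ge1}\frac{1-z_j}{1-\q z_j}$ does not depend on $\lambda$, so it factors out:
\[
\mathscr F^R(Z,U;\q)=\sigma_Z[\q-1]\sum_{\lambda\in\mathscr P}s_\lambda\bigl[(\q-1)Z\bigr]\,U^\lambda .
\]

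For the remaining sum, apply the Schur monomial generating function \eqref{eq:Schur-monomial-gf-pleth}, i.e.\ the $t=0$ case \eqref{e:gene-S} of Lemma~\ref{l:gene-HL}, with the alphabet $A=(\q-1)Z$. This gives
\[
\sum_{\lambda}s_\lambda[(\q-1)Z]\,U^\lambda=\Delta(U)\prod_{a\ge1}\sigma_{u_a}[(\q-1)Z].
\]
By plethystic symmetry, $\sigma_{u_a}[(\q-1)Z]=\prod_{j}\frac{1-z_ju_a}{1-\q z_ju_a}$, exactly as in the computation of $\Phi_r$ in \eqref{eq:Delta-Phi-def}.

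Combining the two factors, we get
\[
\prod_j\frac{1-z_j}{1-\q z_j}\prod_{j,a}\frac{1-z_ju_a}{1-\q z_ju_a}=\sigma_Z\bigl[(\q-1)(1+U)\bigr],
\]
using additivity $(\q-1)(1+U)=(\q-1)+(\q-1)U$ and $\sigma_Z[A+B]=\sigma_Z[A]\sigma_Z[B]$, where $\sigma_Z[(\q-1)U]=\prod_j\sigma_{z_j}[(\q-1)U]$. This is precisely \eqref{eq:rook-bivar}.

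The only point requiring care is interpreting the products over infinitely many $z_j$ and $u_a$ coefficientwise. Each monomial $Z^\alpha U^\lambda$ involves only finitely many variables, so one can truncate to $z_1,\dots,z_N$ and $u_1,\dots,u_M$ and pass to the limit. One should also note that $\Delta(U)$ is treated as a formal series in the ratios $u_b/u_a$, $a<b$, as in Lemma~\ref{l:gene-HL}. Beyond these points the argument is a direct substitution.
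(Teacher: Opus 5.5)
Your proposal is correct and is essentially the paper's own proof. Like the paper, you substitute \eqref{eq:gene-rook} into \eqref{eq:rook-bivar-def}, pull out the $\lambda$-independent factor $\sigma_Z[\q-1]$, apply \eqref{eq:Schur-monomial-gf-pleth} with $A=(\q-1)Z$, and recombine the products using the additivity $(\q-1)(1+U)=(\q-1)+(\q-1)U$.

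On your caveat about $\Delta(U)$: as in the proof of Lemma~\ref{l:gene-HL}, the identity should be read as an equality of coefficients of $U^\lambda$ for partitions $\lambda$, because the right-hand side also produces non-partition monomials.
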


\begin{proof}
Substituting \eqref{eq:gene-rook} into \eqref{eq:rook-bivar-def}, we obtain
\[
\mathscr F^R(Z,U;\q)
=
\sigma_Z[\q-1]\sum_{\lambda\in\mathscr P}s_\lambda\bigl[(\q-1)Z\bigr]\,U^\lambda.
\]
Applying \eqref{eq:Schur-monomial-gf-pleth} with \(A=(\q-1)Z\), we get
\[
\sum_{\lambda\in\mathscr P}s_\lambda\bigl[(\q-1)Z\bigr]\,U^\lambda
=
\Delta(U)\prod_{a\ge1}\sigma_{u_a}\bigl[(\q-1)Z\bigr].
\]
Since
\[
\sigma_{u_a}\bigl[(\q-1)Z\bigr]
=
\prod_{j\ge1}\frac{1-z_ju_a}{1-\q z_ju_a},
\]
it follows that
\[
\mathscr F^R(Z,U;\q)
=
\left(\prod_{a<b}\left(1-\frac{u_b}{u_a}\right)\right)
\left(\prod_{j\ge1}\frac{1-z_j}{1-\q z_j}\right)
\left(\prod_{j\ge1}\prod_{a\ge1}\frac{1-z_ju_a}{1-\q z_ju_a}\right),
\]
which is exactly the explicit product form in \eqref{eq:rook-bivar}. Finally,
\[
\sigma_Z\bigl[(\q-1)(1+U)\bigr]
=
\prod_{j\ge1}
\left(
\frac{1-z_j}{1-\q z_j}\prod_{a\ge1}\frac{1-z_ju_a}{1-\q z_ju_a}
\right),
\]
so the plethystic form follows as well.
\end{proof}

\section{Some symmetries of irreducible characters}\label{s:symmetry-characters}

The generating functions obtained in the previous sections also encode a natural
\(\q\leftrightarrow \q^{-1}\) symmetry of irreducible character values.  For Ariki--Koike algebras
this symmetry is accompanied by componentwise transposition of the upper multipartition,
whereas for Hecke--Clifford algebras one obtains a genuine self-reciprocity.
We now make this precise.

For a multipartition \(\bm\lambda=(\lambda^{(1)},\ldots,\lambda^{(m)})\), write
\[
\bm\lambda^t:=\bigl(\lambda^{(1)t},\ldots,\lambda^{(m)t}\bigr).
\]

\begin{thm}\label{t:sym-AK}
Let \(\bm\lambda\in\mathscr P_{n,m}\). Then
\begin{equation}\label{eq:AK-gf-sym}
\mathscr G_{\bm\lambda}(Z;\q^{-1},\bm u)
=
(-\q)^{-n}\,\mathscr G_{\bm\lambda^t}(Z;\q,\bm u).
\end{equation}
Equivalently, for every \(\bm\alpha\in\mathscr A_m\) with \(|\bm\alpha|=n\),
\begin{equation}\label{eq:AK-char-sym}
\chi^{\bm\lambda}_{\bm\alpha}(\q,\bm u)
=
(-\q)^{\,n-\ell(\bm\alpha)}\,
\chi^{\bm\lambda^t}_{\bm\alpha}(\q^{-1},\bm u).
\end{equation}
In particular, for every \(\bm\mu\in\mathscr P_{n,m}\),
\begin{equation}\label{eq:AK-char-sym-mu}
\chi^{\bm\lambda}_{\bm\mu}(\q,\bm u)
=
(-\q)^{\,n-\ell(\bm\mu)}\,
\chi^{\bm\lambda^t}_{\bm\mu}(\q^{-1},\bm u).
\end{equation}
\end{thm}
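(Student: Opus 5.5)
The plan is to work directly with the finite plethystic formula of Theorem~\ref{t:gene-AK} and compare the two sides truncation by truncation. The key observation is that in \eqref{eq:gene-AK-finite} the parameter $\q$ enters only through the plethystic factors $s_{\lambda^{(r)}}[(\q-1)Y_{r,N}(\kappa)]$. The coefficients $b^{(i)}_k$ of \eqref{e:def-b} depend only on $\bm u$, and the index set of $\kappa$ together with the alphabets $Y_{r,N}(\kappa)$ do not involve $\q$ at all. So replacing $\q$ by $\q^{-1}$ only changes these Schur factors, and the first step is to relate each of them to the corresponding factor for the transposed multipartition at parameter $\q$.

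For that step I would use the identity of virtual alphabets $(\q^{-1}-1)Y=-\q^{-1}\,(\q-1)Y$, which holds at the level of power sums since $p_k[(\q^{-1}-1)Y]=-\q^{-k}p_k[(\q-1)Y]$. For a homogeneous $f$ of degree $d$, scaling the alphabet by $\q^{-1}$ multiplies $f$ by $\q^{-d}$. Lemma~\ref{l:plethysm}, applied with the virtual alphabet $(\q-1)Y$ in place of $X$ (legitimate because its proof only uses $p_k[-A]=-p_k[A]$), gives $f[-A]=(-1)^d\,\omega(f)[A]$. Together with $\omega(s_\lambda)=s_{\lambda^t}$ this yields
\[
s_{\lambda^{(r)}}\bigl[(\q^{-1}-1)Y_{r,N}(\kappa)\bigr]=(-\q)^{-|\lambda^{(r)}|}\,s_{\lambda^{(r)t}}\bigl[(\q-1)Y_{r,N}(\kappa)\bigr].
\]
Taking the product over $r=1,\dots,m$ produces the uniform factor $(-\q)^{-n}$ with $n=|\bm\lambda|$. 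Since this factor is independent of $\kappa$, it pulls out of the $\kappa$-sum, and \eqref{eq:gene-AK-finite} gives $\mathscr G^{[N]}_{\bm\lambda}(Z;\q^{-1},\bm u)=(-\q)^{-n}\mathscr G^{[N]}_{\bm\lambda^t}(Z;\q,\bm u)$ for every $N$. Passing to the coefficientwise stable limit then gives \eqref{eq:AK-gf-sym}. Specializing the indeterminate $\q$ to $\q^{-1}$ is harmless here, since everything lives in $\mathbb K=\mathbb C(\q,\bm u)$ and the coefficients are rational functions of $\q$.

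To get \eqref{eq:AK-char-sym}, I would extract the coefficient of $Z^{\bm\alpha}$ via \eqref{eq:G-def-Zalpha}, recalling that $\chi^{\bm\lambda}_{\bm\alpha}=0$ unless $|\bm\alpha|=n$. Writing $\q^{-1}-1=-(\q-1)/\q$, the left side contributes $(-\q)^{-\ell(\bm\alpha)}(\q-1)^{\ell(\bm\alpha)}\chi^{\bm\lambda}_{\bm\alpha}(\q^{-1},\bm u)$ and the right side contributes $(-\q)^{-n}(\q-1)^{\ell(\bm\alpha)}\chi^{\bm\lambda^t}_{\bm\alpha}(\q,\bm u)$. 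Cancelling $(\q-1)^{\ell(\bm\alpha)}$ gives $\chi^{\bm\lambda}_{\bm\alpha}(\q^{-1})=(-\q)^{\ell(\bm\alpha)-n}\chi^{\bm\lambda^t}_{\bm\alpha}(\q)$. Replacing $\q$ by $\q^{-1}$ and rearranging then yields exactly \eqref{eq:AK-char-sym}. Finally, \eqref{eq:AK-char-sym-mu} is the case where $\bm\alpha=\bm\mu$ is a multipartition, using the identification of $\chi^{\bm\lambda}_{\bm\mu}$ with Shoji's character value.

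The only point I expect to need care is the sign-and-power bookkeeping in the plethystic step: applying Lemma~\ref{l:plethysm} to a virtual alphabet rather than an honest one, and checking that the homogeneity factor $\q^{-|\lambda^{(r)}|}$ and the sign $(-1)^{|\lambda^{(r)}|}$ combine correctly into $(-\q)^{-n}$. Both reduce to checking the identity on power sums, so this should be routine.
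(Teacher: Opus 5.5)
Your proposal is correct and follows the paper's proof essentially step for step. You truncate via \eqref{eq:gene-AK-finite}, note that the $b^{(i)}_k$ do not depend on $\q$, and turn each factor $s_{\lambda^{(r)}}[(\q^{-1}-1)Y_{r,N}(\kappa)]$ into $(-\q)^{-|\lambda^{(r)}|}s_{\lambda^{(r)t}}[(\q-1)Y_{r,N}(\kappa)]$ by homogeneity and $s_\lambda[-A]=(-1)^{|\lambda|}s_{\lambda^t}[A]$; you then pass to the stable limit and extract the coefficient of $Z^{\bm\alpha}$, and your explicit final substitution $\q\mapsto\q^{-1}$ just spells out the step the paper calls ``rearranging.''
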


\begin{proof}
Fix $N\ge1$. By the finite formula \eqref{eq:gene-AK-finite},
\[
\mathscr G_{\bm\lambda}^{[N]}(Z;\q^{-1},\bm u)
=
\sum_{\kappa\in\{0,1,\dots,m\}^{\{1,\dots,m\}\times\{1,\dots,N\}}}
\left(\prod_{i=1}^m\prod_{j=1}^N b^{(i)}_{\kappa_{i,j}}\right)
\prod_{r=1}^m
s_{\lambda^{(r)}}\!\Bigl[(\q^{-1}-1)Y_{r,N}(\kappa)\Bigr].
\]
The coefficients \(b_k^{(i)}\) are independent of \(\q\), so only the Schur factors need to be transformed.
For an ordinary partition \(\lambda\), a monomial \(a\), and an alphabet \(A\), we use the standard plethystic identities
\[
s_\lambda[aA]=a^{|\lambda|}s_\lambda[A],
\qquad
s_\lambda[-A]=(-1)^{|\lambda|}s_{\lambda^t}[A].
\]
Applying these with \(a=\q^{-1}\) and \(A=(1-\q)Y_{r,N}(\kappa)\), we get
\begin{align*}
s_{\lambda^{(r)}}\!\Bigl[(\q^{-1}-1)Y_{r,N}(\kappa)\Bigr]
&=
s_{\lambda^{(r)}}\!\Bigl[\q^{-1}(1-\q)Y_{r,N}(\kappa)\Bigr]
=
\q^{-|\lambda^{(r)}|} s_{\lambda^{(r)}}\!\Bigl[(1-\q)Y_{r,N}(\kappa)\Bigr]\\
&=
\q^{-|\lambda^{(r)}|} s_{\lambda^{(r)}}\!\Bigl[-(\q-1)Y_{r,N}(\kappa)\Bigr]=
(-\q)^{-|\lambda^{(r)}|}\,
s_{\lambda^{(r)t}}\!\Bigl[(\q-1)Y_{r,N}(\kappa)\Bigr].
\end{align*}
Multiplying over \(r=1,\ldots,m\) and using \(\sum_{r=1}^m |\lambda^{(r)}|=|\bm\lambda|=n\), we obtain
\[
\prod_{r=1}^m
s_{\lambda^{(r)}}\!\Bigl[(\q^{-1}-1)Y_{r,N}(\kappa)\Bigr]
=
(-\q)^{-n}
\prod_{r=1}^m
s_{\lambda^{(r)t}}\!\Bigl[(\q-1)Y_{r,N}(\kappa)\Bigr].
\]
Substituting this back into the finite \(\kappa\)-sum and applying \eqref{eq:gene-AK-finite} once again gives
\[
\mathscr G_{\bm\lambda}^{[N]}(Z;\q^{-1},\bm u)
=
(-\q)^{-n}\,
\mathscr G_{\bm\lambda^t}^{[N]}(Z;\q,\bm u).
\]
Letting $N\to\infty$ coefficientwise gives \eqref{eq:AK-gf-sym}.

Now let \(\bm\alpha\in\mathscr A_m\) with \(|\bm\alpha|=n\).  Taking the coefficient of \(Z^{\bm\alpha}\) in
\eqref{eq:AK-gf-sym} and using the definition \eqref{eq:G-def-Zalpha}, we get
\[
(\q^{-1}-1)^{\ell(\bm\alpha)}\chi^{\bm\lambda}_{\bm\alpha}(\q^{-1},\bm u)
=
(-\q)^{-n}
(\q-1)^{\ell(\bm\alpha)}\chi^{\bm\lambda^t}_{\bm\alpha}(\q,\bm u).
\]
Since
\[
\q^{-1}-1=-\q^{-1}(\q-1),
\]
we have
\[
(-1)^{\ell(\bm\alpha)}\q^{-\ell(\bm\alpha)}(\q-1)^{\ell(\bm\alpha)}
\chi^{\bm\lambda}_{\bm\alpha}(\q^{-1},\bm u)
=
(-1)^n\q^{-n}(\q-1)^{\ell(\bm\alpha)}
\chi^{\bm\lambda^t}_{\bm\alpha}(\q,\bm u).
\]
After canceling \((\q-1)^{\ell(\bm\alpha)}\) and rearranging, this becomes
\[
\chi^{\bm\lambda}_{\bm\alpha}(\q,\bm u)
=
(-\q)^{\,n-\ell(\bm\alpha)}\,
\chi^{\bm\lambda^t}_{\bm\alpha}(\q^{-1},\bm u),
\]
which is \eqref{eq:AK-char-sym}. The specialization \eqref{eq:AK-char-sym-mu} is immediate.
\end{proof}

Theorem~\ref{t:sym-AK} shows the reciprocity of the irreducible characters in the full
Ariki--Koike setting.  The specializations to type \(A\) and type \(B\) are obtained simply by choosing
\(m=1\) and \(m=2\), respectively.

\begin{cor}\label{c:sym-typeA}
Let \(\lambda,\mu\in\P_n\). Then
\begin{equation}\label{eq:typeA-char-sym}
\phi^\lambda_{\mu}(\q)
=
(-\q)^{\,n-\ell(\mu)}\,\phi^{\lambda^t}_{\mu}(\q^{-1}).
\end{equation}
Moreover, \(\phi^\lambda_\mu(\q)\) is a polynomial in \(\q\) of degree at most \(n-\ell(\mu)\).
\end{cor}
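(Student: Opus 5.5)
The symmetry \eqref{eq:typeA-char-sym} is the case $m=1$, $u_1=1$ of Theorem~\ref{t:sym-AK}. In that case multipartitions are ordinary partitions, $\bm\lambda^t=\lambda^t$, and $\chi^{\lambda}_{\mu}=\phi^\lambda_\mu$ by the identification in Subsection~\ref{ss:typeA-hecke}. Substituting into \eqref{eq:AK-char-sym-mu} gives the identity immediately. Equivalently, one can argue directly from Corollary~\ref{c:gene-typeA}. Starting from $\mathscr G_\lambda(Z;\q)=s_\lambda[(\q-1)Z]$, apply $s_\lambda[(\q^{-1}-1)Z]=(-\q)^{-n}s_{\lambda^t}[(\q-1)Z]$, then extract the coefficient of $Z^\mu$ using $\q^{-1}-1=-\q^{-1}(\q-1)$.

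For the degree bound, I would use the coefficient formula \eqref{eq:coeff-typeA}:
\[
(\q-1)^{\ell(\mu)}\phi^\lambda_\mu=[Z^\mu]\,s_\lambda[(\q-1)Z].
\]
Expand $s_\lambda=\sum_\nu \chi^\lambda_\nu p_\nu/z_\nu$. Then
\[
s_\lambda[(\q-1)Z]=\sum_\nu \frac{\chi^\lambda_\nu}{z_\nu}\prod_i(\q^{\nu_i}-1)p_{\nu_i}[Z].
\]
This is a polynomial in $\q$ whose coefficient of $Z^\mu$ has degree at most $n$. So $\phi^\lambda_\mu(\q)$ is a rational function of the form (polynomial of degree $\le n$)$/(\q-1)^{\ell(\mu)}$.

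It remains to show that $\phi^\lambda_\mu$ is actually a polynomial. This is the main obstacle, and I would get it from the algebra rather than from the generating function: $\phi^\lambda_\mu$ is the trace of $T_{w}$ on an irreducible module that has a $\mathbb Z[\q]$-form (for example, a Specht/Kazhdan--Lusztig basis on which each $T_i$ acts by matrices in $\mathbb Z[\q]$). Hence $\phi^\lambda_\mu\in\mathbb Z[\q]$.

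Once polynomiality is known, the bound is immediate. The numerator has degree $\le n$, and dividing by $(\q-1)^{\ell(\mu)}$ gives degree $\le n-\ell(\mu)$.

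As a cross-check on the degree, apply the symmetry itself. The identity $\phi^\lambda_\mu(\q)=(-\q)^{n-\ell(\mu)}\phi^{\lambda^t}_\mu(\q^{-1})$, with $\phi^{\lambda^t}_\mu$ a polynomial, forces $\deg\phi^\lambda_\mu\le n-\ell(\mu)$. This holds because $\q^{n-\ell(\mu)}\phi^{\lambda^t}_\mu(\q^{-1})$ can be a polynomial only up to that degree, given that $\phi^{\lambda^t}_\mu$ has no negative powers.
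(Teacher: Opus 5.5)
Your proof is correct. The symmetry step is the same as the paper's: specialize Theorem~\ref{t:sym-AK} to $m=1$, $u_1=1$. Your alternative via Corollary~\ref{c:gene-typeA} is just the one-component case of that theorem's proof.

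For the degree bound the two routes differ. The paper's argument is exactly your ``cross-check'': it takes polynomiality of all $\phi^\lambda_\mu$ as known and applies the symmetry with $\lambda^t$ in place of $\lambda$. Your main argument never uses the symmetry. It reads the bound off the generating function: $[Z^\mu]\,s_\lambda[(\q-1)Z]$ has $\q$-degree at most $|\lambda|=n$, and dividing by $(\q-1)^{\ell(\mu)}$ lowers the degree by exactly $\ell(\mu)$ once the quotient is known to be a polynomial.

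Both routes rest on the same external input, polynomiality. You at least name a source for it. The Dipper--James/Murphy Specht module over $\mathbb Z[\q]$ works as stated; the Kazhdan--Lusztig basis in its usual normalization has entries in $\mathbb Z[\q^{1/2}]$ and needs a diagonal rescaling first.

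In fact you can remove this ``main obstacle'' inside the paper's own framework. Iterating the branching rule $s_\lambda[A+B]=\sum_\nu s_\nu[A]\,s_{\lambda/\nu}[B]$ and using Lemma~\ref{l:h-1} gives
\[
(\q-1)^{\ell(\mu)}\phi^\lambda_\mu=\sum_{\varnothing=\lambda^0\subset\lambda^1\subset\cdots\subset\lambda^{\ell(\mu)}=\lambda}\ \prod_{j=1}^{\ell(\mu)}\wt_{\q}\bigl(\lambda^j/\lambda^{j-1}\bigr),
\]
where the sum runs over chains in which each $\lambda^j/\lambda^{j-1}$ is a generalized ribbon of size $\mu_j$. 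Each factor is divisible by $\q-1$. Moreover, $\wt_{\q}(\theta)/(\q-1)$ has degree $\sum_i c(\xi_i)-1\le|\theta|-1$, where the $\xi_i$ are the components of $\theta$. Hence integrality, polynomiality and the bound $n-\ell(\mu)$ all follow directly from the generating function, with no appeal to representation theory.
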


\begin{proof}
The identity \eqref{eq:typeA-char-sym} is the specialization of Theorem~\ref{t:sym-AK} at
\(m=1\) and \(u_1=1\), where \(\chi^\lambda_\mu=\phi^\lambda_\mu\).

For the degree bound, it is known that \(\phi^\lambda_\mu(\q)\) is a polynomial in \(\q\). Applying
\eqref{eq:typeA-char-sym} to \(\lambda^t\) in place of \(\lambda\), we also have
\[
\phi^{\lambda^t}_{\mu}(\q)
=
(-\q)^{\,n-\ell(\mu)}\,\phi^{\lambda}_{\mu}(\q^{-1}).
\]
Hence \(\phi^{\lambda^t}_{\mu}(\q)\in\mathbb C[\q]\), and therefore \(\phi^{\lambda^t}_{\mu}(\q^{-1})\) is a Laurent polynomial
in \(\q\) involving only nonpositive powers. Multiplying by \((-\q)^{\,n-\ell(\mu)}\), we see from
\eqref{eq:typeA-char-sym} that \(\phi^\lambda_\mu(\q)\) is a polynomial of degree at most
\(n-\ell(\mu)\).
\end{proof}



\begin{cor}\label{c:sym-typeB}
Let \(\bm\lambda=(\lambda^+,\lambda^-)\in\mathscr P_{n,2}\) and
\(\bm\mu=(\mu^+,\mu^-)\in\mathscr P_{n,2}\). Then
\begin{equation}\label{eq:typeB-char-sym}
\varphi^{(\lambda^+,\lambda^-)}_{(\mu^+,\mu^-)}(\q,u)
=
(-\q)^{\,n-\ell(\mu^+)-\ell(\mu^-)}
\varphi^{(\lambda^{+t},\lambda^{-t})}_{(\mu^+,\mu^-)}(\q^{-1},u).
\end{equation}
\end{cor}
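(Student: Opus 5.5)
This follows by specializing Theorem~\ref{t:sym-AK} to $m=2$ and $(u_1,u_2)=(-1,u)$, exactly as Corollary~\ref{c:sym-typeA} specializes it to $m=1$. As explained in Subsection~\ref{ss:typeB-hecke}, $\mathscr H_{2,n}(\q,\bm u)$ becomes $\mathscr B_n(u,\q)$ under this choice. The standard elements $g(\bm\mu)=g(\mu^+,1)\,g(\mu^-,2)$ are the same in both algebras, so $\chi^{\bm\lambda}_{\bm\mu}(\q,(-1,u))=\varphi^{\bm\lambda}_{\bm\mu}(\q,u)$ for all bipartitions $\bm\lambda,\bm\mu\in\mathscr P_{n,2}$.

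Apply \eqref{eq:AK-char-sym-mu} with $\bm\mu=(\mu^+,\mu^-)$. Since $\ell(\bm\mu)=\ell(\mu^+)+\ell(\mu^-)$ and $\bm\lambda^t=(\lambda^{+t},\lambda^{-t})$ (componentwise transpose), this gives \eqref{eq:typeB-char-sym} directly.

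The one point to check is that the specialization $\bm u=(-1,u)$ commutes with $\q\mapsto\q^{-1}$. It does: in the proof of Theorem~\ref{t:sym-AK}, the coefficients $b_k^{(i)}$ of \eqref{e:def-b} depend only on $\bm u$, not on $\q$. Concretely, they are the values $c_0=2$, $c_1=-(u+1)$, $c_2=u$, together with $b^{(1)}_2=1$, from Corollary~\ref{c:gene-typeB}.

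As a more self-contained alternative, one could run the argument directly from \eqref{eq:gene-typeB}. Transform each factor using
\[
s_{\lambda^\pm}\bigl[(\q^{-1}-1)Y\bigr]=(-\q)^{-|\lambda^\pm|}s_{\lambda^{\pm t}}\bigl[(\q-1)Y\bigr],
\]
multiply the two factors to get the overall power $(-\q)^{-n}$, then extract the coefficient of $(Z^+)^{\mu^+}(Z^-)^{\mu^-}$ via \eqref{eq:coeff-typeB}, using $\q^{-1}-1=-\q^{-1}(\q-1)$. No step is a real obstacle; the only care needed is the sign and power bookkeeping, which is identical to the general proof.
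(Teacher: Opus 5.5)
Your proposal is correct and matches the paper's proof, which obtains the corollary by specializing Theorem~\ref{t:sym-AK} to $m=2$, $(u_1,u_2)=(-1,u)$, using $\chi^{\bm\lambda}_{\bm\mu}=\varphi^{\bm\lambda}_{\bm\mu}$ and $\bm\lambda^t=(\lambda^{+t},\lambda^{-t})$. Your added remark that the coefficients $b^{(i)}_k$ do not depend on $\q$ is a harmless, accurate sanity check on that specialization.
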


\begin{proof}
This is the specialization of Theorem~\ref{t:sym-AK} at
\[
m=2,\qquad (u_1,u_2)=(-1,u),
\]
for which \(\chi^{\bm\lambda}_{\bm\mu}=\varphi^{\bm\lambda}_{\bm\mu}\) and
\(\bm\lambda^t=(\lambda^{+t},\lambda^{-t})\).
\end{proof}

The type \(A\) and type \(B\) reciprocity formulas are simply the one-component and two-component
shadows of the general Ariki--Koike symmetry.  The transpose on the upper index comes from the Schur
identity \(s_\lambda[-A]=(-1)^{|\lambda|}s_{\lambda^t}[A]\).  For Hecke--Clifford algebras the same
substitution \(A\mapsto -A\) behaves differently, because Schur \(Q\)-functions lie in the subring
generated by the odd power sums; consequently, no transpose appears.

\begin{thm}\label{t:sym-HC}
Let \(\nu\in\mathscr S_n\). Then
\begin{equation}\label{eq:HC-gf-sym}
\mathscr G^c_\nu(Z;\q^{-1})
=
(-1)^n\,\q^{-n}\,\mathscr G^c_\nu(Z;\q).
\end{equation}
Consequently, for every \(\rho\in\O_n\),
\begin{equation}\label{eq:HC-char-sym}
\zeta^\nu_{\rho}(\q)
=
\q^{\,n-\ell(\rho)}\,\zeta^\nu_{\rho}(\q^{-1}).
\end{equation}
Moreover, \(\zeta^\nu_\rho(\q)\) is a polynomial in \(\q\) of degree at most \(n-\ell(\rho)\). More precisely,
if
\[
\zeta^\nu_\rho(\q)=\sum_{i=0}^{n-\ell(\rho)} a_i \q^i
\]
(with the convention that some of the top coefficients \(a_i\) may be zero), then
\[
a_i=a_{\,n-\ell(\rho)-i}\qquad (0\le i\le n-\ell(\rho)).
\]
\end{thm}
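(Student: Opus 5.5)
The plan is to follow the proof of Theorem~\ref{t:sym-AK}, starting from the closed formula of Theorem~\ref{t:gene-HC},
\[
\mathscr G^c_\nu(Z;\q)=2^{\frac{-\ell(\nu)+\delta(\nu)}{2}}\,Q_\nu\bigl[(\q-1)Z\bigr],
\]
whose prefactor does not depend on $\q$. Replacing $\q$ by $\q^{-1}$ gives $Q_\nu[(\q^{-1}-1)Z]=Q_\nu[-\q^{-1}(\q-1)Z]$. Two plethystic facts then finish the identity. Homogeneity gives $Q_\nu[aA]=a^{n}Q_\nu[A]$ for a monomial $a$, since $Q_\nu$ has degree $n$. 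The rule $Q_{\nu}[-A]=(-1)^{n}Q_\nu[A]$ follows from Lemma~\ref{l:plethysm} together with $\omega(Q_\nu)=Q_\nu$; equivalently, $Q_\nu$ lies in the subring generated by odd power sums, and there $p_r[-A]=-p_r[A]=(-1)^r p_r[A]$. Combining these yields $Q_\nu[(\q^{-1}-1)Z]=(-1)^n\q^{-n}Q_\nu[(\q-1)Z]$, which is \eqref{eq:HC-gf-sym}.

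Next I take the coefficient of $Z^\rho$ for $\rho\in\O_n$ on both sides, using the definition of $\mathscr G^c_\nu$:
\[
(\q^{-1}-1)^{\ell(\rho)}\zeta^\nu_\rho(\q^{-1})=(-1)^n\q^{-n}(\q-1)^{\ell(\rho)}\zeta^\nu_\rho(\q).
\]
Substituting $\q^{-1}-1=-\q^{-1}(\q-1)$ and cancelling $(\q-1)^{\ell(\rho)}$ gives $(-1)^{\ell(\rho)}\q^{-\ell(\rho)}\zeta^\nu_\rho(\q^{-1})=(-1)^n\q^{-n}\zeta^\nu_\rho(\q)$. Every part of $\rho$ is odd, so $n\equiv\ell(\rho)\pmod 2$ and the signs cancel. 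The result is \eqref{eq:HC-char-sym}. This parity step is exactly why no transpose and no sign appear here, unlike in type~$A$.

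For the polynomiality, the point requiring care is to know that $\zeta^\nu_\rho(\q)$ is a polynomial in $\q$, not merely a rational function or a polynomial in $\q^{1/2}$. I would justify this in one of two ways. The first is to note that $T_{\sigma_\rho}$ acts by matrices with entries in $\mathbb C[\q]$ on a suitable integral form of the simple module. The second, which stays within the paper, is to use the generating function directly. The coefficient of $Z^\rho$ in $Q_\nu[(\q-1)Z]$ is a polynomial in $\q$, because $p_r[(\q-1)Z]=(\q^r-1)p_r[Z]$ and $Q_\nu\in\mathbb Q[p_1,p_3,\dots]$. That coefficient must be divisible by $(\q-1)^{\ell(\rho)}$. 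I would argue the divisibility by expanding $Q_\nu$ in the basis $\tilde q_\mu$ via the Frobenius formula, as is implicit in \cite{WW13}; if this proves awkward, I would cite the known polynomiality of Hecke--Clifford character values, exactly as Corollary~\ref{c:sym-typeA} cites polynomiality in type~$A$.

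Given polynomiality, the argument matches Corollary~\ref{c:sym-typeA}. The identity $\zeta^\nu_\rho(\q)=\q^{n-\ell(\rho)}\zeta^\nu_\rho(\q^{-1})$ is a polynomial, and the right-hand side contains only powers $\q^{j}$ with $j\le n-\ell(\rho)$, so $\deg\zeta^\nu_\rho\le n-\ell(\rho)$. Writing $\zeta^\nu_\rho(\q)=\sum_{i=0}^{n-\ell(\rho)} a_i\q^i$, the right-hand side equals $\sum_i a_i\q^{n-\ell(\rho)-i}$. Comparing coefficients gives the palindromic relation $a_i=a_{n-\ell(\rho)-i}$.
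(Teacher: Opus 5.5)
Your proposal is correct and follows the paper's proof essentially step for step. Like the paper, you use homogeneity and $Q_\nu[-A]=(-1)^nQ_\nu[A]$ for the generating-function identity, coefficient extraction with the parity $\ell(\rho)\equiv n \pmod 2$, and then polynomiality for the degree bound and palindromicity; the paper simply cites polynomiality, your fallback, while your generating-function route also works because the coefficient of $Z^\rho$ in $Q_\nu[(\q-1)Z]$ receives contributions only from $p_\mu$ with $\ell(\mu)\ge\ell(\rho)$, each carrying the factor $\prod_i(\q^{\mu_i}-1)$, so divisibility by $(\q-1)^{\ell(\rho)}$ is immediate.
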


\begin{proof}
By Theorem~\ref{t:gene-HC},
\[
\mathscr G^c_\nu(Z;\q^{-1})
=
2^{\frac{-\ell(\nu)+\delta(\nu)}{2}}\,
Q_\nu\!\bigl[(\q^{-1}-1)Z\bigr].
\]
Since \(|\nu|=n\), homogeneity gives
\[
Q_\nu\!\bigl[(\q^{-1}-1)Z\bigr]
=
\q^{-n}Q_\nu\!\bigl[(1-\q)Z\bigr]
=
\q^{-n}Q_\nu\!\bigl[-(\q-1)Z\bigr].
\]
Applying the fact
\[
Q_\nu[-A]=(-1)^nQ_\nu[A],
\]
we obtain
\[
Q_\nu\!\bigl[-(\q-1)Z\bigr]
=
(-1)^n Q_\nu\!\bigl[(\q-1)Z\bigr].
\]
Therefore
\[
\mathscr G^c_\nu(Z;\q^{-1})
=
(-1)^n\q^{-n}\,
2^{\frac{-\ell(\nu)+\delta(\nu)}{2}}\,
Q_\nu\!\bigl[(\q-1)Z\bigr]
=
(-1)^n\q^{-n}\,\mathscr G^c_\nu(Z;\q),
\]
which is \eqref{eq:HC-gf-sym}.

Now let \(\rho\in\O_n\). Taking the coefficient of \(Z^\rho\) in \eqref{eq:HC-gf-sym} and using the definition
of \(\mathscr G^c_\nu\), we get
\[
(\q^{-1}-1)^{\ell(\rho)}\zeta^\nu_\rho(\q^{-1})
=
(-1)^n\q^{-n}(\q-1)^{\ell(\rho)}\zeta^\nu_\rho(\q).
\]
Since
\[
\q^{-1}-1=-\q^{-1}(\q-1),
\]
this becomes
\[
(-1)^{\ell(\rho)}\q^{-\ell(\rho)}(\q-1)^{\ell(\rho)}\zeta^\nu_\rho(\q^{-1})
=
(-1)^n\q^{-n}(\q-1)^{\ell(\rho)}\zeta^\nu_\rho(\q).
\]
Because \(\rho\in\O_n\), all parts of \(\rho\) are odd, hence
\[
\ell(\rho)\equiv n \pmod 2.
\]
Thus \((-1)^{\ell(\rho)}=(-1)^n\), and after canceling \((\q-1)^{\ell(\rho)}\) we obtain
\[
\q^{-\ell(\rho)}\zeta^\nu_\rho(\q^{-1})
=
\q^{-n}\zeta^\nu_\rho(\q),
\]
which is exactly \eqref{eq:HC-char-sym}.

It remains to prove the degree bound and the palindromicity statement. It is known that
\(\zeta^\nu_\rho(\q)\) is a polynomial in \(\q\). Hence \(\zeta^\nu_\rho(\q^{-1})\) is a Laurent polynomial involving
only nonpositive powers of \(\q\), and \eqref{eq:HC-char-sym} shows that
\(\zeta^\nu_\rho(\q)\) has degree at most \(n-\ell(\rho)\).

Now write
\[
\zeta^\nu_\rho(\q)=\sum_{i=0}^{n-\ell(\rho)} a_i \q^i.
\]
Substituting this into \eqref{eq:HC-char-sym}, we get
\[
\sum_{i=0}^{n-\ell(\rho)} a_i \q^i
=
\q^{\,n-\ell(\rho)}
\sum_{i=0}^{n-\ell(\rho)} a_i \q^{-i}
=
\sum_{i=0}^{n-\ell(\rho)} a_i \q^{\,n-\ell(\rho)-i}.
\]
Comparing coefficients of \(\q^i\) on both sides yields
\[
a_i=a_{\,n-\ell(\rho)-i}
\qquad (0\le i\le n-\ell(\rho)),
\]
as claimed.
\end{proof}

\begin{rem}
The symmetry relations established above illustrate particularly clearly the strength of the generating-function approach.  Once the irreducible character values are assembled into a single kernel, identities such as the \(\q\leftrightarrow \q^{-1}\) symmetry are no longer proved by checking character values one by one; instead, they emerge from a simple transformation rule at the level of symmetric functions, together with standard plethystic identities.  In this sense, the generating function provides a conceptual framework that both unifies the character formulas and makes structural phenomena---such as reciprocity, degree bounds, and palindromicity---transparent.
\end{rem}

\begin{rem}
It is also worth emphasizing that the method developed here is not restricted to the particular families of algebras considered in this chapter.  The essential ingredients are rather general: one needs a Frobenius-type formula expressing character values in terms of symmetric functions, a vertex operator realization of the symmetric functions used to give the generating function of the symmetric function, and an explicit description of the corresponding symmetric-function kernel.  Whenever these ingredients are available, the same strategy should apply.  From this perspective, it is natural to expect that analogous generating functions and symmetry results can be developed for other diagrammatic or Hecke-type algebras as well; two particularly promising examples are provided by the BMW algebras and the mirabolic Hecke algebras, where much of the necessary representation-theoretic and symmetric-function input is already available in the literature \cite{HR95,JN15} and \cite{Wan26}.
\end{rem}

\chapter{Skew \texorpdfstring{$(q,t)$}{(q,t)}-Kostka matrices and their inverse}\label{s:qt-kostka}
This chapter is divided into two parts. In the first part we apply our skew Murnaghan--Nakayama framework to the study of skew $(q,t)$-Kostka polynomials.
We first recall the big Schur functions and a ribbon-removal decomposition of skew Schur functions, which in turn yields an analogous decomposition for the dual basis $\Phi_{\la/\mu}(X;q)$.
Combining this decomposition with an explicit formula for the adjoint operators $d_k^{\perp_{q,t}}$ acting on integral Macdonald polynomials $J_\nu(X;q,t)$, we obtain iterative formulas and a tableau/flag expansion for $K_{\la/\mu,\nu}(q,t)$.
Then, we specialize to $q=0$ to obtain explicit expressions for the skew Kostka--Foulkes polynomials.

In the second part, parallel to the first part, we present combinatorial formulas of the skew inverse $(q,t)$-Kostka coefficient and its specializations, which recovers the classical combinatorial formula for the inverse Kostka number due to E\u gecio\u glu--Remmel \cite{ER90}.

\section{A combinatorial decomposition of Schur functions}\label{ss:decomp-Schur}

Recall the generalized complete symmetric function $q_n[X;t]$ and the big Schur function $S_{\la/\mu}[X;t]$ by
\begin{align}\label{e:q-h}
    q_n[X;t]=h_n[(1-t)X],\qquad S_{\la/\mu}[X;t]=s_{\la/\mu}[(1-t)X].
\end{align}



Recall the inner product $\langle\cdot,\cdot\rangle_{q,t}$ from \eqref{e:innerprod}. Let $(\Phi_{\la}(X;q,t))$ be the basis of $\Lambda(q,t)$ dual to $(S_{\la}(X;t))$ with respect to $\langle\cdot,\cdot\rangle_{q,t}$, i.e.,
\begin{align}
    \langle \Phi_{\la}(X;q,t), S_{\mu}(X;t) \rangle_{q,t}=\delta_{\la\mu}.
\end{align}

\begin{lem}\label{l:Phi-s}
The symmetric functions $\Phi_{\la}(X;q,t)$ depend only on $q$. More precisely, in plethystic notation,
\begin{align}\label{e:Phi-s}
    \Phi_{\la}(X;q,t)=s_{\la}\!\left[\frac{X}{1-q}\right].
\end{align}
\end{lem}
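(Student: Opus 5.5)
The plan is to verify directly that $s_\la[X/(1-q)]$ is dual to $S_\mu(X;t)=s_\mu[(1-t)X]$ under $\langle\cdot,\cdot\rangle_{q,t}$; since a dual basis is unique, this identifies $\Phi_\la(X;q,t)$ and shows at once that it does not depend on $t$. The cleanest route is through Cauchy kernels. We use the standard criterion that for a graded nondegenerate pairing, two bases $\{u_\la\},\{v_\la\}$ are dual exactly when $\sum_\la u_\la[X]v_\la[Y]$ equals the reproducing kernel of the pairing. In the notation of Chapter~\ref{s:special-cases-Lambda}, with $\beta_n=\frac{1-q^n}{1-t^n}$, that kernel is
\[
\E[X|Y]=\exp\!\left(\sum_{n\ge1}\frac{1}{n}\frac{1-t^n}{1-q^n}p_n[X]p_n[Y]\right).
\]
The criterion itself follows by expanding in the dual power-sum bases $p_\la$ and $p_\la/(z_\la\beta_\la)$, exactly as in the exponential formula for $\E$ recorded there.

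First step: start from the Hall Cauchy identity \eqref{e:Cauchy-Schur}, $\sum_\la s_\la[A]s_\la[B]=\exp(\sum_n p_n[A]p_n[B]/n)$, valid for arbitrary virtual alphabets $A,B$ because both sides are expressed through power sums. Second step: substitute $A=X/(1-q)$ and $B=(1-t)Y$. Using $p_n[X/(1-q)]=p_n[X]/(1-q^n)$ and $p_n[(1-t)Y]=(1-t^n)p_n[Y]$, this gives
\[
\sum_\la s_\la\!\left[\tfrac{X}{1-q}\right]S_\la(Y;t)=\E[X|Y].
\]
Third step: apply the criterion, pairing both sides in the $X$-variables against $S_\mu(X;t)$ after swapping the roles of $X$ and $Y$ as needed. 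This yields $\langle s_\la[X/(1-q)],S_\mu(X;t)\rangle_{q,t}=\delta_{\la\mu}$. The criterion also needs $\{S_\la(X;t)\}$ to be a basis, and it is one, being the image of the Schur basis under the invertible plethystic map $X\mapsto(1-t)X$.

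The only point needing care is the kernel criterion, specifically that the $\langle\cdot,\cdot\rangle_{q,t}$ reproducing kernel is precisely $\E[X|Y]$ with $\beta_n=\frac{1-q^n}{1-t^n}$, and that the substitutions are legitimate over $\mathbb Q(q,t)$ degreewise. The first is immediate from $\langle p_\la,p_\mu\rangle_{q,t}=z_\la(q,t)\delta_{\la\mu}$, since then $\sum_\la p_\la[X]p_\la[Y]/z_\la(q,t)$ is the kernel and it exponentiates to $\E$. The second holds because each graded piece is finite. So no step is a genuine obstacle; the argument is a direct plethystic computation.
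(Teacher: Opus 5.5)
Your proposal is correct and is essentially the paper's proof: substitute $(1-t)$ into one alphabet and $1/(1-q)$ into the other in the Schur Cauchy identity to obtain the $\langle\cdot,\cdot\rangle_{q,t}$ Cauchy kernel, then apply the kernel characterization of dual bases. The differences are cosmetic. You put the two substitutions on opposite alphabets, which does not matter since the kernel is symmetric. You also justify the kernel criterion through the power-sum expansion, where the paper cites Macdonald, p.~310, (2.7).
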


\begin{proof}
Let $Y=y_1+y_2+\cdots$. The Schur Cauchy identity reads
\begin{align*}
    \sum_{\la}s_{\la}[X]\,s_{\la}[Y]
    =\prod_{n\ge 1}\exp\!\left(\frac{1}{n}p_n[X]p_n[Y]\right).
\end{align*}
Substituting $X\mapsto (1-t)X$ and $Y\mapsto \frac{Y}{1-q}$ gives
\begin{align*}
    \sum_{\la} S_{\la}[X;t]\; s_{\la}\!\left[\frac{Y}{1-q}\right]
    =\prod_{n\ge 1}\exp\!\left(\frac{1}{n}\frac{1-t^n}{1-q^n}p_n[X]p_n[Y]\right).
\end{align*}
Now use the standard characterization (cf.\ \cite[p.~310, (2.7)]{Mac}):
\[
\langle u_{\la},v_{\mu}\rangle_{q,t}=\delta_{\la\mu}\ \text{for all }\la,\mu
\quad \Longleftrightarrow \quad
\sum_{\la}u_{\la}[X]\,v_{\la}[Y]
=\prod_{n\ge 1}\exp\!\left(\frac{1}{n}\frac{1-t^n}{1-q^n}p_n[X]p_n[Y]\right),
\]
which yields \eqref{e:Phi-s}.
\end{proof}

By Lemma~\ref{l:Phi-s}, we henceforth write $\Phi_{\la}(X;q)$, omitting $t$. Define the skew version by
\begin{align}
    \Phi_{\la/\mu}(X;q):=s_{\la/\mu}\!\left[\frac{X}{1-q}\right].
\end{align}

\begin{rem}
By \eqref{e:Phi-s}, $\Phi_{\la/\mu}(X;q)$ admits a Jacobi--Trudi type formula
\begin{align}
    \Phi_{\la/\mu}(X;q)=\det_{1\le i,j\le m}\bigl(d_{\la_i-\mu_j-i+j}(X;q)\bigr),
\end{align}
where $d_n(X;q):=h_n\!\left[\frac{X}{1-q}\right]$, equivalently
\begin{align}\label{e:gen-d}
    \sum_{n\ge 0} d_n(X;q)\,z^n
    =\exp\!\left(\sum_{n\ge 1}\frac{1}{n(1-q^n)}\,p_n(X)\,z^n\right).
\end{align}
\end{rem}

For a skew diagram $\la/\mu$, we call a ribbon $\xi$ a \emph{special ribbon} of $\la/\mu$ if $\xi$ starts at the southwest-most box of $\la/\mu$; see Figure~\ref{fig:s-ribbon}.

\begin{figure}
    \centering
\begin{tikzpicture}[scale = 0.35]
  \begin{scope}
    \clip (0,0) -| (2,2) -| (4,3) -| (9,4) -| (10,5) -| (3,4) -| (1,2) -| (0,0);
    \draw [color=black!25] (0,0) grid (10,5);
  \end{scope}
   \draw [thick] (0,0) -| (2,2) -| (4,3) -| (9,4) -| (10,5)  -| (3,4) -| (1,2) -| (0,0);
  \draw [thick, rounded corners] (0.5,0.5) -- (1.5,0.5) |- (3.5,2.5);
  \draw [color=black,fill=black,thick] (3.5,2.5) circle (.5ex);
  \node [draw, circle, fill = white, inner sep = 1.5pt] at (0.5,0.5) { };
\end{tikzpicture}
\hspace{10em}
\begin{tikzpicture}[scale = 0.35]
  \begin{scope}
    \clip (0,0) -| (2,2) -| (4,3) -| (9,4) -| (10,5) -| (3,4) -| (1,2) -| (0,0);
    \draw [color=black!25] (0,0) grid (10,5);
  \end{scope}
   \draw [thick] (0,0) -| (2,2) -| (4,3) -| (9,4) -| (10,5)  -| (3,4) -| (1,2) -| (0,0);
  \draw [thick, rounded corners] (1.5,0.5) -- (1.5,2.5) -- (3.5,2.5) -- (3.5,3.5) -- (8.5,3.5);
  \draw [color=black,fill=black,thick] (8.5,3.5) circle (.5ex);
  \node [draw, circle, fill = white, inner sep = 1.5pt] at (1.5,0.5) { };
\end{tikzpicture}
\caption{The left ribbon is a special ribbon, while the right one is not.}\label{fig:s-ribbon}
\end{figure}

Let $\mu=(\mu_1,\dots,\mu_m)\subset\la$ with $\ell(\la)=m$ (padding $\mu$ with zeros if necessary). Expanding the Jacobi--Trudi determinant along the last column gives
\begin{align}\label{e:JT-expand}
    s_{\la/\mu}
    =\sum_{i=1}^m (-1)^{m-i}\,h_{\la_i-\mu_m-i+m}\,s_{\la^{(i)}/\mu^{(m)}},
\end{align}
where
\[
\la^{(i)}=(\la_1,\dots,\la_{i-1},\,\la_{i+1}-1,\dots,\la_m-1),
\qquad
\mu^{(m)}=(\mu_1,\dots,\mu_{m-1}).
\]
It follows from \cite[Section~5]{ER90} that $\la^{(i)}/\mu^{(m)}$ is obtained from $\la/\mu$ by removing a special ribbon of length $\la_i-\mu_m-i+m$ and height $m-i$. Therefore, $s_{\la/\mu}$ admits the following ribbon-removal decomposition:
\begin{align}\label{e:SqS}
    s_{\la/\mu}
    =\sum_{\xi} (-1)^{\rht((\la/\mu)/\xi)}\,h_{|\la|-|\mu|-|\xi|}\,s_{\xi},
\end{align}
summed over all skew diagrams $\xi$ obtained from $\la/\mu$ by removing a special ribbon (so that $(\la/\mu)/\xi$ is exactly the removed ribbon).

\phantomsection\label{def:SRT}
For a skew shape $\la/\mu$ and a composition $\nu$, define a \emph{special ribbon tableau} (SRT) $\T$ of shape $\la/\mu$ and type $\nu$ to be a filling of the boxes of $\la/\mu$ with positive integers such that
\begin{enumerate}
    \item entries are weakly increasing along rows (left to right) and down columns;
    \item for each $i\ge 1$, the skew diagram $\T^{(i)}/\T^{(i-1)}$ is a ribbon starting at the southwest-most box of $\T^{(i)}$;
    \item $\nu=(|\T^{(1)}|,\ |\T^{(2)}|-|\T^{(1)}|,\ |\T^{(3)}|-|\T^{(2)}|,\dots)$, equivalently $\nu_i=|\T^{(i)}/\T^{(i-1)}|$.
\end{enumerate}
Here $\T^{(i)}$ denotes the subdiagram consisting of boxes labeled by integers at most $i$ (and we set $\T^{(0)}=\varnothing$). We write ${\rm type}(\T)=\nu$. Define
\[
\sgn(\T)=\prod_{i\ge 1}(-1)^{\rht(\T^{(i)}/\T^{(i-1)})}.
\]

Iterating \eqref{e:SqS} yields the following expansion in the complete symmetric basis:
\begin{equation}\label{e:decom-skew-s}
    s_{\la/\mu}(X)
    =\sum_{\nu\models |\la|-|\mu|}\ \sum_{\T}\ \sgn(\T)\,h_{\nu}(X),
\end{equation}
where the inner sum runs over all SRTs $\T$ of shape $\la/\mu$ and type $\nu$.

\section{\texorpdfstring{A combinatorial interpretation for $K_{\la/\mu,\nu}(q,t)$}{A combinatorial interpretation for K(lambda/mu,nu)(q,t)}}\label{ss:skew-qt-kostka}

The $(q,t)$-Kostka polynomials $K_{\la,\nu}(q,t)$ are defined as the transition coefficients from the integral Macdonald polynomials $J_{\nu}(X;q,t)$ to the big Schur basis $S_{\la}(X;t)$:
\begin{align}
    J_{\nu}(X;q,t)=\sum_{\la}K_{\la,\nu}(q,t)\,S_{\la}(X;t),
\end{align}
equivalently,
\[
K_{\la,\nu}(q,t)=\left\langle \Phi_{\la}(X;q),\, J_{\nu}(X;q,t) \right\rangle_{q,t}.
\]
Define the \emph{skew $(q,t)$-Kostka polynomials} by (cf.\ \cite[p.~366, Example~10]{Mac})
\begin{align}\label{e:def-skew-K}
    K_{\la/\mu,\nu}(q,t):=\left\langle \Phi_{\la/\mu}(X;q),\, J_{\nu}(X;q,t) \right\rangle_{q,t}.
\end{align}
Clearly $K_{\la/\varnothing,\nu}(q,t)=K_{\la,\nu}(q,t)$. Moreover,
\begin{align}\label{e:LR-skew-K}
    K_{\la/\mu,\nu}(q,t)=\sum_{\rho}c^{\la}_{\mu,\rho}\,K_{\rho,\nu}(q,t),
\end{align}
where $c^{\la}_{\mu,\rho}$ is the Littlewood--Richardson coefficient.

Before giving a combinatorial formula for $K_{\la/\mu,\nu}(q,t)$, we need two ingredients.

\begin{lem}\label{l:decom-Phi}
Let $\mu\subset\la$. Then
\begin{align}\label{e:decom-Phi1}
    \Phi_{\la/\mu}(X;q)
    =\sum_{\xi}(-1)^{\rht((\la/\mu)/\xi)}\,d_{|\la|-|\mu|-|\xi|}(X;q)\,\Phi_{\xi}(X;q),
\end{align}
summed over all skew diagrams $\xi$ obtained from $\la/\mu$ by removing a special ribbon (and where, for a skew shape $\xi$, we write $\Phi_{\xi}(X;q):=s_{\xi}\!\left[\frac{X}{1-q}\right]$).
Consequently,
\begin{equation}\label{e:decom-Phi2}
    \Phi_{\la/\mu}(X;q)
    =\sum_{\nu\models |\la|-|\mu|}\ \sum_{\T}\ \sgn(\T)\,d_{\nu}(X;q),
\end{equation}
where the inner sum runs over all SRTs $\T$ of shape $\la/\mu$ and type $\nu$.
\end{lem}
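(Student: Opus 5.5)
The plan is to transport the Schur-function decompositions \eqref{e:SqS} and \eqref{e:decom-skew-s} through the plethystic substitution $X\mapsto X/(1-q)$. This substitution is a ring endomorphism of $\Lambda(q,t)$: it is determined by $p_n\mapsto p_n/(1-q^n)$ and extended multiplicatively. Consequently, every polynomial identity among symmetric functions survives it.

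The first step is to record the action of the substitution on the ingredients. By definition $\Phi_{\xi}(X;q)=s_{\xi}[X/(1-q)]$ for every skew shape $\xi$, including $\xi=\la/\mu$. Likewise $d_n(X;q)=h_n[X/(1-q)]$, and more generally $h_\nu[X/(1-q)]=d_\nu(X;q)$ for any composition $\nu$, since plethysm is multiplicative: $(fg)[A]=f[A]\,g[A]$.

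The second step is to apply the substitution to both sides of \eqref{e:SqS}. The identity \eqref{e:SqS} holds in $\Lambda$ as an identity of symmetric functions, because it comes from the Laplace expansion \eqref{e:JT-expand} of the Jacobi--Trudi determinant together with the ribbon interpretation from \cite{ER90}. Applying the substitution term by term gives \eqref{e:decom-Phi1} immediately. One point needs care here: the intermediate shapes $\xi$ are skew diagrams that need not be straight. The substitution statement for $s_\xi$ is still valid, because $s_\xi$ is a genuine skew Schur function given by its own Jacobi--Trudi determinant in the $h$'s, and the substitution sends that determinant to the corresponding determinant in the $d$'s.

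The third step obtains \eqref{e:decom-Phi2}, either by applying the same substitution directly to \eqref{e:decom-skew-s} or by iterating \eqref{e:decom-Phi1}. For the iteration, each removal of a special ribbon of height $h$ and length $r$ contributes a factor $(-1)^h d_r$, and successive removals record an SRT: labelling the $k$-th removed ribbon from the end as the $i$-th layer reproduces conditions (1)--(3), with the sign being $\sgn(\T)$. This bookkeeping is exactly the one already underlying \eqref{e:decom-skew-s}, so I will simply invoke that expansion.

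The main potential obstacle is the one the paper delegates to \cite{ER90}: checking that the Laplace expansion terms correspond bijectively to special ribbon removals with the stated heights, including the vanishing and degenerate cases when $\la_i-\mu_m-i+m<0$ or when the resulting shape is not valid. Since \eqref{e:SqS} is taken as established earlier, the lemma itself reduces to the formal ring-homomorphism argument above.
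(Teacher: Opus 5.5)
Your proposal is correct and is exactly the paper's argument: the paper proves the lemma in one line by applying the substitution $X\mapsto \frac{X}{1-q}$ to \eqref{e:SqS} and \eqref{e:decom-skew-s} and using $d_k(X;q)=h_k\!\left[\frac{X}{1-q}\right]$. Your extra remarks (that this substitution is a ring homomorphism, that multiplicativity turns $h_\nu$ into $d_\nu$, and that it applies to the non-straight intermediate shapes $\xi$) simply spell out details the paper leaves implicit.
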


\begin{proof}
This follows by substituting $X\mapsto \frac{X}{1-q}$ into \eqref{e:SqS} and \eqref{e:decom-skew-s}, using $d_k(X;q)=h_k\!\left[\frac{X}{1-q}\right]$.
\end{proof}

\begin{prop}\label{p:d*J}
Let $k\ge 0$. Then
\begin{align}\label{e:d*J}
    d_k^{\perp_{q,t}}\,J_{\nu}(X;q,t)
    =\sum_{\rho\subset_k \nu}\frac{c^{\prime}_{\nu}(q,t)}{c^{\prime}_{\rho}(q,t)}\,
    Q_{\nu/\rho}\!\left[\frac{1}{1-t};q,t\right]\,J_{\rho}(X;q,t),
\end{align}
where $\rho\subset_k \nu$ means $\rho\subset\nu$ and $|\nu/\rho|=k$.
\end{prop}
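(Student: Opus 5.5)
The identity will be obtained by pairing against the dual basis and reading off coefficients of a skew Macdonald expansion. Since $\{P_\rho\}$ and $\{Q_\rho\}$ are dual for $\langle\cdot,\cdot\rangle_{q,t}$, and $J_\rho=c_\rho P_\rho=c'_\rho Q_\rho$, the coefficient of $J_\rho$ in $d_k^{\perp_{q,t}}J_\nu$ is
\[
\frac{1}{c_\rho(q,t)}\bigl\langle d_k^{\perp_{q,t}}J_\nu,\,Q_\rho\bigr\rangle_{q,t}
=\frac{1}{c_\rho(q,t)}\bigl\langle J_\nu,\,d_k Q_\rho\bigr\rangle_{q,t}
=\frac{c'_\nu(q,t)}{c_\rho(q,t)}\bigl\langle Q_\nu,\,d_kQ_\rho\bigr\rangle_{q,t}.
\]
It therefore suffices to find the coefficient of $Q_\nu$ in $d_kQ_\rho$, equivalently the coefficient of $P_\rho$ in $d_k^{\perp}P_\nu$, up to the normalizations $b_\nu,b_\rho$.

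\textbf{Step 1: a Cauchy-type generating function for $d_k$.} By \eqref{e:gen-d}, $\sum_k d_k(X;q)z^k=\exp\bigl(\sum_n \tfrac{1}{n(1-q^n)}p_n[X]z^n\bigr)$. I would recognize this as the Macdonald Cauchy kernel $\E[X|Y]=\exp\bigl(\sum_n\frac{1-t^n}{n(1-q^n)}p_n[X]p_n[Y]\bigr)$ specialized at $Y=\frac{z}{1-t}$, since $p_n\!\left[\frac{z}{1-t}\right]=\frac{z^n}{1-t^n}$. Thus
\[
\sum_k d_k z^k=\E\!\left[X\,\middle|\,\tfrac{z}{1-t}\right]=\sum_\sigma P_\sigma[X;q,t]\,Q_\sigma\!\left[\tfrac{z}{1-t};q,t\right].
\]

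\textbf{Step 2: apply the straight MN rule.} The adjoint form \eqref{e:straight-dual-Lambda} of Corollary~\ref{c:straight-MN-dual}, with $(f,g)=(P,Q)$, gives
\[
\E^{\perp}[X|Y]\,Q_\nu[X]=\sum_\rho Q_{\nu/\rho}[Y]\,Q_\rho[X].
\]
I will use the dual version with roles of $P$ and $Q$ exchanged (the form is symmetric, so the pair $(Q,P)$ is also a dual pair); this gives the action of $\sum_k d_k^{\perp}z^k$ on $Q_\nu$, namely $\sum_\rho Q_{\nu/\rho}\!\left[\frac{z}{1-t}\right]Q_\rho$. Taking the coefficient of $z^k$ and using homogeneity, $Q_{\nu/\rho}[\frac{z}{1-t}]=z^{|\nu/\rho|}Q_{\nu/\rho}[\frac1{1-t}]$, yields
\[
d_k^{\perp}Q_\nu=\sum_{\rho\subset_k\nu}Q_{\nu/\rho}\!\left[\tfrac1{1-t};q,t\right]Q_\rho .
\]
Writing $Q=J/c'$ on both sides then gives \eqref{e:d*J} directly, with factor $c'_\nu/c'_\rho$. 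This is cleaner than the computation in the first paragraph, which I would then discard.

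\textbf{Main obstacle.} The delicate point is checking the adjoint form with $Q$ in the acted-upon slot. Here $f^\perp$ must be the adjoint of multiplication by $d_k$, and the skew function arising should be $Q_{\nu/\rho}=P_\rho^{\perp}Q_\nu$ as defined in the preliminaries. Concretely, I need
\[
\langle d_k^\perp Q_\nu,P_\rho\rangle=\langle Q_\nu,d_kP_\rho\rangle=\sum_\sigma Q_\sigma\!\left[\tfrac{z}{1-t}\right]\Big|_{z^k}\,\langle Q_\nu,P_\sigma P_\rho\rangle,
\]
and $\sum_\sigma Q_\sigma[Y]\langle Q_\nu,P_\sigma P_\rho\rangle=Q_{\nu/\rho}[Y]$ by Lemma~\ref{l:skew-structure-constants-dual}. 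I would verify this carefully; everything else is bookkeeping.
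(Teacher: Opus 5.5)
Your argument is correct and is essentially the paper's proof. The paper identifies $\sum_n d_n^{\perp_{q,t}}z^{-n}$ with $\exp\bigl(\sum_n p_n[Y]\,\partial/\partial p_n\bigr)=\E^{\perp}[X|Y]$ at $Y=\frac{1}{(1-t)z}$ via the explicit formula for $p_n^{\perp_{q,t}}$, whereas you obtain the same identification (with $z$ in place of $z^{-1}$) by recognizing $\sum_k d_kz^k$ as $\E[X|\tfrac{z}{1-t}]$ and taking adjoints termwise; both then apply \eqref{e:straight-dual-Lambda} with $(f,g)=(P,Q)$ and use $J=c'Q$.

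The only blemish is your remark about exchanging the roles of $P$ and $Q$. It is unnecessary: the pair $(P,Q)$ already gives the action on $Q_\nu$, while $(Q,P)$ would give the action on $P_\nu$. Your direct check via Lemma~\ref{l:skew-structure-constants-dual} confirms the formula you actually use.
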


\begin{proof}
From \eqref{e:gen-d} and $p_n^{\perp_{q,t}}=\frac{n(1-q^n)}{1-t^n}\frac{\partial}{\partial p_n}$, we obtain
\begin{align}\label{e:gene-dual-d}
    \sum_{n\ge 0} d_n^{\perp_{q,t}}\,z^{-n}
    =\exp\!\left(\sum_{n\ge 1}\frac{1}{1-t^n}\frac{\partial}{\partial p_n}\,z^{-n}\right).
\end{align}
Taking $(f_{\la},g_{\la})=(P_{\la}(X;q,t),Q_{\la}(X;q,t))$ in \eqref{e:straight-dual-Lambda} yields
\begin{align}\label{e:Mac-shift}
    \exp\!\left(\sum_{n\ge 1}p_n(Y)\frac{\partial}{\partial p_n(X)}\right)Q_{\nu}(X;q,t)
    =\sum_{\rho\subset\nu}Q_{\nu/\rho}[Y;q,t]\;Q_{\rho}(X;q,t).
\end{align}
Specializing $Y=\frac{1}{(1-t)z}$ gives $p_n(Y)=\frac{z^{-n}}{1-t^n}$, hence
\begin{align}\label{e:partialQ}
   \exp\!\left(\sum_{n\ge 1}\frac{z^{-n}}{1-t^n}\frac{\partial}{\partial p_n(X)}\right)Q_{\nu}(X;q,t)
   =\sum_{\rho\subset\nu}z^{-|\nu/\rho|}\,
   Q_{\nu/\rho}\!\left[\frac{1}{1-t};q,t\right]\,Q_{\rho}(X;q,t).
\end{align}
Comparing the coefficient of $z^{-k}$ in \eqref{e:gene-dual-d} and \eqref{e:partialQ} yields
\[
d_k^{\perp_{q,t}}Q_{\nu}
=\sum_{\rho\subset_k \nu} Q_{\nu/\rho}\!\left[\frac{1}{1-t};q,t\right]\,Q_{\rho}.
\]
Finally, using $J_{\lambda}(X;q,t)=c'_{\lambda}(q,t)\,Q_{\lambda}(X;q,t)$ gives \eqref{e:d*J}.
\end{proof}

In \cite[Eq.~(6.15)]{JL24} we proved that
\[
\frac{c^{\prime}_{\nu}(q,t)}{c^{\prime}_{\rho}(q,t)}\,
Q_{\nu/\rho}\!\left[\frac{1}{1-t};q,t\right]
=t^{n(\nu)-n(\rho)}\binom{\nu}{\rho}_{q,t},
\]
where $n(\nu)=\sum_i (i-1)\nu_i$ and $\binom{\nu}{\rho}_{q,t}$ is the generalized $(q,t)$-binomial coefficient (introduced independently by Lassalle \cite{L98} and Okounkov \cite{O97}). Therefore \eqref{e:d*J} can be rewritten as
\begin{align}\label{e:d*J2}
    d_k^{\perp_{q,t}}\,J_{\nu}(X;q,t)
    =\sum_{\rho\subset_k \nu} t^{n(\nu)-n(\rho)}\binom{\nu}{\rho}_{q,t}\,J_{\rho}(X;q,t).
\end{align}

A \emph{flag of partitions} of shape $\nu$ and type $\tau$ is a chain
\[
\varnothing=\nu^0\subset\nu^1\subset\cdots\subset\nu^N=\nu
\]
such that $|\nu^i/\nu^{i-1}|=\tau_i$ for all $i\ge 1$.

We are now ready to give combinatorial formulas for the skew $(q,t)$-Kostka polynomials $K_{\la/\mu,\nu}(q,t)$.

\begin{thm}\label{t:skew-K_iterative}
Let $\mu\subset\la$ and $\nu$ be partitions with $|\la/\mu|=|\nu|$. Then
\begin{align}\label{e:skew-K-iterative}
 K_{\la/\mu,\nu}(q,t)
 =\sum_{\xi,\rho}(-1)^{\rht((\la/\mu)/\xi)}\,
 t^{n(\nu)-n(\rho)}\binom{\nu}{\rho}_{q,t}\,
 K_{\xi,\rho}(q,t),
\end{align}
summed over all skew diagrams $\xi$ obtained from $\la/\mu$ by removing a special ribbon and all $\rho\subset\nu$ with $|\rho|=|\xi|$. Consequently,
\begin{align}\label{e:skew-K-iterative2}
    K_{\la/\mu,\nu}(q,t)
    =t^{n(\nu)}\sum_{(\T,\{\nu\})}\sgn(\T)\prod_{i\ge 1}\binom{\nu^i}{\nu^{i-1}}_{q,t},
\end{align}
summed over all pairs consisting of an SRT $\T$ of shape $\la/\mu$ and a flag $\{\nu\}$ of shape $\nu$, with the same type.
\end{thm}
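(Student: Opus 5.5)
The plan is to work directly from the definition \eqref{e:def-skew-K}, $K_{\la/\mu,\nu}(q,t)=\langle \Phi_{\la/\mu}(X;q),J_\nu(X;q,t)\rangle_{q,t}$, and to substitute the ribbon-removal decomposition of Lemma~\ref{l:decom-Phi}. By \eqref{e:decom-Phi1}, $\Phi_{\la/\mu}$ is a signed sum of products $d_k\,\Phi_\xi$ with $k=|\la/\mu|-|\xi|$. Here $\xi$ runs over the skew diagrams obtained from $\la/\mu$ by removing a special ribbon, and the sign is $(-1)^{\rht((\la/\mu)/\xi)}$. Since the pairing is bilinear, it suffices to evaluate each term $\langle d_k\Phi_\xi,J_\nu\rangle_{q,t}$.

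For each term, move $d_k$ across the pairing using the adjoint:
\[
\langle d_k\Phi_\xi,J_\nu\rangle_{q,t}=\langle \Phi_\xi,d_k^{\perp_{q,t}}J_\nu\rangle_{q,t}.
\]
Then apply Proposition~\ref{p:d*J} in the form \eqref{e:d*J2}. This replaces $d_k^{\perp_{q,t}}J_\nu$ by $\sum_{\rho\subset_k\nu}t^{n(\nu)-n(\rho)}\binom{\nu}{\rho}_{q,t}J_\rho$. Each resulting pairing $\langle\Phi_\xi,J_\rho\rangle_{q,t}$ is by definition $K_{\xi,\rho}(q,t)$; here $\xi$ is itself a skew shape, so the skew definition applies. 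The condition $|\rho|=|\xi|$ is automatic, because $|\rho|=|\nu|-k$ and $|\nu|=|\la/\mu|$. Summing over $\xi$ and $\rho$ gives \eqref{e:skew-K-iterative}. This step is purely formal.

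For \eqref{e:skew-K-iterative2}, I would induct on $|\la/\mu|$, iterating \eqref{e:skew-K-iterative}; the base case is the empty shape with $K_{\varnothing,\varnothing}=1$. Each removed special ribbon is recorded as the outermost ribbon of an SRT, so its label is the largest one, and the corresponding $\rho\subset\nu$ is the penultimate member of the flag. The $t$-powers telescope, $\sum_i\bigl(n(\nu^i)-n(\nu^{i-1})\bigr)=n(\nu)$, and the signs multiply to $\sgn(\T)$.

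The main thing to check is the bijection in this induction. Removing the special ribbon from $\la/\mu$ should correspond exactly to deleting the entries with the largest label in an SRT, and conversely every SRT should arise this way. The issue is that \eqref{e:SqS} and \eqref{e:JT-expand} remove the ribbon starting at the southwest-most box of the current shape, whereas the SRT definition requires that $\T^{(i)}/\T^{(i-1)}$ start at the southwest-most box of $\T^{(i)}$. I would verify this compatibility by following exactly how iterating \eqref{e:SqS} produced \eqref{e:decom-skew-s}, and then reuse that same bookkeeping, now refined by which $\rho$ is chosen at each step.

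There is a second, simpler route that avoids redoing this. Pair \eqref{e:decom-Phi2} with $J_\nu$ directly, and write $\langle d_\tau,J_\nu\rangle_{q,t}$ as $d_{\tau_N}^{\perp}\cdots d_{\tau_1}^{\perp}J_\nu$ evaluated in degree $0$. Repeated use of \eqref{e:d*J2} then yields the sum over flags of type $\tau$, with product $t^{n(\nu)}\prod_i\binom{\nu^i}{\nu^{i-1}}_{q,t}$. I would present this route, pairing the type of $\T$ with the type of the flag. The only care needed is the ordering convention: the $d^\perp$ operators commute, so the order in which parts are stripped is free, and I would choose it so that the flag is read as stated.
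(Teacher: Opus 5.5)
Your proof of the recurrence \eqref{e:skew-K-iterative} is exactly the paper's computation: decompose $\Phi_{\la/\mu}$ by Lemma~\ref{l:decom-Phi}, move $d_k$ across the pairing as $d_k^{\perp_{q,t}}$, and apply \eqref{e:d*J2}. For \eqref{e:skew-K-iterative2} the paper only says to iterate the recurrence; your chosen route is to pair \eqref{e:decom-Phi2} with $J_\nu$ and apply \eqref{e:d*J2} repeatedly, using that the $d^{\perp}$'s commute to match the flag type with the SRT type. That is the same formal argument, and it is precisely how the paper derives the inverse analogue \eqref{e:skew-inv-K-iterative2}.
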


\begin{proof}
Using Lemma~\ref{l:decom-Phi} and \eqref{e:d*J2}, we compute
\begin{align*}
K_{\la/\mu,\nu}(q,t)
&=\left\langle \Phi_{\la/\mu},\,J_{\nu}\right\rangle_{q,t}\\
&=\sum_{\xi}(-1)^{\rht((\la/\mu)/\xi)}
\left\langle d_{|\la|-|\mu|-|\xi|}\,\Phi_{\xi},\,J_{\nu}\right\rangle_{q,t}\\
&=\sum_{\xi}(-1)^{\rht((\la/\mu)/\xi)}
\left\langle \Phi_{\xi},\,d_{|\la|-|\mu|-|\xi|}^{\perp_{q,t}}J_{\nu}\right\rangle_{q,t}\\
&=\sum_{\xi}(-1)^{\rht((\la/\mu)/\xi)}\!
\sum_{\rho\subset_{|\la|-|\mu|-|\xi|}\nu}\!
t^{n(\nu)-n(\rho)}\binom{\nu}{\rho}_{q,t}
\left\langle \Phi_{\xi},\,J_{\rho}\right\rangle_{q,t}\\
&=\sum_{\xi}(-1)^{\rht((\la/\mu)/\xi)}\!
\sum_{\rho\subset_{|\la|-|\mu|-|\xi|}\nu}\!
t^{n(\nu)-n(\rho)}\binom{\nu}{\rho}_{q,t}\,K_{\xi,\rho}(q,t),
\end{align*}
which is \eqref{e:skew-K-iterative}. Iterating \eqref{e:skew-K-iterative} yields \eqref{e:skew-K-iterative2}.
\end{proof}

For $q=0$, the specialization $Q_{\nu/\rho}\!\left[\frac{1}{1-t};0,t\right]$ admits a closed form (see \cite[Theorem~3.1]{Las05} or \cite[Eq.~(4.3)]{WZ12}):
\begin{align}\label{e:q=0}
Q_{\nu/\rho}\!\left[\frac{1}{1-t};0,t\right]
=t^{n(\nu/\rho)}\prod_{j\ge 1}
\qbinomial{\nu^t_j-\rho^t_{j+1}}{\rho^t_j-\rho^t_{j+1}}_{t},
\end{align}
where
\[
n(\nu/\rho)=\sum_i \binom{\nu^t_i-\rho^t_i}{2},
\qquad
\qbinomial{a}{b}_t
=\prod_{r=0}^{b-1}\frac{1-t^{a-r}}{1-t^{b-r}}.
\]
Using \eqref{e:q=0}, we obtain explicit formulas for the skew Kostka--Foulkes polynomials
\[
K_{\la/\mu,\nu}(t):=K_{\la/\mu,\nu}(0,t).
\]

\begin{cor}\label{c:skew-K(t)-iterative}
Under the assumptions of Theorem~\ref{t:skew-K_iterative}, we have:
\begin{enumerate}
\item The iterative formula
\begin{align}\label{e:skew-K(t)-iterative}
K_{\la/\mu,\nu}(t)
=\sum_{\xi,\rho}(-1)^{\rht((\la/\mu)/\xi)}\,
t^{n(\nu/\rho)}
\prod_{j\ge 1}\qbinomial{\nu^t_j-\rho^t_{j+1}}{\rho^t_j-\rho^t_{j+1}}_{t}\,
K_{\xi,\rho}(t),
\end{align}
summed over all skew diagrams $\xi$ obtained by removing a special ribbon from $\la/\mu$ and all partitions $\rho\subset\nu$ with $|\rho|=|\xi|$.

\item Moreover,
\begin{align}\label{e:skew-K(t)-iterative2}
K_{\la/\mu,\nu}(t)
=\sum_{(\T,\{\nu\})}\sgn(\T)\,
t^{\sum_i n(\nu^i/\nu^{i-1})}
\prod_{i\ge 1}\prod_{j\ge 1}
\qbinomial{(\nu^i)^t_j-(\nu^{i-1})^t_{j+1}}{(\nu^{i-1})^t_j-(\nu^{i-1})^t_{j+1}}_{t},
\end{align}
summed over all pairs consisting of an SRT $\T$ of shape $\la/\mu$ and a flag $\{\nu\}$ of shape $\nu$, with the same type.
\end{enumerate}
\end{cor}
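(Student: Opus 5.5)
The plan is to specialize Theorem~\ref{t:skew-K_iterative} at $q=0$. The only $q$-dependent ingredient in \eqref{e:skew-K-iterative} is the coefficient $t^{n(\nu)-n(\rho)}\binom{\nu}{\rho}_{q,t}$. Before rewriting it via the Lassalle--Okounkov binomial, this coefficient was
\[
\frac{c'_\nu(q,t)}{c'_\rho(q,t)}\,Q_{\nu/\rho}\!\left[\tfrac{1}{1-t};q,t\right].
\]
Evaluating this expression at $q=0$ is better than specializing the binomial directly, so I would work from this form and return to the proof of Proposition~\ref{p:d*J}.

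At $q=0$ we have $c'_\lambda(0,t)=\prod_{s\in\lambda}(1-0^{a(s)+1}t^{l(s)})=1$, because $a(s)+1\ge1$. Hence $J_\lambda(X;0,t)=Q_\lambda(X;0,t)$, and \eqref{e:d*J} reduces to
\[
d_k^{\perp}J_\nu=\sum_{\rho\subset_k\nu}Q_{\nu/\rho}\!\left[\tfrac{1}{1-t};0,t\right]J_\rho .
\]
Substituting the closed form \eqref{e:q=0} then gives exactly the coefficient $t^{n(\nu/\rho)}\prod_j\qbinomial{\nu^t_j-\rho^t_{j+1}}{\rho^t_j-\rho^t_{j+1}}_t$.

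Before substituting, I must check that every object in the argument of Theorem~\ref{t:skew-K_iterative} specializes cleanly. First, $d_k(X;0)=h_k$ and $\Phi_{\la/\mu}(X;0)=s_{\la/\mu}$. Second, the pairing $\langle\cdot,\cdot\rangle_{0,t}$ is still nondegenerate, with $z_\la(0,t)=z_\la\prod_i(1-t^{\la_i})^{-1}$. Third, the coefficients $K_{\xi,\rho}(q,t)$ are polynomial, so setting $q=0$ is harmless. For the same reason the identity \eqref{e:skew-K-iterative} can be specialized termwise, since everything in it is polynomial in $q$.

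With these checks done, part (1) follows directly from \eqref{e:skew-K-iterative} at $q=0$. For part (2), I would iterate (1), exactly as \eqref{e:skew-K-iterative2} was obtained from \eqref{e:skew-K-iterative}. Each step removes one special ribbon from the current skew shape and one layer $\nu^i/\nu^{i-1}$ from the flag, and the sizes match at every stage. After all steps, the removed ribbons form an SRT $\T$ of shape $\la/\mu$, the successive partitions form a flag of shape $\nu$ with the same type, and the signs multiply to $\sgn(\T)$. The $t$-powers add up to $\sum_i n(\nu^i/\nu^{i-1})$ and the $t$-binomials multiply layer by layer. The iteration stops at $K_{\varnothing,\varnothing}=1$.

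I expect the main obstacle to be the compatibility between the ordering of the iteration and the SRT and flag indexing. The ribbon removed first is the last one in the tableau order, since ribbons are peeled from the top label down, and it pairs with the outermost layer $\nu^N/\nu^{N-1}$. I need to confirm that reversing the indices gives a bijection onto pairs $(\T,\{\nu\})$ of the same type, as in the proof of \eqref{e:skew-K-iterative2}.

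A second point needs care as well. For general $q$, \eqref{e:skew-K-iterative2} carries the prefactor $t^{n(\nu)}$, while for $q=0$ the $t$-exponents are computed directly from \eqref{e:q=0}. I should therefore use the $Q[\frac{1}{1-t}]$-form of the coefficient rather than the binomial form.
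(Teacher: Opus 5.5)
Your proposal is correct and follows the paper's route: at $q=0$ one has $c'_\lambda(0,t)=1$, so the coefficient $\frac{c'_\nu}{c'_\rho}Q_{\nu/\rho}[\frac{1}{1-t};q,t]$ from Proposition~\ref{p:d*J} becomes $Q_{\nu/\rho}[\frac{1}{1-t};0,t]$; the closed form \eqref{e:q=0} then gives (1), and iterating exactly as in Theorem~\ref{t:skew-K_iterative} gives (2). One small correction: that coefficient is not polynomial in $q$ in general (for $\nu=(2,1)$, $\rho=(2)$ it equals $\frac{1-qt^2}{1-qt}$), so the termwise specialization should be justified by regularity at $q=0$ rather than by polynomiality. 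Regularity holds because $d_k^{\perp_{q,t}}$ is independent of $q$ and the $J_\rho(X;q,t)$ specialize to the Hall--Littlewood basis $Q_\rho(X;t)$.
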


\begin{rem}
Theorem~\ref{t:skew-K_iterative} and Corollary~\ref{c:skew-K(t)-iterative} extend \cite[Cor.~6.2, Cor.~6.3]{JL24} to the skew setting.
\end{rem}







\section{Skew inverse \texorpdfstring{$(q,t)$}{(q,t)}-Kostka coefficients}\label{ss:inv-qt-kostka}

The skew inverse $(q,t)$-Kostka coefficient (denoted by $\K_{\la/\mu,\nu}(q,t)$) is defined by
\begin{align}
    S_{\la/\mu}(X;t)=\sum_{\nu}\K_{\la/\mu,\nu}(q,t)J_{\nu}(X;q,t),
\end{align}
which is equivalent to 
\begin{align}
    \K_{\la/\mu,\nu}(q,t)
    &=\frac{1}{c_{\nu}(q,t)c^{'}_{\nu}(q,t)}
    \left\langle S_{\la/\mu}(X;t), J_{\nu}(X;q,t)\right\rangle_{q,t}\\
    &=\frac{1}{c_{\nu}(q,t)}
    \left\langle S_{\la/\mu}(X;t), Q_{\nu}(X;q,t)\right\rangle_{q,t}. 
\end{align}
We write $\K_{\la,\nu}(q,t)$ for $\K_{\la/\varnothing,\nu}(q,t)$ in short.

Recall that $S_{\la/\mu}(X;t)=s_{\la/\mu}[(1-t)X]$. Similar to $\Phi_{\la/\mu}$, $S_{\la/\mu}$ also has the following combinatorial decompositions
    \begin{align}\label{e:decom-S1}
        S_{\la/\mu}(X;t)=\sum_{\xi}(-1)^{\rht((\la/\mu)/\xi)}q_{|\la|-|\mu|-|\xi|}(X;t)S_{\xi}(X;t)
    \end{align}
    summed over all skew diagrams $\xi$ obtained from $\la/\mu$ by removing a special ribbon of $\la/\mu$, where $q_n(X;t)$ is the generalized complete symmetric function. Moreover,
   \begin{equation}\label{e:decom-S2}
    S_{\la/\mu}(X;t)=\sum_{\nu\models |\la|-|\mu|}\sum_{\T} \sgn(\T)q_{\nu}(X;t)
\end{equation}
the inner summation is over all SRTs of shape $\la/\mu$ and type $\nu$. 

\begin{prop}\label{p:q*Q}
    Let $k\geq 0$ and $\nu$ be a partition. Then
    \begin{align}
        q^{\perp_{q,t}}_kQ_{\nu}(X;q,t)=\sum_{\rho\subset_{k}\nu}Q_{\nu/\rho}[1-q;q,t]Q_{\rho}(X;q,t).
    \end{align}
\end{prop}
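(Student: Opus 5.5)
The plan is to mirror the proof of Proposition~\ref{p:d*J}, with $d_k$ replaced by the generalized complete symmetric function $q_k(X;t)$. First I compute the adjoint generating series. By \eqref{e:qgen},
\[
\sum_{n\ge0}q_n(X;t)z^n=\exp\Bigl(\sum_{n\ge1}\frac{1-t^n}{n}p_n(X)z^n\Bigr).
\]
Since $p_n^{\perp_{q,t}}=\frac{n(1-q^n)}{1-t^n}\frac{\partial}{\partial p_n}$, and the $p_n^{\perp}$ commute so that $f\mapsto f^{\perp_{q,t}}$ is an algebra homomorphism on products of power sums, taking adjoints termwise gives
\[
\sum_{n\ge0}q_n^{\perp_{q,t}}z^{-n}=\exp\Bigl(\sum_{n\ge1}(1-q^n)\frac{\partial}{\partial p_n}z^{-n}\Bigr).
\]

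Next I apply the translation identity \eqref{e:Mac-shift}, which was obtained from \eqref{e:straight-dual-Lambda} with $(f_\la,g_\la)=(P_\la,Q_\la)$:
\[
\exp\Bigl(\sum_{n\ge1}p_n(Y)\frac{\partial}{\partial p_n(X)}\Bigr)Q_\nu(X;q,t)=\sum_{\rho\subset\nu}Q_{\nu/\rho}[Y;q,t]\,Q_\rho(X;q,t).
\]
I choose the virtual alphabet $Y=(1-q)z^{-1}$, so that $p_n[Y]=(1-q^n)z^{-n}$. Then the left side becomes the adjoint series applied to $Q_\nu$. By homogeneity, $Q_{\nu/\rho}[(1-q)z^{-1};q,t]=z^{-|\nu/\rho|}Q_{\nu/\rho}[1-q;q,t]$.

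Comparing coefficients of $z^{-k}$ on both sides yields the claim, with the sum restricted to $\rho\subset_k\nu$. The sum is finite because $Q_{\nu/\rho}=0$ unless $\rho\subset\nu$.

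The only delicate point is justifying the plethystic evaluation at a virtual alphabet and the homogeneity step. Both follow directly from the definitions of plethysm via power sums, exactly as in the proof of Proposition~\ref{p:d*J} with $Y=\frac{1}{(1-t)z}$, so I expect no real obstacle.
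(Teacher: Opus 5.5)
Your proposal is correct and follows the paper's proof essentially verbatim. Like the paper, you obtain the generating series $\sum_{n\ge0}q_n^{\perp_{q,t}}z^{-n}=\exp\bigl(\sum_{n\ge1}(1-q^n)\frac{\partial}{\partial p_n}z^{-n}\bigr)$, specialize \eqref{e:Mac-shift} at $Y=\frac{1-q}{z}$, and compare coefficients of $z^{-k}$.
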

\begin{proof}
    It follows from the generating function of $q_n$ that
    \begin{align}\label{e:gen-q*}
        \exp\left(\sum_{n\geq 1} (1-q^n)\frac{\partial}{\partial p_n}z^{-n}\right)=\sum_{n\geq0}q^{\perp_{q,t}}_nz^{-n}.
    \end{align}
    Taking specialization $Y=\frac{1-q}{z}$ in \eqref{e:Mac-shift} gives
    \begin{align}\label{e:partialQ2}
       \exp\left(\sum_{n=1}^{+\infty}\frac{1-q^{n}}{z^n}\frac{\partial}{\partial p_n(X)}\right)Q_{\nu}(X;q,t)=\sum_{\rho\subset\nu}z^{-|\nu/\rho|}Q_{\nu/\rho}\left[1-q;q,t\right]Q_{\rho}(X;q,t). 
    \end{align}
    The proof is finished by \eqref{e:gen-q*} and taking the coefficient of $z^{-k}$ on both sides of \eqref{e:partialQ2}.
\end{proof}

\begin{thm}\label{t:skew-inv-K_iterative}
    Let $\mu\subset\la$ and $\nu$ be partitions with $|\la/\mu|=|\nu|$. Then
    \begin{align}\label{e:skew-inv-K-iterative}
     \K_{\la/\mu,\nu}(q,t)=\sum_{\xi,\rho}(-1)^{\rht((\la/\mu)/\xi)}\frac{c_{\rho}(q,t)}{c_{\nu}(q,t)}Q_{\nu/\rho}[1-q;q,t]\K_{\xi,\rho}(q,t)   
    \end{align}
    summed over all skew diagrams $\xi$ obtained by removing a special ribbon from $\la/\mu$ and partitions $\rho\subset\nu$ with $|\rho|=|\xi|$. Consequently, we have
    \begin{align}\label{e:skew-inv-K-iterative2}
        \K_{\la/\mu,\nu}(q,t)=\frac{1}{c_{\nu}(q,t)}\sum_{(\T,\{\nu\})}\sgn(\T)\prod_{i}Q_{\nu^i/\nu^{i-1}}[1-q;q,t]
    \end{align}
    summed over all pairs of SRTs $\T$ of shape $\la/\mu$ and flags of partition $\{\nu\}$ of shape $\nu$ with the same type. 
\end{thm}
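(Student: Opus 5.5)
The argument mirrors the proof of Theorem~\ref{t:skew-K_iterative}, with the roles of $\Phi_{\la/\mu}$ and $J_\nu$ replaced by $S_{\la/\mu}$ and $Q_\nu$. We start from the expression
\[
\K_{\la/\mu,\nu}(q,t)=\frac{1}{c_{\nu}(q,t)}\left\langle S_{\la/\mu}(X;t),Q_{\nu}(X;q,t)\right\rangle_{q,t}.
\]
In this pairing we substitute the ribbon-removal decomposition \eqref{e:decom-S1}, which expresses $S_{\la/\mu}$ as a signed sum of $q_{|\la|-|\mu|-|\xi|}(X;t)\,S_\xi(X;t)$.

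\textbf{Derivation of \eqref{e:decom-S1}.} This decomposition is obtained by applying the plethystic substitution $X\mapsto(1-t)X$ to \eqref{e:SqS}. Under this substitution $h_k\mapsto q_k(X;t)$ and $s_\xi\mapsto S_\xi(X;t)$, by \eqref{e:q-h}. This is exactly parallel to Lemma~\ref{l:decom-Phi}.

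\textbf{Moving the factor across the pairing.} We move the factor $q_k$ to the other side using the adjoint, writing $\langle q_kS_\xi,Q_\nu\rangle_{q,t}=\langle S_\xi,q_k^{\perp_{q,t}}Q_\nu\rangle_{q,t}$. We then apply Proposition~\ref{p:q*Q}, which gives
\[
q_k^{\perp_{q,t}}Q_\nu=\sum_{\rho\subset_k\nu}Q_{\nu/\rho}[1-q;q,t]\,Q_\rho .
\]
The remaining pairing is $\langle S_\xi,Q_\rho\rangle_{q,t}=c_\rho(q,t)\,\K_{\xi,\rho}(q,t)$, by the definition applied to the (skew) shape $\xi$. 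Collecting the prefactor $1/c_\nu$ produces the ratio $c_\rho/c_\nu$, and this yields \eqref{e:skew-inv-K-iterative}.

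The main point of care is the convention for $\xi$. Here $\xi$ is itself a skew diagram, so $\K_{\xi,\rho}$ must be interpreted through the same definition with $S_\xi$ in place of $S_{\la/\mu}$. This is legitimate, since the definition only uses $S_{\la/\mu}=s_{\la/\mu}[(1-t)X]$, which makes sense for any skew shape. We also need the degree constraint $|\rho|=|\xi|$. It holds automatically, because $\rho\subset_k\nu$ with $k=|\la/\mu|-|\xi|$ and $|\nu|=|\la/\mu|$.

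\textbf{Iteration to \eqref{e:skew-inv-K-iterative2}.} We iterate \eqref{e:skew-inv-K-iterative}, or equivalently use \eqref{e:decom-S2} directly. Successive special ribbon removals record an SRT $\T$, read from the last label backwards, and the sign is $\sgn(\T)$. The successive $\rho$'s form a flag $\varnothing=\nu^0\subset\cdots\subset\nu^N=\nu$ whose step sizes match the ribbon lengths, i.e.\ the flag has the same type as $\T$.

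The ratios $c_{\nu^{i-1}}/c_{\nu^i}$ telescope to $c_\varnothing/c_\nu=1/c_\nu$. The recursion terminates at $\K_{\varnothing,\varnothing}=1$, since $c_\varnothing=1$ and $S_\varnothing=Q_\varnothing=1$.

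Alternatively, pairing \eqref{e:decom-S2} with $Q_\nu$ and applying $q_{\nu_N}^{\perp},\dots,q_{\nu_1}^{\perp}$ in turn via Proposition~\ref{p:q*Q} gives the product of the factors $Q_{\nu^i/\nu^{i-1}}[1-q;q,t]$ over the flag. Since the $q_k$ commute, the order of application is immaterial.
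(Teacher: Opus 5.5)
Your proposal is correct and follows essentially the same route as the paper. You pair the ribbon-removal decomposition \eqref{e:decom-S1} with $Q_\nu$, move $q_k$ across via its adjoint, apply Proposition~\ref{p:q*Q}, and iterate (equivalently, use \eqref{e:decom-S2}), with the ratios $c_{\nu^{i-1}}/c_{\nu^i}$ telescoping to $1/c_\nu$.
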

\begin{proof}
    \eqref{e:skew-inv-K-iterative} holds by the following calculations:
    \begin{align*}
    &\K_{\la/\mu,\nu}(q,t)\\
    =&\frac{1}{c_{\nu}(q,t)c^{'}_{\nu}(q,t)}
    \left\langle S_{\la/\mu}(X;t), J_{\nu}(X;q,t) \right\rangle_{q,t}\\
    =&\frac{1}{c_{\nu}(q,t)}
    \left\langle \sum_{\xi}(-1)^{\rht((\la/\mu)/\xi)}
    q_{|\la|-|\mu|-|\xi|}(X;t)S_{\xi}(X;t), Q_{\nu}(X;q,t) \right\rangle_{q,t}\\
    &\hspace{7.5cm}\text{(by \eqref{e:decom-S1})}\\
    =&\frac{1}{c_{\nu}(q,t)}\sum_{\xi}(-1)^{\rht((\la/\mu)/\xi)}
    \left\langle S_{\xi}(X;t),
    q_{|\la|-|\mu|-|\xi|}^{\perp_{q,t}}(X;t)Q_{\nu}(X;q,t) \right\rangle_{q,t} \\
    =&\frac{1}{c_{\nu}(q,t)}\sum_{\xi}(-1)^{\rht((\la/\mu)/\xi)}
    \left\langle S_{\xi}(X;t),
    \sum_{\rho\subset_{|\la|-|\mu|-|\xi|}\nu}
    Q_{\nu/\rho}[1-q;q,t]Q_{\rho}(X;q,t) \right\rangle_{q,t}\\
    &\hspace{7.5cm}\text{(by Prop. \ref{p:q*Q})}\\
    =&\sum_{\xi,\rho}(-1)^{\rht((\la/\mu)/\xi)}
    \frac{c_{\rho}(q,t)}{c_{\nu}(q,t)}
    Q_{\nu/\rho}[1-q;q,t]\K_{\xi,\rho}(q,t)   
    \end{align*}
    summed over all skew diagrams $\xi$ obtained by removing a special ribbon from $\la/\mu$ and partitions $\rho\subset\nu$ with $|\rho|=|\xi|$.
    Equation \eqref{e:skew-inv-K-iterative2} is obtained by \eqref{e:decom-S2} and using Prop. \ref{p:q*Q} repeatedly.  
\end{proof}

\begin{exmp}\label{exmp:(2,1)(1)(2)}
  We consider the partitions $\la=(2,1)$, $\mu=(1)$, and $\nu=(2)$. The skew diagram $\la/\mu$ consists of two disconnected boxes: one at position $(1,2)$ (the second box in the first row) and one at position $(2,1)$ (the first box in the second row).
  
  According to the definition, a special ribbon $R=(\la/\mu)/\xi$ must start at the southwest-most box of $\la/\mu$, which is the box at $(2,1)$. Since $\la/\mu$ is disconnected, $R$ cannot extend to the box at $(1,2)$. Thus, the unique special ribbon $R$ consists of the single box at $(2,1)$. We have $|R|=1$, $\rht(R)=0$, and the remaining shape $\xi$ is the single box at $(1,2)$, which is isomorphic to the partition $(1)$.
  
  We apply Theorem \ref{t:skew-inv-K_iterative}. We sum over partitions $\rho \subset \nu=(2)$ with $|\rho|=|\xi|=1$. The only such partition is $\rho=(1)$. The recurrence relation \eqref{e:skew-inv-K-iterative} yields a single term:
  \begin{equation*}
    \K_{(2,1)/(1),(2)}(q,t) = (-1)^0 \frac{c_{(1)}(q,t)}{c_{(2)}(q,t)} Q_{(2)/(1)}[1-q;q,t] \K_{(1),(1)}(q,t).
  \end{equation*}
  We compute the components as follows:
  \begin{itemize}
    \item The Macdonald denominators are $c_{(1)}(q,t) = 1-t$ and $c_{(2)}(q,t) = (1-qt)(1-t)$.
    \item By the branching rule, $Q_{(2)/(1)} = Q_{(1)}$. Its plethystic evaluation is given by $Q_{(1)}[1-q;q,t] = 1-t$.
    \item The base case of the inverse $(q,t)$-Kostka coefficient is $\K_{(1),(1)}(q,t) = 1$.
  \end{itemize}
  Substituting these values, we obtain:
  \begin{align*}
    \K_{(2,1)/(1),(2)}(q,t) &= \frac{1-t}{(1-qt)(1-t)} \cdot (1-t) \cdot 1 = \frac{1-t}{1-qt}.
  \end{align*}
  This example demonstrates that $\K_{\la/\mu,\nu}(q,t)$ is, in general, a rational function of $q$ and $t$, rather than a polynomial.
\end{exmp}

The preceding example shows that the skew inverse $(q,t)$-Kostka coefficient
$\K_{\lambda/\mu,\nu}(q,t)$ is not, in general, a polynomial in $q$ and $t$.
Nevertheless, after a natural normalization one obtains a polynomial modification.

\begin{prop}\label{p:modified-skew-inv-qt-K}
For partitions $\mu\subset\lambda$ and $\nu$, define
\[
\widetilde{\K}_{\lambda/\mu,\nu}(q,t)
:=c_{\nu}(q,t)c'_{\nu}(q,t)\K_{\lambda/\mu,\nu}(q,t).
\]
Then
\[
\widetilde{\K}_{\lambda/\mu,\nu}(q,t)\in \mathbb Z[q,t].
\]
\end{prop}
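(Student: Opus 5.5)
Since $\K_{\lambda/\mu,\nu}=\frac{1}{c_\nu c'_\nu}\langle S_{\lambda/\mu},J_\nu\rangle_{q,t}$, we have
\[
\widetilde{\K}_{\lambda/\mu,\nu}(q,t)=\bigl\langle S_{\lambda/\mu}(X;t),\,J_\nu(X;q,t)\bigr\rangle_{q,t},
\]
so it suffices to show that this pairing lies in $\mathbb Z[q,t]$. We will compute it in a pair of bases that are dual up to an explicit factor. By \eqref{e:decom-S2}, $S_{\lambda/\mu}(X;t)$ is a $\mathbb Z$-linear combination of the products $q_\tau(X;t)$, with $\tau$ ranging over compositions and coefficients $\sgn(\T)\in\{\pm1\}$. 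The pairing is symmetric in the order of factors, so $q_\tau=q_{\tau^\downarrow}$. It therefore suffices to prove
\[
\langle q_\tau(X;t),J_\nu(X;q,t)\rangle_{q,t}\in\mathbb Z[q,t]
\]
for every partition $\tau$.

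For this, use the dual basis of $\{q_\tau(X;t)\}$ under $\langle\cdot,\cdot\rangle_{q,t}$. The kernel identity
\[
\sum_\tau q_\tau[X;t]\,m_\tau\!\left[\tfrac{Y}{1-q}\right]=\prod_n\exp\!\Bigl(\tfrac{1-t^n}{n(1-q^n)}p_n[X]p_n[Y]\Bigr)
\]
follows from $\sum_\tau h_\tau[A]m_\tau[B]=\sigma[AB]$ with $A=(1-t)X$ and $B=Y/(1-q)$. By the characterization used in the proof of Lemma~\ref{l:Phi-s}, the dual basis of $q_\tau$ is therefore $m_\tau[X/(1-q)]$. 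Consequently, $\langle q_\tau,J_\nu\rangle_{q,t}$ is the coefficient of $m_\tau[X/(1-q)]$ in $J_\nu$. Equivalently, expanding $J_\nu[(1-q)X;q,t]=\sum_\tau a_{\tau\nu}(q,t)\,m_\tau(X)$, the coefficient $a_{\tau\nu}$ is the desired pairing.

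The main step, and the main obstacle, is the integrality of $J_\nu[(1-q)X;q,t]$ in the monomial basis. Note that $J_\nu[X(1-q)]$ is the plethystically modified integral form; its expansion in $S_\lambda[X;q]=s_\lambda[X(1-q)]$ has coefficients given by Kostka-type polynomials. The cleaner route is Macdonald's integrality theorem (\cite[VI (8.11)]{Mac}): $J_\nu(X;q,t)=\sum_\lambda K_{\lambda\nu}(q,t)S_\lambda(X;t)$ with $K_{\lambda\nu}\in\mathbb Z[q,t]$. Then
\[
\langle q_\tau,J_\nu\rangle_{q,t}=\sum_\lambda K_{\lambda\nu}(q,t)\,\langle q_\tau,S_\lambda(X;t)\rangle_{q,t}.
\]
Here $S_\lambda(X;t)=\sum_\rho K^{-1}_{\rho\lambda}$-type combinations of $q_\rho$ have integer coefficients: by \eqref{e:decom-S2} with $\mu=\varnothing$, $S_\lambda=\sum_{\T}\sgn(\T)q_{\mathrm{type}(\T)}$. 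So it remains to show that the Gram matrix $\langle q_\tau,q_\rho\rangle_{q,t}$ is polynomial. It is: by the kernel, $\langle q_\tau,q_\rho\rangle_{q,t}$ equals the coefficient pairing of $h_\tau[(1-t)X]$ with $h_\rho[(1-q)X]$ under the Hall product. This is a sum over nonnegative integer matrices with row sums $\tau$ and column sums $\rho$ of products $h_{a}[(1-t)(1-q)]$-type terms, and each $h_a[(1-t)(1-q)]\in\mathbb Z[q,t]$.

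If this Gram-matrix computation becomes awkward, we will fall back on the alternative: directly cite integrality of $J_\nu[X(1-q)]$ in the Schur basis (the coefficients are the $K_{\lambda\nu}(q,t)$ with $q,t$ roles per Macdonald's convention), combined with integrality of $\langle q_\tau,\Phi\rangle$. Finally, summing over $\T$ with signs $\pm1$ gives $\widetilde{\K}_{\lambda/\mu,\nu}(q,t)\in\mathbb Z[q,t]$.
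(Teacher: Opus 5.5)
Your argument is correct, but the route you actually complete differs from the paper's.

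The paper proves the bridge identity $\langle f[(1-t)X],g\rangle_{q,t}=\langle f,g[(1-q)X]\rangle_{0,0}$. This is the same fact as your observation that $m_\tau[X/(1-q)]$ is the $\langle\cdot,\cdot\rangle_{q,t}$-dual of $q_\tau$. It gives $\widetilde{\K}_{\lambda/\mu,\nu}=\langle s_{\lambda/\mu},J_\nu[(1-q)X;q,t]\rangle_{0,0}$. From there the paper needs only three inputs:
\begin{itemize}
\item $J_\nu\in\Lambda_{\mathbb Z}\otimes\mathbb Z[q,t]$;
\item $X\mapsto(1-q)X$ preserves this lattice, because $h_r[(1-q)X]=q_r(X;q)$;
\item integrality of Littlewood--Richardson coefficients.
\end{itemize}

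So what you call the main obstacle is a one-line consequence of the integrality of $J_\nu$. With it, your first route is already complete. It is the paper's proof with the pair $(q_\tau,m_\tau[X/(1-q)])$ and the special-ribbon expansion replacing Schur functions and the Littlewood--Richardson expansion.

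Your second route writes $J_\nu=\sum_\rho K_{\rho\nu}(q,t)S_\rho(X;t)$. It then checks that $\langle q_\tau,q_\rho\rangle_{q,t}=\langle h_\tau,h_\rho[(1-q)(1-t)X]\rangle_{0,0}$ is a sum over nonnegative integer matrices of products of $h_a[(1-q)(1-t)]\in\mathbb Z[q,t]$. This is valid. It also yields the more explicit formula $\widetilde{\K}_{\lambda/\mu,\nu}=\sum_\rho K_{\rho\nu}(q,t)\langle S_{\lambda/\mu},S_\rho\rangle_{q,t}$, with every pairing polynomial. The cost is a formally stronger input. Polynomiality of all $K_{\rho\nu}(q,t)$ implies $J_\nu\in\Lambda_{\mathbb Z}\otimes\mathbb Z[q,t]$, but the converse is not formal, since $K_{\rho\nu}=\langle J_\nu[X/(1-t)],s_\rho\rangle_{0,0}$ involves dividing by $1-t$.

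Two corrections follow.
\begin{itemize}
\item Your citation for the second route is wrong. Macdonald's (8.11) in Chapter VI only defines $K_{\lambda\nu}(q,t)$. Their polynomiality is left there as a conjecture and was proved later (Garsia--Tesler, Knop, Sahi, Kirillov--Noumi), so cite one of those.
\item Two side remarks are inaccurate, though unused. The coefficients of $J_\nu[(1-q)X]$ in the basis $s_\lambda[(1-q)X]$ are just the Schur coefficients of $J_\nu$, not $(q,t)$-Kostka polynomials. The fallback involving $\langle q_\tau,\Phi\rangle$ is not meaningful as written. Drop both.
\end{itemize}
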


\begin{proof}
Let $\langle\cdot,\cdot\rangle_{0,0}$ denote the ordinary Hall inner product on $\Lambda$,
characterized by
\[
\langle p_{\alpha},p_{\beta}\rangle_{0,0}=\delta_{\alpha,\beta}\,z_{\alpha}.
\]
We first claim that for any $f,g\in \Lambda(q,t)$,
\begin{equation}\label{e:bridge-inner-products}
\langle f[(1-t)X],g(X)\rangle_{q,t}
=
\langle f(X),g[(1-q)X]\rangle_{0,0}.
\end{equation}
Since both sides are bilinear and diagonal on the power-sum basis, it suffices to verify
\eqref{e:bridge-inner-products} for $f=p_{\alpha}$ and $g=p_{\beta}$. In this case,
\[
p_{\alpha}[(1-t)X]
=
\prod_{i=1}^{\ell(\alpha)}(1-t^{\alpha_i})\,p_{\alpha}(X),
\qquad
p_{\beta}[(1-q)X]
=
\prod_{i=1}^{\ell(\beta)}(1-q^{\beta_i})\,p_{\beta}(X).
\]
Hence
\begin{align*}
\langle p_{\alpha}[(1-t)X],p_{\beta}(X)\rangle_{q,t}
&=
\delta_{\alpha,\beta}\,
z_{\alpha}(q,t)\prod_{i=1}^{\ell(\alpha)}(1-t^{\alpha_i})\\
&=
\delta_{\alpha,\beta}\,
z_{\alpha}\prod_{i=1}^{\ell(\alpha)}\frac{1-q^{\alpha_i}}{1-t^{\alpha_i}}
\prod_{i=1}^{\ell(\alpha)}(1-t^{\alpha_i})\\
&=
\delta_{\alpha,\beta}\,
z_{\alpha}\prod_{i=1}^{\ell(\alpha)}(1-q^{\alpha_i})\\
&=
\langle p_{\alpha}(X),p_{\beta}[(1-q)X]\rangle_{0,0}.
\end{align*}
This proves \eqref{e:bridge-inner-products}.

Now, by the definition of $\K_{\lambda/\mu,\nu}(q,t)$ and the identity
$S_{\lambda/\mu}(X;t)=s_{\lambda/\mu}[(1-t)X]$, we obtain
\begin{align*}
\widetilde{\K}_{\lambda/\mu,\nu}(q,t)
&=
c_{\nu}(q,t)c'_{\nu}(q,t)\K_{\lambda/\mu,\nu}(q,t)\\
&=
\left\langle S_{\lambda/\mu}(X;t),J_{\nu}(X;q,t)\right\rangle_{q,t}\\
&=
\left\langle s_{\lambda/\mu}(X),J_{\nu}[(1-q)X;q,t]\right\rangle_{0,0},
\end{align*}
where the last equality follows from \eqref{e:bridge-inner-products}.

Since $J_{\nu}(X;q,t)$ is the integral Macdonald polynomial, we have
\[
J_{\nu}(X;q,t)\in \Lambda_{\mathbb Z}\otimes_{\mathbb Z}\mathbb Z[q,t]
\qquad\text{(see \cite[Ch.~VI, \S 8]{Mac})}.
\]
Moreover, the plethystic substitution $X\mapsto (1-q)X$ preserves
$\Lambda_{\mathbb Z}\otimes_{\mathbb Z}\mathbb Z[q,t]$.
Indeed, $\Lambda_{\mathbb Z}$ is generated by the complete symmetric functions, and by
\eqref{e:q-h} with $t=q$ we have
\[
h_r[(1-q)X]=q_r(X;q)\in \Lambda_{\mathbb Z}\otimes_{\mathbb Z}\mathbb Z[q]
\qquad (r\ge 0).
\]
Therefore
\[
J_{\nu}[(1-q)X;q,t]\in \Lambda_{\mathbb Z}\otimes_{\mathbb Z}\mathbb Z[q,t].
\]

Now expand
\[
s_{\lambda/\mu}(X)=\sum_{\rho}c^{\lambda}_{\mu,\rho}\,s_{\rho}(X),
\qquad
J_{\nu}[(1-q)X;q,t]=\sum_{\rho}a_{\nu\rho}(q,t)\,s_{\rho}(X),
\]
where $c^{\lambda}_{\mu,\rho}\in\mathbb Z_{\ge0}$ and
$a_{\nu\rho}(q,t)\in\mathbb Z[q,t]$.
Since the Schur basis is orthonormal for $\langle\cdot,\cdot\rangle_{0,0}$, it follows that
\[
\widetilde{\K}_{\lambda/\mu,\nu}(q,t)
=
\sum_{\rho}c^{\lambda}_{\mu,\rho}\,a_{\nu\rho}(q,t)\in \mathbb Z[q,t].
\]
This proves the proposition.
\end{proof}

\begin{rem}\label{r:modified-skew-inv-qt-K}
The normalization in Proposition~\ref{p:modified-skew-inv-qt-K} is universal, but it is
not minimal in general.
Indeed, Example~\ref{exmp:(2,1)(1)(2)} gives
\[
\K_{(2,1)/(1),(2)}(q,t)=\frac{1-t}{1-qt}.
\]
Hence
\[
c_{(2)}(q,t)\K_{(2,1)/(1),(2)}(q,t)
=
(1-t)^2\in\mathbb Z[q,t],
\]
whereas
\[
c'_{(2)}(q,t)\K_{(2,1)/(1),(2)}(q,t)
=
\frac{(1-q)(1-q^2)(1-t)}{1-qt}\notin \mathbb Z[q,t].
\]
Thus $c'_{\nu}(q,t)$ alone does not clear denominators in general.
Note that $c_{\nu}(q,t)$ alone is not sufficient either. For instance, we choose $\la=(2)$, $\mu=\varnothing$, and $\nu=(1,1)$. Then 
\[
\K_{(2),(1,1)}(q,t)=-\frac{q}{1-qt}.
\]
Consequently,
\[
c_{(1,1)}(q,t)\K_{(2),(1,1)}(q,t)
=
-\frac{q(1-t)(1-t^2)}{1-qt}\notin \mathbb Z[q,t].
\]
It would be interesting to find the minimal denominator $M(\la/\mu,\nu;q,t)$ such that $$M(\la/\mu,\nu;q,t)\K_{\la/\mu,\nu}(q,t)\in\mathbb Z[q,t].$$ Such $M(\la/\mu,\nu;q,t)$ is highly likely to rely on both $\la/\mu$ and $\nu$ simultaneously. 
\end{rem}



As we can see, Theorem \ref{t:skew-inv-K_iterative} involves the plethysms $Q_{\la/\mu}[1-q;q,t]$. Now we will give an explicit combinatorial formula for the coefficient $Q_{\la/\mu}[1-q;q,t]$. To proceed this, we need more notations.

The infinite (finite) $q$-shifted factorials in base $q$ are defined by
\begin{align*}
(x;q)_{\infty}=\prod_{k=0}^{\infty}(1-xq^k), \quad (x;q)_{n}=\prod_{k=0}^{n-1}(1-xq^k).
\end{align*} 
For a skew diagram
$\la/\mu$, denote\footnote{The factorized form of $\sk_{\la/\mu}(q,t)$ can be found in \cite[(1.8)]{War13}, \cite[p.~173, Remark~2]{Rai06}, and \cite[Prop.~3.2]{War05}.},
\begin{align*}
f(u)&=\frac{(tu;q)_{\infty}}{(qu;q)_{\infty}}, \quad \varphi_{\la/\mu}(q,t)=\prod_{1\leq i\leq j\leq \ell(\la)}\frac{f(q^{\la_i-\la_j}t^{j-i})f(q^{\mu_i-\mu_{j+1}}t^{j-i})}{f(q^{\la_i-\mu_j}t^{j-i})f(q^{\mu_i-\la_{j+1}}t^{j-i})},\\
&\sk_{\la/\mu}(q,t)=Q_{\la/\mu}\left(\frac{1-q/t}{1-t};q,t\right)=t^{n(\la)-n(\mu)}\prod_{i,j=1}^{\ell(\la)}\frac{(qt^{j-i-1};q)_{\la_i-\mu_j}(qt^{j-i};q)_{\mu_i-\mu_j}}
{(qt^{j-i-1};q)_{\mu_i-\mu_j}(qt^{j-i};q)_{\la_i-\mu_j}},\\
&\psi'_{\la/\mu}(q,t)=\prod_{(i,j)}\frac{(1-q^{\mu_i-\mu_j}t^{j-i-1})(1-q^{\la_i-\la_j}t^{j-i+1})}{(1-q^{\mu_i-\mu_j}t^{j-i})(1-q^{\la_i-\la_j}t^{j-i})},
\end{align*}
where the product is taken over all pairs $(i,j)$ such that $i<j$ and $\la_i=\mu_i$, $\la_j=\mu_j+1$. Note that
$\varphi_{\la/\mu}(q,t)$ (resp. $\psi'_{\la/\mu}(q,t)$) is zero unless $\la/\mu$ is a horizontal (resp. vertical) strip.

It is well known that \cite[p. 346, (7.14)]{Mac}
\begin{align}\label{e:Q(1)}
Q_{\la/\nu}[1;q,t]=
\begin{cases}
\varphi_{\la/\nu}(q,t)&\text{if $\la/\nu$ is a horizontal strip},\\
0&\text{otherwise},
\end{cases}
\end{align}
whereas we have \cite[p. 524]{War13}
\begin{align}\label{e:Q(-1)}
Q_{\nu/\mu}[-1;q,t]=\sum_{\eta}(-1)^{|\nu/\eta|}t^{|\eta/\mu|}\psi'_{\nu/\eta}(q,t)\sk_{\eta/\mu}(q,t),
\end{align}
summed over all partitions $\eta$ such that $\mu\subset\eta\subset\nu$ and $\nu/\eta$ is a vertical strip.

Summarizing \eqref{e:Q(1)} and \eqref{e:Q(-1)}, we have
\begin{align}\label{e:Q(1-q)}
\begin{split}
    &Q_{\la/\mu}[1-q;q,t]\\
    =&\sum_{\mu\subset\nu\subset\la}Q_{\la/\nu}[1;q,t]Q_{\nu/\mu}[-q;q,t]\quad \text{(by sum rule)}\\
    =&\sum_{\mu\subset\nu\subset^{\rm h}\la} \varphi_{\la/\nu}(q,t)q^{|\nu/\mu|}\sum_{\mu\subset\eta\subset^{\rm v}\nu}(-1)^{|\nu/\eta|}t^{|\eta/\mu|}\psi^{'}_{\nu/\eta}(q,t)\sk_{\eta/\mu}(q,t)\quad \text{(by \eqref{e:Q(1)} and \eqref{e:Q(-1)})}\\
    =&\sum_{\mu\subset\eta\subset^{\rm v}\nu\subset^{\rm h}\la}(-1)^{|\nu/\eta|}t^{|\eta/\mu|}q^{|\nu/\mu|}\varphi_{\la/\nu}(q,t)\psi^{'}_{\nu/\eta}(q,t)\sk_{\eta/\mu}(q,t)
    \end{split}
\end{align}
where $\eta\subset^{\rm v}\nu$ (resp. $\nu\subset^{\rm h}\la$) means $\eta\subset\nu$ (resp. $\nu\subset\la$) and $\nu/\eta$ (resp. $\la/\nu$) is a vertical strip (resp. horizontal strip).

\section{\texorpdfstring{Skew inverse $t$-Kostka coefficients}{Skew inverse t-Kostka coefficients}} The skew inverse $t$-Kostka coefficient $\K_{\la/\mu,\nu}(t)$ is defined by $\K_{\la/\mu,\nu}(t):=\K_{\la/\mu,\nu}(0,t)$, i.e.,
\begin{align}
   S_{\la/\mu}(X;t)=\sum_{\nu}\K_{\la/\mu,\nu}(t)Q_{\nu}(X;t).
\end{align}
Here $Q_{\nu}(X;t)$ is the Hall-Littlewood $Q$-function. Let $q=0$. Then \eqref{e:skew-inv-K-iterative} and \eqref{e:skew-inv-K-iterative2} reduce to, respectively, 
\begin{itemize}
    \item 
   \begin{align}\label{e:skew-inv-K(t)-iterative1}
     \K_{\la/\mu,\nu}(t)=\sum_{\xi,\rho}(-1)^{\rht((\la/\mu)/\xi)}\frac{b_{\rho}(t)}{b_{\nu}(t)}Q_{\nu/\rho}[1;t]\K_{\xi,\rho}(t)   
    \end{align}
    summed over all skew diagrams $\xi$ obtained by removing a special ribbon from $\la/\mu$ and partitions $\rho\subset\nu$ with $|\rho|=|\xi|$.
    \item  \begin{align}\label{e:skew-inv-K(t)-iterative2}
        \K_{\la/\mu,\nu}(t)=\frac{1}{b_{\nu}(t)}\sum_{(\T,\{\nu\})}\sgn(\T)\prod_{i}Q_{\nu^i/\nu^{i-1}}[1;t]
    \end{align}
    summed over all pairs of SRTs $\T$ of shape $\la/\mu$ and flags of partition $\{\nu\}$ of shape $\nu$ with the same type.
\end{itemize}  

We have the following facts
\begin{enumerate}
    \item According to \cite[Ch. III (5.14)]{Mac}
\begin{align}
Q_{\nu/\rho}[1;t]=
\begin{cases}
\varphi_{\nu/\rho}(t)&\text{if $\nu/\rho$ is a horizontal strip},\\
0&\text{otherwise},
\end{cases}
\end{align}
where $\varphi_{\nu/\rho}(t)=\prod_{i\in I}(1-t^{m_i(\nu)})$ in which $I$ is the set of integers $i\geq1$ such that $\nu^t_i-\rho^t_{i}=1$ and $\nu^t_{i+1}-\rho^t_{i+1}=0$.
\item Furthermore, for a flag of partition $\{\nu\}$ of shape $\nu$ and type $\tau$,
\begin{align}
    \prod_{i}Q_{\nu^i/\nu^{i-1}}[1;t]=
    \begin{cases}
\varphi_{T}(t)&\text{if $\{\nu\}$ is a SSYT of shape $\nu$ and weight $\tau$},\\
0&\text{otherwise},
\end{cases}
\end{align}
where $\varphi_{T}(t):=\prod_{i}\varphi_{\nu^i/\nu^{i-1}}(t)$.
\item It follows from \cite[Ch III (5.12)]{Mac} that
\begin{align}
    \varphi_{\nu/\rho}(t)\frac{b_{\rho}(t)}{b_{\nu}(t)}=\psi_{\nu/\rho}(t)
\end{align}
where $\psi_{\nu/\rho}(t)=\prod_{j\in J}(1-t^{m_j(\rho)})$ in which $J$ is the set of integers $j\geq1$ such that $\nu^t_j=\rho^t_{j}$ and $\nu^t_{j+1}-\rho^t_{j+1}=1$. Moreover,
\begin{align}\label{e:def-psi}
    \frac{\varphi_{T}(t)}{b_{\nu}(t)}=\psi_{T}(t)
\end{align}
where $T=(\varnothing=\nu^0\subset\cdots\subset\nu^N=\nu)$ is a SSYT of shape $\nu$ and $\psi_{T}(t):=\prod_{i}^N \psi_{\nu^i/\nu^{i-1}}(t)$
\end{enumerate}
With \eqref{e:skew-inv-K(t)-iterative1}, \eqref{e:skew-inv-K(t)-iterative2} and these facts, we have the combinatorial formulas as follows.
\begin{thm}\label{t:skew-inv-K(t)-iterative}
    For partitions $\mu\subset\la$ and $\nu$ with $|\la/\mu|=|\nu|$,
    \begin{align}\label{e:skew-inv-K(t)-iterative3}
     \K_{\la/\mu,\nu}(t)=\sum_{\xi,\rho}(-1)^{\rht((\la/\mu)/\xi)}\psi_{\nu/\rho}(t)\K_{\xi,\rho}(t)  
    \end{align}
    summed over all skew diagrams $\xi$ obtained by removing a special ribbon from $\la/\mu$ and partitions $\rho\subset^{\rm h}\nu$ with $|\rho|=|\xi|$. And
     \begin{align}\label{e:skew-inv-K(t)-iterative4}
        \K_{\la/\mu,\nu}(t)=\sum_{(\T,T)}\sgn(\T)\psi_{T}(t)
    \end{align}
    summed over all pairs of SRTs $\T$ of shape $\la/\mu$ and SSYTs $T$ of shape $\nu$ such that ${\rm type}(\T)=\wt(T)$.
\end{thm}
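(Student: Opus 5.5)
The plan is to specialize Theorem~\ref{t:skew-inv-K_iterative} at $q=0$, where the two displayed reductions \eqref{e:skew-inv-K(t)-iterative1} and \eqref{e:skew-inv-K(t)-iterative2} are already recorded, and then use the three listed facts to simplify the coefficients. At $q=0$ we have $c_\nu(0,t)=\prod_{s\in\nu,\,a(s)=0}(1-t^{l(s)+1})=b_\nu(t)$ and $Q_{\nu/\rho}[1-q;0,t]=Q_{\nu/\rho}[1;t]$. We should check once that the Macdonald skew $Q$ at $q=0$ is the Hall--Littlewood skew $Q$; this holds because the $(q,t)$-pairing degenerates to the $t$-pairing and $P_\la(X;0,t)=P_\la(X;t)$. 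Granting this, \eqref{e:skew-inv-K(t)-iterative1} and \eqref{e:skew-inv-K(t)-iterative2} follow directly from \eqref{e:skew-inv-K-iterative} and \eqref{e:skew-inv-K-iterative2}.

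For \eqref{e:skew-inv-K(t)-iterative3}, we take a term of \eqref{e:skew-inv-K(t)-iterative1}. By fact~(1), $Q_{\nu/\rho}[1;t]$ vanishes unless $\nu/\rho$ is a horizontal strip, and then it equals $\varphi_{\nu/\rho}(t)$. This restricts the sum to $\rho\subset^{\rm h}\nu$. Fact~(3) then gives $\frac{b_\rho(t)}{b_\nu(t)}\varphi_{\nu/\rho}(t)=\psi_{\nu/\rho}(t)$, which yields \eqref{e:skew-inv-K(t)-iterative3}.

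For \eqref{e:skew-inv-K(t)-iterative4}, we use \eqref{e:skew-inv-K(t)-iterative2}. By fact~(2), the product $\prod_i Q_{\nu^i/\nu^{i-1}}[1;t]$ is nonzero only when each step of the flag is a horizontal strip. A flag of horizontal strips of type $\tau$ is the same thing as an SSYT $T$ of shape $\nu$ and weight $\tau$, obtained by filling $\nu^i/\nu^{i-1}$ with $i$. The matching condition ``same type'' therefore becomes ${\rm type}(\T)=\wt(T)$, and the product equals $\varphi_T(t)$. Dividing by $b_\nu(t)$ and applying \eqref{e:def-psi} gives $\psi_T(t)$, which proves the formula.

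One subtlety is that compositions of $\tau$ may contain zero parts. An SRT step of size $0$ would correspond to an empty ribbon, but SRT types are compositions with positive parts, so this case does not arise. The other point to handle is the identity $\varphi_T/b_\nu=\psi_T$. It follows by telescoping: $\prod_i b_{\nu^{i-1}}/b_{\nu^i}=1/b_\nu$ because $b_\varnothing=1$. Alternatively, we can iterate \eqref{e:skew-inv-K(t)-iterative3} directly, which avoids this step altogether.

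The main obstacle is only the bookkeeping in the $q=0$ identification of $c_\nu$ and of the skew $Q$. Everything else is substitution.
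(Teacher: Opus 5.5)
Your proposal is correct and follows the paper's route exactly: specialize Theorem~\ref{t:skew-inv-K_iterative} at $q=0$, using $c_\nu(0,t)=b_\nu(t)$ and $Q_{\nu/\rho}[1-q;0,t]=Q_{\nu/\rho}[1;t]$. Then substitute the three listed facts: the horizontal-strip evaluation of $Q_{\nu/\rho}[1;t]$, the identification of horizontal-strip flags with SSYTs, and $\varphi_{\nu/\rho}\,b_\rho/b_\nu=\psi_{\nu/\rho}$. Your added justifications (the telescoping proof of $\varphi_T/b_\nu=\psi_T$, or alternatively iterating \eqref{e:skew-inv-K(t)-iterative3} directly) are sound and supply details the paper leaves implicit.
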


\begin{rem}
Wheeler and Zinn-Justin \cite{WZ18} gave a different combinatorial formula for $\K_{\la/\mu,\nu}(t)$ in terms of lattice tilings (analogous to Knutson--Tao puzzles \cite{KT03}). In particular, Carbonara \cite{Car98} interpreted the straight inverse $t$-Kostka coefficients using special tournament matrices.
\end{rem}

\begin{lem}\label{lem:special-horizontal-dominance}
Let $\xi,\rho,\lambda,\nu$ be partitions with $|\xi|=|\rho|$.
\begin{enumerate}
    \item If $\xi\le\rho$, $\lambda/\xi$ is a special ribbon, and $\nu/\rho$ is a horizontal strip with
    $|\lambda/\xi|=|\nu/\rho|$, then $\lambda\le\nu$.
    \item If $\lambda/\xi$ is a special ribbon and $\lambda/\rho$ is a horizontal strip, then $\rho\le\xi$.
\end{enumerate}
\end{lem}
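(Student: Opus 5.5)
Both parts reduce to comparing partial sums, since the dominance order $\le$ is defined by $\sum_{i\le r}\la_i\le\sum_{i\le r}\nu_i$ for all $r$. The key structural input is that a special ribbon of a straight shape $\la/\xi$ is the ribbon removed in \eqref{e:JT-expand}. It starts in the first column of the last row $m=\ell(\la)$, and removing it yields $\xi=(\la_1,\dots,\la_{i-1},\la_{i+1}-1,\dots,\la_m-1)$ for some $i$. So the ribbon occupies rows $i,\dots,m$, and $\xi$ agrees with $\la$ in rows $<i$. Its row lengths are $\xi_j=\la_{j+1}-1$ for $i\le j\le m-1$, and $\xi_m=0$. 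Hence for $r\ge i$ we get $\sum_{j\le r}\xi_j=\sum_{j\le r+1}\la_j-\la_i-(r-i+1)$. I will write $k=|\la/\xi|$.

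\emph{Part (2).} Set $P_r(\alpha)=\sum_{j\le r}\alpha_j$. Since $\la/\rho$ is a horizontal strip, $\rho$ interlaces $\la$: $\la_{j+1}\le\rho_j\le\la_j$. For $r<i$ we have $P_r(\xi)=P_r(\la)\ge P_r(\rho)$. For $r\ge i$ we must show $P_r(\rho)\le P_r(\xi)$. Since $|\rho|=|\xi|$, this is equivalent to comparing tails: $\sum_{j>r}\rho_j\ge\sum_{j>r}\xi_j$. The tail of $\xi$ is $\sum_{j=r+1}^{m-1}(\la_{j+1}-1)$. Interlacing gives $\rho_j\ge\la_{j+1}$, so the tail of $\rho$ is at least $\sum_{j=r+1}^{m-1}\la_{j+1}$, which dominates. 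Thus $\rho\le\xi$.

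\emph{Part (1).} I would use the transitivity chain $\la\le\xi+{}$(something)$\le\rho+{}$(something)$\le\nu$ through partial sums. For $\nu/\rho$ a horizontal strip of size $k$, $P_r(\nu)-P_r(\rho)$ is the number of added boxes in rows $\le r$. Interlacing gives $\nu_{j+1}\le\rho_j$, so at most $\rho$-constrained boxes lie below row $r$, which yields $P_r(\nu)\ge P_r(\rho)+k-\sum_{j>r}(\rho_{j-1}-\rho_j)$. A cleaner route is the standard fact that $P_r(\nu)\ge P_r(\rho)+\min(k,\rho_r-\rho_{r+1}+\dots)$. Instead I would aim directly for the inequality $P_r(\la)-P_r(\xi)\le P_r(\nu)-P_r(\rho)$ for every $r$, which together with $\xi\le\rho$ gives the claim. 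The left side is $0$ for $r<i$. For $r\ge i$ it equals $k-\sum_{j>r}(\text{ribbon boxes in row }j)=k-\sum_{j=r+1}^{m}(\la_j-\xi_j)$. The right side equals $k-(\text{strip boxes below row }r)$, and the strip boxes below row $r$ number at most $\nu_{r+1}\le\rho_r$.

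\emph{Main obstacle.} The main obstacle is this last comparison, because the ribbon and strip boxes below row $r$ are not obviously comparable without using $\xi\le\rho$. I would handle it by combining the two bounds: the ribbon boxes below row $r$ in the last column segment satisfy $\sum_{j>r}(\la_j-\xi_j)=\la_{r+1}+(m-r-1)-\dots$. If needed I would instead prove (1) by induction on $k$, adding one box at a time. A horizontal strip is built by successive single-box additions in weakly east-moving columns, each increasing dominance relative to the ribbon cell added to the lowest possible position, and the special ribbon is the most "southern" way to add $k$ boxes. This uses that adding a box to a lower row yields a dominance-smaller result.
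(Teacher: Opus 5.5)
Your setup and Part (2) are correct and coincide with the paper's argument. Let $i$ be the top row of the special ribbon. Then $\xi_j=\la_j$ for $j<i$ and $\xi_j\le\la_{j+1}$ for $j\ge i$. For $r\ge i$ you compare tails using $\rho_j\ge\la_{j+1}$ and $|\rho|=|\xi|$.

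Part (1), however, has a genuine gap. The inequality you propose to prove, $P_r(\la)-P_r(\xi)\le P_r(\nu)-P_r(\rho)$ for every $r$, is false under the hypotheses of the lemma. So no additional use of $\xi\le\rho$ can establish it. Here is a counterexample.
\begin{itemize}
\item Take $\la=(10,10,10)$ and $\xi=(9,9)$, so that $\la/\xi$ is the special ribbon with top row $1$, of size $k=12$.
\item Take $\rho=(12,6)$ and $\nu=(12,12,6)$.
\item Then $|\xi|=|\rho|=18$ and $\xi\le\rho$.
\item Also $\nu/\rho$ is a horizontal strip of size $12$ lying entirely in rows $2$ and $3$.
\item At $r=1$ your left side is $1$ and your right side is $0$.
\end{itemize}
The conclusion $\la\le\nu$ still holds here, because the slack $P_1(\rho)-P_1(\xi)=3$ absorbs the deficit.

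Your fallback induction on $k$ is not an argument as stated. No principle is given for comparing intermediate stages of a special ribbon with those of a horizontal strip built on a different partition. The example also shows that the strip can put fewer boxes than the ribbon in the top rows. So the heuristic that the special ribbon is ``the most southern way to add $k$ boxes'' does not apply when the two shapes are added to different bases.

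The missing idea is to use dominance at index $r-1$ rather than at index $r$. You actually wrote down the right bound and then abandoned it. The strip boxes below row $r$ number at most $\sum_{j>r}(\rho_{j-1}-\rho_j)=\rho_r$, which gives
\[
P_r(\nu)\ \ge\ P_r(\rho)+k-\rho_r\ =\ P_{r-1}(\rho)+k\ \ge\ P_{r-1}(\xi)+k .
\]
For $r\ge i$, the ribbon structure ($\xi_j\le\la_{j+1}$ for $j\ge i$) gives $\sum_{j\ge r}\xi_j\le\sum_{j>r}\la_j$. Since $k=|\la|-|\xi|$, this is exactly $P_r(\la)\le P_{r-1}(\xi)+k$, and chaining the two bounds yields $P_r(\la)\le P_r(\nu)$. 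For $r<i$, use $P_r(\la)=P_r(\xi)\le P_r(\rho)\le P_r(\nu)$. This is precisely the paper's tail chain $\sum_{j>r}\nu_j\le\sum_{j\ge r}\rho_j\le\sum_{j\ge r}\xi_j\le\sum_{j>r}\la_j$, followed by $|\la|=|\nu|$.
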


\begin{proof}
We pad all partitions with zeros. Let $a$ be the top row met by the special ribbon $\lambda/\xi$.
Then
\[
\xi_j=\lambda_j\quad(j<a),
\qquad
\xi_j\le\lambda_{j+1}\quad(j\ge a).
\]
We also use the standard characterization of horizontal strips: if $\beta/\alpha$ is a horizontal strip, then
\[
\beta_{j+1}\le \alpha_j\qquad(j\ge1).
\]

For (1), if $r<a$, then
\[
\sum_{j\le r}\lambda_j=\sum_{j\le r}\xi_j\le \sum_{j\le r}\rho_j\le \sum_{j\le r}\nu_j.
\]
If $r\ge a$, then the horizontal-strip condition and $\xi\le\rho$ give
\[
\sum_{j>r}\nu_j\le \sum_{j\ge r}\rho_j\le \sum_{j\ge r}\xi_j
\le \sum_{j>r}\lambda_j.
\]
Since $|\lambda|=|\nu|$, this is equivalent to
$\sum_{j\le r}\lambda_j\le\sum_{j\le r}\nu_j$.
Thus $\lambda\le\nu$.

For (2), if $r<a$, then
\[
\sum_{j\le r}\rho_j\le \sum_{j\le r}\lambda_j=\sum_{j\le r}\xi_j.
\]
If $r\ge a$, then the horizontal-strip condition for $\lambda/\rho$ gives
\[
\sum_{j>r}\rho_j\ge \sum_{j>r}\lambda_{j+1}\ge \sum_{j>r}\xi_j.
\]
Since $|\rho|=|\xi|$, this is equivalent to
$\sum_{j\le r}\rho_j\le\sum_{j\le r}\xi_j$.
Hence $\rho\le\xi$.
\end{proof}

It is well known that the $t$-Kostka matrix $(K_{\la,\nu}(t))_{\la,\nu}$ is strictly upper unitriangular, i.e., $K_{\la,\la}=1$ and $K_{\la,\nu}(t)=0$ unless $\la\ge\nu$ in the dominance order, i.e., $\sum_{i=1}^j\la_i\ge\sum_{i=1}^j\nu_i$ for all $j\geq1$. Thus, its inverse matrix is also strictly upper unitriangular. As an application of our formulas, now we give a combinatorial proof of this by Theorem \ref{t:skew-inv-K(t)-iterative}.

\begin{cor}\label{c:K(t)_properties}
    The inverse $t$-Kostka polynomials satisfy the following properties:
    \begin{enumerate}
    \item Integrality: $\K_{\la/\mu,\nu}(t)\in\mathbb Z[t]$.
        \item Support condition: $\K_{\la,\nu}(t) = 0$ unless $\la \le \nu$.
        \item Unit triangularity: $\K_{\la,\la}(t) = 1$.
    \end{enumerate}
    Consequently, with partitions ordered by dominance, the matrix $(\K_{\la,\nu}(t))_{\la,\nu}$ is unitriangular in the triangular direction opposite to the Kostka--Foulkes matrix under the notation convention used here\footnote{Note that we adopt the different notation of $\K_{\la,\nu}(t)$ from that in other literature which exchanges the positions of $\la$ and $\nu$.}.
\end{cor}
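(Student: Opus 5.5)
We prove all three properties by induction on $|\la/\mu|$, using the iterative formula \eqref{e:skew-inv-K(t)-iterative3} of Theorem~\ref{t:skew-inv-K(t)-iterative}. The base case $|\la/\mu|=0$ is immediate: then $\nu=\varnothing$ and $\K_{\varnothing,\varnothing}(t)=1$.

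\emph{Integrality.} Each coefficient $\psi_{\nu/\rho}(t)=\prod_{j\in J}(1-t^{m_j(\rho)})$ is a polynomial with integer coefficients, and each sign $(-1)^{\rht((\la/\mu)/\xi)}$ is $\pm1$. The formula expresses $\K_{\la/\mu,\nu}(t)$ as a finite $\mathbb Z[t]$-linear combination of $\K_{\xi,\rho}(t)$ with $|\xi|<|\la/\mu|$. Here $\xi$ may itself be a skew shape, so the induction is run over all skew shapes. Alternatively, the closed formula \eqref{e:skew-inv-K(t)-iterative4} gives integrality directly, since it is a signed sum of the polynomials $\psi_T(t)$.

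\emph{Support condition.} We work with straight shapes $\la$, where $\mu=\varnothing$. Removing a special ribbon from a straight shape $\la$ leaves a straight partition $\xi$. Indeed, the southwest-most box of $\la$ is the first box of its last row, and a ribbon beginning there and running along the rim leaves a partition; this is precisely the shape $\la^{(i)}$ in \eqref{e:JT-expand}. So the induction stays within straight shapes. Suppose $\K_{\la,\nu}(t)\neq0$. Then some term in \eqref{e:skew-inv-K(t)-iterative3} is nonzero, so there exist $\xi,\rho$ with $\la/\xi$ a special ribbon, $\nu/\rho$ a horizontal strip, $|\la/\xi|=|\nu/\rho|$, and $\K_{\xi,\rho}(t)\neq0$. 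By induction $\xi\le\rho$, and Lemma~\ref{lem:special-horizontal-dominance}(1) then gives $\la\le\nu$.

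\emph{Unit triangularity.} Set $\nu=\la$ in \eqref{e:skew-inv-K(t)-iterative3}. A nonzero term requires $\xi\le\rho$ by the support condition just proved. Since $\la/\xi$ is a special ribbon and $\la/\rho$ is a horizontal strip, Lemma~\ref{lem:special-horizontal-dominance}(2) gives $\rho\le\xi$, hence $\rho=\xi$. Thus $\la/\xi$ is both a ribbon and a horizontal strip, so it lies in a single row and has height $0$. Being special, it starts at the first box of the last row. Therefore $\xi$ is $\la$ with its last row removed, the ribbon is that entire row, and the sum collapses to the single term
\[
\K_{\la,\la}(t)=\psi_{\la/\xi}(t)\,\K_{\xi,\xi}(t).
\]

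The main obstacle is to check that $\psi_{\la/\xi}(t)=1$ in this case. Let $k=\la_\ell$ be the last part, so the removed row occupies columns $1,\dots,k$. Then $\la^t_j-\xi^t_j=1$ for $j\le k$ and $0$ for $j>k$. The index set $J$ requires $\la^t_j=\xi^t_j$ together with $\la^t_{j+1}-\xi^t_{j+1}=1$, i.e.\ a column $j$ with no removed box followed by column $j+1$ with one. Since the removed boxes occupy an initial interval of columns, the only candidate is $j=0$, which is excluded. So $J=\varnothing$ and $\psi_{\la/\xi}(t)=1$. By induction $\K_{\xi,\xi}(t)=1$, hence $\K_{\la,\la}(t)=1$. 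Taken together with the support condition, this gives the stated unitriangularity of the matrix $(\K_{\la,\nu}(t))_{\la,\nu}$.
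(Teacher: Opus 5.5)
Your proof is correct and follows the paper's argument. Both proofs induct via the recurrence \eqref{e:skew-inv-K(t)-iterative3}, use Lemma~\ref{lem:special-horizontal-dominance}(1) for the support condition, and use part (2) to force $\rho=\xi=\la$ minus its bottom row for unit triangularity. You also make explicit three points the paper leaves implicit: integrality must be inducted over skew shapes (or read off from \eqref{e:skew-inv-K(t)-iterative4}), special-ribbon removal keeps straight shapes straight, and $J=\varnothing$, so $\psi_{\la/\xi}(t)=1$.
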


\begin{proof}
    We proceed by induction on the size $|\la|$. The base case $\la=\varnothing$ is trivial as $\K_{\varnothing,\varnothing}(t)=1$. Assume the properties hold for all shapes smaller than $|\la|$.

For the integrality, it is clear by using the recurrence \eqref{e:skew-inv-K(t)-iterative3}.
     
    For the support condition, the summation ranges over partitions $\rho \subset \nu$ such that $\nu/\rho$ is a horizontal strip and partitions $\xi$ obtained by removing a special ribbon $R$ from $\la$ (so $\la = \xi \cup R$). By the inductive hypothesis, $\K_{\xi,\rho}(t) \neq 0$ implies $\xi \le \rho$. Lemma~\ref{lem:special-horizontal-dominance}(1) then gives $\la\le\nu$. If $\la \not\le \nu$, every term in the summation vanishes.
    
    For unit triangularity, set $\nu = \la$. We examine the recurrence \eqref{e:skew-inv-K(t)-iterative3}:
    \begin{equation*}
        \K_{\la,\la}(t) = \sum_{\xi,\rho} (-1)^{\rht(R)} \psi_{\la/\rho}(t) \K_{\xi,\rho}(t),
    \end{equation*}
    where $R = \la/\xi$ is a special ribbon and $\la/\rho$ is a horizontal strip. For a non-vanishing contribution, we require $\xi \le \rho$ (by the support condition on $\K_{\xi,\rho}(t)$). Lemma~\ref{lem:special-horizontal-dominance}(2) gives $\rho\leq\xi$. Thus, we are forced to choose $\rho = \xi$, which implies $R$ is also a horizontal strip with $(-1)^{\rht(R)}=1$ and $\la/\rho$ is also a special ribbon starting from the first column with $\psi_{\la/\rho}(t)=1$. Since a horizontal special ribbon in the straight shape $\la$ must start at the southwest-most box, it is forced to be the entire bottom row of $\la$; hence the choice of $\xi=\rho$ is unique.
    Thus, the sum reduces to a single term with coefficient $1$:
        $\K_{\la,\la}(t)=\K_{\xi,\xi}(t)=1$ (by the inductive hypothesis).
\end{proof}

\begin{exmp}
    It follows from the support condition and the unit triangularity of $\K_{\la,\nu}(t)$ that
    \begin{enumerate}
        \item $\K_{(n),\nu}(t)=\delta_{\nu,(n)}$;
        \item $\K_{\la,(1^n)}(t)=\delta_{\la,(1^n)}$.
    \end{enumerate}
\end{exmp}
Now we explore more formulas for $\K_{\la,\nu}(t)$ in special cases by using our combinatorial formula in Theorem \ref{t:skew-inv-K(t)-iterative}.  

\begin{cor}\label{c:K-lambda-n-closed}
Let $\lambda\vdash n$. Then
\[
\K_{\lambda,(n)}(t)=
\begin{cases}
(-t)^{\,n-\lambda_1}, & \text{if $\lambda=(\la_1,1^{n-\la_1})$},\\
0, & \text{otherwise}.
\end{cases}
\]
\end{cor}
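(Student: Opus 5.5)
I will use the closed formula \eqref{e:skew-inv-K(t)-iterative4} of Theorem~\ref{t:skew-inv-K(t)-iterative} with $\mu=\varnothing$ and $\nu=(n)$. An SSYT $T$ of the one-row shape $(n)$ with weight $\tau$ exists exactly when $\tau$ is a composition of $n$, and then it is unique. Its flag $\nu^i=(\tau_1+\cdots+\tau_i)$ consists of one-row partitions.

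The first step is to compute $\psi_T(t)$ for this flag. Each factor $\psi_{\nu^i/\nu^{i-1}}(t)=\prod_{j\in J}(1-t^{m_j(\rho)})$ with $\rho=(a)$ and $\nu=(a+b)$. The set $J$ consists of those $j$ with $\nu^t_j=\rho^t_j$ and $\nu^t_{j+1}-\rho^t_{j+1}=1$.
\begin{itemize}
\item If $a\ge1$, the only such $j$ is $j=a$, and $m_a((a))=1$. So the factor is $1-t$.
\item If $a=0$ (the first step), $J=\varnothing$, or $j=0$ is excluded since $j\ge1$. So the factor is $1$.
\end{itemize}
Hence $\psi_T(t)=(1-t)^{\ell(\tau)-1}$. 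This gives
\[
\K_{\la,(n)}(t)=\sum_{\T}\sgn(\T)\,(1-t)^{\ell({\rm type}(\T))-1},
\]
summed over all SRTs $\T$ of shape $\la$ (every type is allowed).

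The second step evaluates this signed sum by a recursion on removing the first special ribbon. I will first try a generating-function shortcut that avoids the combinatorics. Using \eqref{e:decom-skew-s}, the map $h_\tau\mapsto(1-t)^{\ell(\tau)-1}$ is a ring-level specialization up to a factor: it sends $h_k\mapsto 1-t$ for $k\ge1$, and then divides by $(1-t)$. Setting $h_k\mapsto 1-t$ is the specialization $h_k[1-t]=q_k(1;t)$, i.e.\ evaluating at the alphabet $1-t$. Therefore
\[
\K_{\la,(n)}(t)=\frac{s_\la[1-t]}{1-t}.
\]

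The third step is the classical evaluation $s_\la[1-t]$. This is the $\q\to t$ analogue of Lemma~\ref{l:h-1}, via $s_\la[1-t]=(-1)^{|\la|}s_\la[t-1]$ with transpose. It vanishes unless $\la$ is a hook $(\la_1,1^{n-\la_1})$, and then it equals $(-t)^{n-\la_1}(1-t)$. Concretely, one can use $s_\la[1-t]=\sum_\mu s_\mu[1]\,s_{\la/\mu}[-t]$ with Lemma~\ref{l:specializations}: $\mu$ must be a row and $\la/\mu$ a vertical strip. For a hook there are two terms, with $\mu=(\la_1)$ or $\mu=(\la_1-1)$ (the latter only when $n>\la_1$). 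They give
\[
(-t)^{n-\la_1}+(-t)^{n-\la_1+1}=(-t)^{n-\la_1}(1-t).
\]
For a non-hook, no $\mu$ is admissible. Dividing by $1-t$ gives the claim. The hook case $\la=(n)$ is consistent with $\K_{(n),(n)}=1$.

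The main obstacle is justifying that collapsing \eqref{e:skew-inv-K(t)-iterative4} to $s_\la[1-t]/(1-t)$ is legitimate. In particular, the weight $(1-t)^{\ell(\tau)-1}$ must be multiplicative in the parts of $\tau$ exactly as described. This requires checking the definition of $J$ carefully at the first step $a=0$. Alternatively, I would argue directly with $\K_{\la,(n)}=\langle S_\la,Q_{(n)}\rangle_t/b_{(n)}$. Here $Q_{(n)}(X;t)=q_n(X;t)$ and $b_{(n)}=1-t$, so the pairing is $s_\la[(1-t)\cdot 1]$ by the $t$-Cauchy identity. I would present this as a cross-check.
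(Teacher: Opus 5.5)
Your argument is correct, and it takes a genuinely different route from the paper's. The paper argues by induction on $n$ using the recursion \eqref{e:skew-inv-K(t)-iterative3}, after the same computation of $\psi_{(n)/(m)}(t)$ as your first step, and then splits into two cases. For a non-hook $\la$, the only special ribbons leaving a hook end in the first or second row. Their heights differ by one while the resulting hooks have equal leg length, so the two terms cancel. For a hook $(n-b,1^b)$, the surviving terms form the sum $(1-t)\sum_{r=1}^{b}(-1)^{r-1}(-t)^{b-r}+(-1)^b=(-t)^b$.

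You instead start from the closed formula \eqref{e:skew-inv-K(t)-iterative4}. You notice that, for $\nu=(n)$, the weight $\psi_T(t)=(1-t)^{\ell(\tau)-1}$ is, up to the global factor $(1-t)^{-1}$, the image of $h_\tau$ under $f\mapsto f[1-t]$. Applying this to \eqref{e:decom-skew-s} collapses the signed SRT sum to $s_\la[1-t]/(1-t)$, and the hook condition then falls out of the sum rule together with Lemma~\ref{l:specializations}. The step you flag as the main obstacle is not one. Since $\Lambda=\mathbb Q[h_1,h_2,\dots]$ freely, $h_k\mapsto 1-t$ ($k\ge1$) is a ring homomorphism, and it agrees with $f\mapsto f[1-t]$ because $\sigma_z[1-t]=(1-tz)/(1-z)$.

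Your route is shorter and explains why hooks appear: the corollary is just the classical evaluation of $s_\la$ at $1-t$. Your cross-check $\langle S_\la,q_n(X;t)\rangle_t/b_{(n)}(t)=s_\la[1-t]/(1-t)$ shows that no ribbon-tableau machinery is needed at all. The paper's proof, by contrast, is meant to exhibit the iterative formula at work.

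There is one slip to fix. The parenthetical ``the latter only when $n>\la_1$'' is wrong. The term $\mu=(\la_1-1)$ is admissible for every hook, including $\la=(n)$, where $(n)/(n-1)$ is a single box and hence a vertical strip. That term supplies the $-t$ in $s_{(n)}[1-t]=h_n[1-t]=1-t$; dropping it would give $\K_{(n),(n)}(t)=1/(1-t)$. Your displayed sum already includes both terms, so only the parenthetical needs deleting. Also, the transpose belongs on the Schur function: $s_\la[1-t]=(-1)^{|\la|}s_{\la^t}[t-1]$.
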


\begin{proof}
We argue it by induction on $n$. The initial step $n=1$ is clear. We assume the statement holds for all smaller sizes. Using the iterative formula  \eqref{e:skew-inv-K(t)-iterative3},
we obtain
\begin{equation}\label{eq:K-lambda-n-rec}
\K_{\lambda,(n)}(t)
=\sum_{\xi}\;(-1)^{\rht(\lambda/\xi)}\,
\psi_{(n)/\rho}(t)\,\K_{\xi,\rho}(t),
\end{equation}
where the sum runs over all partitions $\xi$ obtained from $\lambda$ by removing
a special ribbon $\lambda/\xi$, and $\rho$ runs over all $\rho\subset^h (n)$ with
$|\rho|=|\xi|$.
Since $\nu=(n)$ is a single row, the condition $\rho\subset^h (n)$ forces
$\rho=(|\rho|)$, i.e. $\rho=(m)$ with $m=|\xi|$.
Moreover, for $\rho=(m)$ one checks directly from the definition of $\psi$ that
\begin{equation}\label{eq:psi-row}
\psi_{(n)/(m)}(t)=
\begin{cases}
1-t, & m\ge 1,\\
1, & m=0.
\end{cases}
\end{equation}
Therefore \eqref{eq:K-lambda-n-rec} simplifies to
\begin{equation}\label{eq:K-lambda-n-rec-2}
\K_{\lambda,(n)}(t)
=\sum_{\xi:\,|\xi|\ge 1}(-1)^{\rht(\lambda/\xi)}(1-t)\,\K_{\xi,(|\xi|)}(t)
\;+\;\delta_{\xi,\varnothing}\cdot(-1)^{\rht(\lambda)}.
\end{equation}
Here the last term is present exactly when $\lambda$ itself is a hook shape.

\smallskip
\noindent\emph{Case A: $\lambda$ is not a hook.} In this case, 
\begin{equation}\label{eq:K-lambda-n-not-hook}
\K_{\lambda,(n)}(t)
=(1-t)\sum_{\xi:\,|\xi|\ge 1}(-1)^{\rht(\lambda/\xi)}\K_{\xi,(|\xi|)}(t).
\end{equation}
By the induction hypothesis, $\K_{\xi,(|\xi|)}(t)=0$ unless $\xi$ is a hook. This together with the fact that $R=\la/\xi$ is a special ribbon and $\la$ is not a hook, $R$ must starts from the southwest-most box and ends at the rightmost box of either the second row or the first row. We denote these two ribbons by $R'=\la/\xi'$ and $R''=\la/\xi''$. Then the height difference of these two special ribbon is exactly $1$. Moreover, $\xi'$ and $\xi''$ have the same leg length. By inductive hypothesis, $\K_{\xi',(|\xi'|)}(t)=\K_{\xi'',(|\xi''|)}(t)$. Plugging these into \eqref{eq:K-lambda-n-not-hook} implies $\K_{\la,(n)}(t)=0$.

\smallskip
\noindent\emph{Case B: $\lambda$ is a hook, write $\lambda=(n-b,1^b)$ with $b=n-\lambda_1$.}
In this case $\lambda$ \emph{is} a ribbon, so the $\xi=\varnothing$ term contributes
$(-1)^{\rht(\lambda)}=(-1)^b$ in \eqref{eq:K-lambda-n-rec-2}.
For $\xi\neq\varnothing$, the induction hypothesis implies that $\K_{\xi,(|\xi|)}(t)$ vanishes
unless $\xi$ is again a hook. A direct inspection of special ribbon removals from a hook shows that among all $\xi$
arising from $\lambda$ by removing a special ribbon, the only nonzero contributions in
\eqref{eq:K-lambda-n-rec-2} come from those $\xi$ which are hooks
$\xi=(n-b,1^{b-1}), (n-b,1^{b-2}),\dots,(n-b)$, i.e. obtained by removing a vertical strip
from the leg.
For such a removal of height $r$ (so $\rht(\lambda/\xi)=r-1$), we have $|\xi|=n-r$ and
by induction $\K_{\xi,(n-r)}(t)=(-t)^{b-(r)}$.
Substituting into \eqref{eq:K-lambda-n-rec-2} yields
\[
\K_{(n-b,1^b),(n)}(t)
=(1-t)\sum_{r=1}^{b}(-1)^{r-1}(-t)^{\,b-r}\;+\;(-1)^b=(-t)^b.
\]
Hence $\K_{\lambda,(n)}(t)=(-t)^b=(-t)^{n-\lambda_1}$, completing the induction. This finishes the proof.
\end{proof}

We highlight that Carbonara \cite[Prop. 3]{Car98} presented a closed formula for $\K_{(1^n),\nu}(t)$ proven by Macdonald by using algebraic method. At Appendix B of this paper, he asked for a combinatorial proof of this formula. Here we provide a combinatorial proof of this by our formulas.

Write $[r]_t:=\frac{1-t^r}{1-t}$ and $[r]_t!:=\prod_{i=1}^r [i]_t$. 

\begin{cor}\label{c:(1^n)}
    Let $\nu\vdash n$ with $\ell(\nu)=k$. Then
    \begin{align}
        \K_{(1^n),\nu}(t)=(-t)^{n-k}t^{n(\nu)-\binom{k}{2}}\qbinomial{k}{m(\nu)}_t,
    \end{align}
    where $\qbinomial{k}{m(\nu)}_t:=\qbinomial{k}{m_1(\nu),m_{2}(\nu),\cdots,m_n(\nu)}_t=\frac{[k]_t!}{[m_1(\nu)]_t!\cdots[m_n(\nu)]_t!}$
    is the $t$-multinomial coefficient.
\end{cor}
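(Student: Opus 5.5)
We will use the non-iterative form \eqref{e:skew-inv-K(t)-iterative4} of Theorem~\ref{t:skew-inv-K(t)-iterative} with $\lambda=(1^n)$, $\mu=\varnothing$. The first step is to classify the special ribbon tableaux of the single-column shape $(1^n)$. In a column, the southwest-most box of $\T^{(i)}$ is its bottom box. A ribbon starting there is a vertical segment inside $\T^{(i)}$, but the labels must increase downward. Reading the conditions carefully, the only admissible tableaux with nonzero sign contribution should be those in which each label occupies a contiguous vertical segment (the column is cut from the top). Equivalently, $\T$ corresponds to a composition $\tau\models n$, and we expect $\sgn(\T)=\prod_i(-1)^{\tau_i-1}=(-1)^{n-\ell(\tau)}$. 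This is consistent with \eqref{e:decom-skew-s} for $e_n$, namely $e_n=\sum_{\tau\models n}(-1)^{n-\ell(\tau)}h_\tau$, which we can use as a check: its transposed form is the standard $e$-in-$h$ expansion. Hence
\[
\K_{(1^n),\nu}(t)=\sum_{T}(-1)^{n-\ell(\wt(T))}\psi_T(t),
\]
summed over all SSYT $T$ of shape $\nu$ whose weight is a composition, i.e.\ has no zero parts before the last nonzero part.

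The second step is to evaluate this sum. A cleaner route may be to bypass the tableau sum: the sum $\sum_T\psi_T(t)$ over all $T$ of weight $\tau$ is the coefficient of $P_\nu(X;t)$ in $h_\tau$ (Macdonald III (5.7')). Therefore
\[
\K_{(1^n),\nu}(t)=\sum_{\tau\models n}(-1)^{n-\ell(\tau)}[P_\nu]h_\tau,
\]
which is just $[P_\nu]e_n$ recovered combinatorially. To stay combinatorial as Carbonara requests, we instead organize the tableaux by the flag $\nu^0\subset\nu^1\subset\cdots$ of horizontal strips. We then group together all $T$ obtained by merging adjacent labels. Merging two consecutive strips whose union is still a horizontal strip flips the sign, which suggests a sign-reversing involution. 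That involution should be weighted by the $\psi$ factors, so it will only be a $t$-deformed cancellation rather than an exact one.

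The plan for this step is an induction on $n$ via the iterative form \eqref{e:skew-inv-K(t)-iterative3}. Removing the bottom special ribbon of $(1^n)$, of length $r$ and height $r-1$, gives
\[
\K_{(1^n),\nu}(t)=\sum_{r\ge1}(-1)^{r-1}\sum_{\rho\subset^{\rm h}\nu,\ |\rho|=n-r}\psi_{\nu/\rho}(t)\,\K_{(1^{n-r}),\rho}(t).
\]
We then substitute the claimed closed form for $\rho$ and verify the resulting $t$-identity. Only $\rho$ with $\ell(\rho)\ge k-1$ appear, and since $\nu/\rho$ is a horizontal strip, $\rho$ is obtained by decreasing at most one part in each block of equal parts of $\nu$, plus possibly shortening within. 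It is convenient to telescope first: add the equations for $n$ and $n-1$ shifted, so that the alternating sum over $r$ collapses to a single-layer recursion. The expected collapsed form is
\[
\K_{(1^n),\nu}=\sum_{\rho\subset^{\rm h}\nu}\pm\,\psi_{\nu/\rho}\,\K_{(1^{|\rho|}),\rho}-\cdots,
\]
although the exact shape of this identity still has to be pinned down.

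The main obstacle is the final $t$-multinomial identity. Removing a horizontal strip from $\nu$ decreases some multiplicities $m_j$ and increases $m_{j-1}$, and the multinomial $\qbinomial{k}{m(\nu)}_t$ must then satisfy a $q$-Pascal-type recursion weighted by $\psi_{\nu/\rho}=\prod(1-t^{m_j(\rho)})$ and by powers of $t$ coming from $n(\nu)-n(\rho)$. The first attempt will be to reduce to the case where the horizontal strip is a single box per affected block, where the identity becomes the $t$-Pascal rule $\qbinomial{k}{m}_t=\sum_j t^{(\cdot)}\qbinomial{k-1}{m-e_j}_t$. Longer strips should then be handled by showing that their contributions cancel through the alternating sign in $r$.
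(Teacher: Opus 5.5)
Your reduction is the same as the paper's and is correct. The special ribbons of $(1^n)$ are the segments $(1^r)$ of height $r-1$, so \eqref{e:skew-inv-K(t)-iterative3} gives
\[
\K_{(1^n),\nu}(t)=\sum_{\rho\subset^{\rm h}\nu,\ \rho\neq\nu}(-1)^{|\nu/\rho|-1}\psi_{\nu/\rho}(t)\,\K_{(1^{|\rho|}),\rho}(t).
\]
After inserting the inductive hypothesis, everything rests on the $t$-identity proved in Appendix~\ref{A:identity}. That identity is the entire content of the corollary, and your proposal does not prove it. The strategies you sketch would not get there either:
\begin{itemize}
\item The telescoping step is never specified. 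Since $r=|\nu/\rho|$ is determined by $\rho$, there is no separate alternating sum over $r$ to collapse, and the equations for $n$ and $n-1$ involve different $\nu$.
\item The claim that long strips cancel is false. For $\nu=(2)$ the recursion reads $\K_{(1,1),(2)}(t)=\psi_{(2)/(1)}(t)\,\K_{(1),(1)}(t)-\psi_{(2)/\varnothing}(t)=(1-t)-1=-t$. Here the full-row strip contributes an indispensable $-1$.
\item Your matching with the $t$-Pascal rule is misaligned. The length drops to $\ell(\rho)=k-1$ exactly when the strip contains the whole last row of $\nu$, not when it removes one box per block.
\end{itemize}

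The missing idea is a way to sum over all horizontal strips at once. The paper encodes a strip column by column via $a_j=\nu^t_j-\rho^t_j\in\{0,1\}$. Then $\ell(\rho)=k-a_1$, $m_j(\rho)=m_j(\nu)-a_j+a_{j+1}$, and $n(\nu)-n(\rho)=\sum_j a_j(\nu^t_j-1)$. After dividing by the claimed right-hand side, the weight of a strip factors into nearest-neighbour terms:
\[
(-1)^{a_1-1}\frac{[k-a_1]_t!}{[k]_t!}\prod_{j\ge1}w_j(a_j,a_{j+1})\,t^{-a_{j+1}\nu^t_{j+1}},
\]
with $w_j(0,0)=w_j(1,1)=1$, $w_j(1,0)=[m_j(\nu)]_t$ and $w_j(0,1)=1-t$. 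Binary sequences that do not come from a strip automatically get weight $0$.

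The sum is then a product of $2\times2$ transfer matrices. The invariant $B_j=[\nu^t_j]_tA_j$, which follows from $1+(1-t)t^{-N}[N]_t=t^{-N}$, shows that the total \emph{including} the empty strip is $0$. Removing the empty strip, whose weight is $-1$, leaves exactly $1$.

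Separately, your algebraic aside is misstated. By Macdonald's tableau formula $P_\nu(X;t)=\sum_T\psi_T(t)\,x^T$, the sum of $\psi_T(t)$ over $T$ of weight $\tau$ is the coefficient of $m_\tau$ in $P_\nu(X;t)$, i.e.\ of $Q_\nu$ in $q_\tau(X;t)$. So that route only returns $[Q_\nu]\,e_n[(1-t)X]$, which is just the definition of $\K_{(1^n),\nu}(t)$. It does not give $[P_\nu]e_n=\delta_{\nu,(1^n)}$, which for $n\ge2$ would contradict $\K_{(1^n),(n)}(t)=(-t)^{n-1}$.
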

\begin{proof}
    We proceed by induction on $n$. The base case $n=0$ is trivial. Assume the formula holds for all partitions of size less than $n$.
    
    Consider the left-hand side of \eqref{e:skew-inv-K(t)-iterative3} with $\lambda=(1^n)$ and $\mu=\emptyset$. The special ribbons of $(1^n)$ are vertical strips of the form $(1^r)$ for $1 \leq r \leq n$, with $\rht((1^r))=r-1$. Removing such a ribbon leaves the shape $\xi=(1^{n-r})$. Applying Theorem \ref{t:skew-inv-K(t)-iterative}, we express $\K_{(1^n),\nu}(t)$ as a sum over $r$ and partitions $\rho \subset^{\rm h} \nu$ with $|\nu/\rho|=r$:
    \begin{align}\label{e:proof-recursion}
        \K_{(1^n),\nu}(t)=\sum_{r=1}^n \sum_{\substack{\rho \subset^{\rm h} \nu \\ |\nu/\rho|=r}} (-1)^{r-1}\psi_{\nu/\rho}(t)\K_{(1^{n-r}),\rho}(t).
    \end{align}
    Let $k=\ell(\nu)$ and $k_\rho=\ell(\rho)$. By the inductive hypothesis, for each $\rho \vdash (n-r)$, we have:
    \begin{align*}
        \K_{(1^{n-r}),\rho}(t) = (-t)^{(n-r)-k_\rho}t^{n(\rho)-\binom{k_\rho}{2}}\qbinomial{k_\rho}{m(\rho)}_t.
    \end{align*}
    Substituting this into \eqref{e:proof-recursion}, we rewrite the term $(-t)^{(n-r)-k_\rho}$ as $(-1)^{n-r-k_\rho}t^{n-r-k_\rho}$. The powers of $-1$ combine as $(-1)^{r-1}(-1)^{n-r-k_\rho} = (-1)^{n-k_\rho-1}$. Thus, the RHS becomes:
    \begin{align*}
         \sum_{r, \rho} (-1)^{n-k_\rho-1} t^{n-r-k_\rho} t^{n(\rho)-\binom{k_\rho}{2}} \psi_{\nu/\rho}(t) \qbinomial{k_\rho}{m(\rho)}_t.
    \end{align*}
    We claim that this sum equals the desired expression $(-t)^{n-k}t^{n(\nu)-\binom{k}{2}}\qbinomial{k}{m(\nu)}_t$. As the proof of this claim is fairly technical, using a transfer-matrix argument, we postpone it to Appendix \ref{A:identity} for the reader’s convenience.  
Thus, the inductive step is closed.
\end{proof}


\section{Skew inverse Kostka coefficients}
The skew inverse Kostka coefficient $\K_{\la/\mu,\nu}$ is defined by
\[
\K_{\la/\mu,\nu}:=\K_{\la/\mu,\nu}(0,1)=\K_{\la/\mu,\nu}(1).
\]
In particular, the inverse Kostka coefficient $\K_{\la,\nu}$ is defined by $\K_{\la,\nu}:=\K_{\la/\varnothing,\nu}$, i.e.,
\begin{align}
    m_{\nu}(X)=\sum_{\la}\K_{\la,\nu}s_{\la}(X).
\end{align}

As a specialization of Theorem \ref{t:skew-inv-K(t)-iterative}, we have the following combinatorial formulas for $\K_{\la/\mu,\nu}$.

\begin{thm}\label{t:skew-inv-K(1)-iterative}
    For partitions $\mu\subset\la$ and $\nu=(\nu_1,\cdots,\nu_m)$ with $|\la/\mu|=|\nu|$,
    \begin{align}\label{e:skew-inv-K(1)-iterative1}
     \K_{\la/\mu,\nu}=\sum_{\xi}(-1)^{\rht((\la/\mu)/\xi)}\K_{\xi,\tilde{\nu}}
    \end{align}
    summed over all skew diagrams $\xi$ obtained by removing a special ribbon from $\la/\mu$ such that $|\la/\mu|-|\xi|$ equals any part of $\nu$, and $\tilde{\nu}$ is the partition obtained from $\nu$ by deleting that part. Furthermore,
     \begin{align}\label{e:skew-inv-K(1)-iterative2}
        \K_{\la/\mu,\nu}=\sum_{\T}\sgn(\T)
    \end{align}
    summed over all SRTs $\T$ of shape $\la/\mu$ such that the type of $\T$ rearranges to $\nu$.
\end{thm}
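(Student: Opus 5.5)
The plan is to specialize Theorem~\ref{t:skew-inv-K(t)-iterative} at $t=1$, so that $\K_{\la/\mu,\nu}=\K_{\la/\mu,\nu}(1)$. Both \eqref{e:skew-inv-K(t)-iterative3} and \eqref{e:skew-inv-K(t)-iterative4} are identities in $\mathbb Z[t]$ by the integrality statement of Corollary~\ref{c:K(t)_properties}. Setting $t=1$ is therefore legitimate, and the only real work is evaluating $\psi_{\nu/\rho}(1)$ and $\psi_T(1)$.

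By definition, $\psi_{\nu/\rho}(t)=\prod_{j\in J}(1-t^{m_j(\rho)})$. At $t=1$ each factor $1-1^{m_j(\rho)}$ vanishes unless $m_j(\rho)=0$, which would make it $1-1=0$ as well. So $\psi_{\nu/\rho}(1)=1$ when $J=\varnothing$, and $0$ otherwise.

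The main obstacle is the combinatorial claim that, for a horizontal strip $\nu/\rho$, $J=\varnothing$ forces $\nu/\rho$ to lie in a single row, namely to be the deletion of one part of $\nu$. I would argue via column lengths. $J=\varnothing$ means that no column $j+1$ gains a box while column $j$ does not, i.e. the set of columns gaining a box is closed under passing to the left, wherever it is defined. A horizontal strip adds at most one box per column, and the added boxes in column $j+1$ sit weakly above those in column $j$. Combining these, I would show the added boxes form a contiguous run of columns starting from column $1$ or from a column where $\rho$ has a part. This forces them into one row, so that $\nu$ is obtained from $\rho$ by inserting a single part equal to $|\nu/\rho|$.

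I expect the real subtlety here: whether $J$ is empty also depends on $m_j(\rho)$ being nonzero, which needs care. Alternatively, and more safely, I would bypass $\psi$ and take $q=0,t=1$ directly in \eqref{e:skew-inv-K-iterative}, where $Q_\nu(X;1)=m_\nu$ and $S_\xi(X;1)$ degenerate. A third route is simply to pair \eqref{e:decom-skew-s} against $m_\nu$: by $\langle h_\alpha,m_\nu\rangle=\delta$ on rearrangements, this gives \eqref{e:skew-inv-K(1)-iterative2} immediately. Indeed, $\K_{\la/\mu,\nu}=\langle s_{\la/\mu},m_\nu\rangle\cdot$(adjusted by duality), since $s_{\la/\mu}=\sum_\nu \K_{\la/\mu,\nu}\,s$-coefficients pair with $h_\nu$.

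Concretely, I will use $\langle s_{\la/\mu},h_\nu\rangle$ duality together with $m_\nu=\sum_\la\K_{\la,\nu}s_\la$. These give $\K_{\la/\mu,\nu}=$ the coefficient of $h_\nu$ in $s_{\la/\mu}$ when it is expanded in the $h$-basis. By \eqref{e:decom-skew-s}, that coefficient is the sum of $\sgn(\T)$ over all SRTs of any type rearranging to $\nu$, which proves \eqref{e:skew-inv-K(1)-iterative2}.

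The recursion \eqref{e:skew-inv-K(1)-iterative1} then follows by peeling off the first special ribbon as in \eqref{e:SqS}. Its size is some part of $\nu$, and what remains is an SRT of $\xi$ whose type rearranges to $\tilde\nu$.
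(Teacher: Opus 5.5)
Your final argument is correct, but it takes a different route from the paper's. The paper lets $t\to1$ in Theorem~\ref{t:skew-inv-K(t)-iterative}. It notes that $\psi_{\nu/\rho}(1)$ is $1$ if $J=\varnothing$ and $0$ otherwise. It then checks that, for a horizontal strip, $J=\varnothing$ means the columns gaining a box are exactly $1,\dots,k$, i.e.\ $\rho$ is $\nu$ with one part $k$ deleted. From this it reads off the recursion from \eqref{e:skew-inv-K(t)-iterative3} and the SRT formula from $\psi_T(1)$.

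You bypass Hall--Littlewood theory altogether. Once $\K_{\la/\mu,\nu}=\langle s_{\la/\mu},m_\nu\rangle$, the number $\K_{\la/\mu,\nu}$ is the $h_\nu$-coefficient of $s_{\la/\mu}$. Collecting $h_\alpha=h_{\alpha^{\downarrow}}$ in \eqref{e:decom-skew-s} then gives \eqref{e:skew-inv-K(1)-iterative2} immediately; this is essentially E\u gecio\u glu--Remmel's original proof.

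Do justify that identification properly, instead of the ``adjusted by duality'' sentence. One has $\K_{\la/\mu,\nu}(t)=\langle S_{\la/\mu}(X;t),P_\nu(X;t)\rangle_t=\langle s_{\la/\mu},P_\nu(X;t)\rangle$ by \eqref{e:bridge-inner-products} at $q=0$, and $P_\nu(X;1)=m_\nu$.

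For \eqref{e:skew-inv-K(1)-iterative1}, the cleanest version is to pair \eqref{e:SqS} with $m_\nu$ and use $h_r^{\perp}m_\nu=m_{\tilde\nu}$ if $r$ is a part of $\nu$, and $0$ otherwise. Your peeling bijection also works, as long as the peeled ribbon is the one carrying the largest label.

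Your route is shorter and needs neither integrality nor any limit. The paper's route shows the formula as the degeneration of the $t$-deformed one, which is the point of that chapter.

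Your abandoned first route contained false claims. $J=\varnothing$ does not put the new boxes in one row: for $\rho=(1)$, $\nu=(2,1)$ one has $J=\varnothing$, but $\nu/\rho$ meets two rows. What holds is $\nu=\rho\cup(k)$ as multisets of parts. The run of columns must start at column~$1$, and $J$ depends only on column lengths, not on $m_j(\rho)$.
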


\begin{proof}
    We derive this result by considering the limit $t\rightarrow 1$ in Theorem \ref{t:skew-inv-K(t)-iterative}. Recall that the coefficient involves $\psi_{\nu/\rho}(t)$, which vanishes at $t=1$ unless the set $J$ in its definition is empty.

    The condition $J = \varnothing$ implies that for all $j \ge 1$, if the $(j+1)$-th column of the skew diagram grows (i.e., $\nu^t_{j+1} > \rho^t_{j+1}$), then the $j$-th column must also have grown (i.e., $\nu^t_j > \rho^t_j$). Since $\nu/\rho$ is a horizontal strip, the growth in any column is at most $1$. Consequently, the set of columns where $\nu/\rho$ adds cells must be of the form $\{1, 2, \dots, k\}$ for some $k$. Geometrically, this means $\nu$ is obtained from $\rho$ by adding a single row of length $k=|\nu|-|\rho|$. Conversely, this implies $\rho$ is obtained from $\nu$ by deleting some row (i.e., some part $\nu_i$) of length $k$. Since $\psi_{\nu/\rho}(1)=1$ for any such $\rho$, the recurrence sum in \eqref{e:skew-inv-K(t)-iterative3} runs over all sub-partitions $\rho$ obtained by deleting any single part of $\nu$.

Repeatedly applying \eqref{e:skew-inv-K(1)-iterative1}, or equivalently taking $t \to 1$ in \eqref{e:skew-inv-K(t)-iterative4}, we observe that the factor $\psi_T(1) = \prod \psi_{\nu^i/\nu^{i-1}}(1)$ is non-zero (and equal to 1) if and only if each step adds a row. This corresponds to the SSYTs T of shape $\nu$ such that ${\rm wt}(T)$ rearranges to $\nu$. The condition ${\rm type}(\T) = \wt(T)$ implies ${\rm type}(\T)$ also rearranges to $\nu$. This completes the proof.
\end{proof}

We remark that E\u gecio\u glu--Remmel first obtained \eqref{e:skew-inv-K(1)-iterative1}--\eqref{e:skew-inv-K(1)-iterative2} in \cite{ER90}. Further combinatorial descriptions and recurrences were developed in \cite{D03,PR17}, motivated in part by expressing mod-$p$ Steenrod operations on Chern classes in the Schubert basis.

\chapter{Modular Schur functions}\label{s:modular}

Building on the skew Murnaghan--Nakayama expansions developed in the previous chapters, we now turn to a plethystic specialization of Schur functions that arises naturally in modular representation theory.

Throughout this chapter we fix a positive integer $k$.

\section{Skew plethystic Murnaghan-Nakayama rule}\label{ss:skew-PMN}
The plethystic Murnaghan--Nakayama rule was introduced in \cite{DLT94} via Muir’s rule, giving an expansion of the product $p_n[h_k]\,s_{\la}$ in the Schur basis. Subsequent proofs include a character-theoretic argument \cite{EPW14}, a direct combinatorial proof \cite{Wil16}, as well as abacus and vertex-operator proofs \cite{Tur25,CJL25}. In this subsection we extend the rule to skew shapes, i.e., we expand the product $p_n[h_k]\,s_{\la/\eta}$ in terms of skew Schur functions by means of our skew Murnaghan--Nakayama framework. We begin with some combinatorial notions.

For a skew diagram $\rho/\la$, we call $\rho/\la$ a {\em horizontal $n$-ribbon of weight $k$} if there exist partitions
\[
\la=\tau^{(0)}\subset\tau^{(1)}\subset\cdots\subset\tau^{(k)}=\rho
\]
such that (i) each difference $\tau^{(i)}/\tau^{(i-1)}$ is an $n$-ribbon (a ribbon of $n$ boxes), and (ii) the ending box of each $\tau^{(i)}/\tau^{(i-1)}$ is the topmost box in its column of $\rho/\la$. If $\rho/\la$ is a horizontal $n$-ribbon of weight $k$, define
\[
\sgn(\rho/\la):=\prod_{i=1}^{k}(-1)^{\rht\big(\tau^{(i)}/\tau^{(i-1)}\big)}.
\]
Figure~\ref{fig:n-ribbon-of-k} shows a horizontal $4$-ribbon of weight $5$.

\begin{figure}
    \centering
\begin{tikzpicture}[scale = 0.4]
  \begin{scope}
    \clip (0,0) -| (1,2) -| (3,3) -| (4,4) -| (8,7)  -| (4,5) -| (3,4) -| (2,3) -| (0,0);
    \draw [color=black!25] (0,0) grid (8,7);
  \end{scope}
  \draw [thick] (0,0) -| (1,2) -| (3,3) -| (4,4) -| (8,7)  -| (4,5) -| (3,4) -| (2,3) -| (0,0);
  \draw[thick] (4,7) -- (0,7) -- (0,3);

  \draw [thick, rounded corners] (0.5,0.5) -- (0.5,2.5) -- (1.5,2.5);
  \draw [color=black,fill=black,thick] (1.5,2.5) circle (.5ex);
  \node [draw, circle, fill = white, inner sep = 1.5pt] at (0.5,0.5) { };

  \draw [thick, rounded corners] (2.5,2.5) -- (2.5,3.5) -- (3.5,3.5) -- (3.5,4.5);
  \draw [color=black,fill=black,thick] (3.5,4.5) circle (.5ex);
  \node [draw, circle, fill = white, inner sep = 1.5pt] at (2.5,2.5) { };

  \draw [thick, rounded corners] (4.5,4.5) -- (4.5,6.5) -- (5.5,6.5);
  \draw [color=black,fill=black,thick] (5.5,6.5) circle (.5ex);
  \node [draw, circle, fill = white, inner sep = 1.5pt] at (4.5,4.5) { };

  \draw [thick, rounded corners] (5.5,4.5) -- (5.5,5.5) -- (6.5,5.5) -- (6.5,6.5);
  \draw [color=black,fill=black,thick] (6.5,6.5) circle (.5ex);
  \node [draw, circle, fill = white, inner sep = 1.5pt] at (5.5,4.5) { };

  \draw [thick, rounded corners] (6.5,4.5) -- (7.5,4.5) -- (7.5,6.5);
  \draw [color=black,fill=black,thick] (7.5,6.5) circle (.5ex);
  \node [draw, circle, fill = white, inner sep = 1.5pt] at (6.5,4.5) { };
\end{tikzpicture}
\caption{$\rho=(8,8,8,4,3,1,1)$, $\la=(4,4,3,2)$. Here $\rho/\la$ is a horizontal $4$-ribbon of weight $5$, and $\sgn(\rho/\la)=(-1)^2\!\cdot(-1)^2\!\cdot(-1)^2\!\cdot(-1)^2\!\cdot(-1)^2=1$.}
\label{fig:n-ribbon-of-k}
\end{figure}

If $\rho/\la$ is a horizontal $n$-ribbon of weight $k$, then $k$ is determined by $|\rho/\la|=nk$; we will sometimes omit $k$ from the terminology.

With this notation, the plethystic Murnaghan--Nakayama rule states that
\begin{equation}\label{e:PMN}
p_n[h_k]\,s_{\la}=\sum_{\rho}\sgn(\rho/\la)\,s_{\rho},
\end{equation}
where the sum ranges over all partitions $\rho$ such that $\rho/\la$ is a horizontal $n$-ribbon of weight $k$. This simultaneously generalizes the classical Murnaghan--Nakayama rule ($k=1$) and the Pieri rule ($n=1$).

The next lemma is the key specialization needed for the skew version of \eqref{e:PMN}.

\begin{lem}\label{l:spec-omega}
Let $n\ge1$ and let $\omega_n$ be a primitive $n$th root of unity. Then
\begin{equation}\label{e:spec-omega}
s_{\rho/\la}[\Omega_n]=
\begin{cases}
\sgn(\rho/\la), & \text{if $\rho/\la$ is a horizontal $n$-ribbon},\\
0, & \text{otherwise},
\end{cases}
\end{equation}
where $\Omega_n:=1+\omega_n+\omega_n^2+\cdots+\omega_n^{n-1}$.
\end{lem}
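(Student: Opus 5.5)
The plan is to deduce the lemma from the straight plethystic Murnaghan--Nakayama rule \eqref{e:PMN} together with the straight Cauchy-type identity \eqref{e:straight-Lambda} of Chapter~\ref{s:special-cases-Lambda}, by duality. Take $(f_\la,g_\la)=(s_\la,s_\la)$ with the Hall inner product, so $\E[X|Y]=\exp\bigl(\sum_{r\ge1}\frac1r p_r[X]p_r[Y]\bigr)$. Specialize $Y=\Omega_n z$, where $z$ is a formal variable. Since $p_r[\Omega_n z]=n z^r$ if $n\mid r$ and $0$ otherwise, we get
\[
\E[X|\Omega_n z]=\exp\Bigl(\sum_{m\ge1}\frac{1}{m}p_{mn}[X]z^{mn}\Bigr)=\sum_{k\ge0}h_k[p_n[X]]\,z^{nk}=\sum_{k\ge0}p_n[h_k](X)\,z^{nk},
\]
using $h_k[p_n]=p_n[h_k]$. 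This follows because both equal $h_k$ evaluated at the alphabet $x_1^n+x_2^n+\cdots$.

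Next, \eqref{e:straight-Lambda} with $Y=\Omega_n z$ gives
\[
\E[X|\Omega_n z]\,s_\la(X)=\sum_{\rho}s_{\rho/\la}[\Omega_n z]\,s_\rho(X)=\sum_\rho z^{|\rho/\la|}s_{\rho/\la}[\Omega_n]\,s_\rho(X),
\]
by homogeneity. Compare coefficients of $z^{N}$. For $N=nk$, the left side is $p_n[h_k]s_\la$, and \eqref{e:PMN} expands it as $\sum_\rho \sgn(\rho/\la)s_\rho$ over horizontal $n$-ribbons $\rho/\la$ of weight $k$. For $n\nmid N$, the left side is $0$. Linear independence of the Schur functions then yields $s_{\rho/\la}[\Omega_n]=\sgn(\rho/\la)$ when $\rho/\la$ is a horizontal $n$-ribbon and $0$ otherwise. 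The case $|\rho/\la|$ not divisible by $n$ is covered automatically, as is the case $\la\not\subset\rho$, where the skew Schur function is zero by convention.

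The only delicate point is the identification $\exp\bigl(\sum_m p_{mn}z^{mn}/m\bigr)=\sum_k p_n[h_k]z^{nk}$. It is immediate from plethystic rules: apply $p_n[\cdot]$ to the generating series $\sigma_z[X]$, so that $z\mapsto z^n$. A second point to note is that $\sgn(\rho/\la)$ must be well defined independently of the chosen chain $\tau^{(i)}$. This is implicit in \eqref{e:PMN} as stated, and is in fact reproved by the argument, since $s_{\rho/\la}[\Omega_n]$ is intrinsic. So the main obstacle is really just invoking \eqref{e:PMN} in the form cited, with the same sign convention; everything else is bookkeeping.
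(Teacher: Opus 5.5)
Your proof is correct and follows the paper's argument: specialize the straight Cauchy identity at $Y=z\Omega_n$ (the paper uses \eqref{e:ge-skew-Mac} at $(q,t,\eta)=(0,0,\varnothing)$, which is the same identity as your \eqref{e:straight-Lambda}), identify the kernel with $\sum_k p_n[h_k]z^{nk}$, and compare Schur coefficients with the straight plethystic rule \eqref{e:PMN}. Your explicit check that the coefficient of $z^N$ vanishes when $n\nmid N$ covers a case the paper's proof leaves implicit.
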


\begin{proof}
Consider the generating series for $p_n[h_k]$:
\[
\begin{aligned}
\sum_{k\ge0}p_n[h_k]\,z^{nk}
&=\sum_{k\ge0}h_k(x_1^n,x_2^n,\dots)\,z^{nk}
=\prod_{i\ge1}\frac{1}{1-x_i^n z^n}\\
&=\prod_{i\ge1}\prod_{j=0}^{n-1}\frac{1}{1-\omega_n^{j}x_i z}
=\sum_{m\ge0}h_m[\Omega_n X]\,z^{m}.
\end{aligned}
\]
Hence $p_n[h_k]=h_{kn}[\Omega_n X]$. Specializing \eqref{e:ge-skew-Mac} at $(q,t,Y,\eta)=(0,0,z\Omega_n,\varnothing)$ gives
\[
\exp\!\left(\sum_{m\ge1}\frac{1}{m}\,p_m[X]p_m[z\Omega_n]\right)s_{\la}
=\sum_{\rho\supset\la} z^{|\rho/\la|}\,s_{\rho/\la}[\Omega_n]\,s_{\rho}.
\]
Since $\sum_{m\ge0}h_m[\Omega_n X]\,z^m=\exp\!\left(\sum_{r\ge1}\frac{1}{r}p_r[\Omega_n X]\,z^r\right)$, we obtain
\[
p_n[h_k]\,s_{\la}=h_{kn}[\Omega_n X]\,s_{\la}
=\sum_{\rho\vdash|\la|+kn}s_{\rho/\la}[\Omega_n]\,s_{\rho}.
\]
Comparing the coefficient of $s_{\rho}$ with \eqref{e:PMN} yields \eqref{e:spec-omega}.
\end{proof}

We can now state the skew plethystic Murnaghan--Nakayama rule.

\begin{thm}\label{t:skew-PMN}
Let $k\ge0$ and $n\ge1$, and let $\la/\eta$ be a skew diagram. Then
\begin{equation}\label{e:skew-PMN}
p_n[h_k]\,s_{\la/\eta}
=\sum_{\rho,\mu}(-1)^{|\eta/\mu|}\,\sgn(\rho/\la)\,\sgn(\eta^{t}/\mu^{t})\,s_{\rho/\mu},
\end{equation}
where the sum ranges over all partitions $\rho,\mu$ such that both $\rho/\la$ and $\eta^{t}/\mu^{t}$ are horizontal $n$-ribbons and the sum of their weights is $k$.
\end{thm}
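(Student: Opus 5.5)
We specialize the Schur case of Theorem~\ref{t:skew-M-N-Lambda}, i.e.\ \eqref{e:ge-skew-Mac} at $(q,t)=(0,0)$ with $(f_\la,g_\la)=(s_\la,s_\la)$ and the Hall inner product, to the virtual alphabet $Y=z\Omega_n$. This gives
\[
\exp\!\Bigl(\sum_{m\ge1}\tfrac1m p_m[X]p_m[z\Omega_n]\Bigr)s_{\la/\eta}[X]
=\sum_{\rho,\mu} s_{\rho/\la}[z\Omega_n]\,s_{\eta/\mu}[-z\Omega_n]\,s_{\rho/\mu}[X].
\]
As in the proof of Lemma~\ref{l:spec-omega}, the left-hand exponential equals $\sum_{m\ge0}h_m[\Omega_nX]z^m=\sum_{k\ge0}p_n[h_k]z^{nk}$. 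Homogeneity gives $s_{\rho/\la}[z\Omega_n]=z^{|\rho/\la|}s_{\rho/\la}[\Omega_n]$, and similarly for the other factor.

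Next we evaluate the two specializations. Lemma~\ref{l:spec-omega} shows that $s_{\rho/\la}[\Omega_n]$ equals $\sgn(\rho/\la)$ when $\rho/\la$ is a horizontal $n$-ribbon and vanishes otherwise. For the second factor, Lemma~\ref{l:plethysm} (with $f=s_{\eta/\mu}$ and $\omega(s_{\eta/\mu})=s_{\eta^t/\mu^t}$) gives
\[
s_{\eta/\mu}[-\Omega_n]=(-1)^{|\eta/\mu|}s_{\eta^t/\mu^t}[\Omega_n].
\]
Applying Lemma~\ref{l:spec-omega} again, this equals $(-1)^{|\eta/\mu|}\sgn(\eta^t/\mu^t)$ when $\eta^t/\mu^t$ is a horizontal $n$-ribbon, and $0$ otherwise.

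One point needs care. Lemma~\ref{l:plethysm} is stated for ordinary alphabets, but the identity $f[-A]=(-1)^{\deg f}\omega(f)[A]$ holds for any virtual alphabet $A$. The reason is that plethystic evaluation is determined by $p_r[-A]=-p_r[A]$, so the same power-sum computation applies. We also need $\Omega_n$ to be a legitimate alphabet for Lemma~\ref{l:spec-omega}, and it is, being the finite alphabet of $n$th roots of unity.

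Finally, we compare coefficients of $z^{nk}$. On the right-hand side the exponent is $|\rho/\la|+|\eta/\mu|=n(w_1+w_2)$, where $w_1,w_2$ are the weights of the two horizontal $n$-ribbons. Extracting $z^{nk}$ therefore imposes $w_1+w_2=k$, which yields \eqref{e:skew-PMN}. All terms whose exponent is not a multiple of $n$ vanish automatically, since the specializations are zero unless the shapes are horizontal $n$-ribbons. No step is a real obstacle; the only point needing attention is the sign bookkeeping in the $-\Omega_n$ evaluation.
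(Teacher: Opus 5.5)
Your proposal is correct and is essentially the paper's own proof. Like the paper, you specialize \eqref{e:ge-skew-Mac} at $(q,t,Y)=(0,0,z\Omega_n)$, identify the exponential with $\sum_k p_n[h_k]z^{nk}$, rewrite $s_{\eta/\mu}[-\Omega_n]=(-1)^{|\eta/\mu|}s_{\eta^t/\mu^t}[\Omega_n]$, apply Lemma~\ref{l:spec-omega} to both factors, and extract the coefficient of $z^{kn}$.
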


\begin{proof}
Taking $(q,t,Y)=(0,0,z\Omega_n)$ in \eqref{e:ge-skew-Mac} gives
\[
\exp\!\left(\sum_{m\ge1}\frac{1}{m}\,p_m[X]p_m[z\Omega_n]\right)s_{\la/\eta}
=\sum_{\rho,\mu} z^{|\rho/\la|+|\eta/\mu|}\,s_{\rho/\la}[\Omega_n]\;s_{\eta/\mu}[-\Omega_n]\;s_{\rho/\mu}.
\]
Using
\[
\sum_{m\ge0}h_m[\Omega_n X]\,z^m
=\exp\!\left(\sum_{r\ge1}\frac{1}{r}p_r[\Omega_n X]\,z^r\right)
\qquad\text{and}\qquad
p_n[h_k]=h_{kn}[\Omega_n X],
\]
and extracting the coefficient of $z^{kn}$ yields
\[
\begin{aligned}
p_n[h_k]\,s_{\la/\eta}
&=\sum_{\substack{\rho,\mu\\ |\rho/\la|+|\eta/\mu|=kn}}
s_{\rho/\la}[\Omega_n]\;s_{\eta/\mu}[-\Omega_n]\;s_{\rho/\mu}
\\
&=\sum_{\substack{\rho,\mu\\ |\rho/\la|+|\eta/\mu|=kn}}
(-1)^{|\eta/\mu|}\,s_{\rho/\la}[\Omega_n]\;s_{\eta^{t}/\mu^{t}}[\Omega_n]\;s_{\rho/\mu},
\end{aligned}
\]
where we used $s_{\alpha/\beta}[-\Omega_n]=(-1)^{|\alpha/\beta|}s_{\alpha^{t}/\beta^{t}}[\Omega_n]$. Applying Lemma~\ref{l:spec-omega} to both $s_{\rho/\la}[\Omega_n]$ and $s_{\eta^{t}/\mu^{t}}[\Omega_n]$ gives \eqref{e:skew-PMN}.
\end{proof}

\section{Skew Pieri-like rule and expansion of modular Schur functions }\label{ss:skew-Pieri-like}
Petrie symmetric functions were introduced independently by Doty--Walker~\cite{DW92} 
and Bazeniar--Ahmia--Belbachir~\cite{BAB18}. In~\cite{DW92} they appear in the study of 
certain truncated tensor products of representations of the general linear group, 
whereas in~\cite{BAB18} they arise from a generalization of Pascal triangles. 
More recently, Fu--Mei~\cite{FM22} and Grinberg~\cite{Gr22} have, independently of each 
other, established a number of interesting properties of these functions. 
The work of Fu and Mei is driven by the observation that Petrie symmetric functions 
provide a natural common framework for elementary symmetric functions and complete 
homogeneous symmetric functions, while Grinberg’s approach is motivated by Liu and 
Polo’s results on the cohomology of line bundles over a flag scheme~\cite{Liu21,LP21}.

\phantomsection\label{def:petrie-functions}
For a fixed positive integer $k$, the {\em Petrie symmetric functions} $G(k,m)$ of 
degree $m$ are defined by the generating series
\begin{align}\label{e:defG}
\sum_{m=0}^{\infty} G(k,m)(X)\, z^m
  &= \prod_{i=1}^{\infty}\left(\sum_{j=0}^{k-1}(x_i z)^j\right) \notag\\
  &= \prod_{i=1}^{\infty}\frac{1-(x_i z)^k}{1-x_i z}
   = \prod_{i=1}^{\infty}\prod_{j=1}^{k-1}\bigl(1-\omega_k^j x_i z\bigr),
\end{align}
where $\omega_k = e^{2\pi i/k}$ is a primitive $k$-th root of unity.

Combining the first identity in~\eqref{e:defG} with the standard generating function 
for integer partitions, one obtains an equivalent description of $G(k,m)(X)$ in the 
basis of monomial symmetric functions:
\begin{align}
G(k,m)(X)
  = \sum_{\substack{\lambda \vdash m \\ \lambda_1 < k}} m_{\lambda}(X).
\end{align}
In particular, this yields the following special cases:
\begin{align*}
G(k,m)(X)=
\begin{cases}
e_m(X), & k=2,\\
h_m(X), & k>m,\\
h_m(X) - p_m(X), &k=m.  
\end{cases}
\end{align*}

The {\em Petrie numbers} ${\rm Pet}_k(\la,\mu)$ are defined as the transition coefficients of Schur functions in the product of Petrie symmetric functions and Schur functions, i.e.,
\begin{align}\label{e:Pieri1}
G(k,m)(X)s_{\mu}(X)=\sum_{\la\vdash m+|\mu|}{\rm Pet}_k(\la,\mu)s_{\la}(X).
\end{align}
Grinberg \cite{Gr22} showed that ${\rm Pet}_k(\la,\mu)$ take values in $\{0,-1,1\}$.
Very recently, Wu et al.\ \cite[Theorem 1.4]{WEK+25} gave a combinatorial interpretation of ${\rm Pet}_k(\la,\mu)$, extending the corresponding interpretation of ${\rm Pet}_k(\la,\varnothing)$ \cite[Theorem 1.3]{CCE+23}.

In this subsection, we present a skew version of \eqref{e:Pieri1}.
Namely, we expand
\[
G(k,m)(X)s_{\la/\mu}(X)
\]
in the skew Schur basis using our skew-MN framework.
To proceed, we need to introduce the combinatorial models that govern ${\rm Pet}_k(\la,\mu)$.

We say a skew diagram $\K$ is a {\em vertical $k$-ribbon} if $\K^{t}$ is a horizontal $k$-ribbon, as recalled in the last subsection.
For a vertical $k$-ribbon $\K$ consisting of $m$ $k$-ribbons $\xi_1,\ldots,\xi_m$, we define 
\begin{align}\label{e:sgn'}
    \sgn^{'}(\K):=\prod_{i=1}^m(-1)^{r(\xi_i)}.
\end{align}
Here $r(\xi_i)$ denotes the number of rows occupied by $\xi_i$.

\phantomsection\label{def:GPR}
A skew diagram $\la/\mu$ is called a {\em proper $k$-ribbon} if there exists a $\nu$ such that $\mu\subset\nu\subset\la$ satisfying (i) $\nu/\mu$ is a horizontal strip; (ii) $\la/\nu$ is a vertical $k$-ribbon. It is clear that $\nu$ is not unique for a given $\la/\mu$. A proper $k$-ribbon $\la/\mu$ is {\em good} if $\nu$ is unique. In this case, we define $\sgn^{'}(\la/\mu):=\sgn^{'}(\la/\nu)$. Figure \ref{fig:proper-tiling} gives two decompositions of the same proper $6$-ribbon.
\begin{figure}
    \centering
\begin{tikzpicture}[scale = 0.4]
  \begin{scope}
    \clip (0,0) -| (6,2) -| (7,3) -| (12,4) -| (13,5)  -| (9,4) -| (5,3) -| (2,2) -| (1,0);
    \draw [color=black!25] (0,0) grid (13,5);
  \end{scope}
  \draw [thick] (0,0) -| (6,2) -| (7,3) -| (12,4) -| (13,5)  -| (9,4) -| (5,3) -| (2,2) -| (1,0);
  \draw[thick] (0,0) -- (0,5) -- (9,5);

  \draw [thick, rounded corners] (1.5,0.5) -- (5.5,0.5) -- (5.5,1.5);
  \draw [color=black,fill=black,thick] (5.5,1.5) circle (.5ex);
  \node [draw, circle, fill = white, inner sep = 1.5pt] at (1.5,0.5) { };

  \draw [thick, rounded corners] (2.5,1.5) -- (4.5,1.5) -- (4.5,2.5) -- (6.5,2.5);
  \draw [color=black,fill=black,thick] (6.5,2.5) circle (.5ex);
  \node [draw, circle, fill = white, inner sep = 1.5pt] at (2.5,1.5) { };

  \draw [thick, rounded corners] (8.5,3.5) -- (11.5,3.5) -- (11.5,4.5) -- (12.5,4.5);
  \draw [color=black,fill=black,thick] (12.5,4.5) circle (.5ex);
  \node [draw, circle, fill = white, inner sep = 1.5pt] at (8.5,3.5) { };

 \node at (1.5,1.5) {$\times$};
 \node at (2.5,2.5) {$\times$};
  \node at (3.5,2.5) {$\times$};
   \node at (5.5,3.5) {$\times$};
    \node at (6.5,3.5) {$\times$};
     \node at (7.5,3.5) {$\times$};
      \node at (9.5,4.5) {$\times$};
       \node at (10.5,4.5) {$\times$};
\end{tikzpicture}\hspace{5em}
\begin{tikzpicture}[scale = 0.4]
  \begin{scope}
    \clip (0,0) -| (6,2) -| (7,3) -| (12,4) -| (13,5)  -| (9,4) -| (5,3) -| (2,2) -| (1,0);
    \draw [color=black!25] (0,0) grid (13,5);
  \end{scope}
  \draw [thick] (0,0) -| (6,2) -| (7,3) -| (12,4) -| (13,5)  -| (9,4) -| (5,3) -| (2,2) -| (1,0);
  \draw[thick] (0,0) -- (0,5) -- (9,5);

  \draw [thick, rounded corners] (1.5,0.5) -- (5.5,0.5) -- (5.5,1.5);
  \draw [color=black,fill=black,thick] (5.5,1.5) circle (.5ex);
  \node [draw, circle, fill = white, inner sep = 1.5pt] at (1.5,0.5) { };

  \draw [thick, rounded corners] (2.5,1.5) -- (4.5,1.5) -- (4.5,2.5) -- (6.5,2.5);
  \draw [color=black,fill=black,thick] (6.5,2.5) circle (.5ex);
  \node [draw, circle, fill = white, inner sep = 1.5pt] at (2.5,1.5) { };

  \draw [thick, rounded corners] (6.5,3.5) -- (11.5,3.5);
  \draw [color=black,fill=black,thick] (11.5,3.5) circle (.5ex);
  \node [draw, circle, fill = white, inner sep = 1.5pt] at (6.5,3.5) { };

 \node at (1.5,1.5) {$\times$};
 \node at (2.5,2.5) {$\times$};
  \node at (3.5,2.5) {$\times$};
   \node at (5.5,3.5) {$\times$};
      \node at (9.5,4.5) {$\times$};
       \node at (10.5,4.5) {$\times$};
        \node at (11.5,4.5) {$\times$};
     \node at (12.5,4.5) {$\times$};
\end{tikzpicture}
\caption{Let $\la=(13,12,7,6,6)$ and $\mu=(9,5,2,1,1)$. Then $\la/\mu$ is a proper $6$-ribbon with two decompositions corresponding to $\nu=(11,8,4,2,1)$ (left) and $\nu^{'}=(13,6,4,2,1)$ (right), where every box of $\nu/\mu$ and $\nu^{'}/\mu$ is marked by a cross $\times$.}
\label{fig:proper-tiling}
\end{figure}

The Pieri-like rule for Petrie symmetric functions is
\begin{align}
    G(k,m)(X)s_{\mu}(X)=\sum_{\la\vdash m+|\mu|}{\rm Pet}_k(\la,\mu)s_{\la}(X),
\end{align}
where ${\rm Pet}_k(\la,\mu)$ admits the combinatorial interpretation
\begin{align}\label{e:combin-sgn'}
    {\rm Pet}_k(\la,\mu)=
    \begin{cases}
      \sgn^{'}(\la/\mu) &\text{if $\la/\mu$ is a good proper $k$-ribbon with $\la_i-\mu_i<k$ for all $i$,}\\
      0 &\text{otherwise.}
    \end{cases}
\end{align}

\begin{lem}\label{l:1-Theta}
    For $\mu\subset\la$, we have
    \begin{align}\label{e:1-Theta}
        s_{\la/\mu}[1-\Omega_k]={\rm Pet}_k(\la,\mu),
    \end{align}
    \phantomsection\label{def:omega-k}
    where $\Omega_k=1+\omega_k+\cdots+\omega_k^{k-1}$ and $\omega_k$ is a primitive $k$-th root of unity.
\end{lem}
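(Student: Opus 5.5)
The plan mirrors the proof of Lemma~\ref{l:spec-omega}: identify $G(k,m)$ as a plethystic specialization of $h_m$, feed it into the straight Cauchy/MN identity, and read off coefficients against the known Pieri-like rule \eqref{e:combin-sgn'}. Throughout I use the convention ${\rm Pet}_k(\la,\mu)=0$ unless $\mu\subset\la$.

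\textbf{Step 1: $G(k,m)$ as a plethysm.} From the last product in \eqref{e:defG} I would write
\[
\sum_{m\ge0}G(k,m)(X)z^m=\prod_i\frac{1-x_i^kz^k}{1-x_iz}.
\]
Since $\prod_{j=0}^{k-1}(1-\omega_k^jx_iz)=1-x_i^kz^k$, this equals
\[
\prod_i\frac{\prod_{j=0}^{k-1}(1-\omega_k^jx_iz)}{1-x_iz}=\exp\Bigl(\sum_{r\ge1}\tfrac1r\,p_r[(1-\Omega_k)X]\,z^r\Bigr).
\]
This uses $p_r[\Omega_k X]=p_r[\Omega_k]p_r[X]$ and the formula for $p_r[\Omega_k]$ recorded earlier. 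Hence
\[
G(k,m)(X)=h_m[(1-\Omega_k)X].
\]

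\textbf{Step 2: apply the straight Cauchy/MN identity.} I would specialize \eqref{e:ge-skew-Mac} at $(q,t,\eta)=(0,0,\varnothing)$ with $Y=z(1-\Omega_k)$ and $f_\mu=g_\mu=s_\mu$. This is exactly the straight rule \eqref{e:straight-Lambda}, and it gives
\[
\exp\Bigl(\sum_{r\ge1}\tfrac1r\,p_r[X]\,p_r[z(1-\Omega_k)]\Bigr)s_\mu=\sum_{\la\supset\mu}z^{|\la/\mu|}\,s_{\la/\mu}[1-\Omega_k]\,s_\la .
\]
The left side is $\sum_m G(k,m)\,z^m\,s_\mu$ by Step 1, because $p_r[z(1-\Omega_k)]\,p_r[X]=z^r\,p_r[(1-\Omega_k)X]$. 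Extracting the coefficient of $z^m$ then yields
\[
G(k,m)\,s_\mu=\sum_{\la\vdash m+|\mu|}s_{\la/\mu}[1-\Omega_k]\,s_\la .
\]

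\textbf{Step 3: compare coefficients.} Comparing this expansion with \eqref{e:Pieri1}, using linear independence of the Schur basis, gives \eqref{e:1-Theta} for every $\la\supset\mu$.

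The only delicate point is the plethystic bookkeeping in Step 1. Here $1-\Omega_k$ must be treated as a virtual alphabet whose power sums are $1-p_r[\Omega_k]$, so that $p_r[(1-\Omega_k)X]=p_r[X]\bigl(1-p_r[\Omega_k]\bigr)$. One must also check that the exponential generating function matches the rational product. Both checks are routine and run in parallel with Lemma~\ref{l:spec-omega}.
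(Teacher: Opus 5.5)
Your proposal is correct and is essentially the paper's proof. The paper likewise writes $G(k,m)(X)=h_m[(1-\Omega_k)X]$ from \eqref{e:defG}, specializes \eqref{e:ge-skew-Mac} at $(q,t,\eta,Y)=(0,0,\varnothing,(1-\Omega_k)z)$, extracts the coefficient of $z^m$ to get $G(k,m)s_\mu=\sum_\la s_{\la/\mu}[1-\Omega_k]s_\la$, and compares with \eqref{e:Pieri1}; your Step~1 just spells out the plethystic verification that the paper leaves implicit.
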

\begin{proof}
  By \eqref{e:defG}, $G(k,m)(X)=h_{m}[(1-\Omega_k)X]$. Taking the specialization of \eqref{e:ge-skew-Mac} at $(q,t,\eta,Y)=(0,0,\varnothing,(1-\Omega_k)z)$ yields
  \begin{align}\label{e:gene-1-Theta}
   \exp\left(\sum_{n=1}^{\infty}\frac{1}{n}p_n[X]p_n[(1-\Omega_k)z]\right)s_{\la}(X)=\sum_{\rho}z^{|\rho/\la|}s_{\rho/\la}[1-\Omega_k]s_{\rho}(X).  
  \end{align}
  Note that 
  \begin{align}
      \exp\left(\sum_{n=1}^{\infty}\frac{1}{n}p_n[X]p_n[(1-\Omega_k)z]\right)
      =\sum_{m\geq 0}h_{m}[(1-\Omega_k)X]z^m
      =\sum_{m\geq 0}G(k,m)(X)z^m.
  \end{align}
  Taking the coefficient of $z^m$ on both sides of \eqref{e:gene-1-Theta} implies
  \begin{align}\label{e:Gs}
      G(k,m)(X)s_{\la}(X)=\sum_{\rho}s_{\rho/\la}[1-\Omega_k]s_{\rho}(X).
  \end{align}
  Comparing the coefficients in \eqref{e:Pieri1} and \eqref{e:Gs} completes the proof. 
\end{proof}

Now we are ready to give the skew Pieri-like rule for Petrie symmetric functions.
\begin{thm}\label{t:Pet}
   Let $\eta\subset\la$. Then
   \begin{align}
       G(k,m)(X)s_{\la/\eta}(X)=\sum_{\rho,\mu}(-1)^{|\eta/\mu|}\sgn^{'}(\rho/\la)\sgn^{'}(\eta^t/\mu^t)s_{\rho/\mu}(X)
   \end{align}
   where the sum ranges over all partitions $\rho$ and $\mu$ such that $|\rho/\la|+|\eta/\mu|=m$ and both $\rho/\la$ and $\eta^t/\mu^t$ are good proper $k$-ribbons with $\rho_i-\la_i<k$ and $\eta^t_i-\mu^t_i<k$ for all $i$. 
\end{thm}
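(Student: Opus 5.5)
The plan mirrors the proof of Theorem~\ref{t:skew-PMN}, with $\Omega_n$ replaced by the virtual alphabet $1-\Omega_k$. I will start from the Schur specialization of the general skew identity \eqref{e:ge-skew-Mac}, namely $(q,t)=(0,0)$, and take $Y=(1-\Omega_k)z$. This gives
\[
\exp\!\left(\sum_{n\ge1}\frac1n p_n[X]p_n[(1-\Omega_k)z]\right)s_{\la/\eta}(X)
=\sum_{\rho,\mu}z^{|\rho/\la|+|\eta/\mu|}\,s_{\rho/\la}[1-\Omega_k]\,s_{\eta/\mu}[-(1-\Omega_k)]\,s_{\rho/\mu}(X).
\]
As in the proof of Lemma~\ref{l:1-Theta}, the left-hand exponential equals $\sum_{m\ge0}h_m[(1-\Omega_k)X]z^m=\sum_{m\ge0}G(k,m)(X)z^m$. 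Extracting the coefficient of $z^m$ then yields
\[
G(k,m)\,s_{\la/\eta}=\sum_{|\rho/\la|+|\eta/\mu|=m} s_{\rho/\la}[1-\Omega_k]\;s_{\eta/\mu}[-(1-\Omega_k)]\;s_{\rho/\mu}.
\]

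Next I evaluate the two plethystic coefficients. For the first, Lemma~\ref{l:1-Theta} combined with the combinatorial description \eqref{e:combin-sgn'} gives
\[
s_{\rho/\la}[1-\Omega_k]=\mathrm{Pet}_k(\rho,\la).
\]
This equals $\sgn'(\rho/\la)$ exactly when $\rho/\la$ is a good proper $k$-ribbon with $\rho_i-\la_i<k$ for all $i$, and vanishes otherwise. Strictly, Lemma~\ref{l:1-Theta} is stated for straight $\la$ and $\mu$, but here $\rho$ and $\la$ are both partitions, so it applies directly.

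For the second coefficient I use the sign rule for negated alphabets, $s_{\alpha/\beta}[-A]=(-1)^{|\alpha/\beta|}s_{\alpha^t/\beta^t}[A]$, which follows from Lemma~\ref{l:plethysm} because it holds for any virtual alphabet $A$ (it acts on power sums via $p_r\mapsto -p_r$). Applying it gives
\[
s_{\eta/\mu}[-(1-\Omega_k)]=(-1)^{|\eta/\mu|}\,s_{\eta^t/\mu^t}[1-\Omega_k]=(-1)^{|\eta/\mu|}\,\mathrm{Pet}_k(\eta^t,\mu^t).
\]
Substituting both values and discarding the vanishing terms produces exactly the stated sum.

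The main thing to verify carefully is that $1-\Omega_k$ can be handled as a virtual alphabet, so that both the sign rule and Lemma~\ref{l:1-Theta} are legitimate. This is purely formal, since every identity involved is determined by the power sums $p_r[1-\Omega_k]=1-p_r[\Omega_k]$. Beyond that, the only remaining care is bookkeeping: the conditions $\eta^t_i-\mu^t_i<k$ and the good-proper condition for $\eta^t/\mu^t$ must be inherited from \eqref{e:combin-sgn'} applied to the transposed pair $(\eta^t,\mu^t)$.
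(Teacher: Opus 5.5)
Your proposal is correct and follows the paper's proof essentially step for step. Both specialize \eqref{e:ge-skew-Mac} at $(q,t,Y)=(0,0,(1-\Omega_k)z)$, identify the kernel with $\sum_m G(k,m)z^m$, extract the coefficient of $z^m$, convert $s_{\eta/\mu}[\Omega_k-1]$ into $(-1)^{|\eta/\mu|}s_{\eta^t/\mu^t}[1-\Omega_k]$, and finish with Lemma~\ref{l:1-Theta} and \eqref{e:combin-sgn'}; your remark that the transpose sign rule holds for the virtual alphabet $1-\Omega_k$, because it is determined by power sums, supplies the justification the paper leaves implicit.
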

\begin{proof}
  Specializing \eqref{e:ge-skew-Mac} at $(q,t,Y)=(0,0,(1-\Omega_k)z)$ yields
  \begin{multline}\label{e:pps}
   \exp\left(\sum_{n=1}^{\infty}\frac{1}{n}p_n[X]p_n[(1-\Omega_k)z]\right)s_{\la/\eta}(X)\\
   =\sum_{\rho,\mu}z^{|\rho/\la|+|\eta/\mu|}s_{\rho/\la}[1-\Omega_k]s_{\eta/\mu}[\Omega_k-1]s_{\rho/\mu}(X).  
  \end{multline}
  By \eqref{e:defG}, we have
  \begin{align}\label{e:gene-G}
    \exp\left(\sum_{n=1}^{\infty}\frac{1}{n}p_n[X]p_n[(1-\Omega_k)z]\right)=\sum_{m=0}^{\infty}G(k,m)(X)z^m.  
  \end{align}
  Plugging \eqref{e:gene-G} into \eqref{e:pps} and taking the coefficient of $z^m$ on both sides imply that
  \begin{multline}\label{e:Gssss}
      G(k,m)(X)s_{\la/\eta}(X)=\sum_{\rho,\mu}s_{\rho/\la}[1-\Omega_k]s_{\eta/\mu}[\Omega_k-1]s_{\rho/\mu}(X)\\
      =\sum_{\rho,\mu}(-1)^{|\eta/\mu|}s_{\rho/\la}[1-\Omega_k]s_{\eta^t/\mu^t}[1-\Omega_k]s_{\rho/\mu}(X),
  \end{multline}
  summed over all partitions $\rho$ and $\mu$ such that $|\rho/\la|+|\eta/\mu|=m$. The proof is completed by combining \eqref{e:Gssss}, Lemma \ref{l:1-Theta} and \eqref{e:combin-sgn'}.
\end{proof}

\begin{defn}\label{def:skew-modular-schur}
    The \emph{skew modular Schur function} $\mathfrak s^{(k)}_{\la/\mu}(X)$ is defined by 
    \begin{align}
        \mathfrak s^{(k)}_{\la/\mu}(X):=\det_{1\leq i,j\leq\ell(\la)}\bigl(G(k,\la_i-\mu_j-i+j)(X)\bigr),
    \end{align}
    where, as usual, we pad $\mu$ with zeros so that it has length $\ell(\la)$.
    Here $G(k,m)(X)$ is the Petrie symmetric function defined in \eqref{e:defG}.
    In particular, the \emph{straight} modular Schur function $\mathfrak s^{(k)}_{\la}(X)$ is $\mathfrak s^{(k)}_{\la/\varnothing}(X)$.
\end{defn}

The transition coefficients between modular Schur functions and classical Schur functions,
defined by
\begin{align}
    \mathfrak s^{(k)}_{\la}(X)=\sum_{\mu}m_{\la,\mu}\,s_{\mu}(X),
\end{align}
form a central combinatorial bridge between characteristic $0$ and characteristic $p$ representation theories of $GL_n(\mathbb K)$.
Following Doty--Walker’s work on truncated symmetric powers \cite{DW92} and Walker’s formalization of related modular Schur functions \cite{Wal94}, the matrix $M(\mathfrak s^{(k)},s)$ plays a key role in describing how Weyl characters behave under truncation and in extracting modular structural information (e.g., via Carter-type criteria \cite{Wal94}).
More broadly, these coefficients provide an explicit combinatorial route toward understanding decomposition phenomena for $GL_n(\mathbb K)$.

More generally, define skew coefficients $m_{\la/\mu,\nu}$ by
\begin{align}\label{eq:def-skew-m-coeff}
   \mathfrak s_{\la/\mu}^{(k)}(X)=\sum_{\nu}m_{\la/\mu,\nu}\,s_{\nu}(X).
\end{align}
This section gives a combinatorial interpretation of $m_{\la/\mu,\nu}$ using the results from the previous sections.

As mentioned earlier, $G(k,m)(X)=h_m[(1-\Omega_k)X]$. By the Jacobi--Trudi definition, this implies
\begin{align}\label{e:modular-plethysm}
\mathfrak s^{(k)}_{\la/\mu}(X)=s_{\la/\mu}[(1-\Omega_k)X].
\end{align}
Substituting $X\mapsto (1-\Omega_k)X$ into \eqref{e:SqS} and \eqref{e:decom-skew-s} yields the following.

\begin{lem}\label{l:decom-modular}
    Let $\mu\subset\la$. Then
    \begin{align}\label{e:decom-modular1}
        \mathfrak s^{(k)}_{\la/\mu}(X)=\sum_{\xi}(-1)^{\rht((\la/\mu)/\xi)}\,G\!\left(k,|\la|-|\mu|-|\xi|\right)(X)\,\mathfrak s^{(k)}_{\xi}(X),
    \end{align}
    where the sum runs over all skew diagrams $\xi$ obtained from $\la/\mu$ by removing a special ribbon of $\la/\mu$.
    Consequently,
   \begin{equation}\label{e:decom-modular2}
    \mathfrak s^{(k)}_{\la/\mu}(X)=\sum_{\nu\models |\la|-|\mu|}\ \sum_{\T}\sgn(\T)\,G(k,\nu)(X),
\end{equation}
where the inner sum is over all SRTs $\T$ of shape $\la/\mu$ and type $\nu$.
Here $G(k,\nu):=G(k,\nu_1)\,G(k,\nu_2)\cdots$.
\end{lem}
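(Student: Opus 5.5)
The plan is to transport the ribbon-removal identities \eqref{e:SqS} and \eqref{e:decom-skew-s} through the plethystic substitution $X\mapsto(1-\Omega_k)X$. This is exactly how Lemma~\ref{l:decom-Phi} was obtained from the same identities via $X\mapsto X/(1-q)$.

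The first step is to record that the substitution $f\mapsto f[(1-\Omega_k)X]$ is a ring endomorphism of $\Lambda\otimes\mathbb C$. Plethysm satisfies $(fg)[A]=f[A]g[A]$ and $(f\pm g)[A]=f[A]\pm g[A]$, so any polynomial identity among symmetric functions survives the substitution. The second step is to identify the images of the relevant functions. We have $h_m[(1-\Omega_k)X]=G(k,m)(X)$, as noted right after \eqref{e:defG}: the generating series $\prod_i(1-x_i^kz^k)/(1-x_iz)$ equals $\sigma_z[(1-\Omega_k)X]$ because $\prod_{j=0}^{k-1}(1-\omega_k^jx_iz)=1-x_i^kz^k$. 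We also have $s_\xi[(1-\Omega_k)X]=\mathfrak s^{(k)}_\xi(X)$ for any skew shape $\xi$; this is \eqref{e:modular-plethysm}, which follows by applying the substitution entrywise to the Jacobi--Trudi determinant and comparing with Definition~\ref{def:skew-modular-schur}.

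The third step is to apply the substitution to \eqref{e:SqS}. The sign factors $(-1)^{\rht((\la/\mu)/\xi)}$ are scalars, $h_{|\la|-|\mu|-|\xi|}$ is sent to $G(k,|\la|-|\mu|-|\xi|)$, and $s_\xi$ is sent to $\mathfrak s^{(k)}_\xi$. This gives \eqref{e:decom-modular1}. In the same way, applying the substitution to \eqref{e:decom-skew-s} and using multiplicativity to send $h_\nu=\prod_i h_{\nu_i}$ to $\prod_i G(k,\nu_i)=G(k,\nu)$ gives \eqref{e:decom-modular2}. Alternatively, \eqref{e:decom-modular2} follows by iterating \eqref{e:decom-modular1}, which reproduces the SRT expansion.

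The one point needing care is that the shapes $\xi$ produced by removing special ribbons are in general skew shapes rather than straight ones. The remark that $\mathfrak s^{(k)}_\xi=s_\xi[(1-\Omega_k)X]$ holds for skew $\xi$ is therefore essential, and it is guaranteed by the skew Jacobi--Trudi formula. Beyond that, no step should be an obstacle, since everything reduces to the fact that plethystic substitution is a ring homomorphism.
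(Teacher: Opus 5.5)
Your proposal is correct and matches the paper's own argument: the lemma is obtained by applying the plethystic substitution $X\mapsto(1-\Omega_k)X$ to \eqref{e:SqS} and \eqref{e:decom-skew-s}, using $h_m[(1-\Omega_k)X]=G(k,m)(X)$ and $s_\xi[(1-\Omega_k)X]=\mathfrak s^{(k)}_\xi(X)$ for skew $\xi$ via \eqref{e:modular-plethysm}. Your explicit remark that the latter must hold for skew shapes, since removing a special ribbon generally leaves a skew shape, is a worthwhile detail that the paper leaves implicit.
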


Recall the notion of a good proper $k$-ribbon (GPR) from Subsection~\ref{ss:skew-Pieri-like}.
Let $\nu$ be a partition and $\tau$ a composition with $\tau\models |\nu|$.
\phantomsection\label{def:GPRT}
A \emph{good proper $k$-ribbon tableau} (GPRT) $\mathscr{T}$ of \emph{shape} $\nu$ and \emph{type} $\tau$ is a filling of the boxes of $\nu$ by positive integers such that:
\begin{enumerate}
  \item entries are weakly increasing from left to right along each row and from top to bottom down each column;
  \item for each $i\ge 1$, the set of boxes labeled $i$ consists of $\tau_i$ boxes forming a good proper $k$-ribbon,
  and each row of this ribbon contains fewer than $k$ boxes.
\end{enumerate}
For a GPRT $\mathscr T$, define
\begin{align}
    \sgn^{'}(\mathscr T):=\prod_{i=1}^{\ell(\tau)}\sgn^{'}\!\left(\mathscr T^{(i)}/\mathscr T^{(i-1)}\right),
\end{align}
where $\mathscr T^{(i)}$ denotes the subdiagram consisting of boxes labeled by numbers $\le i$ (with $\mathscr T^{(0)}=\varnothing$), and $\sgn^{'}$ is as in \eqref{e:sgn'}.

The next lemma is the dual form of the Pieri-like rule for Petrie symmetric functions, since the Schur basis is orthonormal with respect to the Hall inner product.

\begin{lem}\label{l:G*s}
    Let $m\geq 1$ and let $\nu$ be a partition. Then
    \begin{align}\label{e:G*s}
        G^{\perp_{0,0}}(k,m)\,s_{\nu}
        =\sum_{\rho\vdash|\nu|-m}\sgn^{'}(\nu/\rho)\,s_{\rho},
    \end{align}
    where the sum runs over all partitions $\rho$ such that $\nu/\rho$ is a good proper $k$-ribbon (GPR) whose every row has length $<k$.
    Moreover, if $\tau\models |\nu|$, then
    \begin{align}
        G^{\perp_{0,0}}(k,\tau)\,s_{\nu}
        =\sum_{\mathscr T}\sgn^{'}(\mathscr T),
    \end{align}
    summed over all GPRTs $\mathscr T$ of shape $\nu$ and type $\tau$, where
    \[
    G^{\perp_{0,0}}(k,\tau):=G^{\perp_{0,0}}(k,\tau_1)G^{\perp_{0,0}}(k,\tau_2)\cdots.
    \]
\end{lem}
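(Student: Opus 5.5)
The first identity is the Hall-dual of the Pieri-like rule \eqref{e:Pieri1} with the combinatorial interpretation \eqref{e:combin-sgn'}; the second follows by iterating it. Since $\{s_\rho\}$ is orthonormal for $\langle\cdot,\cdot\rangle_{0,0}$, I would expand $G^{\perp_{0,0}}(k,m)s_\nu=\sum_\rho \langle s_\rho, G^{\perp_{0,0}}(k,m)s_\nu\rangle_{0,0}\,s_\rho$. Then adjointness gives
\[
\langle s_\rho,G^{\perp_{0,0}}(k,m)s_\nu\rangle_{0,0}=\langle G(k,m)s_\rho,s_\nu\rangle_{0,0}={\rm Pet}_k(\nu,\rho),
\]
using \eqref{e:Pieri1} with $\mu=\rho$. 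This forces $|\rho|=|\nu|-m$. By \eqref{e:combin-sgn'}, the coefficient is $\sgn'(\nu/\rho)$ when $\nu/\rho$ is a good proper $k$-ribbon with $\nu_i-\rho_i<k$ for all $i$, and $0$ otherwise. The condition $\nu_i-\rho_i<k$ says exactly that every row of $\nu/\rho$ has fewer than $k$ boxes, which is the row condition in the statement. This gives \eqref{e:G*s}. Equivalently, by Lemma~\ref{l:1-Theta} the coefficient is $s_{\nu/\rho}[1-\Omega_k]$, and \eqref{e:G*s} is the specialization $Y=1-\Omega_k$ of \eqref{e:straight-dual-Lambda}, read in degree $m$.

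For the second statement, the left side $G^{\perp_{0,0}}(k,\tau)s_\nu$ is understood as a scalar: it lies in degree $|\nu|-|\tau|=0$. Because multiplication is commutative, the adjoints commute, so I may apply the factors $G^{\perp_{0,0}}(k,\tau_j)$ in any order. I would apply them from the last part to the first, so that the boxes labeled $\ell(\tau)$ are peeled off first. Iterating \eqref{e:G*s} $\ell(\tau)$ times produces a sum over chains
\[
\nu=\rho^{(\ell)}\supset\rho^{(\ell-1)}\supset\cdots\supset\rho^{(0)},\qquad \ell=\ell(\tau).
\]
In such a chain, each $\rho^{(i)}/\rho^{(i-1)}$ is a good proper $k$-ribbon of size $\tau_i$ with all rows shorter than $k$. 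Each chain carries the weight $\prod_i\sgn'(\rho^{(i)}/\rho^{(i-1)})$, multiplied by $s_{\rho^{(0)}}$ with $|\rho^{(0)}|=0$, so $s_{\rho^{(0)}}=1$.

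Finally, such a chain corresponds bijectively to a filling of $\nu$ in which the boxes of $\rho^{(i)}/\rho^{(i-1)}$ are labeled $i$. The resulting filling has $\mathscr T^{(i)}=\rho^{(i)}$, since a chain of partitions makes rows and columns weakly increasing. This is exactly a GPRT of shape $\nu$ and type $\tau$, and its weight is $\sgn'(\mathscr T)$ by definition.

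The only point needing care is bookkeeping rather than a real obstacle. Compositions $\tau$ may have parts equal to $0$; these are handled by $G(k,0)=1$, which contributes the trivial step. The other point is checking that the row-length condition is attached to each ribbon, exactly as in the definition of a GPRT.
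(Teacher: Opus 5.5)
Your proposal is correct and follows the paper's own justification, which is just the remark that the lemma is the Hall-dual of the Pieri-like rule \eqref{e:Pieri1} with the interpretation \eqref{e:combin-sgn'}, via orthonormality of the Schur basis; the GPRT formula then follows by iterating, peeling off the largest label first. Your identification of $\nu_i-\rho_i<k$ with the row-length condition, and your bijection between chains and GPRTs, supply exactly the bookkeeping the paper leaves implicit.
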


With Lemmas \ref{l:decom-modular} and \ref{l:G*s} in hand, we can now interpret the coefficients $m_{\la/\mu,\nu}$ combinatorially.

\begin{thm}\label{t:combin-m}
    Let $\mu\subset\la$ and $\nu$ be partitions. Then
    \begin{align}\label{e:combin-m1}
        m_{\la/\mu,\nu}
        =\sum_{|\xi|=|\rho|}
        (-1)^{\rht((\la/\mu)/\xi)}\,\sgn^{'}(\nu/\rho)\,m_{\xi,\rho},
    \end{align}
    where $\xi$ ranges over skew diagrams obtained from $\la/\mu$ by removing a special ribbon, and $\rho$ ranges over partitions obtained from $\nu$ by removing a GPR whose every row has length $<k$.
    Moreover,
    \begin{align}\label{e:combin-m2}
      m_{\la/\mu,\nu}
      =\sum_{(\T,\mathscr T)}\sgn(\T)\,\sgn^{'}(\mathscr T),
    \end{align}
    summed over all pairs consisting of an SRT $\T$ of shape $\la/\mu$ and a GPRT $\mathscr T$ of shape $\nu$ that have the same type.
\end{thm}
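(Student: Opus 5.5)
The plan follows exactly the pattern used for Theorems~\ref{t:skew-K_iterative} and \ref{t:skew-inv-K(t)-iterative}: express $m_{\la/\mu,\nu}$ as a Hall inner product, decompose the left factor by special ribbon removal, and transfer the resulting multiplication operator to the right factor as a skewing operator. Since the Schur basis is orthonormal for $\langle\cdot,\cdot\rangle_{0,0}$, we have
\[
m_{\la/\mu,\nu}=\bigl\langle \mathfrak s^{(k)}_{\la/\mu},\,s_\nu\bigr\rangle_{0,0}.
\]
Substitute the decomposition \eqref{e:decom-modular1} of Lemma~\ref{l:decom-modular} for $\mathfrak s^{(k)}_{\la/\mu}$. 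This gives
\[
m_{\la/\mu,\nu}=\sum_{\xi}(-1)^{\rht((\la/\mu)/\xi)}\bigl\langle G(k,|\la/\mu|-|\xi|)\,\mathfrak s^{(k)}_{\xi},\,s_\nu\bigr\rangle_{0,0}.
\]
Moving $G$ across the pairing turns each summand into $\langle \mathfrak s^{(k)}_\xi, G^{\perp_{0,0}}(k,|\la/\mu|-|\xi|)s_\nu\rangle_{0,0}$.

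Next, apply Lemma~\ref{l:G*s}, which expands $G^{\perp_{0,0}}(k,r)s_\nu$ as $\sum_\rho \sgn'(\nu/\rho)s_\rho$. Here $\rho$ runs over partitions for which $\nu/\rho$ is a GPR of size $r$ whose rows all have length $<k$. Pairing again with $\mathfrak s^{(k)}_\xi$ yields $m_{\xi,\rho}$, and this is \eqref{e:combin-m1}. Two points need care. First, the case $r=0$, where $G(k,0)=1$, forces $\rho=\nu$ with sign $1$; this is consistent with the convention for an empty GPR. Second, $\xi$ is a skew diagram in general, so $m_{\xi,\rho}$ must be read as the skew coefficient defined in \eqref{eq:def-skew-m-coeff}. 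The constraint $|\xi|=|\rho|$ comes automatically from degree matching.

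For \eqref{e:combin-m2}, I would use the full expansion \eqref{e:decom-modular2} instead of iterating \eqref{e:combin-m1}. It gives
\[
m_{\la/\mu,\nu}=\sum_\tau\sum_{\T}\sgn(\T)\,\bigl\langle G(k,\tau),s_\nu\bigr\rangle_{0,0}.
\]
Then evaluate $\langle G(k,\tau),s_\nu\rangle_{0,0}=G^{\perp_{0,0}}(k,\tau)s_\nu$, the constant term, by the second part of Lemma~\ref{l:G*s}. That count is the signed number of GPRTs of shape $\nu$ and type $\tau$.

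The main obstacle is keeping the order of factors consistent. Since $G^{\perp}(k,\tau_1)G^{\perp}(k,\tau_2)\cdots$ applied to $s_\nu$ removes the strips labeled by the largest indices first, we have to check that the type of the SRT, which records ribbons removed from the southwest inward, matches the type of the GPRT under the same indexing. The skewing operators $G^{\perp}$ commute with one another, so as operators the order does not matter. However, the correspondence between a sequence of removals and a filling requires reversing the labels consistently. I would fix the labeling so that the $i$-th factor in $G(k,\tau)$ corresponds to the label $i$ in both tableaux, and check the bookkeeping on a small example.
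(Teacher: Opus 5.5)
Your argument for \eqref{e:combin-m1} is exactly the paper's computation: Hall pairing, substitute \eqref{e:decom-modular1}, move $G$ across as $G^{\perp_{0,0}}$, apply Lemma~\ref{l:G*s}. Deriving \eqref{e:combin-m2} from \eqref{e:decom-modular2} and the second part of Lemma~\ref{l:G*s} is just the paper's ``iterate \eqref{e:combin-m1}'' done in one step, and the ordering issue you flag is vacuous: the $G(k,\tau_i)$ commute, so $\langle G(k,\tau),s_\nu\rangle_{0,0}$ depends only on the multiset of parts of $\tau$ and matching SRTs with GPRTs by type needs no relabeling.
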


\begin{proof}
    The identity \eqref{e:combin-m2} follows by iterating \eqref{e:combin-m1}. Thus it suffices to prove \eqref{e:combin-m1}.
    Using the Hall inner product $\langle\cdot,\cdot\rangle_{0,0}$, we compute
    \begin{align*}
        m_{\la/\mu,\nu}
        &=\left\langle \mathfrak s^{(k)}_{\la/\mu}(X),\, s_{\nu}(X) \right\rangle_{0,0}\\
        &=\left\langle \sum_{\xi}(-1)^{\rht((\la/\mu)/\xi)}
        G\!\left(k,|\la|-|\mu|-|\xi|\right)(X)\,\mathfrak s^{(k)}_{\xi}(X),\, s_{\nu}(X) \right\rangle_{0,0}
        \quad\text{(by \eqref{e:decom-modular1})}\\
        &=\sum_{\xi}(-1)^{\rht((\la/\mu)/\xi)}
        \left\langle \mathfrak s^{(k)}_{\xi}(X),\, G^{\perp_{0,0}}\!\left(k,|\la|-|\mu|-|\xi|\right)s_{\nu}(X) \right\rangle_{0,0}\\
        &=\sum_{\xi,\rho}(-1)^{\rht((\la/\mu)/\xi)}\sgn^{'}(\nu/\rho)
        \left\langle \mathfrak s^{(k)}_{\xi}(X),\, s_{\rho}(X) \right\rangle_{0,0}
        \quad\text{(by \eqref{e:G*s})}\\
        &=\sum_{\xi,\rho}(-1)^{\rht((\la/\mu)/\xi)}\sgn^{'}(\nu/\rho)\,m_{\xi,\rho},
    \end{align*}
    as required.
\end{proof}

\begin{rem}
    In \cite{CCE+23}, the authors gave a combinatorial interpretation of $m_{\la/\mu,\nu}$ in the special case where $\la$ is a single row and $\mu=\varnothing$.
    They also asked for a combinatorial method to complete the transition matrix by expanding $\mathfrak s^{(k)}_{\la}$ in the Schur basis.
    Theorem~\ref{t:combin-m} provides such an interpretation in a broader skew setting.
\end{rem}


\section{Proof of Walker's conjecture surrounding \texorpdfstring{$m_{\la,\nu}$}{m(lambda,nu)}}
In this section, we restrict $k$ to be an odd prime. As before, we let $m_{\la,\nu}$ denote the transition coefficient from $\mathfrak s^{(k)}_{\la}$ to $s_{\nu}$, i.e.,
\begin{align}
    \mathfrak s^{(k)}_{\la}=\sum_{\nu\vdash|\la|}m_{\la,\nu}\,s_{\nu}.
\end{align}

\begin{defn}
Let $\lambda\vdash d$.
We write $\epsilon^\lambda$ for the ordinary irreducible character of the symmetric group $\mathfrak S_d$ corresponding to $\lambda$.

A partition $\lambda$ is called a \emph{$k$-core} if no $k$-ribbon can be removed from $\lambda$.
Equivalently, no hook length of $\lambda$ is divisible by $k$.
\end{defn}

At the end of \cite{Wal94}, Walker proposed a conjecture about $m_{\la,\nu}$, which reads as follows:
\begin{thm}{\rm (\cite[Conjecture 4.7]{Wal94}).}\label{thm:conj47}
Assume that $k$ is an odd prime. For a given partition $\la$, if
\[
m_{\la,\nu}=\delta_{\la,\nu}\quad\text{for all $\nu\vdash|\la|$},
\]
then $\lambda$ is a $k$-core.
\end{thm}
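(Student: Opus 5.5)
The plan is the one sketched in the introduction: compare power-sum (Frobenius) expansions. The hypothesis $m_{\la,\nu}=\delta_{\la,\nu}$ says exactly that $\mathfrak s^{(k)}_\la=s_\la$. By \eqref{e:modular-plethysm} this becomes the plethystic identity
\[
s_\la\bigl[(1-\Omega_k)X\bigr]=s_\la[X].
\]
I would expand both sides in power sums using the Frobenius formula $s_\la=\sum_{\mu\vdash d} z_\mu^{-1}\epsilon^\la_\mu\, p_\mu$, where $d=|\la|$. Since $p_r[(1-\Omega_k)X]=(1-p_r[\Omega_k])p_r[X]$, and $p_r[\Omega_k]$ equals $k$ if $k\mid r$ and $0$ otherwise, we get
\[
p_\mu[(1-\Omega_k)X]=(1-k)^{a_\mu}\,p_\mu[X],
\]
where $a_\mu$ is the number of parts of $\mu$ divisible by $k$. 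Because the $p_\mu$ are linearly independent, comparing coefficients gives
\[
\epsilon^\la_\mu\bigl((1-k)^{a_\mu}-1\bigr)=0 \quad\text{for every } \mu\vdash d.
\]
For $k\ge 3$ (which holds since $k$ is an odd prime) and $a_\mu\ge1$, we have $(1-k)^{a_\mu}\neq1$. Hence $\epsilon^\la$ vanishes on every conjugacy class containing a cycle of length divisible by $k$.

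The second step is to translate this into vanishing on $k$-singular classes, i.e.\ classes whose elements have order divisible by $k$. For $k$ prime, an element of $\mathfrak S_d$ has order divisible by $k$ exactly when some cycle length is divisible by $k$. So $\epsilon^\la$ vanishes on all $k$-singular elements.

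Third, I would invoke the classical fact (Brauer) that an ordinary irreducible character vanishing on all $p$-singular elements has $p$-defect zero, so $d!/\epsilon^\la(1)$ is prime to $k$. Concretely, the restriction of $\epsilon^\la$ to a Sylow $k$-subgroup $P$ vanishes off the identity. It is therefore a multiple of the regular character of $P$, which forces $|P|$ to divide $\epsilon^\la(1)$. By the hook length formula $\epsilon^\la(1)=d!/\prod h_\la(s)$, so the $k$-part of $\prod_s h_\la(s)$ is trivial. Thus no hook length of $\la$ is divisible by $k$, and $\la$ is a $k$-core by the equivalent definition stated above.

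I expect the only delicate point to be justifying the defect-zero step cleanly; the restriction to a Sylow subgroup argument just given seems self-contained and avoids citing block theory. Primality of $k$ is used in two places: to identify ``$k$ divides some cycle length'' with $k$-singularity, and to run the Sylow argument. Oddness of $k$ is used only to ensure $(1-k)^{a}\neq1$; for $k=2$ we would have $(1-k)^{a}=(-1)^{a}$, which equals $1$ for even $a$.
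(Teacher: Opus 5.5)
Your proposal is correct and follows essentially the same route as the paper: the power-sum comparison forces $\epsilon^\lambda$ to vanish on $k$-singular classes, and the Sylow restriction argument then gives defect zero. The paper phrases that step as $\langle \epsilon_P,1_P\rangle=\epsilon(1)/|P|\in\mathbb Z_{\ge0}$, which is exactly the integrality implicit in your ``multiple of the regular character'' step; the only other difference is that you read off the absence of hook lengths divisible by $k$ directly from the hook length formula, whereas the paper first records defect zero and then cites Frame--Robinson--Thrall for the equivalence.
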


\begin{rem}
The case $k=2$ is not included in this statement. Indeed, for $k=2$ one has
$\mathfrak s^{(2)}_\lambda=s_{\lambda^t}$; hence the implication above would
fail, for example, for the self-conjugate partition $(2,2)$, which is not a
$2$-core.
\end{rem}

Before confirming this conjecture, we make some preparation. 

For a positive integer $n$, we write $n_k$ for the $k$-part of $n$, i.e. the largest power of $k$ dividing $n$. 
\begin{defn}
Let $G$ be a finite group. 
\begin{enumerate}
    \item An element $g\in G$ is called \emph{$k$-singular} if $k\mid |g|$, where $|g|$ denotes the order of $g$.
A conjugacy class of $G$ is called \emph{$k$-singular} if it consists of $k$-singular elements.
\item Let $\chi$ be an ordinary irreducible character of $G$.
The \emph{$k$-defect} of $\chi$ is the integer $d_k(\chi)\geq 0$ defined by
\[
\chi(1)_k=\frac{|G|_k}{k^{d_k(\chi)}}.
\]
Equivalently, $\chi$ has \emph{defect zero} if and only if
\[
\chi(1)_k=|G|_k.
\]
\end{enumerate}

\end{defn}

Now we concentrate on the symmetric group $\mathfrak S_d$.

\begin{rem}
If $\mu\vdash d$, then the conjugacy class of $\mathfrak S_d$ of cycle type $\mu$ is $k$-singular if and only if at least one part of $\mu$ is divisible by $k$.
Indeed, the order of an element of cycle type $\mu=(\mu_1,\dots,\mu_\ell)$ is the least common multiple of $(\mu_1,\dots,\mu_\ell)$.
\end{rem}

Brauer and Nesbitt \cite[Theorem 1]{BN41} proved the following classical result. Let $\chi$ be an irreducible character of the finite group $G$ and has $k$-defect zero, then $\chi$ vanishes on all those elements of $G$ whose order is divisible by $k$. For the symmetric group case, we give the converse of this result.

\begin{lem}\label{lem:vanishing-implies-defect-zero}
Let $\epsilon$ be an irreducible character of $\mathfrak S_d$.
Assume that
\[
\epsilon(g)=0
\qquad
\text{for every $k$-singular element } g\in \mathfrak S_d.
\]
Then $\epsilon$ has $k$-defect zero.
\end{lem}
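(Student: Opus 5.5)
Since $\epsilon$ vanishes on every $k$-singular element, we may restrict to a Sylow $k$-subgroup and read off divisibility of $\epsilon(1)$ from an orthogonality relation. Let $P$ be a Sylow $k$-subgroup of $\mathfrak S_d$. Every nonidentity element of $P$ has order a positive power of $k$, so it is $k$-singular. The hypothesis therefore gives $\epsilon(g)=0$ for all $g\in P\setminus\{1\}$, and the restriction $\epsilon|_P$ equals $\epsilon(1)$ times the regular-character indicator, that is, $\epsilon|_P=\frac{\epsilon(1)}{|P|}\rho_P$, where $\rho_P$ is the regular character of $P$. Consequently the multiplicity of the trivial character of $P$ in $\epsilon|_P$ is
\[
\langle \epsilon|_P,1_P\rangle_P=\frac{1}{|P|}\sum_{g\in P}\epsilon(g)=\frac{\epsilon(1)}{|P|}.
\]
This multiplicity is a nonnegative integer. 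It is nonzero because $\epsilon(1)>0$. Hence $|P|=|\mathfrak S_d|_k$ divides $\epsilon(1)$.

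Since $\epsilon(1)$ divides $|\mathfrak S_d|$, the $k$-part of $\epsilon(1)$ is at most $|\mathfrak S_d|_k$. Combined with the divisibility above, this gives $\epsilon(1)_k=|\mathfrak S_d|_k$, so $d_k(\epsilon)=0$ and $\epsilon$ has defect zero.

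The argument is short and I expect no step to be a serious obstacle. The one point to state carefully is that every nonidentity element of $P$ is $k$-singular; this holds because its order is a nontrivial power of $k$. Nothing specific to $\mathfrak S_d$ is used, so the lemma holds for any finite group. For the later application, one can pass from defect zero to the hook condition by the hook length formula $\epsilon^\lambda(1)=d!/\prod h_\lambda(s)$: defect zero forces $k\nmid h_\lambda(s)$ for all $s$, so $\lambda$ is a $k$-core.
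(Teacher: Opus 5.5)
Your proof is correct and follows the paper's argument: restrict to a Sylow $k$-subgroup $P$, compute $\langle \epsilon|_P,1_P\rangle=\epsilon(1)/|P|$ to get $|P|\mid\epsilon(1)$, and then bound $\epsilon(1)_k$ above by $|\mathfrak S_d|_k$. The only difference is in that upper bound: you use the general fact that irreducible character degrees divide the group order, whereas the paper obtains $\epsilon^\lambda(1)\mid d!$ from the hook-length formula. Your version thereby makes explicit that the lemma holds for any finite group, for $k$ prime.
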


\begin{proof}
Let $P$ be a sylow $k$-subgroup of $\mathfrak S_d$.
Consider the restriction $\epsilon_P$ of $\epsilon$ to $P$.
By taking the usual inner product on the space of class functions with the trivial character $1_P$, we obtain
\[
\langle \epsilon_P,1_P\rangle
=
\frac{1}{|P|}\sum_{x\in P}\epsilon(x).
\]
Every nonidentity element of $P$ is $k$-singular, so by assumption $\epsilon(x)=0$ for all $x\in P\setminus\{1\}$.
Hence
\[
\langle \epsilon_P,1_P\rangle
=
\frac{1}{|P|}\epsilon(1).
\]
Since $\langle \epsilon_P,1_P\rangle$ is a nonnegative integer, it follows that
\[
|P|\mid \epsilon(1).
\]
Therefore
\[
|P|=|\mathfrak S_d|_k=(d!)_k \leq \epsilon(1)_k.
\]

On the other hand, since $\epsilon=\epsilon^\lambda$ for some partition $\lambda\vdash d$, the hook-length formula gives
\[
\epsilon^\lambda(1)=\frac{d!}{\prod_{u\in[\lambda]} h(u)},
\]
where $h(u)$ denotes the hook length at the node $u$.
In particular, $\epsilon^\lambda(1)$ divides $d!$, and hence
\[
\epsilon(1)_k\leq (d!)_k=|\mathfrak S_d|_k.
\]
Combining the two inequalities yields
\[
\epsilon(1)_k=|\mathfrak S_d|_k.
\]
Thus $\epsilon$ has $k$-defect zero.
\end{proof}

Now we are ready to prove the Walker's conjecture.
\medskip

\noindent {\em Proof of Theorem \ref{thm:conj47}}. 
We first compute the plethystic action of $(1-\Omega_k)X$ on the power-sum basis. Recall that $\Omega_k=1+\omega+\cdots+\omega^{k-1}$ and $\omega$ is the $k$th root of unity.

For every integer $r\geq 1$,
\[
p_r\bigl[(1-\Omega_k)X\bigr]
=
p_r[X]-p_r[\Omega_k X].
\]
Since plethysm is multiplicative on alphabets,
\[
p_r[\Omega_k X]
=
p_r[\Omega_k]\cdot p_r[X].
\]
Moreover,
\[
p_r[\Omega_k]
=
1+\omega^r+\omega^{2r}+\cdots+\omega^{(k-1)r}
=
\begin{cases}
0,& k\nmid r,\\[1mm]
k,& k\mid r.
\end{cases}
\]
Therefore
\[
p_r\bigl[(1-\Omega_k)X\bigr]
=
\begin{cases}
p_r[X],& k\nmid r,\\[1mm]
(1-k)p_r[X],& k\mid r.
\end{cases}
\]
Equivalently,
\[
p_r\bigl[(1-\Omega_k)X\bigr]=\alpha_r p_r[X],
\qquad
\alpha_r=
\begin{cases}
1,& k\nmid r,\\
1-k,& k\mid r.
\end{cases}
\]

Now let $\mu=(\mu_1,\dots,\mu_\ell)\vdash d$, and define
\[
a_k(\mu):=\#\{\,i: k\mid \mu_i\,\}.
\]
By multiplicativity,
\[
p_\mu\bigl[(1-\Omega_k)X\bigr]
=
\prod_{i=1}^\ell p_{\mu_i}\bigl[(1-\Omega_k)X\bigr]
=
(1-k)^{a_k(\mu)}p_\mu[X].
\]

Let us recall the Frobenius character formula
\[
s_\lambda
=
\sum_{\mu\vdash d} z_\mu^{-1}\epsilon^\lambda(\mu)\,p_\mu,
\]
where
\[
p_\mu:=p_{\mu_1}p_{\mu_2}\cdots p_{\mu_\ell}
\qquad
\text{for }
\mu=(\mu_1,\dots,\mu_\ell),
\]
and
\[
z_\mu=\prod_{i\geq 1} i^{m_i(\mu)}m_i(\mu)!.
\]
This together with \eqref{e:modular-plethysm} gives
\[
\mathfrak s^{(k)}_\lambda
=
s_\lambda\bigl[(1-\Omega_k)X\bigr]
=
\sum_{\mu\vdash d} z_\mu^{-1}\epsilon^\lambda(\mu)(1-k)^{a_k(\mu)}\,p_\mu.
\]

Assume now that $m_{\la,\nu}=\delta_{\la,\nu}$ for any $\nu$, which is equivalent to $\mathfrak s^{(k)}_\lambda=s_\lambda$.
Subtracting the Frobenius expansion of $s_\lambda$, we obtain
\[
0
=
\mathfrak s^{(k)}_\lambda-s_\lambda
=
\sum_{\mu\vdash d}
z_\mu^{-1}\Bigl((1-k)^{a_k(\mu)}-1\Bigr)\epsilon^\lambda(\mu)\,p_\mu.
\]
Since the family $\{p_\mu\}_{\mu\vdash d}$ is a basis of $\Lambda^d_{\mathbb Q(\omega)}$, every coefficient must vanish:
\[
\Bigl((1-k)^{a_k(\mu)}-1\Bigr)\epsilon^\lambda(\mu)=0
\qquad
\text{for all }\mu\vdash d.
\]

If $a_k(\mu)>0$, then since $k>2$ we have $(1-k)^{a_k(\mu)}\neq 1$.
Therefore
\[
\epsilon^\lambda(\mu)=0
\qquad
\text{for every }\mu\vdash d\text{ with }a_k(\mu)>0.
\]
However, as observed above, the condition $a_k(\mu)>0$ is equivalent to saying that the conjugacy class of cycle type $\mu$ in $\mathfrak S_d$ is $k$-singular.
Hence $\epsilon^\lambda$ vanishes on all $k$-singular elements of $\mathfrak S_d$.

By Lemma~\ref{lem:vanishing-implies-defect-zero}, the character $\epsilon^\lambda$ has $k$-defect zero.
For symmetric groups, this is equivalent to saying that no hook length of $\lambda$ is divisible by $k$ \cite[Corollary 2]{FRT54}.
Equivalently, $\lambda$ is a $k$-core.
This proves the theorem.$\hfill \Box$

\begin{rem}
Theorem~\ref{thm:conj47} also gives a partial confirmation of Walker's Conjecture~4.6.
In the paragraph following \cite[Conjecture~4.6]{Wal94}, Walker explains that, in the column-regular case, the remaining possibility for the modular Schur function $s'_\lambda$ to be simple would force the equality $s'_\lambda=s_\lambda$.
Translated into our notation, this equality is precisely
\[
\mathfrak s^{(k)}_\lambda=s_\lambda,
\]
or equivalently $m_{\lambda,\nu}=\delta_{\lambda,\nu}$ for all $\nu\vdash|\lambda|$.
Thus, for odd prime $k$, Theorem~\ref{thm:conj47} rules out this remaining possibility whenever $\lambda$ is not a $k$-core.
Since $k$-cores satisfy the Carter condition, as recalled in \cite[Remark~4.5]{Wal94}, this proves the column-regular case of \cite[Conjecture~4.6]{Wal94} for odd primes.
The full conjecture involves additional row-singular/highest-weight issues; these remaining cases will be studied separately in future work and will not be pursued further here.
\end{rem}



\vskip30pt \centerline{\bf Acknowledgments}
N.J. is partially supported by the Simons Foundation (no. MP-TSM-00002518) and National Natural Science Foundation of China (no. 12171303). 
N.L. is supported by China Postdoctoral Science Foundation (No. 2025M773076, No. GZC20252018) and Beijing Natural Science Foundation (No. 1264053). We thank Jennifer L. Morse for reading an earlier version of this manuscript and for helpful comments, especially for pointing out the relevance of the \(k\)-Schur Murnaghan--Nakayama rule to the framework developed here. N.L. would like to thank Zhicheng Feng at Southern University of Science and Technology for providing an excellent working environment during the writing of this paper. 
\bigskip

\appendix
\chapter{A combinatorial identity}\label{A:identity}
\begin{lem}
Let $\nu$ be a partition of $n$ with length $\ell(\nu)=k$. Let $m(\nu) = (m_1(\nu), m_2(\nu), \dots)$ denote the sequence of multiplicities of parts in $\nu$. The following identity holds:
\begin{align}
 \sum_{r,\rho} (-1)^{n-\ell(\rho)-1} t^{n-r-\ell(\rho)} t^{n(\rho)-\binom{\ell(\rho)}{2}} \psi_{\nu/\rho}(t) \qbinomial{\ell(\rho)}{m(\rho)}_t 
    = (-t)^{n-k}t^{n(\nu)-\binom{k}{2}}\qbinomial{k}{m(\nu)}_t
    \label{eq:main_identity}
\end{align}
where the sum runs over $1 \leq r \leq n$ and all partitions $\rho$ such that $\nu/\rho$ is an $r$-horizontal strip. 
\end{lem}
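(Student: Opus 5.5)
The plan is to reduce the identity to a vanishing statement, recognize that statement as a Pieri-rule consequence, and then verify a single specialization formula.

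\emph{Step 1: complete the sum.} In \eqref{eq:main_identity} the integer $r=|\nu/\rho|$ is determined by $\rho$. So the left side is a sum over all $\rho$ with $\nu/\rho$ a horizontal strip and $\rho\neq\nu$. Its summand has $t$-exponent $|\rho|-\ell(\rho)+n(\rho)-\binom{\ell(\rho)}{2}$. If we formally allow $r=0$, i.e.\ $\rho=\nu$ with $\psi_{\nu/\nu}(t)=1$, that extra term equals
\[
(-1)^{n-k-1}t^{n-k}t^{n(\nu)-\binom{k}{2}}\qbinomial{k}{m(\nu)}_t ,
\]
which is exactly minus the right-hand side. Multiplying through by $(-1)^{n-1}$, the identity becomes equivalent, for every nonempty $\nu$, to
\[
\sum_{\rho\subset^{\rm h}\nu}\psi_{\nu/\rho}(t)\,c_\rho(t)=0,\qquad
c_\rho(t):=(-1)^{\ell(\rho)}t^{|\rho|-\ell(\rho)+n(\rho)-\binom{\ell(\rho)}{2}}\qbinomial{\ell(\rho)}{m(\rho)}_t .
\]

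\emph{Step 2: reinterpret via the Hall--Littlewood Pieri rule.} The coefficients $\psi_{\nu/\rho}(t)$ are the Pieri coefficients in $P_\rho(X;t)\,q_r(X;t)=\sum_\nu\psi_{\nu/\rho}(t)P_\nu(X;t)$ (Macdonald III (5.7$'$)). Consequently the vanishing statement of Step 1, together with $c_\varnothing=1$, is equivalent to the generating-function identity
\[
\Bigl(\sum_{\rho}c_\rho(t)\,z^{|\rho|}P_\rho(X;t)\Bigr)\Bigl(\sum_{r\ge0}q_r(X;t)z^r\Bigr)=1 .
\]
By \eqref{e:qgen} this says
\[
\sum_\rho c_\rho(t)z^{|\rho|}P_\rho(X;t)=\prod_i\frac{1-zx_i}{1-tzx_i}
=\exp\Bigl(-\sum_{r\ge1}\tfrac{1-t^r}{r}p_r(X)z^r\Bigr).
\]
Next compare with the Hall--Littlewood Cauchy identity
\[
\sum_\rho P_\rho(X;t)\,Q_\rho(Y;t)=\exp\Bigl(\sum_{r\ge1}\tfrac{1-t^r}{r}p_r(X)p_r(Y)\Bigr).
\]
Taking the plethystic alphabet $Y=-z$, so that $p_r[Y]=-z^r$, reproduces the right side above. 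Since the $P_\rho$ form a basis, it therefore suffices to prove
\[
Q_\rho[-1;t]=c_\rho(t)\qquad\text{for every partition }\rho .
\]
This is consistent with Corollary~\ref{c:(1^n)}, since $c_\rho$ is precisely the claimed value of $\K_{(1^{|\rho|}),\rho}(t)$.

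\emph{Step 3: the specialization $Q_\rho[-1;t]$, which I expect to be the main obstacle.} I would write $-1=(1-t)\cdot\bigl(-\tfrac{1}{1-t}\bigr)$ and use the $\omega_{t}$-type duality from Macdonald III.\S5, ex.~on $\omega Q_\rho$, together with the principal specialization
\[
P_\rho(1,t,\dots,t^{N-1};t)=t^{n(\rho)}\frac{[N]_t!}{[N-\ell(\rho)]_t!\,\prod_i[m_i(\rho)]_t!}
\]
(Macdonald III.2, Ex.~1). The aim is to express $Q_\rho[-1;t]$ through the specialization of the dual family at $1,t,t^2,\dots$, which is a stable limit in $N$. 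The factor $\qbinomial{\ell(\rho)}{m(\rho)}_t$ should come from $[\ell]_t!/\prod_i[m_i]_t!$, the power $t^{n(\rho)-\binom{\ell}{2}}$ from normalizing $n(\rho)$ against the $t^{\binom{\ell}{2}}$ produced by the finite products, and the sign $(-1)^{\ell}$ together with $t^{|\rho|-\ell}$ from the sign twist $p_r\mapsto -p_r$ applied part by part.

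If this algebraic route becomes messy, the fallback is the transfer-matrix argument alluded to in the text. Here $\rho\subset^{\rm h}\nu$ is chosen independently in each block of equal parts of $\nu$: each block of multiplicity $m_i$ may shorten its last row by any amount up to $i-\nu_{\text{next}}$. Both $\psi_{\nu/\rho}$ and $c_\rho$ factor through these block choices, with correction terms only where adjacent blocks merge. One then proves the vanishing by induction on the number of distinct parts, using the $t$-binomial recursion $\qbinomial{k}{m}_t=\qbinomial{k-1}{m-1}_t+t^{m}\qbinomial{k-1}{m}_t$ to telescope the contributions of the last block.
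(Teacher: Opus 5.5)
Your Step~1 is fine; it is the same completion by the empty strip that the paper uses at the end of its proof.

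\textbf{Step~2 uses the wrong Pieri rule.} The coefficients $\psi_{\nu/\rho}(t)$ belong to the $Q$-functions, $Q_\rho\,q_r=\sum_\nu\psi_{\nu/\rho}(t)\,Q_\nu$. For the $P$-functions the coefficients are $\varphi_{\nu/\rho}(t)=\psi_{\nu/\rho}(t)\,b_\nu(t)/b_\rho(t)$. Already for $\rho=\varnothing$, $r=1$ one has $q_1=(1-t)p_1=Q_{(1)}$, while $\psi_{(1)/\varnothing}=1$.

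As a result your target $Q_\rho[-1;t]=c_\rho(t)$ is false. For instance $Q_{(1)}[-1;t]=-(1-t)\neq-1=c_{(1)}$, and in general $Q_\rho[-1;t]=b_\rho(t)\,c_\rho(t)$. Correspondingly, the coefficient of $z$ in your product is $c_{(1)}P_{(1)}+q_1=-t\,p_1\neq0$.

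Redoing Step~2 with $Q_\rho$ in place of $P_\rho$, and using $\sum_\rho Q_\rho(X;t)P_\rho(Y;t)$ at $Y=-z$, does give a correct reduction. The lemma, for all $\nu$, is equivalent to $P_\rho[-1;t]=c_\rho(t)$ for all $\rho$.

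\textbf{The main gap: this identity is the entire content, and Step~3 does not prove it.} Since $\langle e_n,f\rangle=(-1)^nf[-1]$ for $f$ of degree $n$, and $\langle g[(1-t)X],f\rangle_t=\langle g,f\rangle$, one has
$P_\rho[-1;t]=(-1)^{|\rho|}\K_{(1^{|\rho|}),\rho}(t)$.
So your Step~3 is, up to sign, exactly the closed formula for $\K_{(1^n),\nu}(t)$ that this lemma is used to prove by induction in Chapter~\ref{s:qt-kostka}. You may not invoke that corollary. Moreover, with an independent proof the lemma becomes a consequence of the formula rather than a combinatorial route to it.

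The tools you name do not reach the identity. The Hall--Littlewood principal specialization gives the evaluation at $\frac{1}{1-t}$, namely $Q_\rho[\frac{1}{1-t};t]=t^{n(\rho)}$. Your sketch gives no mechanism for passing from there to the evaluation at $-1$.

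\textbf{One route that does work leaves the Hall--Littlewood setting.} For $f$ homogeneous of degree $n$ one has $f[-1]=(-1)^n(\omega_{0,t}f)[1-t]$, where $\omega_{q,t}p_r=(-1)^{r-1}\frac{1-q^r}{1-t^r}p_r$. Macdonald's duality $\omega_{q,t}P_\rho(X;q,t)=Q_{\rho^t}(X;t,q)$ at $q=0$ then gives
$P_\rho[-1;t]=(-1)^{|\rho|}Q_{\rho^t}[1-t;t,0]$.
Macdonald's principal-specialization formula at parameters $(t,0)$, together with $b_\mu(t,0)=\prod_i(t;t)_{\mu_i-\mu_{i+1}}^{-1}$, evaluates this to $c_\rho(t)$.

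\textbf{Your combinatorial fallback is only an outline.} It is missing the mechanism that makes the paper's argument close. The paper encodes $\rho$ column by column through $a_j=\nu^t_j-\rho^t_j\in\{0,1\}$, so that every weight becomes a product of local $2\times2$ transfer-matrix entries. It then uses the invariant $B_j=[\nu^t_j]_tA_j$, which forces the completed sum to vanish.
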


\begin{proof}
Divide both sides of \eqref{eq:main_identity} by the RHS and denote the normalized LHS by $S$.
We aim to prove $S=1$ and proceed in several steps.
\medskip

\noindent 1) {\em Encoding horizontal strips}.
If $\nu/\rho$ is a (possibly empty) horizontal strip, set
\[
a_j:=\nu_j^t-\rho_j^t\in\{0,1\}\qquad (j\ge1),
\]
so that $r=|\nu|-|\rho|=\sum_{j\ge1} a_j$ and $\ell(\rho)=\rho_1^t=k-a_1$.
Also $n(\lambda)=\sum_{j\ge1}\binom{\lambda_j^t}{2}$.
\medskip

\noindent 2) {\em Normalized sign and $t$-power}.
The sign ratio is
\[
\frac{(-1)^{n-\ell(\rho)-1}}{(-1)^{n-k}}=(-1)^{k-\ell(\rho)-1}=(-1)^{a_1-1}.
\]
For the $t$-exponent, we compute
\[
(n-r-\ell(\rho))-(n-k)=k-r-(k-a_1)=a_1-r,
\]
and
\[
n(\rho)-n(\nu)=\sum_{j\ge1}\left(\binom{\nu_j^t-a_j}{2}-\binom{\nu_j^t}{2}\right)
=\sum_{j\ge1} a_j(1-\nu_j^t)=r-\sum_{j\ge1} a_j \nu_j^t,
\]
as $a_j\in\{0,1\}$.
Moreover,
\[
\binom{\ell(\rho)}2-\binom{k}{2}=\binom{k-a_1}{2}-\binom{k}{2}=-a_1(k-1).
\]
Hence the normalized $t$-power is
\[
\Delta E=(a_1-r)+\Bigl(r-\sum_{j\ge1}a_j\nu_j^t\Bigr)+a_1(k-1)
=a_1k-\sum_{j\ge1}a_j\nu_j^t
=-\sum_{j\ge2} a_j \nu_j^t
=-\sum_{j\ge1} a_{j+1}\nu_{j+1}^t.
\]
\medskip

\noindent 3) {\em Multinomial ratio and local weights}.
Using $m_j(\rho)=m_j(\nu)-a_j+a_{j+1}$, we have
\[
\frac{\qbinomial{\ell(\rho)}{m(\rho)}_t}{\qbinomial{k}{m(\nu)}_t}
=\frac{[k-a_1]_t!}{[k]_t!}\prod_{j\ge1}\frac{[m_j(\nu)]_t!}{[m_j(\rho)]_t!}.
\]
Also, $\psi_{\nu/\rho}(t)=\prod_{j:\,a_j=0,a_{j+1}=1}(1-t^{m_j(\rho)})$.
Define for $u,v\in\{0,1\}$ the local weight (with $m_j:=m_j(\nu)$)
\[
w_j(u,v):=\frac{[m_j]_t!}{[m_j-u+v]_t!}\times
\begin{cases}
1-t^{m_j-u+v}, & (u,v)=(0,1),\\
1, & \text{otherwise}.
\end{cases}
\]
A direct check gives the four cases:
\[
w_j(0,0)=1,\quad w_j(1,0)=[m_j]_t,\quad w_j(0,1)=1-t,\quad w_j(1,1)=1.
\]

Note that not every binary sequence $(a_j)$ yields a partition $\rho$, but any \emph{illegal} drop
$\nu_j^t=\nu_{j+1}^t$ together with $(a_j,a_{j+1})=(1,0)$ forces $m_j(\nu)=0$ and hence
$w_j(1,0)=[0]_t=0$. Therefore all non-partition sequences contribute weight $0$, and we may
sum over all $(a_j)\in\{0,1\}^{\mathbb N}$ without changing the value.
\medskip

\noindent 4) {\em Transfer matrices}.
Set $N_j:=\nu_{j+1}^t$.
By 2) the factor $t^{\Delta E}$ contributes $t^{-a_{j+1}N_j}$ at step $j$.
Thus the total normalized weight of a sequence $(a_j)$ is
\[
(-1)^{a_1-1}\frac{[k-a_1]_t!}{[k]_t!}\prod_{j\ge1}\Bigl(w_j(a_j,a_{j+1})\,t^{-a_{j+1}N_j}\Bigr).
\]
Define the $2\times2$ transfer matrix $M_j$ by $M_j(u,v):=w_j(u,v)\,t^{-vN_j}$.
Explicitly,
\[
M_j=\begin{pmatrix}
1 & (1-t)t^{-N_j}\\
{[m_j]_t} & t^{-N_j}
\end{pmatrix}.
\]
Choose $L>\nu_1$ so that $\nu_{L+1}^t=0$; then for $j\ge L$ we have $m_j=0$ and $N_j=0$,
and $M_j\binom10=\binom10$. Hence the infinite product reduces to a finite product:
let $V_{L+1}:=\binom10$ and define $V_j:=M_jV_{j+1}$ for $1\le j\le L$.
Write $V_j=\binom{A_j}{B_j}$.
\medskip

\noindent 5) {\em A one-step invariant}.
We claim that for all $j$, 
\[
B_j=[\nu_j^t]_t\,A_j.
\]
For $j=L+1$, $\nu_{L+1}^t=0$ and $(A_{L+1},B_{L+1})=(1,0)$, so it holds.
Assume $B_{j+1}=[N_j]_tA_{j+1}$. From $V_j=M_jV_{j+1}$,
\[
A_j=A_{j+1}+(1-t)t^{-N_j}B_{j+1}
=A_{j+1}\Bigl(1+(1-t)t^{-N_j}[N_j]_t\Bigr)=t^{-N_j}A_{j+1},
\]
since $1+(1-t)t^{-N}[N]_t=t^{-N}$.
Also
\[
B_j=[m_j]_tA_{j+1}+t^{-N_j}B_{j+1}
=A_{j+1}\Bigl([m_j]_t+t^{-N_j}[N_j]_t\Bigr)
=t^{-N_j}[m_j+N_j]_tA_{j+1}.
\]
Because $\nu_j^t=m_j+N_j$, we get $B_j=[\nu_j^t]_tA_j$.
\medskip

\noindent 6) {\em Evaluate the total sum and remove the empty strip}.
Including the empty strip ($r=0$), the sum over all sequences equals the contribution of
initial state $a_1=0$ plus that of $a_1=1$:
\[
S_{\mathrm{tot}}=\Bigl((-1)\cdot 1\Bigr)A_1+\Bigl(1\cdot \frac{[k-1]_t!}{[k]_t!}\Bigr)B_1
=-A_1+\frac{1}{[k]_t}B_1.
\]
By 5) with $\nu_1^t=k$, $B_1=[k]_tA_1$, hence $S_{\mathrm{tot}}=0$.

The excluded case $r=0$ corresponds to the unique sequence $a_j\equiv0$, whose normalized
weight is $(-1)^{0-1}=-1$. Therefore
\[
S=S_{\mathrm{tot}}-(-1)=1.
\]
This proves \eqref{eq:main_identity}.
\end{proof}

\end{document}